\theoremstyle{theorem}
\newtheorem{conj}[equation]{Conjecture}
\newtheorem{coro}[equation]{Corollary}
\newtheorem{lemm}[equation]{Lemma}
\newtheorem{prop}[equation]{Proposition}
\newtheorem{thrm}[equation]{Theorem}
\newtheorem*{thrmA}{Theorem A}
\newtheorem*{thrmB}{Theorem B}
\theoremstyle{definition}
\newtheorem{defi}[equation]{Definition}
\newtheorem{exam}[equation]{Example}
\newtheorem{nota}[equation]{Notation}
\newtheorem{ques}[equation]{Question}
\newtheorem{rema}[equation]{Remark}
\def\AArrow(#1,#2){\ncline[nodesep=0.3mm, linewidth=0.8pt, border=1.2pt]{->}{#1}{#2}}
\def\BArrow(#1,#2){\ncline[linecolor=blue, linewidth=1.2pt, linestyle=dotted, nodesep=0.3mm, border=1.2pt]{->}{#1}{#2}}
\def\CArrow(#1,#2){\ncline[linecolor=red, nodesep=0.3mm, linewidth=0.8pt, border=1.2pt]{->}{#1}{#2}}
\definecolor{color1}{rgb}{.88,.88,.88}
\definecolor{color2}{rgb}{1,.75,.5}%corail
\definecolor{color2}{rgb}{1,.87,.83}%rose
\definecolor{color4}{rgb}{.95,.95,.95}
\definecolor{color2}{rgb}{0.85,.92,1}%bleu
\definecolor{color20}{rgb}{0.9,.95,1}%bleu
\definecolor{color21}{rgb}{0.8,.9,1}%bleu
\definecolor{color22}{rgb}{0.75,.85,1}%bleu
\definecolor{color3}{rgb}{0,0,1}
\definecolor{color2}{rgb}{1,.85,0.85}%rouge
\definecolor{color20}{rgb}{.88,.88,.88}%rouge
\definecolor{color21}{rgb}{1,.8,.8}%rouge
\definecolor{color22}{rgb}{1,.75,.75}%rouge
\definecolor{color3}{rgb}{1,0,0}
\def\BPoint(#1,#2,#3){\cnode[style=thin,fillcolor=black,fillstyle=solid](#1,#2){0.5}{#3}}
\def\GPoint(#1,#2,#3){\cnode[style=thin,fillcolor=lightgray,fillstyle=solid](#1,#2){0.5}{#3}}
\def\WPoint(#1,#2,#3){\cnode[style=thin,fillcolor=white,fillstyle=solid](#1,#2){0.7}{#3}}
\def\AArrow(#1,#2){\ncline[nodesep=1mm, linewidth=0.8pt, border=2pt]{->}{#1}{#2}}
\def\BArrow(#1,#2){\ncline[linecolor=blue, linewidth=1.2pt, linestyle=dotted, nodesep=0.3mm, border=1.2pt]{->}{#1}{#2}}
\def\CArrow(#1,#2){\ncline[linecolor=red, nodesep=0.3mm, linewidth=0.8pt, border=1.2pt]{->}{#1}{#2}}
\def\DArrow(#1,#2){\ncline[linecolor=red,style=thickexist,nodesep=0.5mm,border=2pt]{->}{#1}{#2}}
\def\PArrow(#1,#2){\ncline[linestyle=dashed, nodesep=0.3mm, linewidth=0.8pt, border=0.8pt]{->}{#1}{#2}}
\def\Arrow(#1,#2){\ncline[nodesep=1mm,border=2pt]{->}{#1}{#2}}
\def\ArrowE(#1,#2){\ncline[nodesep=1mm,border=2pt,style=exist]{->}{#1}{#2}}
\def\ETC(#1,#2){\ncline[nodesep=2mm,border=2pt,style=etc]{#1}{#2}}
\numberwithin{equation}{section}
\newcounter{ITEM}
\newcommand\ITEM[1]{\setcounter{ITEM}{#1}\leavevmode\hbox{\rm(\roman{ITEM})}}
\renewcommand\aa{a}
\newcommand\aav{\U\aa}
\newcommand\act{\mathbin{\scriptscriptstyle\bullet}}
\renewcommand\and{\text{and}}
\newcommand\antifac{\mathrel{\raisebox{0.5pt}{\hbox{$\scriptstyle\supset$}}}}
\newcommand\antird{\Leftarrow}
\newcommand\antirdh{\mathrel{\antird\hspace{-1.5ex}\widehat{\VR(1.7,0)}\hspace{1ex}}}
\newcommand\antirdhs{\mathrel{{}^*\hspace{-1ex}\antirdh\hspace{0.5ex}}}
\newcommand\antirds{\mathrel{{}^*\hspace{-1ex}\antird\hspace{0.5ex}}}
\newcommand\Att{\widetilde{\VR(2.1,0)\smash{\mathrm{A}}}_2}
\newcommand\bb{b}
\newcommand\bbv{\U\bb}
\newcommand\can{\iota}
\newcommand\card{\mathtt\#}
\newcommand\cc{c}
\newcommand\ccv{\U\cc}
\newcommand\cl[1]{[#1]}
\newcommand\clp[1]{[#1]^{\HS{-0.2}\scriptscriptstyle+}}
\newcommand\Cut{R_{\!\times}}
\newcommand\dd{d}
\newcommand\DDD{\mathcal{D}}
\newcommand\ddv{\U\dd}
\newcommand\Den[1]{\mathrm{D}(#1)}
\renewcommand\dh[1]{\Vert#1\Vert}
\renewcommand\div{<}
\newcommand\dive{\le}
\newcommand\divet{\mathrel{\widetilde{\VR(1.8,0)\smash{\dive}}}}
\newcommand\divt{\mathrel{\widetilde{\VR(1.8,0)\smash{<}}}}
\newcommand\ee{e}
\newcommand\eev{\U\ee}
\newcommand\EG[1]{\mathcal{U}(#1)}
\newcommand\ef{\varnothing}
\newcommand\equivp{\equiv^{\HS{-0.1}\scriptscriptstyle+}}
\newcommand\ew{\varepsilon}
\newcommand\fac{\mathrel{\raisebox{0.5pt}{\hbox{$\scriptstyle\subset$}}}}
\newcommand\FR[1]{\mathcal{F}_{\HS{-0.4}#1}}% new notation
\newcommand\fr{\mathtt}
\renewcommand\gcd{\wedge}
\newcommand\gcdt{\mathbin{\widetilde\wedge}}
\renewcommand\ge{\geqslant}
\renewcommand\gg{g}
\newcommand\GG{G}
\newcommand\GR[2]{\langle#1\mid#2\rangle}
\newcommand\hh{h}
\newcommand\HHH{\mathcal{H}}
\newcommand\HS[1]{\hspace{#1ex}}
\newcommand\ie{\textit{i.e.}}
\newcommand\ii{i}
\newcommand\II{I}
\newcommand\inv{^{-1}}
\newcommand\INV[1]{\overline{#1}}
\newcommand\jj{j}
\newcommand\JJ{J}
\newcommand\kk{k}
\newcommand\lcm{\vee}
\newcommand\lcmt{\mathbin{\widetilde\lcm}}
\renewcommand\le{\leqslant}
\newcommand\LG[2]{\ell_{#1}(#2)}
\newcommand\mm{m}
\newcommand\MM{M}
\newcommand\MMinv{M^{\mathrm{inv}}}
\newcommand\MON[2]{\langle#1\mid#2\rangle^{\HS{-0.5}\scriptscriptstyle+}}
\newcommand\multe{\ge}
\newcommand\multt{\mathrel{\widetilde{\VR(1.8,0)\smash{>}}}}
\newcommand\nn{n}
\newcommand\NNNN{\mathbb{N}}
\newcommand\One[1]{\underline1_{#1}}
\newcommand\one{\underline1}
\newcommand\OOO{\mathcal{O}}
\newcommand\opp{\cdot}
\newcommand\pdots{\HS{0.2}{\cdot}{\cdot}{\cdot}\HS{0.2}}
\newcommand\phih{\phi^*}
\newcommand\pp{p}
\newcommand\PPP{\mathcal{P}}
\newcommand\PropH{\mathrm{H}}
\newcommand\qq{q}
\newcommand\quand{\quad\text{and}\quad}
\newcommand\rd{\Rightarrow}
\newcommand\RD[1]{\RRR_{\HS{-0.2}#1}}% new notation
\newcommand\rdh{\mathrel{\rd\hspace{-2ex}\widehat{\VR(1.7,0)}\hspace{1ex}}}
\newcommand\RDh[1]{\RRRh_{\HS{-0.2}#1}}% new notation
\newcommand\rdhs{\mathrel{\rdh{\hspace{-0.5ex}}^*}}
\newcommand\Rdiv[1]{D_{#1}}
\newcommand\RDiv[1]{\DDD_{\HS{-0.2}#1}}% new notation
\newcommand\rds{\rd^{\HS{-0.3}*}}
\renewcommand\red{\mathrm{red}}
\newcommand\Red[1]{R_{#1}}
\newcommand\redh{\widehat{\VR(1.8,0)\smash\red}}
\newcommand\Redmax[1]{R_{#1}^\smax}
\newcommand\resp{\mbox{\it resp}.,\ }
\newcommand\rev{\curvearrowright}
\newcommand\revt{\raisebox{4pt}{\rotatebox{180}{\hbox{$\curvearrowleft$}}}}
\newcommand\rr{r}
\newcommand\RR{R}
\newcommand\RRR{\mathcal{R}}
\newcommand\RRRh{\widehat\RRR}
\newcommand\sdots{ / \pdots / }
\newcommand\sh{^{\mathrm\#}}
\newcommand\smin{{\hbox{\rm\tiny min}}}
\newcommand\smax{{\hbox{\rm\tiny max}}}
\renewcommand\ss{s}
\renewcommand\SS{S}
\newcommand\SSb{\overline{S}}
\newcommand\Supp{\mathrm{Supp}}
\renewcommand\tt{t}
\newcommand\tta{\mathtt{a}}
\newcommand\ttb{\mathtt{b}}
\newcommand\ttc{\mathtt{c}}
\let\U=\underline
\newcommand\UG[1]{\Gamma_{\!#1}}
\newcommand\univ[1]{U(#1)}
\newcommand\uu{u}
\def\VR(#1,#2){\vrule width0pt height#1mm depth#2mm}
\newcommand\vv{v}
\newcommand\wdots{, ...\HS{0.2},}
\newcommand\wit{\lambda}
\newcommand\ww{w}
\newcommand\WW{W}
\newcommand\xx{x}
\newcommand\XX{X}
\newcommand\yy{y}
\newcommand\YY{Y}
\newcommand\zz{z}
\title[Multifraction reduction I: the 3-Ore case]{Multifraction reduction I: the 3-Ore case and Artin-Tits groups of type~FC}
\author{Patrick Dehornoy}
\address{Laboratoire de Math\'ematiques Nicolas Oresme, CNRS UMR 6139, Universit\'e de Caen, 14032 Caen cedex, France, and Institut Universitaire de France}
\email{patrick.dehornoy@unicaen.fr}
\urladdr{www.math.unicaen.fr/$\sim$dehornoy}
\keywords{Artin-Tits monoid; Artin-Tits group; gcd-monoid; enveloping group; word problem; multifraction; reduction; 3-Ore condition; type FC; embeddability; normal form; van Kampen diagram}
\subjclass{20F36, 20F10, 20M05, 68Q42, 18B40, 16S15}
\begin{document}

\maketitle

\begin{abstract}
We describe a new approach to the word problem for Artin-Tits groups and, more generally, for the enveloping group~$\EG\MM$ of a monoid~$\MM$ in which any two elements admit a greatest common divisor. The method relies on a rewrite system~$\RD\MM$ that extends free reduction for free groups. Here we show that, if $\MM$ satisfies what we call the $3$-Ore condition about common multiples, what corresponds to type~FC in the case of Artin-Tits monoids, then the system~$\RD\MM$ is convergent. Under this assumption, we obtain a unique representation result for the elements of~$\EG\MM$, extending Ore's theorem for groups of fractions and leading to a solution of the word problem of a new type. We also show that there exist universal shapes for the van Kampen diagrams of the words representing~$1$.
\end{abstract}

%%%%
\section{Introduction}

The aim of this paper, the first in a series, is to describe a new approach to the word problem for Artin-Tits groups, which are those groups that admit a finite presentation~$\GR\SS\RR$ where $\RR$ contains at most one relation of the form $\ss... = \tt...$ for each pair of generators~$(\ss, \tt)$ and, if so, the relation has the form $\ss\tt\ss... = \tt\ss\tt...$, both sides of the same length. Introduced and investigated by J.\,Tits in the 1960s, see~\cite{BriBou}, these groups remain incompletely understood except in particular cases, and even the decidability of the word problem is open in the general case~\cite{Chc, GoP2}. 

Our approach is algebraic, and it is relevant for every group that is the enveloping group of a cancellative monoid~$\MM$ in which every pair of elements admits a greatest common divisor (``gcd-monoid''). The key ingredient is a certain rewrite system (``reduction'')~$\RD\MM$ that acts on finite sequences of elements of~$\MM$ (``multifractions'') and is reminiscent of free reduction of words (deletion of factors~$\xx\xx\inv$ or $\xx\inv\xx$). In the current paper, we analyze reduction in the special case when the monoid~$\MM$ satisfies an assumption called the $3$-Ore condition, deferring the study of more general cases to subsequent papers~\cite{Diu, Div, Dix}. 

When the approach works optimally, namely when the $3$-Ore condition is satisfied, it provides a description of the elements of the enveloping group~$\EG\MM$ of the monoid~$\MM$ that directly extends the classical result by \O.\,Ore \cite{Ore}, see~\cite{ClP}, which is the base of the theory of Garside groups~\cite{Eps, Dir} and asserts that, if $\MM$ is a gcd-monoid and any two elements of~$\MM$ admit a common right multiple, then every element of~$\EG\MM$ admits a unique representation~$\aa_1\aa_2\inv$ with right gcd$(\aa_1, \aa_2) = 1$. Our statement here is parallel, and it takes the form:

\begin{thrmA}\label{T:Main}
Assume that $\MM$ is a noetherian gcd-monoid satisfying the $3$-Ore condition:
\begin{quote}
Any three elements of~$\MM$ that pairwise admit a common right multiple $($\resp left multiple$)$ admits a global common right multiple $($\resp left multiple$)$.
\end{quote}

\ITEM1 The monoid~$\MM$ embeds in its enveloping group~$\EG\MM$ and every element of~$\EG\MM$ admits a unique representation $\aa_1 \aa_2\inv \aa_3 \aa_4\inv\!... \, \aa_\nn^{\pm1}$ with $\aa_\nn \not= 1$, right gcd$(\aa_1, \aa_2) = 1$, and, for $\ii$ even $($\resp odd$)$, if $\xx$ divides~$\aa_{\ii+1}$ on the left $($\resp on the right $)$, then $\xx$ and $\aa_\ii$ have no common right $($\resp left$)$ multiple. 

\ITEM2 If, moreover, $\MM$ admits a presentation by length-preserving relations and contains finitely many basic elements, the word problem for~$\EG\MM$ is decidable.
\end{thrmA}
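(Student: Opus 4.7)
My plan is to reduce both parts of the theorem to one central statement: convergence of the rewrite system~$\RD\MM$ on multifractions over~$\MM$. Termination (strong normalization) comes from the noetherianity of~$\MM$: one assigns to each multifraction a well-founded measure, for instance the tuple of noetherian ranks of its entries ordered lexicographically, and verifies that each application of a reduction rule strictly decreases this measure. Local confluence is the heart of the argument and is where the 3-Ore condition is used. Two reductions acting on disjoint positions of a multifraction commute formally; the delicate case is an overlap involving three consecutive entries, where the two reductions provide three pairwise common multiples of three elements of~$\MM$. The 3-Ore condition then promotes these to a global common multiple, which is precisely what is needed to construct a common $\rds$-descendant. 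Newman's lemma then turns local confluence plus termination into the desired convergence.

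Given convergence, I would next identify the equivalence generated by~$\RD\MM$ with the defining congruence of~$\EG\MM$ on multifractions. Soundness (each reduction step holds in~$\EG\MM$) follows by inspection of the rules; completeness requires checking that the basic Ore-style identities in~$\EG\MM$ are captured, up to $\RD\MM$-equivalence, by the rewrite system. Once this is done, part~\ITEM1 follows directly: the unique irreducible representative of each class yields the announced unique expression $\aa_1\aa_2\inv\aa_3\aa_4\inv\cdots\aa_\nn^{\pm1}$, and the irreducibility conditions unfold precisely to the stated coprimality of $(\aa_1,\aa_2)$ and to the common-multiple-incompatibility of left (resp.\ right) divisors of~$\aa_{\ii+1}$ with~$\aa_\ii$. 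The embedding of~$\MM$ into~$\EG\MM$ is then immediate, since length-one multifractions are already irreducible, so distinct elements of~$\MM$ remain distinct in~$\EG\MM$.

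For part~\ITEM2, effectiveness of the above procedure under the extra hypotheses must be checked. Length-preserving relations force all positive words representing a given element of~$\MM$ to share a common length, so over the finite alphabet of basic elements the set of words representing a given element is finite and enumerable; this reduces equality in~$\MM$, and hence the computation of gcds, common multiples, and factorizations, to a finite search. The word problem for~$\EG\MM$ is then solved algorithmically: parse the input word as a multifraction, apply $\RD\MM$ to reach the unique irreducible form, and accept iff this form is empty. Termination provides a bound on the number of rewrite steps, so the procedure halts.

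The hard part is the local confluence analysis at overlapping positions. Checking that the 3-Ore condition genuinely closes every confluence square is subtle: the alternating structure of multifractions means that successive positions call for common multiples on opposite sides (right, then left, then right, \ldots), and the interaction between these two flavors of 3-Ore at a triple overlap requires delicate bookkeeping of which entries must share a right multiple and which must share a left multiple. It is there that the bulk of the combinatorial work of the paper must lie, and it is the decisive point on which both the embedding and the decidability statements ultimately rest.
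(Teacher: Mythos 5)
Your overall architecture is exactly the paper's: termination from noetherianity, local confluence with the $3$-Ore condition supplying the missing common multiples, Newman's lemma to get convergence, then unique irreducible representatives for part~(1) and an effectivity analysis for part~(2). However, two points in the sketch would not survive being carried out as written. You locate the use of the $3$-Ore condition at an overlap of $\Red{\ii,\xx}$ and $\Red{\jj,\yy}$ with $\vert\ii-\jj\vert=1$, where ``successive positions call for common multiples on opposite sides.'' In the paper that adjacent-level case closes up \emph{unconditionally} in any gcd-monoid (Lemma~\ref{L:LocConfSucc}), using only conditional lcms and the iterated-lcm formula; the two flavors of the $3$-Ore condition are never mixed at a single overlap. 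The hypothesis is needed exactly once, for two reductions \emph{at the same level}: if $\aav\act\Red{\ii,\xx}$ and $\aav\act\Red{\ii,\yy}$ are both defined, then $\xx$, $\yy$, $\aa_\ii$ pairwise admit common right (say) multiples, all on the same side, and a global one is required to form $\aav\act\Red{\ii,\zz}$ with $\zz=\xx\lcm\yy$ (Lemma~\ref{L:LocConfLcm}). So the anticipated ``delicate bookkeeping between the two flavors'' is not where the difficulty lies.

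Second, convergence of the depth-preserving system $\RD\MM$ alone does not give the uniqueness stated in the theorem: $\aa/\bb$ and $\aa/\bb/1/1$ are both $\RD\MM$-irreducible and represent the same element of~$\EG\MM$, and the condition $\aa_\nn\not=1$ cannot be extracted. The paper adjoins the depth-reducing rule~$\Cut$ (erasing a trivial last entry) to form~$\RDh\MM$, checks this preserves termination and confluence, and uses~$\Cut$ essentially in the ``completeness'' step you mention: showing that the equivalence generated by reduction contains the defining pairs $(1,\ef)$, $(\aa/\aa,\ef)$, $(1/\aa/\aa,\ef)$ of~$\simeq$ requires collapsing depth. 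Finally, for part~(2) the delicate point is not deciding equality in~$\MM$ (which your finite-search argument handles) but deciding whether $\xx$ and~$\aa_\ii$ admit a common right multiple at all, since a priori such a multiple could be arbitrarily long; this is where finiteness of the set of basic elements genuinely enters, via the bound $\LG\SS{\aa\lcm\bb}\le C(\LG\SS\aa+\LG\SS\bb)$ of Lemma~\ref{L:BoundCM}, which turns existence of lcms into a bounded search.
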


The sequences involved in Theorem~A\ITEM1 are those that are irreducible with respect to the above alluded rewrite system~$\RD\MM$ (more precisely, a mild amendment~$\RDh\MM$ of it), and the main step in the proof is to show that, under the assumptions, the system~$\RD\MM$ is what is called locally confluent and, from there, convergent, meaning that every sequence of reductions leads to a unique irreducible sequence.

The above result applies to many groups. In the case of a free group, one recovers the standard results about free reduction. In the case of an Artin-Tits group of spherical type and, more generally, of a Garside group, the irreducible sequences involved in Theorem~A have length at most two, and one recovers the standard representation by irreducible fractions occurring in Ore's theorem. But other cases are eligible. In the world of Artin-Tits groups, we show

\begin{thrmB}\label{T:AT3Ore}
An Artin-Tits monoid is eligible for Theorem~A if and only if it is of type~FC.
\end{thrmB}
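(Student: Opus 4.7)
The plan reduces Theorem~B, via the classical fact that every Artin-Tits monoid is noetherian and a gcd-monoid (Brieskorn-Saito and subsequent work of Michel, Dehornoy-Paris), to proving $\MM$ satisfies 3-Ore if and only if $(\SS, m)$ is of type~FC. The only Artin-Tits-specific input I invoke is the \emph{Brieskorn-Saito atom criterion}: a subset $T \subseteq \SS$ of atoms admits a common right multiple in~$\MM$ iff $T$ generates a finite Coxeter subgroup (\ie, is spherical), in which case the right lcm is the Garside element $\Delta_T$. A corollary is that the atoms left-dividing any single element of~$\MM$ always form a spherical set.

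\emph{3-Ore~$\Rightarrow$~FC.} Induct on~$|T|$ to show that any $T \subseteq \SS$ with every pair spherical is itself spherical. The cases $|T| \le 2$ are immediate. For $|T| \ge 3$, every proper subset of~$T$ inherits pairwise-sphericity and is spherical by the induction hypothesis. Pick distinct $\ss_1, \ss_2 \in T$ and set
\[
\aa_1 := \ss_1, \qquad \aa_2 := \ss_2, \qquad \aa_3 := \Delta_{T \setminus \{\ss_1, \ss_2\}},
\]
the last well-defined because $T \setminus \{\ss_1, \ss_2\}$ is a proper, hence spherical, subset of~$T$. Each pair $(\aa_\ii, \aa_\jj)$ lies in a proper parabolic submonoid $\langle T' \rangle^+$ with $T' \subsetneq T$ spherical, so admits a common right multiple (the Garside element of~$T'$). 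By 3-Ore a triple common right multiple~$m$ exists; since $\ss_1, \ss_2$ left-divide~$m$ directly and every $\ss \in T \setminus \{\ss_1, \ss_2\}$ left-divides~$\aa_3$, which in turn left-divides~$m$, the whole of~$T$ lies in the left-divisor set of~$m$. The Brieskorn-Saito criterion then forces~$T$ spherical, closing the induction.

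\emph{FC~$\Rightarrow$~3-Ore and main obstacle.} Given $\aa_1, \aa_2, \aa_3 \in \MM$ pairwise admitting common right multiples~$m_{\ii\jj}$, one must exhibit a triple common right multiple. The key observation is that the left-divisor set of each $m_{\ii\jj}$ is spherical and contains the left-divisor sets of both~$\aa_\ii$ and~$\aa_\jj$; the FC axiom then lifts the resulting pairwise-sphericity of left-descents to the sphericity of their full union. One builds the triple common multiple inductively on $|\aa_1|+|\aa_2|+|\aa_3|$ by extracting a common atomic prefix using the Garside element of this spherical parabolic, verifying that the residues still satisfy the pairwise-common-multiple hypothesis, and invoking the induction hypothesis. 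The left-multiple case follows by the anti-automorphism of~$\MM$ reversing words. The main obstacle is that the naive guess ``$\aa, \bb$ admit a common right multiple iff $\Supp(\aa) \cup \Supp(\bb)$ is spherical'' is \emph{false}: an atom of $\Supp(\aa)$ need not left-divide~$\aa$, so elements can have non-spherical support. The correct invariant is the smaller left-descent set, which does propagate properly along common multiples; the delicate work is in arranging the prefix extraction so that a well-chosen complexity measure strictly decreases while the pairwise hypothesis is genuinely inherited by the residues.
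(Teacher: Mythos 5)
Your first direction (3-Ore implies FC) is correct and is essentially the paper's own argument: both run an induction on $\card\II$ and apply the 3-Ore condition to the triple $\{\ss,\tt,\Delta_{\II\setminus\{\ss,\tt\}}\}$, using the Brieskorn--Saito fact that a set of atoms admitting a common right multiple is spherical.

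The converse direction, however, has a genuine gap: the induction you sketch does not close. If you extract an atom $\ss$ (or a spherical head) from $\aa_1$, the residues you must then handle are not $\aa_2,\aa_3$ themselves but the complements $\bb_\jj$ defined by $\ss\lcm\aa_\jj=\ss\bb_\jj$, and in an Artin--Tits monoid $\bb_\jj$ is typically \emph{longer} than $\aa_\jj$ (already $\ss\lcm\tt=\ss\tt\ss$ gives a complement of length~$2$ for an atom), so the measure $|\aa_1|+|\aa_2|+|\aa_3|$ need not decrease. Worse, to check that $\bb_2$ and $\bb_3$ admit a common right multiple you need a \emph{global} common multiple of $\ss,\aa_2,\aa_3$ --- itself an instance of the 3-Ore condition --- and sphericity of the union of the three left-descent sets does not supply it: knowing that $\Delta_\JJ$ exists for $\JJ$ the union of the descent sets does not give existence of $\aa_\ii\lcm\Delta_\JJ$, since that again demands a common multiple of $\aa_\ii$ with elements built from the other two descent sets. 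The paper resolves exactly this difficulty by first proving a general reduction (Prop.~\ref{P:3OreInd}): the 3-Ore condition holds in all of~$\MM$ as soon as it holds for the right and left \emph{basic} elements, the induction being run on the number of basic factors, a quantity which, unlike atom length, is preserved under taking right complements because the basic elements form an RC-closed family. For an Artin--Tits monoid of type~FC every basic element divides some $\Delta_\II$ with $\II$ spherical, and for such elements the support criterion~\eqref{E:LcmExists} is valid and makes the 3-Ore condition immediate. You correctly observed that the naive support criterion fails for general elements, but the fix is not the left-descent set; it is the restriction to the RC-closed family $\XX=\{\xx\mid\xx\dive\Delta_\II\text{ for some spherical }\II\}$ together with the $\XX$-length as induction measure. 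Without that (or an equivalent device) your ``well-chosen complexity measure'' remains unspecified and the proof is incomplete.
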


The paper is organized as follows. Prerequisites about the enveloping group of a monoid and gcd-monoids are gathered in Section~\ref{S:GcdMon}. Reduction of multifractions is introduced in Section~\ref{S:Red} as a rewrite system, and its basic properties are established. In Section~\ref{S:Confl}, we investigate local confluence of reduction and deduce that, when the $3$-Ore condition is satisfied, reduction is convergent. In Section~\ref{S:AppliConv}, we show that, when reduction is convergent, then all expected consequences follow, in particular Theorem~A. We complete the study in the $3$-Ore case by showing the existence of a universal reduction strategy, implying that of universal shapes for van Kampen diagrams (Section~\ref{S:Univ}). Finally, in Section~\ref{S:AT}, we address the special case of Artin-Tits monoids and establish Theorem~B.

%%%%
\subsection*{Acknowledgments}

The author thanks Pierre-Louis Curien, Jean Fromentin, Volker Gebhardt, Juan Gonz\'alez-Meneses, Vincent Jug\'e, Victoria Lebed, Luis Paris, Friedrich Wehrung, Bertold Wiest, Zerui Zhang, Xiangui Zhao for their friendly listening and their many suggestions.

%%%%
\section{Gcd-monoids}\label{S:GcdMon}

We collect a few basic properties about the enveloping group of a monoid (Subsection~\ref{SS:Multifrac}) and about gcd-monoids, which are monoids, in which the divisibility relations enjoy lattice properties, in particular greatest common divisors (gcds) exist (Subsection~\ref{SS:Div}). Finally, noetherianity properties are addressed in Subsection~\ref{SS:Noeth}. More details can be found in~\cite[Chap.\,II]{Dir}.

%%%%
\subsection{The enveloping group of a monoid}\label{SS:Multifrac}

For every monoid~$\MM$, there exists a group~$\EG\MM$, the \emph{enveloping group} of~$\MM$, unique up to isomorphism, together with a morphism~$\can$ from~$\MM$ to~$\EG\MM$, with the universal property that every morphism of~$\MM$ to a group factors through~$\can$. If $\MM$ admits (as a monoid) a presentation~$\MON\SS\RR$, then $\EG\MM$ admits (as a group) the presentation~$\GR\SS\RR$. 

Our main subject is the connection between~$\MM$ and~$\EG\MM$, specifically the representation of the elements of~$\EG\MM$ in terms of those of~$\MM$. The universal property of~$\EG\MM$ implies that every such element can be expressed as
\begin{equation}\label{E:AltProd}
\iota(\aa_1) \iota(\aa_2)\inv \iota(\aa_3) \iota(\aa_4)\inv \pdots
\end{equation}
with $\aa_1, \aa_2, ...$ in~$\MM$. It will be convenient here to represent such decompositions using (formal) sequences of elements of~$\MM$ and to arrange the latter into a monoid. 

\begin{defi}
If $\MM$ is a monoid, we denote by~$\FR\MM$ the family of all finite sequences of elements of~$\MM$, which we call \emph{multifractions on~$\MM$}. For~$\aav$ in~$\FR\MM$, the length of~$\aav$ is called its \emph{depth}, written~$\dh\aav$. We write $\ef$ for the unique multifraction of depth zero (the empty sequence) and, for every~$\aa$ in~$\MM$, we identify~$\aa$ with the depth one multifraction~$(\aa)$.
\end{defi}

We use $\aav, \bbv, ...$ as generic symbols for multifractions, and denote by~$\aa_\ii$ the $\ii$th entry of~$\aav$ counted from~$1$. A depth~$\nn$ multifraction~$\aav$ has the expanded form $(\aa_1 \wdots \aa_\nn)$. In view of using the latter to represent the alternating product of~\eqref{E:AltProd}, we shall use~$/$ for separating entries, thus writing $\aa_1 \sdots \aa_\nn$ for $(\aa_1 \wdots \aa_\nn)$. This extends the usual convention of representing $\iota(\aa)\iota(\bb)\inv$ by the fraction~$\aa/\bb$; note that, with our convention, the left quotient $\iota(\aa)\inv\iota(\bb)$ corresponds to the depth three multifraction $1/\aa/\bb$. We insist that multifractions live in the monoid~$\MM$, and \emph{not} in the group~$\EG\MM$, in which $\MM$ need not embed.

\begin{defi}
For $\aav, \bbv$ in~$\FR\MM$ with respective depths~$\nn, \pp \ge 1$, we put
\begin{equation*}
\aav \opp \bbv = 
\begin{cases}
\aa_1 \sdots \aa_{\nn} / \bb_1 \sdots \bb_\pp
&\quad\text{if $\nn$ is even,}\\
\aa_1 \sdots \aa_{\nn-1} / \aa_\nn \bb_1 / \bb_2 \sdots \bb_\pp
&\quad\text{if $\nn$ is odd,}
\end{cases}
\end{equation*}
completed with $\ef \opp \aav = \aav \opp \ef = \aav$ for every~$\aav$.
\end{defi}

Thus $\aav \opp \bbv$ is the concatenation of~$\aav$ and~$\bbv$, except that the last entry of~$\aav$ is multiplied by the first entry of~$\bbv$ for $\dh\aav$ odd, \ie, when $\aa_\nn$ corresponds to a positive factor in~\eqref{E:AltProd}. It is easy to check that $\FR\MM$ is a monoid and to realize the group~$\EG\MM$ as a quotient of this monoid: 

\begin{prop}\label{P:EnvGroup}
\ITEM1 The set~$\FR\MM$ equipped with~$\opp$ is a monoid with neutral element~$\ef$. It is generated by the elements~$\aa$ and~$1/\aa$ with~$\aa$ in~$\MM$. The family of all depth~one multifractions is a submonoid isomorphic to~$\MM$.

\ITEM2 Let $\simeq$ be the congruence on~$\FR\MM$ generated by $(1, \ef)$ and the pairs $(\aa / \aa, \ef)$ and $(1 / \aa / \aa, \ef)$ with~$\aa$ in~$\MM$, and, for~$\aav$ in~$\FR\MM$, let~$\can(\aav)$ be the $\simeq$-class of~$\aav$. Then the group~$\EG\MM$ is (isomorphic to) $\FR\MM{/}{\simeq}$ and, for every~$\aav$ in~$\FR\MM$, we have
\begin{equation}\label{E:Eval}
\can(\aav) = \can(\aa_1) \, \can(\aa_2)\inv \, \can(\aa_3) \, \can(\aa_4)\inv \pdots. 
\end{equation}
\end{prop}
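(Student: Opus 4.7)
For part~\ITEM1, the plan is to verify directly from the definition that $\opp$ is associative, which reduces to a short case analysis according to the parities of $\dh\aav$ and $\dh\bbv$: in each of the four cases one expands $(\aav \opp \bbv) \opp \ccv$ and $\aav \opp (\bbv \opp \ccv)$ as concrete sequences and checks that the same pattern of fusion between adjacent entries occurs in both, the only non-formal input being associativity of the product of~$\MM$ at the single entry where a triple fusion may appear. That $\ef$ is neutral is immediate. For the generation claim, induction on~$\nn$ shows that
\[
\aa_1 \sdots \aa_\nn = \aa_1 \opp (1/\aa_2) \opp \aa_3 \opp (1/\aa_4) \opp \cdots,
\]
the inductive step using the even case of~$\opp$ (concatenation) to attach a factor $1/\aa_{2\ii}$ and the odd case (merging a leading~$1$) to attach a factor~$\aa_{2\ii+1}$. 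For the last assertion of~\ITEM1, the product $(\aa) \opp (\bb)$ falls in the odd case and equals~$(\aa\bb)$, so the set of depth-one multifractions is closed under~$\opp$ with neutral element~$(1)$, and $\aa \mapsto (\aa)$ is a monoid isomorphism onto that submonoid.

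For part~\ITEM2, the quotient $\FR\MM/{\simeq}$ automatically inherits a monoid structure since $\simeq$ is by definition a congruence. The first task is to show that this monoid is a group: the relation $(\aa/\aa, \ef)$ yields $(\aa) \opp (1/\aa) \simeq \ef$, and $(1/\aa) \opp (\aa)$, which falls in the even case of~$\opp$ and equals $(1/\aa/\aa)$, is also $\simeq \ef$ by the third defining pair; hence each generator listed in~\ITEM1 admits a two-sided inverse modulo~$\simeq$, and the monoid is a group. To identify $\FR\MM/{\simeq}$ with $\EG\MM$, I plan to verify the universal property directly: the composite $j \colon \MM \to \FR\MM/{\simeq}$ sending $\aa$ to~$[(\aa)]$ is a monoid morphism by the last assertion of~\ITEM1, and any monoid morphism $\phi \colon \MM \to G$ into a group extends to~$\FR\MM$ by
\[
\tilde\phi(\aa_1 \sdots \aa_\nn) = \phi(\aa_1)\phi(\aa_2)\inv \phi(\aa_3) \phi(\aa_4)\inv \cdots,
\]
which is a morphism for~$\opp$ (the fusion in the odd case corresponds exactly to the identity $\phi(\aa_\nn)\phi(\bb_1) = \phi(\aa_\nn\bb_1)$ in~$G$) and factors through~$\simeq$ because each of the three defining pairs maps to a true identity in~$G$. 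The resulting map $\FR\MM/{\simeq} \to G$ is the unique extension of~$\phi$ along~$j$, so $\FR\MM/{\simeq}$ satisfies the universal property of the enveloping group and is canonically isomorphic to~$\EG\MM$; formula~\eqref{E:Eval} is then precisely the image of the generator decomposition of~\ITEM1 under this isomorphism.

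The only step with any real bookkeeping is the associativity of~$\opp$, where the main pitfall is a parity off-by-one; all remaining verifications are formal. In particular, the fact that $\can \colon \MM \to \EG\MM$ need \emph{not} be injective causes no difficulty here, as the argument takes place entirely at the level of formal multifractions and only invokes the universal property of~$\EG\MM$.
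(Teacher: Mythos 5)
Your proposal is correct and follows essentially the same route as the paper: associativity of~$\opp$ by the four-fold parity case analysis, generation via the alternating decomposition, inverses of the generators~$\aa$ and~$1/\aa$ modulo~$\simeq$, and identification of $\FR\MM/{\simeq}$ with~$\EG\MM$ by extending an arbitrary morphism $\phi\colon\MM\to G$ to the alternating product and checking it factors through~$\simeq$. One small slip (of exactly the parity type you warn against): in the inductive step for the generation claim the two clauses of~$\opp$ are swapped --- attaching $1/\aa_{2\ii}$ to the odd-depth prefix $\aa_1 \sdots \aa_{2\ii-1}$ invokes the \emph{odd} case (the last entry absorbs the leading~$1$), while attaching $\aa_{2\ii+1}$ to the even-depth prefix invokes the \emph{even} case (plain concatenation); the resulting formula is nonetheless the correct one and the argument is unaffected.
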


\begin{proof}
\ITEM1 The equality of $(\aav \opp \bbv) \opp \ccv$ and $\aav \opp (\bbv \opp \ccv)$ is obvious when at least one of~$\aav$, $\bbv$, $\ccv$ is empty; otherwise, one considers the four possible cases according to the parities of~$\dh\aav$ and~$\dh\bbv$. That $\FR\MM$ is generated by the elements~$\aa$ and~$1/\aa$ with~$\aa$ in~$\MM$ follows from the equality 
\begin{equation}\label{E:Decomp}
\aa_1 \sdots \aa_\nn = \aa_1 \opp 1/\aa_2 \opp \aa_3 \opp 1/\aa_4 \opp \pdots .
\end{equation}
Finally, by definition, $\aa \opp \bb = \aa\bb$ holds for all~$\aa, \bb$ in~$\MM$. 

\ITEM2 For every~$\aa$ in~$\MM$, we have $\aa \opp 1/\aa = \aa/\aa \simeq \ef$ and $1/\aa \opp \aa = 1/\aa/\aa \simeq \ef$, so $\can(1/\aa)$ is an inverse of~$\can(\aa)$ in~$\FR\MM{/}{\simeq}$. By~\eqref{E:Decomp}, the multifractions~$\aa$ and~$1/\aa$ with~$\aa$ in~$\MM$ generate the monoid~$\FR\MM$, hence $\FR\MM{/}{\simeq}$ is a group. 

As $\simeq$ is a congruence, the map~$\can$ is a homomorphism from~$\FR\MM$ to~$\FR\MM{/}{\simeq}$, and its restriction to~$\MM$ is a homomorphism from~$\MM$ to~$\FR\MM{/}{\simeq}$.

Let $\phi$ be a homomorphism from~$\MM$ to a group~$\GG$. Extend~$\phi$ to~$\FR\MM$ by 
$$\phih(\ef) = 1 \quad \text{and} \quad \phih(\aav) = \phi(\aa_1) \, \phi(\aa_2)\inv \, \phi(\aa_3) \, \phi(\aa_4)\inv \pdots.$$
By the definition of~$\opp$, the map~$\phih$ is a homomorphism from the monoid~$\FR\MM$ to~$\GG$. Moreover, we have $\phih(1) = 1 = \phih(\ef)$ and, for every~$\aa$ in~$\MM$,
\begin{gather*}
\phih(\aa / \aa) = \phi(\aa)\phi(\aa)\inv = 1 = \phih(\ef),\quad
\phih(1 / \aa / \aa) = \phi(1) \phi(\aa)\inv \phi(\aa) = 1 = \phih(\ef).
\end{gather*}
Hence $\aav \simeq \aav'$ implies $\phih(\aav) = \phih(\aav')$, and $\phih$ induces a well-defined homomorphism~$\hat\phi$ from $\FR\MM{/}{\simeq}$ to~$\GG$. Then, for every~$\aa$ in~$\MM$, we find $\phi(\aa) = \phih(\aa) = \hat\phi(\can(\aa))$. Hence, $\phi$ factors through~$\can$. Hence, $\FR\MM{/}{\simeq}$ satisfies the universal property of~$\EG\MM$.

Finally, \eqref{E:Eval} directly follows from~\eqref{E:Decomp}, from the fact that $\can$ is a (monoid) homomorphism, and from the equality $\can(1/\aa) = \can(\aa)\inv$ for~$\aa$ in~$\MM$.
\end{proof}

Hereafter, we identify $\EG\MM$ with $\FR\MM{/}{\simeq}$. One should keep in mind that Prop.\,\ref{P:EnvGroup} remains formal (and essentially trivial) as long as no effective control of~$\simeq$ is obtained. We recall that, in general, $\can$ need not be injective, \ie, the monoid~$\MM$ need not embed in the group~$\EG\MM$.

We now express the word problem for the group~$\EG\MM$ in the language of multifractions. If $\SS$ is a set, we denote by~$\SS^*$ the free monoid of all $\SS$-words, using~$\ew$ for the empty word. To represent group elements, we use words in~$\SS \cup \SSb$, where $\SSb$ is a disjoint copy of~$\SS$ consisting of one letter~$\INV\ss$ for each letter~$\ss$ of~$\SS$, due to represent~$\ss\inv$. The letters of~$\SS$ (\resp $\SSb$) are called \emph{positive} (\resp \emph{negative}). If $\ww$ is a word in~$\SS \cup \SSb$, we denote by~$\INV\ww$ the word obtained from~$\ww$ by exchanging~$\ss$ and~$\INV\ss$ everywhere and reversing the order of letters. 

Assume that $\MM$ is a monoid, and $\SS$ is included in~$\MM$. For $\ww$ a word in~$\SS$, we denote by~$\clp\ww$ the evaluation of~$\ww$ in~$\MM$, \ie, the element of~$\MM$ represented by~$\ww$. Next, for every word~$\ww$ in~$\SS \cup \SSb$, there exists a unique finite sequence $(\ww_1 \wdots \ww_{\nn})$ of words in~$\SS$ satisfying 
\begin{equation}\label{E:CanDec}
\ww = \ww_1 \, \INV{\ww_2}\, \ww_3 \, \INV{\ww_4} \, \pdots
\end{equation} 
with $\ww_\ii \not= \ew$ for $1 < \ii \le \nn$ (as $\ww_1$ occurs positively in~\eqref{E:CanDec}, the decomposition of a negative letter~$\INV\ss$ is $(\ew, \ss)$). We then define $\clp\ww$ to be the multifraction $\clp{\ww_1} \sdots \clp{\ww_{\nn}}$. Then we obtain:

\begin{lemm}\label{L:WP1}
For every monoid~$\MM$ and every generating family~$\SS$ of~$\MM$, a word~$\ww$ in~$\SS \cup \SSb$ represents~$1$ in~$\EG\MM$ if and only if the multifraction $\clp\ww$ satisfies $\clp\ww \simeq 1$ in~$\FR\MM$.
\end{lemm}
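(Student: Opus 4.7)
The plan is to unpack the definitions and reduce everything to a direct application of Proposition~\ref{P:EnvGroup}, in particular the evaluation formula~\eqref{E:Eval}. First, I would recall what it means for a word $\ww \in (\SS \cup \SSb)^*$ to represent an element of~$\EG\MM$: writing $\pi \colon (\SS \cup \SSb)^* \to \EG\MM$ for the unique monoid homomorphism sending each $\ss \in \SS$ to $\can(\ss)$ and each $\INV\ss \in \SSb$ to $\can(\ss)\inv$, the element represented by~$\ww$ is $\pi(\ww)$, and ``$\ww$ represents~$1$ in $\EG\MM$'' means $\pi(\ww) = 1_{\EG\MM}$.

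Next I would use the canonical decomposition~\eqref{E:CanDec}, namely $\ww = \ww_1 \INV{\ww_2} \ww_3 \INV{\ww_4} \pdots$ with $\ww_\ii \in \SS^*$. Since $\pi$ is a monoid homomorphism and coincides on~$\SS^*$ with the evaluation $\ww_\ii \mapsto \can(\clp{\ww_\ii})$, and since $\pi(\INV{\ww_\ii}) = \pi(\ww_\ii)\inv = \can(\clp{\ww_\ii})\inv$, we obtain
\[
\pi(\ww) = \can(\clp{\ww_1}) \, \can(\clp{\ww_2})\inv \, \can(\clp{\ww_3}) \, \can(\clp{\ww_4})\inv \pdots.
\]
By definition, $\clp\ww = \clp{\ww_1} \sdots \clp{\ww_\nn}$, so formula~\eqref{E:Eval} of Proposition~\ref{P:EnvGroup} gives $\pi(\ww) = \can(\clp\ww)$.

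It then remains to translate the condition $\can(\clp\ww) = 1_{\EG\MM}$. Under the identification $\EG\MM = \FR\MM/{\simeq}$, the identity $1_{\EG\MM}$ is the class of the neutral element~$\ef$ of~$\FR\MM$, so $\pi(\ww) = 1$ is equivalent to $\clp\ww \simeq \ef$. Since $(1, \ef)$ is among the generators of the congruence~$\simeq$, we have $1 \simeq \ef$ in~$\FR\MM$, and hence $\clp\ww \simeq \ef$ is equivalent to $\clp\ww \simeq 1$, which is the desired conclusion.

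There is essentially no obstacle: the argument is a short bookkeeping exercise once~\eqref{E:Eval} is available. The only minor point to be careful with is the case where $\ww$ starts with a negative letter (so that $\ww_1 = \ew$ and $\clp{\ww_1} = 1$), and the case~$\ww = \ew$; both fit naturally into the framework since $\clp\ef$ is meaningfully read as~$\ef$ and $\ef \simeq 1$.
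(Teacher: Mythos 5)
Your proof is correct and follows essentially the same route as the paper's: both compute the element of~$\EG\MM$ represented by~$\ww$ via the decomposition~\eqref{E:CanDec} and the evaluation formula~\eqref{E:Eval}, concluding that it equals~$\can(\clp\ww)$ and then translating $\can(\clp\ww)=1$ into $\clp\ww \simeq 1$. The only cosmetic difference is that you phrase ``represents~$1$'' via the homomorphism~$\pi$ on $(\SS\cup\SSb)^*$, whereas the paper phrases it via the word congruence~$\equiv$; these are equivalent.
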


\begin{proof}
For $\ww, \ww'$ in~$\SS^*$, write $\ww\equivp\ww'$ for $\clp\ww = \clp{\ww'}$. Let $\equiv$ be the congruence on the free monoid~$(\SS \cup \SSb)^*$ generated by~$\equivp$ together with the pairs $(\ss\INV\ss, \ew)$ and $(\INV\ss\ss, \ew)$ with $\ss \in \SS$. For~$\ww$ a word in~$\SS \cup \SSb$, let $\cl\ww$ denote the $\equiv$-class of~$\ww$. By definition, $\ww$ represents~$1$ in~$\EG\MM$ if and only if $\ww \equiv \ew$ holds. Now, let $\ww$ be an arbitrary word in~$\SS \cup \SSb$, and let $(\ww_1 \wdots \ww_{\nn})$ be its decomposition~\eqref{E:CanDec}. Then, in the group~$\EG\MM$, we find
\begin{align*}
\cl\ww &= \cl{\ww_1} \, \cl{\ww_2}\inv \, \cl{\ww_3} \, \cl{\ww_4}\inv \, \, \pdots
&&\text{by~\eqref{E:CanDec}}\\
&= \can(\clp{\ww_1}) \, \can(\clp{\ww_2})\inv \, \can(\clp{\ww_3}) \, \can(\clp{\ww_4})\inv \, \pdots
&&\text{by definition of~$\can$}\\
&= \can(\clp{\ww_1} / \clp{\ww_2} / \clp{\ww_3} / \pdots )
&&\text{by~\eqref{E:Eval}}\\
&= \can(\clp\ww)
&&\text{by definition of~$\clp\ww$.}
\end{align*}
Hence $\ww \equiv \ew$, \ie, $\cl\ww = 1$, is equivalent to $\can(\clp\ww) = 1$, hence to $\clp\ww \simeq 1$ by Prop.~\ref{P:EnvGroup}\ITEM2.
\end{proof}

Thus solving the word problem for the group~$\EG\MM$ with respect to the generating set~$\SS$ amounts to deciding the relation $\clp\ww \simeq 1$, which takes place inside the ground monoid~$\MM$.

%%%%
\subsection{Gcd-monoids}\label{SS:Div}

The natural framework for our approach is the class of gcd-monoids. Their properties are directly reminiscent of the standard properties of the gcd and lcm operations for natural numbers, with the difference that, because we work in general with non-commutative monoids, all notions come in a left and a right version.

We begin with the divisibility relation(s), which play a crucial role in the sequel.

\begin{defi}\label{D:Div}
If $\MM$ is a monoid and $\aa, \bb$ lie in~$\MM$, we say that $\aa$ is a \emph{left divisor} of~$\bb$ or, equivalently, that $\bb$ is a \emph{right multiple} of~$\aa$, written $\aa \dive \bb$, if $\aa\xx = \bb$ holds for some~$\xx$ in~$\MM$. 
\end{defi}

The relation~$\dive$ is reflexive and transitive. If $\MM$ is left cancellative, \ie, if $\aa\xx = \aa\yy$ implies $\xx = \yy$, the conjunction of $\aa \dive \bb$ and $\bb \dive \aa$ is equivalent to $\bb = \aa\xx$ with~$\xx$ invertible. Hence, if $1$ is the only invertible element in~$\MM$, the relation~$\dive$ is a partial ordering. When they exist, a least upper bound and a greatest lower bound with respect to~$\dive$ are called a \emph{least common right multiple}, or \emph{right lcm}, and a \emph{greatest common left divisor}, or \emph{left gcd}. If $1$ is the only invertible element of~$\MM$, the right lcm and the left gcd~$\aa$ and $\bb$ are unique when they exist, and we denote them by~$\aa \lcm \bb$ and~$\aa \gcd \bb$, respectively.

Left-divisibility admits a symmetric counterpart, with left multiplication replacing right multiplication. We say that $\aa$ is a \emph{right divisor} of~$\bb$ or, equivalently, that $\bb$ is a \emph{left multiple} of~$\aa$, denoted $\aa \divet \bb$, if $\bb = \xx \aa$ holds for some~$\xx$. We then have the derived notions of a left lcm and a right gcd, denoted~$\lcmt$ and~$\gcdt$ when they are unique. 

\begin{defi}\label{D:GcdMon}
A \emph{gcd-monoid} is a cancellative monoid with no nontrivial invertible element, in which any two elements admit a left gcd and a right gcd. 
\end{defi}

\begin{exam}\label{X:GcdMon}
Every Artin-Tits monoid is a gcd-monoid: the non-existence of nontrivial invertible elements follows from the homogeneity of the relations, whereas cancellativity and existence of gcds (the latter amounts to the existence of lower bounds for the weak order of the corresponding Coxeter group) have been proved in~\cite{BrS} and~\cite{Dlg}.

A number of further examples are known. Every Garside monoid is a gcd-monoid, but gcd-monoids are (much) more general: typically, every monoid defined by a presentation $\MON\SS\RR$ where $\RR$ contains at most one relation $\ss... = \tt...$ and at most one relation $...\ss = ...\tt$ for all~$\ss, \tt$ in~$\SS$ and satisfying the left and right cube conditions of~\cite{Dgp} and~\cite[Sec.\,II.4]{Dir} is a gcd-monoid. By~\cite[Sec.\,IX.1.2]{Dir}, this applies for instance to every Baumslag--Solitar monoid $\MON{\tta, \ttb}{\tta^\pp\ttb^\qq = \ttb^{\qq'}\tta^{\pp'}}$.
\end{exam}

The partial operations~$\lcm$ and~$\gcd$ obey various laws that we do not recall here. We shall need the following formula connecting right lcm and multiplication:

\begin{lemm}\label{L:IterLcm}
If $\MM$ is a gcd-monoid, then, for all~$\aa, \bb, \cc$ in~$\MM$, the right lcm $\aa \lcm \bb\cc$ exists if and only if $\aa \lcm \bb$ and $\aa' \lcm \cc$ exist, where~$\aa'$ is defined by $\aa \lcm \bb = \bb \aa'$, and then we have
\begin{equation}\label{E:IterLcm}
\aa \lcm \bb\cc = \aa \opp \bb'\cc' = \bb\cc \opp \aa''.
\end{equation}
with $\aa \lcm \bb = \bb \aa' = \aa \bb'$ and $\aa' \lcm \cc = \aa' \cc' = \cc \aa''$. 
\end{lemm}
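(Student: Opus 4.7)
The plan is to handle the biconditional in two directions, with the orienting observation that once $\aa\lcm\bb$ and $\aa'\lcm\cc$ exist and one writes $\aa\bb' = \bb\aa'$ and $\aa'\cc' = \cc\aa''$, associativity forces the element $\aa\bb'\cc' = \bb\aa'\cc' = \bb\cc\aa''$ to be a common right multiple of $\aa$ and $\bb\cc$. The substantive content is therefore to check that this element is globally minimal and, conversely, to recover existence of the smaller lcms from the existence of $\aa\lcm\bb\cc$.

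For sufficiency, I would pick an arbitrary common right multiple $\ee = \aa\uu = \bb\cc\vv$ of $\aa$ and $\bb\cc$ and chase divisibility in two steps. First, since $\bb \dive \bb\cc \dive \ee$, the element $\ee$ is already a common right multiple of $\aa$ and $\bb$, so $\aa\lcm\bb = \bb\aa'$ left-divides it; writing $\ee = \bb\aa'\ww$ and equating with $\bb\cc\vv$, left-cancelling $\bb$ gives $\aa'\ww = \cc\vv$. Hence $\aa'\ww$ is a common right multiple of $\aa'$ and $\cc$, so $\aa'\lcm\cc = \aa'\cc'$ left-divides it; left-cancelling $\aa'$ produces $\cc' \dive \ww$, and substitution yields $\aa\bb'\cc' \dive \ee$, establishing both the existence of $\aa\lcm\bb\cc$ and the formula~\eqref{E:IterLcm}.

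For necessity, I would start from $\mm = \aa\lcm\bb\cc = \aa\uu = \bb\cc\vv$ and observe that $\mm$ is itself a common right multiple of $\aa$ and $\bb$, since $\bb \dive \bb\cc \dive \mm$. Invoking the standard lattice principle for gcd-monoids -- that existence of a common right multiple forces existence of the right lcm -- gives $\aa\lcm\bb = \bb\aa'$ with $\bb\aa' \dive \mm$; a single left cancellation of $\bb$ in $\bb\aa'\xx = \bb\cc\vv$ produces $\aa'\xx = \cc\vv$, exhibiting a common right multiple of $\aa'$ and $\cc$, and the same principle yields $\aa'\lcm\cc$. Applying the sufficiency direction to the data just produced, together with uniqueness of right lcms, then identifies the candidate $\aa\bb'\cc' = \bb\cc\aa''$ with $\mm$ and delivers~\eqref{E:IterLcm}.

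The step I expect to be most delicate is the appeal to ``common right multiple implies existence of a right lcm'' in the necessity direction; this is a standard but non-trivial property of gcd-monoids, proved by taking any common multiple $\dd = \aa\pp = \bb\qq$, setting $\rr = \pp \gcdt \qq$, writing $\pp = \pp_0\rr$ and $\qq = \qq_0\rr$, and using right cancellation to obtain the candidate $\aa\pp_0 = \bb\qq_0$, whose global minimality is where both the gcd axiom and cancellativity are genuinely used. Once this principle is admitted, the rest of the argument reduces to the short chains of left cancellations above.
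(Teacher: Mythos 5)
Your argument is correct. The paper itself gives no proof of this lemma (it is explicitly skipped, with a pointer to \cite[Prop.~II.2.12]{Dir} and the accompanying diagram), so there is nothing to compare line by line; your divisibility chase is exactly the standard verification that the reference contains. Both directions are sound: the candidate $\aa\bb'\cc' = \bb\aa'\cc' = \bb\cc\aa''$ is a common right multiple by associativity, your two-step cancellation argument ($\ee = \bb\aa'\ww = \bb\cc\vv$, cancel $\bb$, then cancel $\aa'$) shows it left-divides every common right multiple of $\aa$ and $\bb\cc$, and the converse direction correctly recovers the two smaller lcms from $\aa \lcm \bb\cc$. One remark on logical ordering: the principle you invoke in the necessity direction --- that a common right multiple forces the existence of a right lcm --- is the paper's Lemma~\ref{L:CondLcm}, which is \emph{stated after} Lemma~\ref{L:IterLcm}; this is not a circularity, since the proofs of Lemmas~\ref{L:Lcm} and~\ref{L:CondLcm} make no use of Lemma~\ref{L:IterLcm}, and the gcd-plus-cancellation argument you sketch for this principle at the end is essentially the paper's own proof of it. You rightly identify that step as the only place where the gcd axiom is genuinely needed; everything else is left cancellation and the universal property of the lcm.
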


\hangindent=40mm\hangafter=3
We skip the easy verification, see for instance~\cite[Prop.~II.2.12]{Dir}. To prove and remember formulas like~\eqref{E:IterLcm}, it may be useful to draw diagrams and associate with every element~$\aa$ of the monoid~$\MM$ a labeled edge \begin{picture}(12,2)(0,0)\pcline{->}(1,0)(11,0)\taput{$\aa$}\end{picture}. Concatenation of edges is read as a product in~$\MM$ (which amounts to viewing~$\MM$ as a category), and equalities then correspond to commutative diagrams. With such conventions, \eqref{E:IterLcm} can be read in the diagram on the left.\ \hfill
\begin{picture}(0,0)(142,0)
\psset{nodesep=0.7mm}
\psset{yunit=0.8mm}
\pcline{->}(0,10)(15,10)\taput{$\bb$}
\pcline{->}(15,10)(30,10)\taput{$\cc$}
\pcline{->}(0,10)(0,0)\tlput{$\aa$}
\pcline{->}(15,10)(15,0)\trput{$\aa'$}
\pcline{->}(30,10)(30,0)\trput{$\aa''$}
\pcline{->}(0,0)(15,0)\taput{$\bb'$}
\pcline{->}(15,0)(30,0)\taput{$\cc'$}
%\psarc[style=thin](15,0){3.5}{90}{180}
%\psarc[style=thinexist](0,12){3.5}{270}{360}
\end{picture}

\begin{lemm}\label{L:Lcm}
If $\MM$ is a gcd-monoid and $\aa\dd = \bb\cc$ holds, then $\aa\dd$ is the right lcm of~$\aa$ and~$\bb$ if and only if $1$ is the right gcd of~$\cc$ and~$\dd$.
\end{lemm}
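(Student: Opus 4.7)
The two directions both reduce to careful bookkeeping with cancellativity, combined with the defining universal properties of the right lcm $\lcm$ and the right gcd $\gcdt$.

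\emph{Direction} $(\Rightarrow)$. I would take any common right divisor $\ee$ of $\cc$ and $\dd$, write $\cc = \cc_0\ee$ and $\dd = \dd_0\ee$, substitute into $\aa\dd = \bb\cc$, and right-cancel $\ee$ to obtain $\aa\dd_0 = \bb\cc_0$, a new common right multiple of $\aa$ and $\bb$. The right-lcm hypothesis then forces $\aa\dd \dive \aa\dd_0$, say $\aa\dd_0 = \aa\dd\ff$. Left-cancelling $\aa$ and then $\dd_0$ in $\dd_0 = \dd\ff = \dd_0\ee\ff$ produces $\ee\ff = 1$, so $\ee$ is invertible, and therefore $\ee = 1$ since $\MM$ has no non-trivial units. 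Hence $\cc\gcdt\dd = 1$.

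\emph{Direction} $(\Leftarrow)$. Take any common right multiple $\mm = \aa\uu = \bb\vv$; it suffices to prove $\dd \dive \uu$, because then $\aa\dd \dive \aa\uu = \mm$. The plan is to set $\gg := \uu \gcd \dd$ and show that the complementary factor $\dd^*$ in $\dd = \gg\dd^*$ is trivial. Since both $\aa\uu$ and $\aa\dd$ are common right multiples of $\aa$ and $\bb$, the element $\bb$ left-divides their left gcd. Here I would invoke the identity $\aa\uu \gcd \aa\dd = \aa(\uu\gcd\dd)$, valid in every gcd-monoid: $\aa$ itself is a common left divisor of $\aa\uu$ and $\aa\dd$, hence divides the left gcd, and left cancellativity identifies the remaining quotient with $\uu\gcd\dd$. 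Writing $\aa\gg = \bb\ww$ and comparing with $\aa\dd = \bb\cc$, one gets $\bb\ww\dd^* = \bb\cc$, and left cancellation by $\bb$ gives $\cc = \ww\dd^*$. Thus $\dd^*$ right-divides both $\cc$ and $\dd$, so $\dd^* = 1$ by hypothesis, as desired.

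\emph{Where the work sits.} The only non-routine ingredient is the distributivity identity $\aa\uu \gcd \aa\dd = \aa(\uu\gcd\dd)$; everything else is straightforward manipulation via cancellativity. Since this identity is an immediate consequence of cancellativity and of the universal property of the left gcd, I do not foresee a genuine obstacle — the proof is essentially rectangle-chasing, with the gcd-monoid axioms pulling all the weight.
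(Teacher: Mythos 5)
Your proof is correct and follows essentially the same route as the paper: the forward direction is the identical cancellation argument, and in the converse your identity $\aa\uu \gcd \aa\dd = \aa(\uu \gcd \dd)$ is exactly what the paper establishes inline when it sets $\ee = \aa\dd \gcd \aa\dd'$ and writes $\ee = \aa\dd''$ with $\dd'' \dive \dd$ and $\dd'' \dive \dd'$. The only cosmetic difference is that you isolate that distributivity step as a named identity and conclude $\dd \dive \uu$ directly from $\dd^* = 1$, whereas the paper tracks the invertible cofactor explicitly.
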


\begin{proof}
Assume $\aa\dd = \bb\cc = \aa \lcm \bb$. Let $\xx$ right divide~$\cc$ and~$\dd$, say $\cc = \cc' \xx$ and $\dd = \dd' \xx$. Then $\aa\dd = \bb\cc$ implies $\aa\dd'\xx = \bb\cc'\xx$, whence $\aa\dd' = \bb\cc'$. By definition of the right lcm, this implies $\aa\dd \dive \aa\dd'$, whence $\dd' = \dd \xx'$ for some~$\xx'$ by left cancelling~$\aa$. We deduce $\dd = \dd \xx' \xx$, whence $\xx' \xx = 1$. Hence $\cc$ and~$\dd$ admit no non-invertible common right divisor, whence $\cc \gcdt \dd = 1$.

Conversely, assume $\aa\dd = \bb\cc$ with $\cc \gcdt \dd = 1$. Let $\aa\dd' = \bb\cc'$ be a common right multiple of~$\aa$ and~$\bb$, and let $\ee = \aa\dd \gcd \aa\dd'$. We have $\aa \dive \aa\dd$ and $\aa \dive \aa\dd'$, whence $\aa \dive \ee$, say $\ee = \aa \dd''$. Then $\aa\dd'' \dive \aa\dd$ implies $\dd'' \dive \dd$, say $\dd = \dd'' \xx$, and similarly $\aa\dd'' \dive \aa\dd'$ implies $ \dd'' \dive \dd'$, say $\dd' = \dd'' \xx'$. Symmetrically, from $\aa\dd = \bb\cc$ and $\aa\dd' = \bb\cc'$, we deduce $\bb \dive \aa\dd$ and $\bb \dive \aa\dd'$, whence $\bb \dive \ee$, say $\ee = \bb \cc''$. Then we find $\bb\cc = \aa \dd = \aa \dd'' \xx = \bb\cc'' \xx$, whence $\cc = \cc'' \xx$ and $\dd = \dd'' \xx$. Thus, $\xx$ is a common right divisor of~$\cc$ and~$\dd$. Hence, by assumption, $\xx$ is invertible. Similarly, we find $\bb\cc' = \aa\dd' = \aa\dd'' \xx' = \bb\cc'' \xx'$, whence $\cc' = \cc'' \xx'$, and, finally, $\aa\dd' = \aa\dd''\xx' = \aa\dd \xx\inv\xx'$, whence $\aa\dd \dive \aa\dd'$. Hence $\aa\dd$ is a right lcm of~$\aa$ and~$\bb$.
\end{proof}

\begin{lemm}\label{L:CondLcm}
If $\MM$ is a gcd-monoid, then any two elements of~$\MM$ admitting a common right multiple $($\resp left multiple$)$ admit a right lcm $($\resp left lcm$)$.
\end{lemm}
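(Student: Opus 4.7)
Starting from two elements $\aa,\bb$ of $\MM$ that admit a common right multiple $\mm$, I would produce the right lcm in essentially one move, using Lemma~\ref{L:Lcm} as the finishing tool. Write $\mm = \aa\xx = \bb\yy$. Because $\MM$ is a gcd-monoid, the \emph{right} gcd $\dd = \xx \gcdt \yy$ exists, so we can factor $\xx = \xx'\dd$ and $\yy = \yy'\dd$ for some $\xx',\yy' \in \MM$. Substituting and right-cancelling $\dd$ yields
\[
\aa\xx' = \bb\yy',
\]
so $\aa\xx'$ is still a common right multiple of $\aa$ and $\bb$.

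The key point is that $\xx'$ and $\yy'$ now have trivial right gcd: if some $\ee$ right-divided both $\xx'$ and $\yy'$, say $\xx' = \xx''\ee$ and $\yy' = \yy''\ee$, then $\ee\dd$ would right-divide both $\xx$ and $\yy$; by maximality of $\dd = \xx\gcdt\yy$, we would get $\ee\dd \multet \dd$, whence $\ee$ would be left-invertible, hence equal to $1$ since $\MM$ has no nontrivial invertible elements. So $\xx' \gcdt \yy' = 1$, and Lemma~\ref{L:Lcm} applied to the equality $\aa\xx' = \bb\yy'$ (with $\cc = \yy'$, $\dd = \xx'$) immediately gives that $\aa\xx'$ is the right lcm of $\aa$ and $\bb$.

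The left-multiple statement is entirely symmetric: starting from $\mm = \xx\aa = \yy\bb$, take the left gcd of $\xx$ and $\yy$, cancel on the left, and invoke the left-right mirror of Lemma~\ref{L:Lcm} (which holds since all hypotheses on $\MM$ are left-right symmetric).

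I do not foresee a real obstacle here; the only thing to be careful about is the direction of the gcd used (right gcd of the ``complementary'' factors $\xx,\yy$, not of $\aa,\bb$ themselves) and the verification that this gcd reduces to $1$ after factoring out, but both are immediate from the maximality of the gcd and the absence of nontrivial invertible elements.
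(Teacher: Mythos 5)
Your proof is correct and follows essentially the same route as the paper: factor out the right gcd of the complementary factors, cancel, observe that the resulting complements are right-coprime, and invoke Lemma~\ref{L:Lcm}. You even supply the small verification (that $\xx'\gcdt\yy'=1$ follows from maximality of the gcd and the absence of nontrivial invertibles) that the paper leaves implicit; the only quibble is the direction of the divisibility symbol in ``$\ee\dd \multet \dd$'', which should read that $\ee\dd$ right divides $\dd$.
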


\begin{proof}
Assume that $\aa$ and~$\bb$ admit a common right multiple, say $\aa\dd = \bb\cc$. Let $\ee = \cc \gcdt \dd$. Write $\cc = \cc' \ee$ and $\dd = \dd' \ee$. As $\MM$ is right cancellative, $\aa\dd = \bb\cc$ implies $\aa\dd' = \bb\cc'$, and $\ee = \cc \gcdt \dd$ implies $\cc' \gcdt \dd' = 1$. Then Lemma~\ref{L:Lcm} implies that $\aa\dd'$ is a right lcm of~$\aa$ and~$\bb$. The argument for the left lcm is symmetric.
\end{proof}

When the conclusion of Lemma~\ref{L:CondLcm} is satisfied, the monoid~$\MM$ is said to \emph{admit conditional lcms}. Thus every gcd-monoid admits conditional lcms. 

\begin{rema}
Requiring the absence of nontrivial invertible elements is not essential for the subsequent developments. Using techniques from~\cite{Dir}, one could drop this assumption and adapt everything. However, it is more pleasant to have unique gcds and lcms, which is always the case for the monoids we are mainly interested in. 
\end{rema}

%%%%
\subsection{Noetherianity}\label{SS:Noeth}

If $\MM$ is a monoid, we shall denote by~$\MMinv$ the set of invertible elements of~$\MM$ (a subgroup of~$\MM$). We use $\div$ for the \emph{proper} left divisibility relation, where $\aa \div \bb$ means $\aa\xx = \bb$ for some non-invertible~$\xx$, hence for~$\xx \not= 1$ if $\MM$ is assumed to admit no nontrivial invertible element. Symmetrically, we use~$\divt$ for the proper right divisibility relation.

\begin{defi}
A monoid~$\MM$ is called \emph{noetherian} if $\div$ and~$\divt$ are well-founded, meaning that every nonempty subset of~$\MM$ admits a $\div$-minimal element and a $\divt$-minimal element.
\end{defi} 

A monoid~$\MM$ is noetherian if and only if $\MM$ contains no infinite descending sequence with respect to~$\div$ or~$\divt$. It is well known that well-foundedness is characterized by the existence of a map to Cantor's ordinal numbers~\cite{Lev}. In the case of a monoid, the criterion can be stated as follows:

\begin{lemm}\label{L:Wit}
If $\MM$ is a cancellative monoid, the following are equivalent:

\ITEM1 The proper left divisibility relation~$\div$ of~$\MM$ is well-founded.

\ITEM2 There exists a map~$\wit$ from~$\MM$ to ordinal numbers such that, for all~$\aa, \bb$ in~$\MM$, the relation 
$\aa \div \bb$ implies $\wit(\aa) < \wit(\bb)$.

\ITEM3 There exists a map~$\wit$ from~$\MM$ to ordinal numbers satisfying, for all~$\aa, \bb$ in~$\MM$,
\begin{equation}\label{E:Wit}
\wit(\aa\bb) \ge \wit(\aa) + \wit(\bb), \quad \text{and}\quad \wit(\aa) > 0 \text{\ for $\aa \notin \MMinv$}.
\end{equation}
\end{lemm}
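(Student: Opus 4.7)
The plan is to prove the cycle (iii) $\Rightarrow$ (ii) $\Rightarrow$ (i) $\Rightarrow$ (iii).

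The implication (iii) $\Rightarrow$ (ii) is immediate: if $\aa \div \bb$, then $\bb = \aa\xx$ with $\xx \notin \MMinv$, whence $\wit(\bb) \ge \wit(\aa) + \wit(\xx) > \wit(\aa)$ by \eqref{E:Wit}. For (ii) $\Rightarrow$ (i), an infinite $\div$-descending sequence $\bb_0, \bb_1, \ldots$ in $\MM$ (with $\bb_{\ii+1} \div \bb_\ii$) would produce via $\wit$ an infinite strictly decreasing sequence of ordinals, which is impossible; equivalently, every nonempty subset of $\MM$ has a $\div$-minimal element, as required.

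For the core implication (i) $\Rightarrow$ (iii), I would define $\wit \colon \MM \to \mathrm{Ord}$ by well-founded recursion on the relation $\div$, setting
\[
\wit(\aa) = \sup\{\wit(\bb) + 1 \;:\; \bb \div \aa\}.
\]
The first task is to verify $\wit(1) = 0$ and, more generally, $\wit(\uu) = 0$ for every $\uu \in \MMinv$. For this one shows that no $\bb$ satisfies $\bb \div \uu$: if $\uu = \bb\xx$ with $\uu$ invertible, then $\uu\inv\bb\xx = 1$ gives $\uu\inv\bb$ a right inverse, hence (by cancellativity) makes it, and then $\bb$ and $\xx$, invertible; the supremum over an empty set is~$0$. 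Conversely, if $\aa \notin \MMinv$, then $1 \div \aa$, so $\wit(\aa) \ge \wit(1) + 1 = 1 > 0$, which gives the second clause of \eqref{E:Wit}.

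The main work is the subadditivity inequality $\wit(\aa\bb) \ge \wit(\aa) + \wit(\bb)$, which I would prove by transfinite induction on~$\bb$ with respect to~$\div$, for fixed~$\aa$. The inductive step uses that $\bb' \div \bb$, say $\bb = \bb'\xx$ with $\xx \notin \MMinv$, yields $\aa\bb = \aa\bb' \cdot \xx$ with $\xx$ still non-invertible, so $\aa\bb' \div \aa\bb$ and therefore $\wit(\aa\bb) \ge \wit(\aa\bb') + 1 \ge \wit(\aa) + \wit(\bb') + 1$ by the induction hypothesis. Taking the supremum over $\bb' \div \bb$ and using the right-continuity of ordinal addition gives the bound in the case $\bb \notin \MMinv$. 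The residual case $\bb \in \MMinv$, where $\wit(\bb) = 0$, is handled separately by noting that every $\cc \div \aa$ also satisfies $\cc \div \aa\bb$ (one checks as above that if $\aa = \cc\xx$ with $\xx$ non-invertible then $\xx\bb$ remains non-invertible), which forces $\wit(\aa\bb) \ge \wit(\aa)$.

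The only mildly delicate point is the bookkeeping of ordinal arithmetic in the inductive step — specifically that $\alpha + \sup_i \beta_i = \sup_i(\alpha + \beta_i)$ requires a nonempty index set, which is precisely why the invertible-$\bb$ case must be peeled off and handled by the divisor-preservation argument. Once that is in place, all three conditions follow.
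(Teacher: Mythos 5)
Your proof is correct. The paper itself omits the argument (``We skip the proof, which is essentially standard''), and what you give is exactly the standard argument it alludes to: the rank function $\wit(\aa) = \sup\{\wit(\bb)+1 : \bb \div \aa\}$ for (i)$\Rightarrow$(iii), with the two genuinely necessary uses of cancellativity correctly identified (one-sided inverses are two-sided, so a product with a non-invertible factor stays non-invertible, and invertible elements have no proper divisors), and with the empty-supremum issue in $\alpha + \sup_i\beta_i = \sup_i(\alpha+\beta_i)$ properly peeled off into the invertible-$\bb$ case.
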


We skip the proof, which is essentially standard. A symmetric criterion holds for the well-foundedness of ~$\divt$, with \eqref{E:Wit} replaced by
\begin{equation}\label{E:Witt}
\wit(\aa\bb) \ge \wit(\bb) + \wit(\aa), \quad \text{and}\quad \wit(\aa) > 0 \text{\ for $\aa \notin \MMinv$}.
\end{equation}
%the left relation being established using induction on~$\wit(\aa)$ and the final inequality
%$\wit(\aa\bb) \ge \sup\{\wit(\bb) + \wit(\aa') + 1 \mid \aa' \divt \aa\} = \wit(\bb) + \sup\{\wit(\aa') + 1 \mid \aa' \divt \aa\} = \wit(\bb) + \wit(\aa)$.
We shall subsequently need a well-foundedness result for the transitive closure of left and right divisibility (``factor'' relation), a priori stronger than noetherianity.

\begin{lemm}\label{L:Factor} 
If $\MM$ is a cancellative monoid, then the \emph{factor} relation defined by
\begin{equation}\label{E:Factor}
\aa \fac \bb \quad \Leftrightarrow \quad \exists\xx, \yy\, (\,\xx\aa\yy = \bb\ \text{and at least one of $\xx, \yy$ is not invertible} \, ).
\end{equation}
is well-founded if and only if $\MM$ is noetherian.
\end{lemm}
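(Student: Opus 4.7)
The direction $(\Leftarrow)$ is immediate: if $\aa \div \bb$, then $\bb = 1 \cdot \aa \cdot \xx$ with a non-invertible~$\xx$, so $\aa \fac \bb$; symmetrically for~$\divt$. Hence $\div \cup \divt \subseteq \fac$, and well-foundedness of~$\fac$ forces well-foundedness of both.

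For $(\Rightarrow)$, my plan is to convert noetherianity into a single ordinal-valued rank that strictly increases along~$\fac$. Applying Lemma~\ref{L:Wit}\ITEM3 and its right-sided analogue, I obtain maps $\wit_L, \wit_R \colon \MM \to \mathrm{Ord}$ satisfying \eqref{E:Wit} and \eqref{E:Witt} respectively. I will then combine them by setting
\[
\wit(\aa) := \wit_L(\aa) \oplus \wit_R(\aa),
\]
where $\oplus$ denotes the natural (Hessenberg) sum, which is strictly monotone in each argument. Well-foundedness of~$\fac$ will follow from the strict monotonicity $\aa \fac \bb \Rightarrow \wit(\aa) < \wit(\bb)$, transported back from the well-foundedness of the ordinals.

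To verify that monotonicity, write $\bb = \xx\aa\yy$ with at least one of $\xx,\yy$ non-invertible. Iterating \eqref{E:Wit} and \eqref{E:Witt} I get
\[
\wit_L(\bb) \ge \wit_L(\xx) + \wit_L(\aa) + \wit_L(\yy),\qquad \wit_R(\bb) \ge \wit_R(\yy) + \wit_R(\aa) + \wit_R(\xx),
\]
so in particular $\wit_L(\bb) \ge \wit_L(\aa)$ and $\wit_R(\bb) \ge \wit_R(\aa)$. I then split cases. If $\yy \notin \MMinv$, then $\wit_L(\yy) > 0$ and strict monotonicity of ordinal addition in its \emph{second} argument gives $\wit_L(\bb) > \wit_L(\aa)$. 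Symmetrically, if $\xx \notin \MMinv$, then $\wit_R(\xx) > 0$ gives $\wit_R(\bb) > \wit_R(\aa)$. Combined with the opposite non-strict inequality, strict monotonicity of~$\oplus$ then yields $\wit(\bb) > \wit(\aa)$.

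The main subtlety — and the reason two witnesses are needed rather than one — is that ordinal addition is \emph{not} strictly monotone on the left ($1 + \omega = \omega$), so the extra mass contributed by~$\xx$ on the left of $\wit_L(\xx) + \wit_L(\aa) + \wit_L(\yy)$ may simply be absorbed, and symmetrically for~$\yy$ in the $\wit_R$-inequality. The case split is designed precisely so that whichever of $\xx,\yy$ is non-invertible contributes its strict mass on the \emph{right} of the relevant sum, where strict monotonicity of ordinal addition is automatic.
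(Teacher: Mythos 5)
Your proof is correct, but the second half takes a genuinely different route from the paper's. For the converse direction the paper argues directly on an infinite $\fac$-descending sequence: writing $\aa_\ii = \xx_\ii \aa_{\ii+1} \yy_\ii$, it forms the products $\bb_\ii = \xx_1 \pdots \xx_{\ii-1}\aa_\ii$, uses well-foundedness of~$\div$ to conclude that the~$\yy_\ii$ are eventually invertible (hence the~$\xx_\ii$ eventually are not), and then the products $\cc_\ii = \aa_\ii \yy_{\ii-1} \pdots \yy_1$ yield an infinite $\divt$-descending sequence, a contradiction. You instead invoke Lemma~\ref{L:Wit}\ITEM3 and its symmetric version to extract two ordinal witnesses $\wit_L, \wit_R$ satisfying \eqref{E:Wit} and~\eqref{E:Witt}, and combine them by the Hessenberg sum into a single rank that strictly increases along~$\fac$; your case split correctly exploits the fact that ordinal addition is strictly monotone only in its second argument, and the verification that $\aa \fac \bb$ forces $\wit_L(\aa) < \wit_L(\bb)$ or $\wit_R(\aa) < \wit_R(\bb)$ (with the other inequality non-strict) is sound, so strict monotonicity of~$\oplus$ in each argument finishes the job. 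The trade-off: the paper's argument is self-contained and elementary (it only needs cancellativity to make the $\yy_\ii$ well defined), whereas yours leans on the implication \ITEM1$\Rightarrow$\ITEM3 of Lemma~\ref{L:Wit}, whose proof the paper skips, and on properties of the natural sum; in exchange you obtain something slightly stronger, namely an explicit ordinal rank witnessing the well-foundedness of~$\fac$, in the spirit of Lemma~\ref{L:Wit}\ITEM2.
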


\begin{proof}
Both $\div$ and $\divt$ are included in~$\fac$, so the assumption that $\fac$ is well-founded implies that both $\div$ and $\divt$ are well-founded, hence that $\MM$ is noetherian.

Conversely, assume that $\aa_1 \antifac \aa_2 \antifac \pdots$ is an infinite descending sequence with respect to~$\fac$ and that $\div$ is well-founded. For each~$\ii$, write $\aa_\ii = \xx_\ii \aa_{\ii+1} \yy_\ii$ with $\xx_\ii$ and $\yy_\ii$ not both invertible. Let $\bb_\ii = \xx_1 \pdots \xx_{\ii-1} \aa_\ii$. Then, for every~$\ii$, we have $\bb_\ii = \bb_{\ii+1} \yy_\ii$, whence $\bb_{\ii+1} \dive \bb_\ii$. The assumption that $\div$ is well-founded implies the existence of~$\nn$ such that $\yy_\ii$ (which is well-defined, since $\MM$ is left cancellative) is invertible for~$\ii \ge \nn$. Hence $\xx_\ii$ is not invertible for~$\ii \ge \nn$. Let $\cc_\ii = \aa_\ii \yy_{\ii-1} \pdots \yy_1$. Then we have $\cc_\ii = \xx_\ii \cc_{\ii+1}$ for every~$\ii$. Hence $\cc_{\nn} \multt \cc_{\nn+1} \multt \pdots$ is an infinite descending sequence with respect to~$\divt$. Hence $\divt$ cannot be well-founded, and $\MM$ is not noetherian.
\end{proof}
 
 A stronger variant of noetherianity is often satisfied (typically by Artin-Tits monoids).

\begin{defi}\label{D:StrNoeth}
A cancellative monoid~$\MM$ is called \emph{strongly noetherian} if there exists a map~$\wit : \MM \to \NNNN$ satisfying, for all~$\aa, \bb$ in~$\MM$,
\begin{equation}\label{E:StrWit}
\wit(\aa\bb) \ge \wit(\aa) + \wit(\bb), \quad \text{and}\quad \wit(\aa) > 0 \text{\ for $\aa \notin \MMinv$}.
\end{equation}
\end{defi}

As $\NNNN$ is included in ordinals and its addition is commutative, \eqref{E:StrWit} implies~\eqref{E:Wit} and~\eqref{E:Witt} and, therefore, a strongly noetherian monoid is noetherian, but the converse is not true.
%: the monoid $\MON{\tta_1, \tta_2, ...}{\tta_1 = \tta_2^2 = \tta_3^3 = \pdots}$ is noetherian but not strongly noetherian, since $\wit(\tta_1)$ ought to be infinite. 

An important consequence of strong noetherianity is the decidability of the word problem.

\begin{prop}\label{P:WordPbMon}
If $\MM$ is a strongly noetherian gcd-monoid, $\SS$ is finite, and $(\SS, \RR)$ is a recursive presentation of~$\MM$, then the word problem for~$\MM$ with respect to~$\SS$ is decidable.
\end{prop}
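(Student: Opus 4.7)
The plan is to use strong noetherianity to bound each $\equivp$-congruence class of $\SS$-words and then to combine this finiteness with the decidability of~$\RR$ to decide class membership, which gives a decision procedure for the word problem. I first fix a weight map $\wit : \MM \to \NNNN$ as in Definition~\ref{D:StrNoeth}. After a trivial preprocessing dropping any $\ss \in \SS$ with $\clp\ss = 1$ (finitely many, and all superfluous as generators), we may assume $\wit(\clp\ss) \ge 1$ for every $\ss \in \SS$, so iterating~\eqref{E:StrWit} over the letters of $\ww = \ss_1 \cdots \ss_\nn \in \SS^*$ yields $\wit(\clp\ww) \ge \nn$. Consequently, if $\ww \equivp \ww'$, writing $\aa = \clp\ww = \clp{\ww'}$, both $\ww$ and $\ww'$ have length at most $\wit(\aa)$, and the $\equivp$-class of~$\ww$ is contained in the finite set $\SS^{\le \wit(\aa)}$.

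Given inputs $\ww, \ww'$, I then compute the finite $\equivp$-class of~$\ww$ by breadth-first search and test whether $\ww' \in$ it. Initialize $S := \{\ww\}$; at each step, extend $S$ with every word obtained from some $\uu \in S$ by replacing a substring~$\pp$ of~$\uu$ with~$\qq$ whenever $(\pp, \qq) \in \RR$ or $(\qq, \pp) \in \RR$. I structure the procedure around an outer length parameter $L = 1, 2, \ldots$, restricting both the length of admissible pairs from~$\RR$ and the length of words kept in~$S$ to be $\le L$. Each inner expansion ranges over the finitely many substring factorizations of the bounded-length elements of~$S$ and the finitely many candidate $\qq$'s of length $\le L$, with membership in~$\RR$ testable by the recursiveness of~$\RR$; since $S$ stays inside the finite $\equivp$-class of~$\ww$, the inner BFS stabilizes after finitely many steps at each~$L$.

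The main obstacle is detecting when the BFS has genuinely captured the whole class, given that the weight bound $\wit(\aa)$ is not known in advance. I resolve this by a finite closure test at each~$L$: verify that $S$ is closed under applying any pair in~$\RR$ of length $\le L$ at any substring factorization of any $\uu \in S$, a finite condition thanks to the decidability of~$\RR$ and the boundedness of word lengths in~$S$. For $L \ge \wit(\aa)$, every derivation within the $\equivp$-class of~$\ww$ stays inside that class and therefore uses only pairs and intermediate words of length $\le \wit(\aa) \le L$; hence, at such~$L$, the closure test certifies that $S$ equals the full class. Since such~$L$ is reached in finitely many outer iterations, the procedure terminates, and it outputs ``yes'' if and only if $\ww' \in S$.
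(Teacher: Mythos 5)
Your first paragraph reproduces the paper's key observation: strong noetherianity forces $\LG{}{}\le\wit$ on representatives, so every $\equivp$-class is finite, and your overall plan (saturate $\{\ww\}$ under $\RR$ inside this finite class and test whether $\ww'$ shows up) is the paper's plan. You are also right to flag the point the paper passes over quickly, namely that the bound $\wit(\clp\ww)$ is not available to the algorithm. But the device you introduce to cope with this --- halting at the first $L$ for which the length-$\le L$ closure test passes --- is unsound, and this is a genuine gap. The closure test at level~$L$ only inspects pairs of~$\RR$ both of whose sides have length $\le L$ (it must, or it is not the finite test you claim), so it can pass, even vacuously, at a value of~$L$ strictly below $\wit(\clp\ww)$, while $S$ is still a proper subset of the class. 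Concretely, present the free monoid on $\{\ttb,\ttc\}$ as $\MON{\tta,\ttb,\ttc}{\tta=\ttb\ttc}$ --- a strongly noetherian gcd-monoid with a finite presentation, and $\wit(\clp\tta)=2$ --- and run your procedure on the query $\tta\equivp\ttb\ttc$: at $L=1$ no pair of~$\RR$ has both sides of length $\le 1$, so $S=\{\tta\}$ is vacuously closed, the test passes, and the procedure halts with the wrong answer ``no''. Your argument establishes that the closure test is conclusive once $L\ge\wit(\clp\ww)$, but never that it fails for every smaller~$L$ (it does not), and since $\wit$ is only asserted to exist, not to be computable, you have no way of recognizing that the threshold $L\ge\wit(\clp\ww)$ has been reached. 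So the stopping rule can fire prematurely and the output can be wrong.

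The paper sidesteps this entirely by saturating $\{\ww\}$ under~$\RR$ with no length restriction: every relation of~$\RR$ is applied wherever it matches, so when no new word appears the set really is the whole class, and termination comes from the finiteness of the class rather than from an a priori length bound. (This is unproblematic for the finite presentations the paper actually uses downstream; note that for a genuinely infinite recursive~$\RR$ one still has to justify how to enumerate, for a given matched factor~$\pp$, the relevant right-hand sides~$\qq$ --- a point your length cap was trying to address, but a correct treatment needs a bound tied to the class being computed, not an independent counter~$L$.) To repair your write-up in the spirit of the paper, drop the restriction on the lengths of the pairs of~$\RR$ used in the saturation, or at least show that your stopping test cannot succeed before the class is exhausted; as written, it can.
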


\begin{proof}
First, we observe that, provided $\SS$ does not contain~$1$, the set of all words in~$\SS$ representing an element~$\aa$ of~$\MM$ is finite. Indeed, assume that $\wit : \MM \to \NNNN$ satisfies~\eqref{E:StrWit} and that $\ww$ represents~$\aa$. Write $\ww = \ss_1 \pdots \ss_\ell$ with $\ss_1 \wdots \ss_\ell \in \SS$. Then \eqref{E:StrWit} implies $\wit(\aa) \ge \sum_{\ii = 1}^{\ii = \ell} \wit(\ss_\ii) \ge \ell$, hence $\ww$ necessarily belongs to the finite set~$\SS^{\wit(\aa)}$.

Then, by definition, $\MM$ is isomorphic to~${\SS^*\!}{/}{\equivp}$, where $\equivp$ is the congruence on~$\SS^*$ generated by~$\RR$. If $\ww, \ww'$ are words in~$\SS$, we can decide $\ww \equivp \ww'$ as follows. We start with~$\XX:= \{\ww\}$ and then saturate~$\XX$ under~$\RR$, \ie, we apply the relations of~$\RR$ to the words of~$\XX$ until no new word is added: as seen above, the $\equivp$-class of~$\ww$ is finite, so the process terminates in finite time. Then $\ww'$ is $\equivp$-equivalent to~$\ww$ if and only if $\ww'$ appears in the set~$\XX$ so constructed.
\end{proof}

Prop.\,\ref{P:WordPbMon} applies in particular to all gcd-monoids that admit a finite homogeneous presentation~$(\SS, \RR)$, meaning that every relation of~$\RR$ is of the form $\uu = \vv$ where $\uu, \vv$ are words of the same length: then defining~$\wit(\aa)$ to be the common length of all words representing~$\aa$ provides a map satisfying~\eqref{E:StrWit}. Artin-Tits monoids are typical examples. 

\begin{rema}
Strong noetherianity is called \emph{atomicity} in~\cite{Dfx}, and gcd-monoids that are strongly noetherian are called \emph{preGarside} in~\cite{GoP3}.
\end{rema}

The last notion we shall need is that of a basic element in a gcd-monoid. First, noetherianity ensures the existence of extremal elements, and, in particular, it implies the existence of \emph{atoms}, \ie, elements that are not the product of two non-invertible elements.

\begin{lemm}\cite[Cor.\,II.2.59]{Dir}\label{L:Atoms}
If $\MM$ is a noetherian gcd-monoid, then a subset of~$\MM$ generates~$\MM$ if and only if it contains all atoms of~$\MM$.
\end{lemm}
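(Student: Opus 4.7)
The statement has two directions, and I will prove each separately.

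For the \emph{only if} direction, assume $\SS$ generates~$\MM$ and let $\aa$ be an atom. Then $\aa = \ss_1 \pdots \ss_\ell$ for some $\ss_1 \wdots \ss_\ell$ in~$\SS$. Since $\aa$ is non-invertible we have $\ell \geq 1$, and I would argue by induction on~$\ell$ that $\aa$ must actually equal one of the~$\ss_\ii$. Indeed, if $\ell \geq 2$, write $\aa = \ss_1 \cdot (\ss_2 \pdots \ss_\ell)$; by the definition of an atom one of the two factors is invertible, hence equal to~$1$ (there are no nontrivial invertibles in a gcd-monoid), and we can delete it to reduce~$\ell$. After finitely many steps we obtain $\aa \in \SS$.

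For the \emph{if} direction, assume $\SS$ contains every atom of~$\MM$, and let $\TT$ be the submonoid of~$\MM$ generated by~$\SS$. The plan is to show that $\MM \setminus \TT$ is empty by a well-foundedness argument based on the \emph{factor} relation~$\fac$ of Lemma~\ref{L:Factor}. Suppose for contradiction that $\MM \setminus \TT$ is nonempty; since $\MM$ is noetherian, Lemma~\ref{L:Factor} gives a $\fac$-minimal element~$\aa$ in~$\MM \setminus \TT$. Since $1$ lies in~$\TT$, we have $\aa \neq 1$. If $\aa$ is an atom, then $\aa \in \SS \subseteq \TT$, a contradiction. Otherwise, by the definition of atoms, we can write $\aa = \bb\cc$ with both $\bb$ and~$\cc$ non-invertible. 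Then $\bb \fac \aa$ and $\cc \fac \aa$ hold strictly (taking $\xx = 1, \yy = \cc$ in~\eqref{E:Factor}, \resp $\xx = \bb, \yy = 1$), so by the $\fac$-minimality of~$\aa$ both $\bb$ and~$\cc$ belong to~$\TT$, whence $\aa = \bb\cc$ belongs to~$\TT$ as well, a contradiction.

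The main point is the choice of well-founded order: working with the left-divisibility~$\div$ alone would give $\bb \in \TT$ but say nothing about the right factor~$\cc$, so one needs to combine both divisibilities. Lemma~\ref{L:Factor} is tailor-made for this purpose, and once invoked the argument is essentially a one-line induction. The only if direction requires no noetherianity and uses only the structural definition of an atom together with the absence of nontrivial invertibles.
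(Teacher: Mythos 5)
Your proof is correct, and the ``if'' direction takes a genuinely different route from the paper's. The paper works with the two divisibility orders separately: well-foundedness of~$\div$ is used to show that every non-invertible element has an atom as a left divisor (a $\div$-minimal element of the set of nontrivial left divisors must be an atom), and well-foundedness of~$\divt$ is then used to show that iterating this extraction, $\aa = \xx\aa'$, $\aa' = \xx'\aa''$, ..., produces a $\divt$-descending sequence $\aa \multt \aa' \multt \aa'' \multt \pdots$ that must terminate, so every element is a product of atoms. You instead package both orders at once by invoking Lemma~\ref{L:Factor}: a $\fac$-minimal element of~$\MM \setminus \TT$ can be neither~$1$ nor an atom, and splitting it as a product of two non-invertible factors contradicts minimality. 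Your version is shorter at the point of use and avoids the explicit iteration, at the cost of leaning on Lemma~\ref{L:Factor}, whose proof is exactly the kind of interleaving of~$\div$ and~$\divt$ that the paper's argument performs inline; the two proofs are thus of comparable total depth. Your treatment of the ``only if'' direction (which the paper dismisses as ``the rest is easy'') is the expected one and is complete: the only point worth making explicit is that a product of non-invertible elements is non-invertible in a cancellative monoid with trivial unit group, which justifies the claim that one of the two factors in $\aa = \ss_1 \cdot (\ss_2 \pdots \ss_\ell)$ must be invertible.
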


\begin{proof}
Let $\aa \not= 1$ beong to~$\MM$. Let $\XX:= \{\xx \in \MM \setminus \{1\} \mid \xx \dive \aa\}$. As $\aa$ is non-invertible, $\XX$ is nonempty. As $\div$ is well-founded, $\XX$ has a $\div$-minimal element, say~$\xx$. Then $\xx$ must be an atom. So every non-invertible element is left divisible by an atom. Now, write $\aa = \xx \aa'$ with $\xx$ an atom. If $\aa'$ is not invertible, then, by the same argument, write $\aa' = \xx' \aa''$ with $\xx'$ an atom, and iterate. We have $\aa \multt \aa'$, since $\xx$ is not invertible (in a cancellative monoid, an atom is never invertible), whence $\aa \multt \aa' \multt \aa'' \multt \pdots$. As $\divt$ is well-founded, the process stops after finitely steps. Hence $\aa$ is a product of atoms. The rest is easy.
\end{proof}

\begin{defi}\label{D:RCClosed}
A subset~$\XX$ of a gcd-monoid is called \emph{RC-closed} (``closed under right complement'') if, whenever $\XX$ contains $\aa$ and~$\bb$ and $\aa \lcm \bb$ exists, $\XX$ also contains the elements~$\aa'$ and~$\bb'$ defined by $\aa \lcm \bb = \aa \bb' = \bb \aa'$. We say \emph{LC-closed} (``closed under left complement'') for the counterpart involving left lcms. 
\end{defi}

Note that nothing is required when $\aa \lcm \bb$ does not exist: for instance, in the free monoid based on~$\SS$, the family~$\SS \cup\{1\}$ is RC-closed. Lemma~\ref{L:Atoms} implies that, if $\MM$ is a noetherian gcd-monoid, then there exists a smallest generating subfamily of~$\MM$ that is RC-closed, namely the closure of the atom set under the right complement operation associating with all~$\aa, \bb$ such that $\aa \lcm \bb$ exists the (unique) element~$\aa'$ satisfying $\aa \lcm \bb = \bb \aa'$.

\begin{defi}\cite{Dgk}\label{D:Primitive}
If $\MM$ is a noetherian gcd-monoid, an element~$\aa$ of~$\MM$ is called \emph{right basic} if it lies in the closure of the atom set under the right complement operation. \emph{Left-basic} elements are defined symmetrically. We say that $\aa$ is \emph{basic} if it is right or left basic. 
\end{defi}

Even if its atom family is finite, a noetherian gcd-monoid may contain infinitely many basic elements: for instance, in the Baumslag--Solitar monoid $\MON{\tta, \ttb}{\tta\ttb = \ttb\tta^2}$, all elements~$\tta^{2^\kk}$ with $\kk \ge 0$ are right basic. However, this cannot happen in an Artin-Tits monoid: 

\begin{prop}\cite{Din, DyH}
A (finitely generated) Artin-Tits monoid contains finitely many basic elements.
\end{prop}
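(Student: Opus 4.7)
The strategy is to exhibit a finite subset of~$\MM$ containing all right-basic elements (the left-basic case being symmetric). For each~$T \subseteq \SS$, let~$\MM_T$ denote the standard parabolic submonoid of~$\MM$ generated by~$T$; this is itself an Artin-Tits monoid, and by Brieskorn--Saito it is a Garside monoid exactly when $T$ is \emph{spherical}, \ie when the parabolic subgroup~$W_T$ of the associated Coxeter group~$W$ is finite. In that case $\MM_T$ possesses a Garside element~$\Delta_T$, whose set $\mathrm{Div}(\Delta_T)$ of left divisors is finite. Since $\SS$ is finite, the union
$$\mathcal{B} := \bigcup_{T \subseteq \SS\ \text{spherical}} \mathrm{Div}(\Delta_T)$$
is a finite subset of~$\MM$. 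The goal is to prove that every right-basic element of~$\MM$ belongs to~$\mathcal{B}$.

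The plan is to argue by induction on the closure process defining right-basic elements. The base case is immediate: each atom~$\ss \in \SS$ lies in~$\mathcal{B}$ since $\{\ss\}$ is spherical and $\ss = \Delta_{\{\ss\}}$. For the inductive step, one assumes $\aa \in \mathrm{Div}(\Delta_{T_\aa})$ and $\bb \in \mathrm{Div}(\Delta_{T_\bb})$ with~$T_\aa, T_\bb$ spherical, together with the existence of the right lcm~$\aa \lcm \bb$ in~$\MM$. One must then show that the complements~$\aa'$ and~$\bb'$, defined by $\aa \lcm \bb = \bb \aa' = \aa \bb'$, each lie in~$\mathrm{Div}(\Delta_T)$ for some spherical~$T \subseteq \SS$. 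Granted this, $\mathcal{B}$ is RC-closed in the sense of Definition~\ref{D:RCClosed}; since it already contains~$\SS$, it must contain the entire RC-closure of~$\SS$, which by definition is the set of right-basic elements of~$\MM$.

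The hard part is the inductive step. Note that $\aa \lcm \bb$ itself need \emph{not} lie in a spherical parabolic submonoid: in the affine Artin-Tits monoid of type~$\widetilde A_2$ (which is not FC), the atoms $\ss_1, \ss_2, \ss_3$ pairwise braid with order~$3$, and the right lcm of~$\ss_1\ss_2$ and~$\ss_1\ss_3$ is $\ss_1\ss_2\ss_3\ss_2$, with non-spherical support~$\{\ss_1,\ss_2,\ss_3\}$. What saves the induction is that the complements $(\ss_1 \ss_2)' = \ss_2 \ss_3$ and $(\ss_1 \ss_3)' = \ss_3 \ss_2$ still lie in the spherical parabolic~$\MM_{\{\ss_2,\ss_3\}}$. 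The general assertion---that the right complement of any two elements of~$\mathcal{B}$ lies in $\mathrm{Div}(\Delta_T)$ for some spherical~$T$---is the main obstacle, and is essentially the content of the cited references~\cite{Din, DyH}. It depends on a careful analysis of the right weak order on~$W$ and of how common right multiples in Artin-Tits monoids interact with their parabolic substructure, in particular on the fact that the atoms left-dividing the complement of a basic left divisor of~$\aa \lcm \bb$ always generate a finite-type Coxeter subgroup, even when the full support of~$\aa \lcm \bb$ does not.
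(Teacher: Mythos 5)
Your overall strategy is legitimate: since the right-basic elements are by definition the smallest set containing the atoms and closed under right complement, exhibiting \emph{any} finite RC-closed set containing~$\SS$ suffices, and your candidate $\mathcal{B}=\bigcup_{T\ \mathrm{spherical}}\mathrm{Div}(\Delta_T)$ is the natural one (it is exactly the set~$\XX$ used in the proof of Prop.~\ref{P:AT3Ore}). But the proof has a genuine gap precisely at what you yourself flag as ``the main obstacle'': the RC-closedness of~$\mathcal{B}$ is asserted, not proven, and it is \emph{not} the statement established in the references you lean on. Those papers prove that a Coxeter group has finitely many \emph{low} elements (via the Brink--Howlett finiteness of small roots) and that the smallest Garside family of an Artin--Tits monoid is finite; they contain no theorem saying that the right complement of two divisors of spherical~$\Delta_T$'s again divides some spherical~$\Delta_T$. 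So the reduction you perform replaces the proposition by a different unproven claim, and the citation does not discharge it.

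The reason this matters is visible in the present paper's own use of this set: in the proof of Prop.~\ref{P:AT3Ore}, RC-closedness of~$\XX$ is derived from the type~FC hypothesis, via the equivalence~\eqref{E:LcmExists} --- there, whenever $\xx,\yy\in\XX$ admit a right lcm, $\Supp(\xx)\cup\Supp(\yy)$ is forced to be spherical, so the complement automatically divides $\Delta_{\Supp(\xx)\cup\Supp(\yy)}$. For a general Artin--Tits monoid that implication fails, as your own $\Att$ example shows ($\ss_1\ss_2$ and $\ss_1\ss_3$ have a right lcm with non-spherical support), and you offer no substitute argument; checking one example where the complements happen to land back in~$\mathcal{B}$ is not a proof that they always do. Whether $\mathcal{B}$ is RC-closed for an arbitrary Artin--Tits monoid is exactly the hard content here, and the fact that the literature proves finiteness through the larger and differently defined set of low elements should make you suspicious that no elementary support-counting argument is known. (For comparison, the paper itself does not prove this proposition: it is quoted as a nontrivial external result, with the remark that it rests on the finiteness of low elements.)
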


This nontrivial result relies on the (equivalent) result that every such monoid contains a finite Garside family, itself a consequence of the result that a Coxeter group admits finitely many low elements. Typically, there are $10$ basic elements in the Artin-Tits monoid of type~$\Att$, namely $1$, the $3$~atoms, and the $6$~products of two distinct atoms.

%%%%
\section{Reduction of multifractions}\label{S:Red}

Our main tool for investigating the enveloping group~$\EG\MM$ using multifractions is a family of partial depth-preser\-ving transformations that, when defined, map a multifraction to a $\simeq$-equivalent multifraction, which we shall see is smaller with respect to some possibly well-founded partial order. As can be expected, irreducible multifractions, \ie, those multifractions that are eligible for no reduction, will play an important role.

In this section, we successively introduce and formally define the reduction rules~$\Red{\ii, \xx}$ in Subsection~\ref{SS:Principle}, then establish in Subsection~\ref{SS:Basic} the basic properties of the rewrite system~$\RD\MM$, and finally describe in Subsection~\ref{SS:Cut} a mild extension~$\RDh\MM$ of~$\RD\MM$ that is necessary for the final uniqueness result we aim at.

%%%%
\subsection{The principle of reduction}\label{SS:Principle}

 In this article, we only consider multifractions~$\aav$, where the first entry is positive. However, to ensure compatibility with~\cite{Diu} and~\cite{Div}, where multifractions with a negative first entry are also considered, it is convenient to adopt the following convention:

\begin{defi}
If $\aav$ is a multifraction, we say that $\ii$ is \emph{positive} (\resp \emph{negative}) \emph{in~$\aav$} if $\can(\aa_\ii)$ (\resp $\can(\aa_\ii)\inv$) occurs in~\eqref{E:AltProd}.
\end{defi}

So, everywhere in the current paper, $\ii$ is positive in~$\aav$ if and only if $\ii$ is odd. 

Let us start from free reduction. Assume that $\MM$ is a free monoid based on~$\SS$. A multifraction on~$\MM$ is a finite sequence of words in~$\SS$. Then every element of the enveloping group~$\EG\MM$, \ie, of the free group based on~$\SS$, is represented by a unique freely reduced word in~$\SS \cup \SSb$~\cite{LyS}, or equivalently, in our context, by a unique freely reduced multifraction~$\aav$, meaning that, if $\ii$ is negative (\resp positive) in~$\aav$, the first (\resp last) letters of~$\aa_\ii$ and~$\aa_{\ii +1}$ are distinct.

The above (easy) result is usually proved by constructing a rewrite system that converges to the expected representative. For $\aav, \bbv$ in~$\FR\MM$, and for~$\ii$ negative (\resp positive) in~$\aav$ and $\xx$ in~$\SS$, let us declare that $\bbv = \aav \act \Rdiv{\ii, \xx}$ holds if the first (\resp last) letters of~$\aa_\ii$ and~$\aa_{\ii + 1}$ coincide and $\bbv$ is obtained from~$\aav$ by erasing these letters, \ie, if we have
\begin{equation}\label{E:Div}
\aa_\ii = \xx \bb_\ii \text{ and } \aa_{\ii + 1} = \xx \bb_{\ii + 1} \text{ ($\ii$ negative in~$\aav$)}, \ \text{or} \ \aa_\ii = \bb_\ii \xx \text{ and } \aa_{\ii + 1} = \bb_{\ii + 1} \xx \text{ ($\ii$ positive in~$\aav$)}.
\end{equation}
Writing $\aav \rd \bbv$ when $\bbv = \aav \act \Rdiv{\ii, \xx}$ holds for some~$\ii$ and~$\xx$ and $\rds$ for the reflexive--transitive closure of~$\rd$, one easily shows that, for every~$\aav$, there exists a unique irreducible~$\bbv$ satisfying $\aav \rds \bbv$, because the rewrite system~$\RDiv\MM$ so obtained is \emph{locally confluent}, meaning that
\begin{equation}\label{E:LocConf0}
\parbox{113mm}{If we have $\aav \rd \bbv$ and $\aav \rd \ccv$, there exists~$\ddv$ satisfying $\bbv \rds \ddv$ and $\ccv \rds \ddv$.}
\end{equation}
If we associate with a multifraction~$\aav$ a diagram of the type $\begin{picture}(36,4)(0,0)
\psset{nodesep=0.7pt}
\pcline{->}(0,0)(10,0)\taput{$\aa_1$}
\pcline{<-}(10,0)(20,0)\taput{$\aa_2$}
\pcline{->}(20,0)(30,0)\taput{$\aa_3$}
\put(31,0){...}
\end{picture}$
(with alternating orientations), then free reduction is illustrated as in Fig.\,\ref{F:Free}.

\begin{figure}[htb]
\begin{picture}(60,14)(0,0)
\psset{nodesep=0.5mm}
\put(-4,12){case $\ii$ negative in~$\aav$:}
\psarc[style=back,linecolor=color2]{c-c}(24,8){8}{220}{320}
\psline[style=back,linecolor=color2]{c-c}(0,3.5)(18,3.5)
\psline[style=back,linecolor=color2]{c-c}(30,3.5)(48,3.5)
\psline[style=back,linecolor=color1]{c-c}(0,4)(48,4)
\pcline{->}(0,4)(6,4)\put(-4,4){$...$}
\pcline{<-}(6,4)(18,4)\tbput{$\bb_\ii$}
\pcline{<-}(18,4)(24,4)\taput{$\xx$}
\pcline{->}(24,4)(30,4)\taput{$\xx$}
\pcline{->}(30,4)(42,4)\tbput{$\bb_{\ii + 1}$}
\pcline{<-}(42,4)(48,4)\put(50,4){$...$}
\psarc[style=double](24,8){8}{220}{320}
\psline[style=thin](6,6)(6,7)(23.5,7)(23.5,6)\put(14,8){$\aa_\ii$}
\psline[style=thin](42,6)(42,7)(24.5,7)(24.5,6)\put(30,8){$\aa_{\ii + 1}$}
\end{picture}
\begin{picture}(50,14)(0,0)
\psset{nodesep=0.5mm}
\put(-4,12){case of $\ii$ positive in~$\aav$:}
\psarc[style=back,linecolor=color2]{c-c}(24,8){8}{220}{320}
\psline[style=back,linecolor=color2]{c-c}(0,3.5)(18,3.5)
\psline[style=back,linecolor=color2]{c-c}(30,3.5)(48,3.5)
\psline[style=back,linecolor=color1]{c-c}(0,4)(48,4)
\pcline{<-}(0,4)(6,4)\put(-4,4){$...$}
\pcline{->}(6,4)(18,4)\tbput{$\bb_\ii$}
\pcline{->}(18,4)(24,4)\taput{$\xx$}
\pcline{<-}(24,4)(30,4)\taput{$\xx$}
\pcline{<-}(30,4)(42,4)\tbput{$\bb_{\ii + 1}$}
\pcline{->}(42,4)(48,4)\put(50,4){$...$}
\psarc[style=double](24,8){8}{220}{320}
\psline[style=thin](6,6)(6,7)(23.5,7)(23.5,6)\put(14,8){$\aa_\ii$}
\psline[style=thin](42,6)(42,7)(24.5,7)(24.5,6)\put(30,8){$\aa_{\ii + 1}$}
\end{picture}
\caption{\small Free reduction: $\bbv = \aav \act \Rdiv{\ii, \xx}$ holds if $\bbv$ is obtained from~$\aav$ by erasing the common first letter~$\xx$ (for $\ii$ negative in~$\aav$) or the common last letter~$\xx$ (for $\ii$ positive in~$\aav$) in adjacent entries: $\aav$ is the initial grey path, whereas $\bbv$ is to the colored shortcut.}
\label{F:Free}
\end{figure}

Let now $\MM$ be an arbitrary cancellative monoid. The notion of an initial or final letter makes no sense, but it is subsumed in the notion of a left and a right divisor: $\xx$ left divides~$\aa$ if $\aa$ may be expressed as $\xx\yy$. Then we can extend the definition of~$\Rdiv{\ii, \xx}$ in~\eqref{E:Div} without change, allowing~$\xx$ to be any element of~$\MM$, and we still obtain a well defined rewrite system~$\RDiv\MM$ on~$\FR\MM$ for which Fig.\,\ref{F:Free} is relevant. However, when $\MM$ is not free, $\RDiv\MM$ is of little interest because, in general, it fails to satisfy the local confluence property~\eqref{E:LocConf0}.

\begin{exam}\label{X:Braid}
Let $\MM$ be the $3$-strand braid monoid, given as $\MON{\tta, \ttb}{\tta\ttb\tta = \ttb\tta\ttb}$, and let $\aav = \tta / \tta\ttb\tta / \ttb$. One finds $\aav \act \Rdiv{1, \tta} = 1 / \tta\ttb / \ttb$ and $\aav \act \Rdiv{2, \ttb} = \tta / \tta\ttb / 1$, and one easily checks that no further division can ensure confluence.
\end{exam}

In order to possibly restore confluence, we extend the previous rules by relaxing some assumption. Assume for instance $\ii$ negative in~$\aav$. Then $\aav \act \Rdiv{\ii, \xx}$ is defined if $\xx$ left divides both~$\aa_\ii$ and~$\aa_{\ii + 1}$. We shall define $\aav \act \Red{\ii, \xx}$ by keeping the condition that $\xx$ divides~$\aa_{\ii + 1}$, but relaxing the condition that $\xx$ divides~$\aa_\ii$ into the weaker assumption that $\xx$ and~$\aa_\ii$ admit a common right multiple. Provided the ambient monoid is a gcd-monoid, Lemma~\ref{L:CondLcm} implies that $\xx$ and~$\aa_\ii$ then admit a right lcm, and there exist unique~$\xx'$ and~$ \bb_\ii$ satisfying $\aa_\ii \xx' = \xx \bb_\ii = \xx \lcm \aa_\ii$. In this case, the action of~$\Red{\ii, \xx}$ will consist in removing~$\xx$ from~$\aa_{\ii + 1}$, replacing~$\aa_\ii$ with~$ \bb_\ii$, and incorporating the remainder~$\xx'$ in~$\aa_{\ii - 1}$, see Fig.\,\ref{F:Red}. Note that, for $\xx$ left dividing~$\aa_\ii$, we have~$\xx' = 1$ and $\aa_\ii = \xx \bb_\ii$, so we recover the division~$\Rdiv{\ii, \xx}$. The case of $\ii$ positive in~$\aav$ is treated symmetrically, exchanging left and right everywhere. Finally, for $\ii = 1$, we stick to the rule~$\Rdiv{1, \xx}$, because there is no $0$th entry in which $\xx'$ could be incorporated.

\begin{defi}\label{D:Red}
If $\MM$ is a gcd-monoid, $\aav, \bbv$ belong to~$\FR\MM$, and $\ii \ge 1$ and $\xx \in \MM$ hold, we say that $\bbv$ is obtained from~$\aav$ by \emph{reducing~$\xx$ at level~$\ii$}, written $\bbv = \aav \act \Red{\ii, \xx}$, if we have $\dh\bbv = \dh\aav$, $\bb_\kk = \aa_\kk$ for $\kk \not= \ii - 1, \ii, \ii + 1$, and there exists~$\xx'$ satisfying
$$\begin{array}{lccc}
\text{for $\ii$ negative in~$\aav$:}
&\bb_{\ii-1} = \aa_{\ii-1} \xx', 
&\xx \bb_\ii = \aa_\ii \xx' = \xx \lcm \aa_\ii, 
&\xx \bb_{\ii+1} = \aa_{\ii+1},\\
\text{for $\ii \ge 3$ positive in~$\aav$:\qquad}
&\bb_{\ii-1} = \xx' \aa_{\ii-1}, 
&\bb_\ii \xx = \xx' \aa_\ii = \xx \lcmt \aa_\ii, 
&\bb_{\ii+1} \xx = \aa_{\ii+1},\\
\text{for $\ii = 1$ positive in~$\aav$:}
&&\bb_\ii \xx = \aa_\ii, 
&\bb_{\ii+1} \xx = \aa_{\ii+1}.
\end{array}$$
We write $\aav \rd \bbv$ if $\aav \act \Red{\ii, \xx}$ holds for some~$\ii$ and some~$\xx \not= 1$, and use $\rds$ for the reflexive--transitive closure of~$\rd$. The rewrite system~$\RD\MM$ so obtained is called \emph{reduction}. 
\end{defi}

As is usual, we shall say that $\bbv$ is an \emph{$\RRR$-reduct} of~$\aav$ when $\aav \rds \bbv$ holds, and that $\aav$ is \emph{$\RRR$-irreducible} if no rule of~$\RD\MM$ applies to~$\aav$.

\begin{figure}[htb]
\begin{picture}(105,22)(0,-2)
\psset{nodesep=0.7mm}
\put(-5,17){case $\ii$ negative in~$\aav$:}
\put(55,17){case $\ii$ positive in~$\aav$:}
\put(-7,6){$...$}
\psline[style=back,linecolor=color2]{c-c}(0,6)(15,0)(30,0)(45,6)
\psline[style=back,linecolor=color1]{c-c}(0,6)(15,12)(30,12)(45,6)
\pcline{->}(0,6)(15,12)\taput{$\aa_{\ii - 1}$}
\pcline{<-}(15,12)(30,12)\taput{$\aa_\ii$}
\pcline{->}(30,12)(45,6)\taput{$\aa_{\ii + 1}$}
\pcline{->}(0,6)(15,0)\tbput{$\bb_{\ii - 1}$}
\pcline{<-}(15,0)(30,0)\tbput{$\bb_\ii$}
\pcline{->}(30,0)(45,6)\tbput{$\bb_{\ii + 1}$}
\pcline[linewidth=1.5pt,linecolor=color3,arrowsize=1.5mm]{->}(30,12)(30,0)\trput{$\xx$}
\pcline{->}(15,12)(15,0)\tlput{$\xx'$}
\psarc[style=thin](15,0){3}{0}{90}
\put(21.5,5){$\Downarrow$}
\put(51,6){$...$}
\psline[style=back,linecolor=color2]{c-c}(60,6)(75,0)(90,0)(105,6)
\psline[style=back,linecolor=color1]{c-c}(60,6)(75,12)(90,12)(105,6)
\pcline{<-}(60,6)(75,12)\taput{$\aa_{\ii - 1}$}
\pcline{->}(75,12)(90,12)\taput{$\aa_\ii$}
\pcline{<-}(90,12)(105,6)\taput{$\aa_{\ii + 1}$}
\pcline{<-}(60,6)(75,0)\tbput{$\bb_{\ii - 1}$}
\pcline{->}(75,0)(90,0)\tbput{$\bb_\ii$}
\pcline{<-}(90,0)(105,6)\tbput{$\bb_{\ii + 1}$}
\pcline[linewidth=1.5pt,linecolor=color3,arrowsize=1.5mm]{<-}(90,12)(90,0)\trput{$\xx$}
\pcline{<-}(75,12)(75,0)\tlput{$\xx'$}
\psarc[style=thin](75,0){3}{0}{90}
\put(81.5,5){$\Downarrow$}
\put(109,6){$...$}
\end{picture}
\caption{\small The rewriting relation $\bbv = \aav \act \Red{\ii, \xx}$: for $\ii$ negative in~$\aav$, we remove~$\xx$ from the beginning of~$\aa_{\ii + 1}$, push it through~$\aa_\ii$ using the right lcm operation, and append the remainder~$\xx'$ at the end of the $(\ii-1)$st entry; for $\ii$ positive in~$\aav$, things are symmetric, with left and right exchanged: we remove~$\xx$ from the end of~$\aa_{\ii + 1}$, push it through~$\aa_\ii$ using the left lcm operation, and append the remainder~$\xx'$ at the beginning of~$\aa_{\ii - 1}$. As in Fig.\,\ref{F:Free}, $\aav$ corresponds to the grey path, and $\bbv$ to the colored path. We use small arcs to indicate coprimeness, which, by Lemma~\ref{L:Lcm}, characterizes lcms.}
\label{F:Red}
\end{figure}

\begin{exam}\label{X:ExRed}
If $\MM$ is a free monoid, two elements of~$\MM$ admit a common right multiple only if one is a prefix of the other, so $\aav \act \Red{\ii, \xx}$ can be defined only when $\aav \act \Rdiv{\ii, \xx}$ is, and $\RD\MM$ coincides with the free reduction system~$\RDiv\MM$.

Let now $\MM$ be the $3$-strand braid monoid, as in Example~\ref{X:Braid}. Then $\RD\MM$ properly extends~$\RDiv\MM$. For instance, considering $\aav = \tta / \tta\ttb\tta / \ttb$ again and putting $\bbv = \aav \act \Rdiv{1, \tta} = 1 / \tta\ttb / \ttb$, the elements~$\tta\ttb$ and~$\ttb$ admit a common right multiple, hence $\bbv$ is eligible for~$\Red{2, \ttb}$, leading to $\bbv \act \Red{2, \ttb} = \tta / \tta\ttb / 1$, which restores local confluence: $\aav \act \Rdiv{2,\fr{b}} = \aav \act \Rdiv{1,\fr{a}}\Red{2, \fr{b}}$. 

More generally, assume that $\MM$ is a Garside monoid~\cite{Dgk}, for instance an Artin-Tits group of spherical type (see Section~\ref{S:AT}). Then lcms always exist, and can be used at each step: the whole of $\aa_{\ii+1}$ can always be pushed through~$\aa_\ii$, with no remainder at level~$\ii+1$. Starting from the highest level, one can push all entries down to $1$ or~$2$ and finish with a multifraction of the form $\aa_1 /\aa_2 /1\sdots1$. The process stops, because, at level~$1$, the only legal reduction is division. 

Finally, let $\MM$ be the Artin-Tits monoid of type~$\Att$, here defined as 
$$\MON{\tta, \ttb, \ttc}{\tta\ttb\tta = \ttb\tta\ttb, \ \ttb\ttc\ttb = \ttc\ttb\ttc, \ \ttc\tta\ttc = \tta\ttc\tta},$$
and consider $\aav := 1 /\ttc/\tta\ttb\tta$. Both $\tta$ and $\ttb$ left divide~$\tta\ttb\tta$ and admit a common right multiple with~$\ttc$, so $\aav$ is eligible for~$\Red{2,\tta}$ and~$\Red{2, \ttb}$, leading to $\aav \act \Red{2,\tta} = \tta\ttc / \ttc\tta / \ttb\tta$ and $\aav \act \Red{2, \ttb} = \ttb\ttc / \ttc\ttb / \tta\ttb$. The latter are eligible for no reduction: this shows that a multifraction may admit several irreducible reducts, so we see that confluence cannot be expected in every case.
\end{exam}

The following statement directly follows from Def.\,\ref{D:Red}: 

\begin{lemm}\label{L:RedDef}
Assume that $\MM$ is a gcd-monoid.

\ITEM1 The multifraction~$\aav \act \Red{1, \xx}$ is defined if and only if we have $\dh\aav \ge 2$ and $\xx$ right divides both~$\aa_1$ and~$\aa_2$. If $\ii$ is negative $($\resp positive ${\ge}3$$)$ in~$\aav$, then $\aav \act \Red{\ii, \xx}$ is defined if and only if we have $\dh\aav > \ii$ and $\xx$ and~$\aa_\ii$ admit a common right $($\resp left$)$ multiple, and $\xx$ divides~$\aa_{\ii + 1}$ on the left $($\resp right$)$.

\ITEM2 A multifraction~$\aav$ is $\RRR$-irreduc\-ible if and only if $\aa_1$ and~$\aa_2$ have no nontrivial common right divisor, and, for $\ii < \dh\aav$ negative $($\resp positive ${\ge}3$$)$ in~$\aav$, if $\xx$ left $($\resp right$)$ divides~$\aa_{\ii+1}$, then $\xx$ and $\aa_\ii$ have no common right $($\resp left$)$ multiple. 
\end{lemm}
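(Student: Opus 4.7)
The plan is to obtain both parts as essentially direct unfoldings of Definition~\ref{D:Red}, with Lemma~\ref{L:CondLcm} converting the existence of a right or left lcm (required implicitly by the defining equations of~$\Red{\ii,\xx}$) into the simpler hypothesis that a common right or left multiple exists.

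For part~\ITEM1, I would proceed by case analysis matching the three cases listed in Def.\,\ref{D:Red}. If $\ii$ is negative in~$\aav$, the equation $\xx\bb_\ii = \aa_\ii\xx' = \xx\lcm\aa_\ii$ requires $\xx$ and $\aa_\ii$ to admit a right lcm, which by Lemma~\ref{L:CondLcm} is equivalent to their admitting a common right multiple; the equation $\xx\bb_{\ii+1}=\aa_{\ii+1}$ then says exactly that $\xx$ left-divides~$\aa_{\ii+1}$ (forcing $\dh\aav>\ii$), and the equation $\bb_{\ii-1}=\aa_{\ii-1}\xx'$ only fixes~$\bb_{\ii-1}$ with no further constraint. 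The case $\ii\ge 3$ positive is symmetric, with right replaced by left throughout. For $\ii=1$, there is no $(\ii-1)$st entry, no lcm intervenes, and the definition simply demands $\bb_1\xx=\aa_1$ and $\bb_2\xx=\aa_2$, i.e.\ that $\xx$ right-divides both~$\aa_1$ and~$\aa_2$ (forcing $\dh\aav\ge 2$). Conversely, under each stated condition, the required $\bbv$ is reconstructed by defining $\bb_{\ii-1},\bb_\ii,\bb_{\ii+1}$ from $\xx$ and the right or left lcm provided by Lemma~\ref{L:CondLcm}, and copying the remaining entries.

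Part~\ITEM2 then follows by taking the contrapositive of part~\ITEM1: $\aav$ is $\RRR$-irreducible if and only if, for every~$\ii$ and every $\xx\neq 1$, the applicability conditions of part~\ITEM1 fail. The case $\ii=1$ says that no $\xx\neq 1$ right-divides both~$\aa_1$ and~$\aa_2$, which is exactly the statement that $\aa_1$ and~$\aa_2$ have no nontrivial common right divisor. The cases $\ii\ge 2$ (negative, resp.\ positive) give that, whenever $\xx$ left (resp.\ right) divides~$\aa_{\ii+1}$, $\xx$ and~$\aa_\ii$ admit no common right (resp.\ left) multiple, which matches the stated condition verbatim.

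I anticipate no genuine obstacle: the content is entirely packaged inside Def.\,\ref{D:Red} and Lemma~\ref{L:CondLcm}. The only care needed is bookkeeping the left/right asymmetry between the negative, positive~${\ge}3$, and positive~$=1$ cases, and noting that the $\ii=1$ case is exceptional because the absence of an entry~$\aa_0$ removes the lcm requirement and leaves only a common divisor condition.
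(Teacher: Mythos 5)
Your proposal is correct and matches the paper's intent exactly: the paper offers no proof, stating only that the lemma ``directly follows from Def.\,\ref{D:Red}'', and your unfolding of the three cases of that definition, together with Lemma~\ref{L:CondLcm} to replace ``right (\resp left) lcm exists'' by ``common right (\resp left) multiple exists'', is precisely the intended argument.
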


\begin{rema}\label{R:Relax}
The $\ii$th and $(\ii + 1)$st ent ries do not play symmetric roles in~$\Red{\ii, \xx}$: we demand that $\aa_{\ii+1}$ is a multiple of~$\xx$, but not that $\aa_\ii$ is a multiple of~$\xx$. Note that, in~$\Red{\ii, \xx}$, if we see the factor~$\xx'$ of Def.~\ref{D:Red} and Fig.~\ref{F:Red} as the result of~$\xx$ crossing~$\aa_\ii$ (while $\aa_\ii$ becomes~$\bb_\ii$), then we insist that $\xx$ crosses the whole of~$\aa_\ii$: relaxing this condition and only requiring that $\xx$ crosses a divisor of~$\aa_\ii$ makes sense (at the expense of allowing the depth of the multifraction to increase), but leads to a rewrite system with different properties, see~\cite{Dix}. 
\end{rema}

%%%%
\subsection{Basic properties of reduction}\label{SS:Basic}

The first, fundamental property of the transformations~$\Red{\ii, \xx}$ and the derived reduction relation~$\rds$ is their compatibility with the congruence~$\simeq$: reducing a multifraction on a monoid~$\MM$ does not change the element of~$\EG\MM$ it represents. 

\begin{lemm}\label{L:RedSimeq}
If $\MM$ is a gcd-monoid and $\aav, \bbv$ belong to~$\FR\MM$, then $\aav \rds \bbv$ implies~$\aav \simeq \bbv$.
\end{lemm}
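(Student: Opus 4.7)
The plan is to reduce the claim to a single step and then verify it cases by a direct computation in the quotient group. Since $\rds$ is the reflexive--transitive closure of $\rd$ and $\simeq$ is transitive, it is enough to show that $\aav \rd \bbv$ implies $\aav \simeq \bbv$. And since $\rd$ is the union of the one-step relations $\aav \act \Red{\ii, \xx} = \bbv$ for $\ii \ge 1$ and $\xx \in \MM \setminus \{1\}$, it suffices to prove this for each such rule separately. By Proposition~\ref{P:EnvGroup}\ITEM2, the relation $\aav \simeq \bbv$ is equivalent to $\can(\aav) = \can(\bbv)$ in~$\EG\MM$, which, by~\eqref{E:Eval}, amounts to checking that the alternating products $\can(\aa_1)\can(\aa_2)\inv\can(\aa_3)\can(\aa_4)\inv\pdots$ attached to~$\aav$ and $\bbv$ agree.

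Now observe that, in all three cases of Def.~\ref{D:Red}, the multifractions $\aav$ and $\bbv$ coincide outside the window of indices $\{\ii-1, \ii, \ii+1\}$ (or $\{1,2\}$ when $\ii = 1$). So all factors of the alternating product outside this window match, and the task reduces to checking the local identity that the corresponding short subproduct in $\EG\MM$ is unchanged. This is the only real content: verify, using the defining equations of $\Red{\ii, \xx}$ in Def.~\ref{D:Red}, that the auxiliary element $\xx'$ cancels correctly when one passes through the boundary entries.

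Concretely, for $\ii$ negative in $\aav$, I would use $\bb_{\ii-1} = \aa_{\ii-1}\xx'$, $\xx\bb_\ii = \aa_\ii\xx'$, and $\xx\bb_{\ii+1} = \aa_{\ii+1}$ to rewrite
$$\can(\bb_{\ii-1})\can(\bb_\ii)\inv\can(\bb_{\ii+1})
= \can(\aa_{\ii-1})\can(\xx')\can(\bb_\ii)\inv\can(\bb_{\ii+1}),$$
then push $\can(\xx')\can(\bb_\ii)\inv = \can(\aa_\ii)\inv\can(\xx)$ and absorb $\can(\xx)\can(\bb_{\ii+1}) = \can(\aa_{\ii+1})$ to recover $\can(\aa_{\ii-1})\can(\aa_\ii)\inv\can(\aa_{\ii+1})$. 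The case $\ii \ge 3$ positive in $\aav$ is symmetric, with left and right exchanged: the relations $\bb_{\ii-1} = \xx'\aa_{\ii-1}$, $\bb_\ii\xx = \xx'\aa_\ii$, $\bb_{\ii+1}\xx = \aa_{\ii+1}$ let one identify $\can(\bb_{\ii-1})\inv\can(\bb_\ii)\can(\bb_{\ii+1})\inv$ with $\can(\aa_{\ii-1})\inv\can(\aa_\ii)\can(\aa_{\ii+1})\inv$ after canceling two occurrences of $\can(\xx)$. The case $\ii = 1$ is even simpler: from $\bb_1\xx = \aa_1$ and $\bb_2\xx = \aa_2$ one gets $\can(\bb_1)\can(\bb_2)\inv = \can(\aa_1)\can(\xx)\inv\can(\xx)\can(\aa_2)\inv = \can(\aa_1)\can(\aa_2)\inv$.

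I do not expect any real obstacle here: the statement essentially says that the local ``pentagonal'' diagram of Fig.~\ref{F:Red} commutes in $\EG\MM$, which is immediate once one reads off the three defining equalities. The only thing demanding a bit of care is the bookkeeping for the three parity cases and, at $\ii = 1$, the fact that there is no $0$th entry to absorb a remainder (which is precisely why Def.~\ref{D:Red} forbids any remainder at that level).
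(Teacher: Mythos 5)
Your argument is correct and follows essentially the same route as the paper: reduce to a single application of $\Red{\ii,\xx}$, then use the three defining equalities of Def.~\ref{D:Red} together with~\eqref{E:Eval} to check that the alternating product in~$\EG\MM$ is unchanged, with the parity cases and $\ii=1$ handled exactly as you describe. The only cosmetic difference is that the paper packages the local cancellation as the multifraction equivalence $1/\aa_\ii/\xx \simeq \xx'/\bb_\ii/1$ and then multiplies by $\aa_{\ii-1}$ and $\bb_{\ii+1}$ using that $\simeq$ is a congruence, whereas you track the group elements directly; both are fine.
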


\begin{proof}
As $\rds$ is the reflexive--transitive closure of~$\rd$, it is sufficient to establish the result for~$\rd$. Assume $\bbv = \aav \act \Red{\ii, \xx}$ with, say, $\ii$ negative in~$\aav$. By definition, we have $\xx \bb_\ii = \aa_\ii \xx' = \xx \lcm \aa_\ii$ in~$\MM$, whence $\can(\xx) \can (\bb_\ii) = \can(\aa_\ii) \can(\xx')$ and, from there, $\can(\aa_\ii)\inv \can(\xx) = \can(\xx') \can(\bb_\ii)\inv$ in~$\EG\MM$. Applying~\eqref{E:Eval} and $\can(1) = 1$, we obtain $\can(1 / \aa_\ii / \xx) = \can(\aa_\ii)\inv \can(\xx) = \can(\xx') \can(\bb_\ii)\inv = \can(\xx' / \bb_\ii / 1)$, whence $1 / \aa_\ii / \xx \simeq \xx' / \bb_\ii / 1$. Multiplying on the left by~$\aa_{\ii-1}$ and on the right by~$\bb_{\ii+1}$, we deduce 
$$\aa_{\ii-1} / \aa_\ii / \aa_{\ii+1} \simeq \bb_{\ii-1} / \bb_\ii / \bb_{\ii+1}$$ 
and, from there, $\aav \simeq \bbv$. The argument is similar for $\ii$ positive in~$\aav$, replacing $\xx \bb_\ii = \aa_\ii \xx'$ with $\bb_\ii \xx = \xx' \aa_\ii$ and $1 / \aa_\ii / \xx \simeq \xx' / \bb_\ii / 1$ with $\aa_\ii / \xx / 1 \simeq 1 / \xx' / \bb_\ii$, leading to $\aa_{\ii - 2} / \aa_{\ii-1} / \aa_\ii / \aa_{\ii+1} \simeq \bb_{\ii - 2} / \bb_{\ii-1} / \bb_\ii / \bb_{\ii+1}$ for $\ii \ge 3$, and to $\aa_\ii / \aa_{\ii+1} \simeq \bb_\ii / \bb_{\ii + 1}$ for~$\ii =~1$, whence, in any case, to~$\aav \simeq\nobreak \bbv$.
\end{proof}

The next property is the compatibility of reduction with multiplication. The verification is easy, but we do it carefully, because it is crucial.

\begin{lemm}\label{L:CompatRed}
If $\MM$ is a gcd-monoid, the relation~$\rds$ is compatible with multiplication on~$\FR\MM$.
\end{lemm}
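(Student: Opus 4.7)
The plan is to reduce to single-step reductions: by transitivity of~$\rds$ it suffices to establish that $\aav \rd \aav'$ implies $\aav \opp \bbv \rd \aav' \opp \bbv$ and, symmetrically, that $\bbv \rd \bbv'$ implies $\aav \opp \bbv \rd \aav \opp \bbv'$. In each case, I would exhibit an explicit rule of~$\RD\MM$ taking $\aav \opp \bbv$ to the expected target and verify that its defining conditions follow from those of the given reduction. Assume $\aav, \bbv$ are nonempty (empty cases are trivial) and let $\nn := \dh\aav$.

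For the first statement, with $\aav' = \aav \act \Red{\ii, \xx}$, I apply the same rule $\Red{\ii, \xx}$ to $\aav \opp \bbv$. By definition of~$\opp$, the first $\nn - 1$ entries of $\aav \opp \bbv$ are $\aa_1 \wdots \aa_{\nn-1}$ and its $\nn$th entry is $\aa_\nn$ (if $\nn$ is even) or $\aa_\nn \bb_1$ (if $\nn$ is odd); the parity of every position $\le \nn$ is thereby preserved. Since the rule only involves positions $\ii - 1, \ii, \ii + 1$ with $\ii + 1 \le \nn$, the only delicate subcase is $\ii + 1 = \nn$ with $\nn$ odd: then $\ii$ is negative, the rule requires $\xx \dive \aa_\nn$, which lifts to $\xx \dive \aa_\nn \bb_1$, and the resulting entry $\aa'_\nn \bb_1$ at position~$\nn$ matches the merged $\nn$th entry of $\aav' \opp \bbv$ (since $\dh{\aav'} = \nn$ is still odd).

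For the second statement, with $\bbv' = \bbv \act \Red{\jj, \xx}$, I apply $\Red{\ii, \xx}$ to $\aav \opp \bbv$ for $\ii := \nn + \jj$ if $\nn$ is even and $\ii := \nn + \jj - 1$ if $\nn$ is odd; in both cases $\ii$ has the parity of $\jj$, so positivity/negativity matches. When all three positions $\ii - 1, \ii, \ii + 1$ lie strictly beyond the merged boundary, they are literally $\bb_{\jj-1}, \bb_\jj, \bb_{\jj+1}$ and the rule carries over verbatim. The subcases to check are (a)~$\jj = 2$ with $\nn$ odd, so position $\ii - 1 = \nn$ is the boundary entry $\aa_\nn \bb_1$, and (b)~$\jj = 1$ with $\nn \ge 2$, so $\ii \ge 3$ and the generic positive rule replaces $\Red{1, \xx}$ (the subcase $\jj = 1$, $\nn = 1$ uses $\Red{1, \xx}$ in $\aav \opp \bbv$ directly). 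Case~(a) reduces to associativity: with $\xx'$ the remainder given by $\xx \lcm \bb_2 = \bb_2 \xx'$, the new boundary entry in the reduct is $(\aa_\nn \bb_1) \xx' = \aa_\nn (\bb_1 \xx') = \aa_\nn \bb'_1$, which is exactly the merged $\nn$th entry of $\aav \opp \bbv'$. In case~(b), the hypothesis $\xx \divet \bb_1$ of $\Red{1, \xx}$ makes $\xx$ right-divide the entry at position~$\ii$ in $\aav \opp \bbv$ (whether this is $\bb_1$ or $\aa_\nn \bb_1$), so the new ``remainder''~$\xx'$ in the lcm computation is~$1$; position $\ii - 1$ is left untouched, and positions $\ii, \ii + 1$ pick up $\bb'_1, \bb'_2$ as required.

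The main obstacle is purely combinatorial bookkeeping: reconciling the merged boundary entry $\aa_\nn \bb_1$ (when $\nn$ is odd) with the single-position entries the rules expect, and identifying the correct rule and position in $\aav \opp \bbv$. Once this is done, the verification is a direct inspection using associativity and the defining formulas for the lcm.
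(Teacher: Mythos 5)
Your proof is correct. The overall strategy is the same as the paper's — reduce to single-step reductions and check that a rule $\Red{\ii,\xx}$ survives multiplication, the only delicate point being the merged entry $\aa_\nn\bb_1$ at the product boundary when $\dh\aav$ is odd — but the decomposition is genuinely different. The paper invokes Prop.~\ref{P:EnvGroup}\ITEM1 to reduce to multiplication by the generators $\cc$ and $1/\cc$ of~$\FR\MM$, so it only has to check four cases where a single entry (or a pair $1/\cc$) is prepended or appended; you instead prove the two one-sided compatibilities $\aav \rd \aav' \Rightarrow \aav\opp\bbv \rd \aav'\opp\bbv$ and $\bbv \rd \bbv' \Rightarrow \aav\opp\bbv \rd \aav\opp\bbv'$ for an \emph{arbitrary} second factor. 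This costs you the explicit index shift ($\ii \mapsto \nn+\jj$ or $\nn+\jj-1$) and the boundary subcases (a) and (b), but buys a self-contained argument that does not rely on the generating set of~$\FR\MM$. Your treatment of the subcases is right: in (a) the identification $(\aa_\nn\bb_1)\xx' = \aa_\nn(\bb_1\xx')$ is pure associativity, and in (b) the hypothesis $\xx\divet\bb_1$ forces the remainder $\xx'$ in the generic positive rule to be~$1$, so that the rule degenerates to the division $\Red{1,\xx}$ performed inside~$\bbv$ — exactly the phenomenon the paper observes when it notes that the treatments of right multiplication by~$\cc$ and by~$1/\cc$ are not symmetric. (For completeness one should remark that the witness~$\xx$ is unchanged, so $\xx\ne 1$ is preserved and the step is a genuine $\rd$-step; this is immediate.)
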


\begin{proof}
It suffices to consider the case of~$\rd$. By Prop.\,\ref{P:EnvGroup}\ITEM1, the monoid~$\FR\MM$ is generated by the elements~$\cc$ and~$1/\cc$ with~$\cc$ in~$\MM$, so it is sufficient to show that $\bbv = \aav \act \Red{\ii, \xx}$ implies $\cc \opp \aav \rd \cc \opp \bbv$, $1/\cc \opp \aav \rd 1/\cc \opp \bbv$, $\aav \opp \cc \rd \bbv \opp \cc$, and $\aav \opp 1/\cc \rd \bbv \opp 1/\cc$. The point is that multiplying by~$\cc$ or by $1/\cc$ does not change the eligibility for reduction because it removes no divisors. 

So, assume $\bbv = \aav \act \Red{\ii, \xx}$ with $\dh\aav = \nn$. Let $\aav':= \cc \opp \aav$ and $\bbv':= \cc \opp \bbv$. In the case $\ii \ge 2$, we find $\aa'_\ii = \aa_\ii$ and $\xx$ divides~$\aa'_{\ii+1} = \aa_{\ii+1}$, so $\aav' \act \Red{\ii, \xx}$ is defined, and we have $\bbv' = \aav' \act \Red{\ii, \xx}$. In the case $\ii = 1$, we find $\xx \divet \aa'_1 = \cc \aa_1$ and $\xx \divet \aa'_2 = \aa_2$, so $\aav' \act \Red{\ii, \xx}$ is defined, and we have $\bbv' = \aav' \act \Red{\ii, \xx}$.

Put now $\aav':= 1/\cc \opp \aav$ and $\bbv':= 1/\cc \opp \bbv$. We have $\aav' = 1 / \cc / \aa_1 \sdots \aa_\nn$ and, similarly, $\bbv' = 1 / \cc / \bb_1 \sdots \bb_\nn$. We find now (in every case) $\aa'_{\ii + 2} = \aa_\ii$ and $\xx$ divides~$\aa'_{\ii+3} = \aa_{\ii+1}$, so $\aav' \act \Red{\ii + 2, \xx}$ is defined, and $\bbv' = \aav' \act \Red{\ii + 2, \xx}$ follows. 

Then let $\aav':= \aav \opp \cc$ and $\bbv':= \bbv \opp \cc$. If $\nn$ is negative in~$\aav$, we have $\aav' = \aa_1 \sdots \aa_\nn / \cc$ and $\bbv' = \bb_1 \sdots \bb_\nn / \cc$, whence $\bbv' = \aav' \act \Red{\ii, \xx}$. If $\nn$ is positive in~$\aav$, we find $\aav' = \aa_1 \sdots \aa_\nn\cc$ and $\bbv' = \bb_1 \sdots \bb_\nn\cc$, and $\bbv' = \aav' \act \Red{\ii, \xx}$ again: we must have $\ii < \nn$, and everything is clear for $\ii \le \nn - 2$; for $\ii = \nn-1$, there is no problem as $\xx \dive \aa_{\ii +1}$, \ie, $\xx \dive \aa_\nn$, implies $\xx \dive \aa'_\nn = \aa_\nn\cc$. So, we still have $\bbv' = \aav' \act \Red{\ii, \xx}$.

Finally, put $\aav':= \aav \opp 1/\cc$ and $\bbv':= \bbv \opp 1/\cc$. If $\nn$ is negative in~$\aav$, we have $\aav' = \aa_1 \sdots \aa_\nn / 1 / \cc$ and $\bbv' = \bb_1 \sdots \bb_\nn / 1 / \cc$, whence $\bbv' = \aav' \act \Red{\ii, \xx}$ directly. Similarly, if $\nn$ is positive in~$\aav$, we find $\aav' = \aa_1 \sdots \aa_\nn / \cc$ and $\bbv' = \bb_1 \sdots \bb_\nn / \cc$, again implying $\bbv' = \aav' \act \Red{\ii, \xx}$. So the verification is complete. (Observe that the treatments of right multiplication by~$\cc$ and by~$1/\cc$ are not exactly symmetric.)
\end{proof}

A rewrite system is called \emph{terminating} if no infinite rewriting sequence exists, hence if every sequence of reductions from an element leads in finitely many steps to an irreducible element. We easily obtain a termination result for reduction: 

\begin{prop}\label{P:Termin}
If $\MM$ is a noetherian gcd-monoid, then $\RD\MM$ is terminating.
\end{prop}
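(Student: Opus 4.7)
The plan is to exhibit a well-founded ranking on $\FR\MM$ that strictly decreases under every reduction step. The key observation comes from inspecting Definition~\ref{D:Red}: if $\bbv = \aav \act \Red{\ii,\xx}$ with $\xx \neq 1$, then the entries of $\bbv$ at indices strictly greater than $\ii+1$ coincide with those of $\aav$, while at index $\ii+1$ one has $\aa_{\ii+1} = \xx\bb_{\ii+1}$ (for $\ii$ negative) or $\aa_{\ii+1} = \bb_{\ii+1}\xx$ (for $\ii$ positive); since $\MM$ has no nontrivial invertible element and $\xx \neq 1$, the element $\xx$ is noninvertible, so $\bb_{\ii+1}$ is a proper factor of $\aa_{\ii+1}$ in the sense of~\eqref{E:Factor}. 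At indices $\ii$ and $\ii-1$ the entries of $\bbv$ may be genuinely larger than those of $\aav$, so any monotone ``total size'' of $\aav$ cannot work; the correct comparison must read from the highest index downward.

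By Lemma~\ref{L:Factor} the noetherianity of $\MM$ is equivalent to well-foundedness of the factor relation $\fac$. Mimicking the standard rank construction used in the proof of Lemma~\ref{L:Wit}, one obtains a map $\wit: \MM \to \mathrm{Ord}$ such that $\aa \fac \bb$ implies $\wit(\aa) < \wit(\bb)$. For $\aav \in \FR\MM$ of depth $\nn$ define
$$\Phi(\aav) := (\wit(\aa_\nn),\, \wit(\aa_{\nn-1}),\, \ldots,\, \wit(\aa_1)),$$
and compare these tuples by the lexicographic order that reads from the leftmost (highest-index) coordinate. On tuples of a fixed length this order is well-founded.

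Because each rule in~$\RD\MM$ preserves depth, any infinite $\rd$-chain would remain inside a single fixed-depth stratum of $\FR\MM$. By the observation above, a step $\aav \rd \bbv$ at level~$\ii$ leaves the coordinates of $\Phi$ at positions $\nn, \nn-1, \ldots, \ii+2$ unchanged and strictly decreases the coordinate at position $\ii+1$. Hence $\Phi(\bbv) < \Phi(\aav)$ in the lexicographic order, ruling out infinite descending chains and proving termination.

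The only nontrivial input is the existence of the ordinal-valued rank $\wit$ on the factor relation, but this is the very same construction as in Lemma~\ref{L:Wit} applied to $\fac$ in place of $\div$, and requires only the well-foundedness of $\fac$ supplied by Lemma~\ref{L:Factor}. If one wishes to avoid ordinals altogether, the argument works verbatim using the lexicographic product of $\nn$ copies of the well-founded relation $\antifac$, which is again well-founded on $\MM^\nn$.
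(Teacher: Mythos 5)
Your proof is correct and follows essentially the same route as the paper: both rest on the observation that a reduction at level~$\ii$ fixes all entries above index~$\ii+1$ and replaces $\aa_{\ii+1}$ by a proper factor, combined with the well-foundedness of the factor relation from Lemma~\ref{L:Factor}. The paper packages this as the anti-lexicographic extension $\fac_\nn$ of $\fac$ to $\nn$-multifractions rather than via an ordinal rank, which is exactly the ordinal-free variant you mention in your closing remark.
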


\begin{proof}
Using $\fac$ for the factor relation of~\eqref{E:Factor}, we consider for each~$\nn$ the anti-lexicographic extension of~$\fac$ to $\nn$-multifractions:
\begin{equation}\label{E:Factor1}
\aav \fac_\nn \bbv \quad\Leftrightarrow\quad\exists\ii \in \{1 \wdots \nn\}\, (\aa_\ii \fac \bb_\ii \ \text{and}\ \forall\jj \in \{\ii+1 \wdots \nn\}\,(\aa_\jj = \bb_\jj)).
\end{equation}
Then, if $\aav, \bbv$ are $\nn$-multifractions, $\aav \rd \bbv$ implies $ \bbv \fac_\nn \aav$. Indeed, $\bbv = \aav \act \Red{\ii, \xx}$ with $\xx \not= 1$ implies $\bb_\kk = \aa_\kk$ for $\kk \ge \ii +2$, and $\bb_{\ii+1} \fac \aa_{\ii+1}$: if $\ii$ is negative in~$\aav$, then $\bb_{\ii+1}$ is a proper right divisor of~$\aa_{\ii+1}$ whereas, if $\ii$ is positive in~$\aav$, then it is a proper left divisor. In both cases, we have $\bb_\ii \fac \aa_\ii$, whence $\bbv \fac_\nn \aav$. Hence an infinite sequence of $\RRR$-reductions starting from~$\aav$ results in an infinite $\fac_\nn$-descending sequence.

If $\MM$ is a noetherian gcd-monoid, then, by Lemma~\ref{L:Factor}, $\fac$ is a well-founded partial order on~$\MM$. This implies that $\fac_\nn$ is a well-founded partial order for every~$\nn$: the indices~$\ii$ possibly occurring in a $\fac_\nn$-descending sequence should eventually stabilize, resulting in a $\fac$-descending sequence in~$\MM$. Hence no such infinite sequence may exist.
\end{proof}

\begin{coro}\label{C:Irred}
If $\MM$ is a noetherian gcd-monoid, then every $\simeq$-class contains at least one $\RRR$-irreducible multifraction.
\end{coro}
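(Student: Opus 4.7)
The plan is to combine the two earlier results about $\RD\MM$: termination (Proposition~\ref{P:Termin}) and compatibility with the congruence (Lemma~\ref{L:RedSimeq}). These essentially give the corollary for free, so the proof should amount to little more than unwinding definitions.

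Concretely, I would proceed as follows. Let $\CC$ be a $\simeq$-class and pick any representative $\aav \in \CC$. Since $\MM$ is a noetherian gcd-monoid, Proposition~\ref{P:Termin} asserts that $\RD\MM$ is terminating, hence no infinite sequence $\aav = \aav^{(0)} \rd \aav^{(1)} \rd \aav^{(2)} \rd \cdots$ exists. Consequently, any maximal $\rd$-chain starting at~$\aav$ has finite length, and its last term $\bbv$ must be $\RRR$-irreducible (otherwise the chain could be extended, contradicting maximality). By construction, $\aav \rds \bbv$, and Lemma~\ref{L:RedSimeq} then gives $\aav \simeq \bbv$, so that $\bbv \in \CC$. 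Thus $\CC$ contains an $\RRR$-irreducible multifraction.

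There is no genuine obstacle here: both ingredients are already available, and the argument is the standard passage from termination of a rewrite system to existence of normal forms. The only point worth stating explicitly is that maximality of the reduction chain forces the terminal multifraction to be irreducible, which is immediate from the definition of $\RRR$-irreducibility. Note also that uniqueness of the $\RRR$-irreducible representative is \emph{not} claimed at this stage; that would require confluence, which is exactly the goal of Section~\ref{S:Confl} under the additional $3$-Ore assumption.
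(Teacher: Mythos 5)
Your proof is correct and follows exactly the paper's argument: termination (Prop.~\ref{P:Termin}) yields an $\RRR$-irreducible reduct of any given multifraction, and Lemma~\ref{L:RedSimeq} places it in the same $\simeq$-class. Nothing is missing, and your remark that uniqueness is deferred to the confluence analysis is also consistent with the paper.
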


\begin{proof}
Starting from~$\aav$, every sequence of reductions leads in finitely many steps to an $\RRR$-irreducible multifraction. By Lemma~\ref{L:RedSimeq}, the latter belongs to the same $\simeq$-class as~$\aav$.
\end{proof}

Thus, when Corollary~\ref{C:Irred} is relevant, $\RRR$-irreducible multifractions are natural candidates for being distinguished representatives of $\simeq$-classes. 

\begin{rema}
Prop.\,\ref{P:Termin} makes the question of the termination of~$\RD\MM$ fairly easy. But it would not be so, should the definition be relaxed as alluded in Remark~\ref{R:Relax}. On the other hand, as can be expected, noetherianity is crucial to ensure termination. For instance, the monoid $\MM = \MON{\tta, \ttb}{\tta = \ttb\tta\ttb}$ is a non-noetherian gcd-monoid, and the infinite sequence $1 / \tta / \tta \rd 1 / \tta\ttb / \tta\ttb \rd / 1 / \tta\ttb^2 / \tta\ttb^2 \rd \pdots$ shows that reduction is not terminating for~$\MM$. 
\end{rema}

Our last result is a simple composition rule for reductions at the same level. 

\begin{lemm}\label{L:IterRed}
If $\MM$ is a gcd-monoid and $\aav$ belongs to~$\FR\MM$, then, if $\ii$ is negative (\resp positive) in~$\aav$, then $(\aav \act \Red{\ii, \xx}) \act \Red{\ii, \yy}$ is defined if and only if $\aav \act \Red{\ii, \xx\yy}$ $($\resp $\aav \act \Red{\ii, \yy\xx}$$)$ is, and then they are equal.
\end{lemm}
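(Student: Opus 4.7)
The plan is to settle the case ``$\ii$ negative in~$\aav$'' in detail; the case ``$\ii \ge 3$ positive in~$\aav$'' follows by the left/right symmetry built into Def.~\ref{D:Red} (which is exactly why the formula switches from $\xx\yy$ to $\yy\xx$, since a left lcm replaces a right lcm), and the case $\ii = 1$ positive is a degenerate simplification that I would dispatch separately by directly checking that $\xx$ and then $\yy$ right-divide $\aa_1, \aa_2$ if and only if $\yy\xx$ does. So the only real work lies in the negative case, and it reduces to two applications of Lemma~\ref{L:IterLcm}.

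Fix $\ii$ negative in~$\aav$ and let $\bbv := \aav \act \Red{\ii, \xx}$, $\ccv := \bbv \act \Red{\ii, \yy}$. Unpacking Def.~\ref{D:Red} twice, $\ccv$ is defined exactly when: (a) $\aa_\ii \lcm \xx$ exists, say equal to $\aa_\ii \xx' = \xx \bb_\ii$; (b) $\xx$ left-divides $\aa_{\ii+1}$, say $\aa_{\ii+1} = \xx \bb_{\ii+1}$; (c) $\bb_\ii \lcm \yy$ exists, say equal to $\bb_\ii \yy' = \yy \cc_\ii$; and (d) $\yy$ left-divides $\bb_{\ii+1}$, say $\bb_{\ii+1} = \yy \cc_{\ii+1}$.

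The heart of the argument is to recognize that (a)--(d) are precisely what the existence of $\aav \act \Red{\ii, \xx\yy}$ demands. Applying Lemma~\ref{L:IterLcm} with $\aa = \aa_\ii$, $\bb = \xx$, $\cc = \yy$, the conjunction of (a) and (c) is equivalent to the existence of $\aa_\ii \lcm \xx\yy$, and one gets the explicit formula $\aa_\ii \lcm \xx\yy = \aa_\ii \xx' \yy' = \xx\yy \cc_\ii$. Simultaneously, under the factorization $\aa_{\ii+1} = \xx \bb_{\ii+1}$ from (b), condition (d) is equivalent to $\xx\yy$ left-dividing $\aa_{\ii+1}$, with cofactor $\cc_{\ii+1}$. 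Thus the eligibility of the composed reduction matches that of the single reduction $\Red{\ii, \xx\yy}$.

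The equality $\ccv = \aav \act \Red{\ii, \xx\yy}$ then follows by comparing entries: both multifractions have depth $\dh\aav$ and their $\kk$th entries coincide with $\aa_\kk$ for $\kk \notin \{\ii-1, \ii, \ii+1\}$. At level~$\ii$, both give $\cc_\ii$ by the formula above. At level~$\ii+1$, both give $\cc_{\ii+1}$, since left cancellativity of $\xx\yy$ determines it uniquely from $\xx\yy \cc_{\ii+1} = \aa_{\ii+1}$. At level~$\ii-1$, the composed reduction yields $(\aa_{\ii-1} \xx') \yy' = \aa_{\ii-1} \xx' \yy'$, which is exactly the remainder arising from the single reduction $\Red{\ii, \xx\yy}$ in view of Lemma~\ref{L:IterLcm}. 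No obstacle is expected; the only point to watch is to keep the bookkeeping of the auxiliary factors $\xx', \yy'$ consistent with the notation of Lemma~\ref{L:IterLcm}.
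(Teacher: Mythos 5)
Your proposal is correct and follows exactly the route the paper indicates: the paper skips the proof of Lemma~\ref{L:IterRed}, pointing to the iterated-lcm rule of Lemma~\ref{L:IterLcm} and Fig.~\ref{F:IterRed}, and your argument is precisely the written-out version of that, with the bookkeeping of $\xx'$, $\yy'$ and the entries at levels $\ii-1$, $\ii$, $\ii+1$ carried out correctly. Nothing to add.
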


We skip the proof, which should be easy to read on Fig.\,\ref{F:IterRed}: the point is the rule for an iterated lcm, as given in Lemma~\ref{L:IterLcm}, and obvious on the diagram.

\begin{figure}[htb]
\begin{picture}(60,18)(0,1)
\psset{nodesep=0.7mm}
\put(-7,8){$...$}
\put(63,8){$...$}
\psline[style=back,linecolor=color2]{c-c}(0,8)(20,0)(40,0)(60,8)
\psline[style=back,linecolor=color1]{c-c}(0,8)(20,16)(40,16)(60,8)
\pcline{->}(0,8)(20,16)\taput{$\aa_{\ii - 1}$}
\pcline{->}(0,8)(20,8)\naput[npos=0.8]{$\bb_{\ii - 1}$}
\pcline{->}(0,8)(20,0)\tbput{$\cc_{\ii - 1}$}
\pcline{<-}(20,16)(40,16)\taput{$\aa_\ii$}
\pcline{<-}(20,8)(40,8)\taput{$\bb_\ii$}
\pcline{<-}(20,0)(40,0)\tbput{$\cc_\ii$}
\pcline{->}(40,16)(60,8)\taput{$\aa_{\ii + 1}$}
\pcline{->}(40,8)(60,8)\naput[npos=0.4]{$\bb_{\ii + 1}$}
\pcline{->}(40,0)(60,8)\tbput{$\cc_{\ii + 1}$}
\pcline{->}(20,16)(20,8)\naput[npos=0.3]{$\xx'$}
\pcline{->}(20,8)(20,0)\naput[npos=0.3]{$\yy'$}
\pcline{->}(40,16)(40,8)\trput{$\xx$}
\pcline{->}(40,8)(40,0)\trput{$\yy$}
\psarc[style=thin](20,8){3}{0}{90}
\psarc[style=thin](20,0){3}{0}{90}
\end{picture}
\caption{\small Composition of two reductions, here for $\ii$ negative in~$\aav$.}
\label{F:IterRed}
\end{figure}

 In Def.~\ref{D:Red}, we put no restriction on the parameter~$\xx$ involved in the rule~$\Red{\ii, \xx}$. Lemma~\ref{L:IterRed} implies that the relation~$\rds$ is not changed when one restricts to rules~$\Red{\ii, \xx}$ with~$\xx$ in some distinguished generating family, typically $\xx$ an atom when $\MM$ is noetherian.

%%%%
\subsection{Reducing depth}\label{SS:Cut}

By definition, all rules of~$\RD\MM$ preserve the depth of multifractions. In order to possibly obtain genuinely unique representatives, we introduce an additional transformation erasing trivial final entries.

\begin{defi}\label{D:Cut}
If $\MM$ is a gcd-monoid, then, for $\aav, \bbv$ in~$\FR\MM$, we declare $\bbv = \aav \act \Cut$ if the final entry of~$\aav$ is~$1$, and $\bbv$ is obtained from~$\aav$ by removing it. We write $\aav \rdh \bbv$ for either $\aav \rd \bbv$ or $\bbv = \aav \act \Cut{}$, and denote by~$\rdhs$ the reflexive--transitive closure of~$\rdh$. We put $\RDh\MM:= \RD\MM \cup \{\Cut\}$.
\end{defi}

It follows from the definition that an $\nn$-multifraction~$\aav$ is $\RRRh$-irreduc\-ible if and only if it is $\RRR$-irreducible and, in addition, satisfies $\aa_\nn \not= 1$. We now show that the rule~$\Cut{}$ does not change the properties of the system. Hereafter, we use $\One\nn$ (often abridged as~$\one$) for the $\nn$-multifraction~$1 \sdots 1$, $\nn$~factors.

\begin{lemm}\label{L:RedCut}
If $\MM$ is a gcd-monoid and $\aav, \bbv$ belong to~$\FR\MM$, then $\aav \rdhs \bbv$ holds if, and only if $\aav \rds \bbv \opp \One\pp$ holds for some~$\pp \ge 0$.
\end{lemm}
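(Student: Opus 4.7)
The plan is to prove both directions by exploiting the fact that a $\Cut$-step commutes past a subsequent $\Red{\ii,\xx}$-step in a controlled way, so that every $\rdh$-sequence can be rearranged to perform all genuine reductions first and all cuts at the end.

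\textbf{Swap lemma.} The key technical step is: if $\aav$ has depth~$\nn$ with $\aa_\nn = 1$ and $(\aav\act\Cut)\act\Red{\ii,\xx}=\ccv$, then $\aav\act\Red{\ii,\xx}$ is defined, has depth~$\nn$ with last entry~$1$, and cutting it returns~$\ccv$. This is because Lemma~\ref{L:RedDef}\ITEM1 forces $\ii+1\le\nn-1$, so the reduction only inspects entries at positions~$\ii-1,\ii,\ii+1$, all strictly less than~$\nn$. These entries coincide in $\aav$ and in $\aav\act\Cut$, so the same rule applies to~$\aav$ and simply leaves the trailing entry~$1$ untouched.

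\textbf{Forward direction.} By iterating the swap lemma, any $\rdh$-sequence from~$\aav$ to~$\bbv$ can be reorganized, through finitely many transpositions of adjacent $(\Cut,\Red)$-pairs into $(\Red,\Cut)$-pairs (which strictly decreases the count of out-of-order pairs), into the form $\aav\rds\ccv\rdhs\bbv$ where the second segment consists solely of $\Cut$-steps. Writing $\nn=\dh\ccv$ and $\mm=\dh\bbv$, the multifraction~$\ccv$ is then precisely $\bbv$ followed by $\nn-\mm$ trailing entries equal to~$1$. Unpacking the definition of~$\opp$ shows that $\ccv=\bbv\opp\One\pp$ with $\pp=\nn-\mm$ when $\mm$ is even, with $\pp=\nn-\mm+1$ when $\mm$ is odd and $\nn>\mm$, and with $\pp=0$ when $\nn=\mm$.

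\textbf{Reverse direction.} Conversely, assume $\aav\rds\bbv\opp\One\pp$. Since $\rd\subseteq\rdh$, we have $\aav\rdhs\bbv\opp\One\pp$. The same parity analysis as above shows that $\bbv\opp\One\pp$ coincides with $\bbv$ on the first $\dh\bbv$ entries and has all further entries equal to~$1$, so repeated applications of $\Cut$ strip off the trailing~$1$s and give $\bbv\opp\One\pp\rdhs\bbv$. Transitivity of~$\rdhs$ concludes.

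\textbf{Main obstacle.} The only real subtlety is the parity-dependent behaviour of the product~$\opp$: when $\dh\bbv$ is odd, $\bbv\opp\One1$ equals $\bbv$ itself because the trailing~$1$ is absorbed into $\bb_{\dh\bbv}$. This compels the value of~$\pp$ in the forward direction to depend on the parity of $\dh\bbv$, but is otherwise routine bookkeeping.
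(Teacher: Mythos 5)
Your proof is correct. It takes a different route from the paper's in the forward direction: the paper argues by induction on the length of the $\rdh$-sequence, writing $\aav \rdh^{\mm-1} \ccv \rdh \bbv$ and combining the induction hypothesis $\aav \rds \ccv \opp \One\qq$ with the one-step case $\ccv \rds \bbv \opp \One\rr$ by invoking Lemma~\ref{L:CompatRed} (compatibility of~$\rds$ with the product on~$\FR\MM$) to transport reductions of~$\ccv$ to~$\ccv \opp \One\qq$. Your swap lemma is precisely the special instance of that compatibility that is actually needed --- a rule~$\Red{\ii,\xx}$ only inspects entries $\ii-1,\ii,\ii+1$, and Lemma~\ref{L:RedDef}\ITEM1 forces these to lie strictly below a trailing trivial entry --- and your bubble-sort rearrangement of the derivation replaces the induction. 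What your version buys is self-containedness (no appeal to Lemma~\ref{L:CompatRed}) and a slightly stronger structural statement, namely that every $\rdhs$-derivation can be normalized so that all genuine reductions precede all cuts; what the paper's version buys is brevity, since Lemma~\ref{L:CompatRed} is already in place and no termination argument for the rearrangement is needed. The reverse directions are identical, and your parity bookkeeping for~$\pp$ agrees with the paper's.
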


\begin{proof}
We prove using induction on~$\mm$ that $\aav \rdh^\mm \bbv$ implies the existence of~$\pp$ satisfying $\aav \rds \bbv \opp \One\pp$. This is obvious for $\mm = 0$. For $\mm = 1$, by definition, we have either $\aav \rd \bbv$, whence $\aav \rds \bbv \opp \One0$, or $\bbv = \aav \act \Cut$, whence $\aav \rds \bbv \opp \One\pp$ with $\pp = 1$ for $\dh\aav$ odd and $\pp = 2$ for $\dh\aav$~ even. Assume $\mm \ge 2$. Write $\aav \rdh^{\mm - 1} \ccv \rdh \bbv$. By induction hypothesis, we have $\aav \rds \ccv \opp \One\qq$ and $\ccv \rds \bbv \opp \One\rr$ for some~$\qq, \rr$. By Lemma~Ê\ref{L:CompatRed}, we deduce $\ccv \opp \One\qq \rds \bbv \opp \One\rr \opp \One\qq$, whence, by transitivity, $\aav \rds \bbv \opp \One\rr \opp \One\qq$, which is $\aav \rds \bbv \opp \One\pp$ with $\pp = \qq + \rr$ (\resp $\pp = \qq + \rr - 1$) for~$ \rr$ even (\resp odd).

Conversely, we have $\bbv = (\bbv \opp \One\pp) \act (\Cut)^\pp$ for~$\dh\bbv$ even and $\bbv = (\bbv \opp \One\pp) \act (\Cut)^{\pp - 1}$ for~$\dh\bbv$ odd, so $\aav \rds \bbv \opp \One\pp$ implies $\aav \rdhs \bbv$ in every case.
\end{proof}

\enlargethispage{2mm}

\begin{lemm}\label{L:Cut}
Assume that $\MM$ is a gcd-monoid. 

\ITEM1 The relation~$\rdhs$ is included in~$\simeq$ and it is compatible with multiplication.

\ITEM2 The rewrite system~$\RDh\MM$ is terminating if and only if $\RD\MM$ is.
\end{lemm}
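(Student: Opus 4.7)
For part \ITEM1, the inclusion $\rdhs \subseteq \simeq$ is immediate: Lemma~\ref{L:RedCut} rewrites $\aav \rdhs \bbv$ as $\aav \rds \bbv \opp \One\pp$, Lemma~\ref{L:RedSimeq} upgrades this to $\aav \simeq \bbv \opp \One\pp$, and the auxiliary fact $\One\pp \simeq \ef$ (an easy induction on~$\pp$ using the generator $1/1 \simeq \ef$, which is the instance of $(\aa/\aa, \ef)$ with $\aa = 1$, together with the congruence property) yields $\aav \simeq \bbv$. For compatibility of $\rdhs$ with left multiplication, I write $\aav \rds \bbv \opp \One\pp$ via Lemma~\ref{L:RedCut}, apply Lemma~\ref{L:CompatRed} together with associativity of~$\opp$ to obtain $\ccv \opp \aav \rds (\ccv \opp \bbv) \opp \One\pp$, and conclude $\ccv \opp \aav \rdhs \ccv \opp \bbv$ via Lemma~\ref{L:RedCut} again.

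The main obstacle will be compatibility with right multiplication, because the identity $\aav \opp \ccv = \bbv \opp (\One\pp \opp \ccv)$ inserts a block of trivial entries \emph{in the middle} of the multifraction, out of reach of~$\Cut$. I would handle this by first establishing the auxiliary claim $\One\pp \opp \ccv \rdhs \ccv$ for all $\pp \ge 0$ and $\ccv \in \FR\MM$, which by Lemma~\ref{L:RedCut} reduces to exhibiting an $\rds$-path from $\One\pp \opp \ccv$ to $\ccv \opp \One\qq$ for some~$\qq$. The proof proceeds by induction on~$\dh\ccv$: the base $\ccv = \ef$ is handled by $\pp$ successive applications of~$\Cut$; in the inductive step, I iteratively push~$\cc_1$ leftward through the leading block of trivial entries by reductions of the form $\Red{\ii, \cc_1}$ (each such step moves~$\cc_1$ two positions to the left, leaving two trivial entries in its wake), reaching a configuration of the form $\cc_1 \opp (\One{\pp'} \opp \ccv')$ with $\ccv' = \cc_2/\pdots/\cc_\rr$ of strictly smaller depth and $\pp'$ determined by parity. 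Applying the induction hypothesis to $\One{\pp'} \opp \ccv' \rdhs \ccv'$ and then left-multiplication compatibility yields $\cc_1 \opp (\One{\pp'} \opp \ccv') \rdhs \cc_1 \opp \ccv' = \ccv$. Once the auxiliary claim is in hand, right-multiplication compatibility follows from the chain $\aav \opp \ccv \rds \bbv \opp (\One\pp \opp \ccv) \rdhs \bbv \opp \ccv$, the first step coming from Lemma~\ref{L:CompatRed} and the second from left-multiplication compatibility applied to the auxiliary claim.

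For part \ITEM2, the forward direction is immediate from $\rd \subseteq \rdh$. The converse is a contradiction argument: an infinite $\rdh$-reduction sequence can contain only finitely many $\Cut$-steps (each strictly decreases the depth, a nonnegative integer), so it must end with an infinite tail consisting entirely of $\rd$-steps, contradicting the assumed termination of~$\RD\MM$.
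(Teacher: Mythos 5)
Your overall strategy coincides with the paper's: everything is routed through Lemma~\ref{L:RedCut} to the form $\aav \rds \bbv \opp \One\pp$, the inclusion in~$\simeq$ and left-compatibility follow at once, right-compatibility is handled by commuting the trivial block past~$\ccv$, and part~\ITEM2 is argued exactly as in the paper (finitely many $\Cut$-steps, hence an infinite tail of $\RD\MM$-steps). The one concrete defect is in the inductive step of your auxiliary claim $\One\pp \opp \ccv \rdhs \ccv$. Since $\dh{\cc_1} = 1$ is odd, the product $\cc_1 \opp \ccv'$ with $\ccv' = \cc_2 \sdots \cc_\rr$ equals $\cc_1\cc_2 / \cc_3 \sdots \cc_\rr$, \emph{not} $\ccv$; and the multifraction you actually reach after pushing $\cc_1$ to the front --- namely $\cc_1$ followed by $\pp$ trivial entries followed by $\cc_2 \sdots \cc_\rr$ --- is not of the form $\cc_1 \opp (\One{\pp'} \opp \ccv')$ for any $\pp'$ with that choice of~$\ccv'$. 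The factorization that does hold is $\cc_1 \opp (\One{\pp+1} \opp (1 / \cc_2 \sdots \cc_\rr))$, but there the residual multifraction $1/\cc_2 \sdots \cc_\rr$ has depth $\dh\ccv$, so your induction on~$\dh\ccv$ does not decrease. As written, the inductive step therefore does not close.

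The repair is routine and brings you back to the paper's formulation: rather than recursing on~$\rdhs$, prove directly that $\One\pp \opp \ccv \rds \ccv \opp \One\qq$ for a suitable~$\qq$, by pushing $\cc_1$, then $\cc_2$, then $\cc_3$, \dots, each in turn two positions at a time through the block of trivial entries (the prototype being $1/1/\xx \act \Red{2,\xx} = \xx/1/1$, and its mirror for the positive levels), so that the trivial block migrates to the right end where it is within reach of~$\Cut$; this is exactly the ``easy induction'' the paper invokes. Alternatively, keep your recursive scheme but peel off $\cc_1$ and $1/\cc_2$ together, so that the residual multifraction $\cc_3 \sdots \cc_\rr$ genuinely has smaller depth. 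With either fix the argument is complete; everything else in the proposal is correct.
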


\begin{proof}
\ITEM1 Assume $\aav \rdhs \bbv$. By Lemma~\ref{L:RedCut}, we have $\aav \rds \bbv \opp \One\pp$ for some~$\pp$. First, we deduce $\aav \simeq \bbv \opp \One\pp$, whence $\aav \simeq \bbv$ owing to~\eqref{E:Decomp}. Next, let~$\ccv$ be an arbitrary multifraction. By Lemma~\ref{L:CompatRed}, $\aav \rds \bbv \opp \One\pp$ implies $\ccv \opp \aav \rds \ccv \opp \bbv \opp \One\pp$, whence $\ccv \opp \aav \rdhs \ccv \opp \bbv$. On the other hand, $\aav \rds \bbv \opp \One\pp$ implies $\aav \opp \ccv \rds \bbv \opp \One\pp \opp \ccv$. An easy induction from $1 / 1 / \xx \act \Red{2, \xx} = \xx / 1 / 1$ yields the general relation $\One\pp \opp \ccv \rds \ccv \opp \One\qq$ with $\qq = \pp$ for $\pp$ and~$\dh\ccv$ of the same parity, $\qq = \pp + 1$ for~$\pp$ even and $\dh\ccv$ odd, and $\qq = \pp - 1$ for~$\pp$ odd and $\dh\ccv$ even. We deduce $\aav \opp \ccv \rds \bbv \opp \ccv \opp \One\qq$, whence $\aav \opp \ccv \rdhs \bbv \opp \ccv$.

\ITEM2 As $\RD\MM$ is included in~$\RDh\MM$, the direct implication is trivial. Conversely, applying~$\Cut{}$ strictly diminishes the depth. Hence an $\RDh\MM$-sequence from an $
\nn$-multifraction~$\aav$ contains at most~$\nn$ applications of~$\Cut{}$ and, therefore, an infinite $\RDh\MM$-sequence from~$\aav$ must include a (final) infinite $\RD\MM$-subsequence. 
\end{proof}

We conclude with a direct application providing a two-way connection between the congruence~$\simeq$ and the symmetric closure of~$\rdhs$. This connection is a sort of converse for Lemma~\ref{L:Cut}\ITEM1, and it will be crucial in Section~\ref{S:AppliConv} below.

\begin{prop}\label{P:Zigzag}
If $\MM$ is a gcd-monoid and $\aav, \bbv$ belong to~$\FR\MM$, then $\aav \simeq \bbv$ holds if and only if there exist $\rr \ge 0$ and multifractions $\ccv^0 \wdots \ccv^{2\rr}$ satisfying
\begin{equation}\label{E:Zigzag}
\aav = \ccv^0 \rdhs \ccv^1 \antirdhs \ccv^2 \rdhs \ \pdots \ \antirdhs \ccv^{2\rr} = \bbv.
\end{equation}
\end{prop}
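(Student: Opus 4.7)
The \emph{if} direction is essentially immediate. By Lemma~\ref{L:Cut}\ITEM1, $\rdhs$ is included in~$\simeq$, and $\simeq$ is an equivalence relation, hence symmetric and transitive. So each of the $2\rr$ steps in~\eqref{E:Zigzag} implies a $\simeq$-equivalence between consecutive terms (reading $\antirdhs$ as $\rdhs$ backward and using symmetry), and transitivity yields $\aav \simeq \bbv$.

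For the \emph{only if} direction, the plan is to introduce the binary relation~$\approx$ on~$\FR\MM$ defined by declaring $\aav \approx \bbv$ to hold whenever a zigzag of the form~\eqref{E:Zigzag} exists, and to show that $\approx$ is a congruence on~$(\FR\MM, \opp)$ containing each of the generating pairs of~$\simeq$ listed in Proposition~\ref{P:EnvGroup}\ITEM2. The minimality of~$\simeq$ among such congruences will then give $\simeq \subseteq \approx$, completing the proof.

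That $\approx$ is an equivalence is direct from its definition: reflexivity via $\rr = 0$; symmetry by reversing a zigzag (noting that $\rdhs$ and $\antirdhs$ trade roles along the reversal); transitivity by concatenating zigzags. Compatibility of~$\approx$ with~$\opp$ reduces to the fact that $\rdhs$ is compatible with~$\opp$ (Lemma~\ref{L:Cut}\ITEM1): given a zigzag for~$\aav \approx \bbv$, left- and right-multiplication of every term by a fixed~$\ccv$ produces a zigzag for~$\ccv \opp \aav \opp \ccv' \approx \ccv \opp \bbv \opp \ccv'$.

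It remains to check that each generating pair of~$\simeq$ lies in~$\approx$, and this is the one place where the rules of~$\RDh\MM$ must be unpacked. For the pair $(1, \ef)$, the rule~$\Cut$ directly gives $1 \rdh \ef$. For the pair~$(\aa/\aa, \ef)$ with $\aa \in \MM$, the rule $\Red{1, \aa}$ applies since $\aa$ right divides both entries, giving $\aa/\aa \rd 1/1$, whence $\aa/\aa \rdhs \ef$ by two applications of~$\Cut$. For the pair~$(1/\aa/\aa, \ef)$, the index $\ii = 2$ is negative; the lcm condition is trivial because $\aa$ and $\aa_2 = \aa$ have right lcm~$\aa$, and $\aa$ left divides $\aa_3 = \aa$, so $\Red{2, \aa}$ applies and yields $1/\aa/\aa \rd 1/1/1$, which then reduces to~$\ef$ by three applications of~$\Cut$. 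Hence each generator lies in~$\rdhs$, a fortiori in~$\approx$. The only delicate point is the verification that the second and third families of generators are eligible for the stated rules of~$\RRR$; once this is observed, the rest is formal, and we conclude $\simeq \subseteq \approx$.
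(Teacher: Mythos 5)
Your proposal is correct and follows essentially the same route as the paper: both directions are handled by observing that the zigzag relation $\approx$ is a congruence (via Lemma~\ref{L:Cut}) sandwiched between the generating pairs of~$\simeq$ and $\simeq$ itself, with the same explicit reductions $1 \act \Cut = \ef$, $\aa/\aa \act \Red{1,\aa}\Cut\Cut = \ef$, and $1/\aa/\aa \act \Red{2,\aa}\Cut\Cut\Cut = \ef$ witnessing that the generators lie in~$\approx$.
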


\begin{proof}
Write $\aav \approx \bbv$ when there exists a zigzag as in~\eqref{E:Zigzag}. As $\rdhs$ is reflexive and transitive, $\approx$ is an equivalence relation. By Lemma~\ref{L:Cut}\ITEM1, $\rdhs$ is included in~$\simeq$, which is symmetric, hence $\approx$ is included in~$\simeq$. Next, by Lemma~\ref{L:Cut} again, $\rdhs$ is compatible with multiplication, hence so is~$\approx$. Hence, $\approx$ is a congruence included in~$\simeq$. As we have $1 \act \Cut = \ef$ and, for every~$\aa$ in~$\MM$, 
$$\aa / \aa \act \Red{1, \aa}\Cut\Cut = \ef, \quad 1/ \aa / \aa \act \Red{2, \aa}\Cut\Cut\Cut = \ef,$$
the relations $1 \approx \ef$, $\aa / \aa \approx \ef$ and $1/ \aa / \aa \approx \ef$ hold. By definition, $\simeq$ is the congruence generated by the pairs above, hence $\approx$ and $\simeq$ coincide.
\end{proof} 

Using the connection between~$\rds$ and~$\rdhs$, we deduce

\begin{coro}
If $\MM$ is a gcd-monoid and $\aav, \bbv$ belong to~$\FR\MM$, then $\aav \simeq \bbv$ holds if and only if there exist $\pp, \qq, \rr \ge 0$ and multifractions $\ddv^0 \wdots \ddv^{2\rr}$ satisfying
\begin{equation}\label{E:Zigzag1}
\aav \opp \One\pp = \ddv^0 \rds \ddv^1 \antirds \ddv^2 \rds \ \pdots \ \antirds \ddv^{2\rr} = \bbv \opp \One\qq.
\end{equation}
\end{coro}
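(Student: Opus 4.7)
The ``if'' direction is immediate from Lemma~\ref{L:RedSimeq} (which gives $\rds \subseteq \simeq$) combined with the identity $\aav \opp \One\pp \simeq \aav$, itself a consequence of the generator $1 \simeq \ef$ of~$\simeq$.

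For the ``only if'' direction, apply Prop~\ref{P:Zigzag} to obtain an $\rdhs$-zigzag
$$\aav = \ccv^0 \rdhs \ccv^1 \antirdhs \ccv^2 \rdhs \cdots \antirdhs \ccv^{2\rr} = \bbv,$$
and then translate each arrow via Lemma~\ref{L:RedCut}: for each $\kk < \rr$ there exist $\ss_\kk, \tt_\kk \ge 0$ with $\ccv^{2\kk} \rds \ccv^{2\kk+1} \opp \One{\ss_\kk}$ and $\ccv^{2\kk+2} \rds \ccv^{2\kk+1} \opp \One{\tt_\kk}$. The two sides of each peak then arrive with (\emph{a priori} distinct) paddings, so these arrows do not yet assemble into a pure $\rds$-zigzag. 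The strategy is to unify the paddings by invoking Lemma~\ref{L:CompatRed}, which ensures that right-multiplying an $\rds$-chain by the same multifraction yields another $\rds$-chain.

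I would proceed by induction on~$\rr$, the base case $\rr = 0$ being trivial. For the inductive step, having obtained a pure $\rds$-zigzag from $\ccv^2 \opp \One{\pp'}$ to $\bbv \opp \One\qq$ (for the sub-zigzag with $\rr-1$ peaks), I would attach a new first peak by choosing $\pp, \gamma \ge 0$ satisfying $\One{\ss_0} \opp \One\pp = \One{\tt_0} \opp \One{\pp' + \gamma}$, after having uniformly padded the inductive sub-zigzag by $\One\gamma$ (permitted by Lemma~\ref{L:CompatRed}). This matching condition reduces, via the elementary identity $\One{\nn} \opp \One\mm = \One{\nn+\mm}$ for $\nn$ even and $\One{\nn} \opp \One\mm = \One{\nn+\mm-1}$ for $\nn$ odd (with $\mm \ge 1$), to a simple linear equation with a parity correction, and taking $\gamma$ large enough always yields a nonnegative solution. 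The principal obstacle is exactly this bookkeeping with trivial-entry paddings, since the constraints at successive peaks are linked through the shared intermediate valleys; the freedom to uniformly enlarge all paddings makes the system always solvable, producing the desired $\rds$-zigzag from $\aav \opp \One\pp$ to $\bbv \opp \One\qq$.
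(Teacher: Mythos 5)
Your proof is correct, and it rests on the same three ingredients as the paper's argument (Prop.~\ref{P:Zigzag}, Lemma~\ref{L:RedCut}, and Lemma~\ref{L:CompatRed}); the difference lies entirely in how the paddings at the valleys are reconciled, which is the only real content of the deduction. You do this by induction on the number of peaks, uniformly enlarging the tail of the zigzag and solving a small padding equation at each junction; this works, because by associativity of~$\opp$ each valley is $\ccv^{2\kk+1}$ followed by trailing trivial entries and is therefore determined by its depth alone, and the sets of achievable depths on the two sides of a peak both contain all sufficiently large integers, so they can always be matched. The paper dissolves the bookkeeping in one stroke: it sets $\nn = \max_\ii \dh{\ccv^\ii}$, pads every $\ccv^\ii$ to a single $\nn$-multifraction $\ddv^\ii = \ccv^\ii \opp \One\mm$ \emph{before} translating the arrows, and then observes that, since $\rds$ preserves depth, the $\RRR$-reduct $\ccv^{2\ii\pm1} \opp \One\qq$ of~$\ddv^{2\ii}$ supplied by Lemmas~\ref{L:RedCut} and~\ref{L:CompatRed} has depth~$\nn$ and is therefore forced to coincide with~$\ddv^{2\ii\pm1}$. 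This removes the induction and the linear equations altogether: the obstacle you identify as principal disappears once the common target depth is fixed globally rather than peak by peak.
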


\begin{proof}
Assume that $\aav$ and~$\bbv$ are connected as in~\eqref{E:Zigzag}. Let $\nn = \max(\dh{\ccv^0} \wdots \dh{\ccv^{2\rr}})$ and, for each~$\ii$, let $\ddv^\ii$ be an $\nn$-multifraction of the form~$\ccv^\ii \opp \One{\mm}$. By Lemma~\ref{L:RedCut}, $\ccv^{2\ii} \rdhs \ccv^{2\ii - 1}$ implies $\ccv^{2\ii} \rds \ccv^{2\ii - 1} \opp \One\pp$ for some~$\pp$, whence, by Lemma~\ref{L:CompatRed}, $\ddv^{2\ii} \rds \ccv^{2\ii - 1} \opp \One\qq$ for some~$\qq$. As $\rds$ preserves depth, the latter multifraction must be~$\ddv^{2\ii - 1}$. The argument for~$\ddv^{2\ii} \rds \ddv^{2\ii + 1}$ is similar.
\end{proof}

%%%%
\section{Confluence of reduction}\label{S:Confl}

As in the case of free reduction and of any rewrite system, we are interested in the case when the system~$\RD\MM$ and its variant~$\RDh\MM$ are convergent, meaning that every element, here, every multifraction, reduces to a unique irreducible one. In this section, we first recall in Subsection~\ref{SS:ConfConf} the connection between the convergence of~$\RD\MM$ and its local confluence~\eqref{E:LocConf}, and we show that $\RD\MM$ and $\RDh\MM$ are similar in this respect. We thus investigate the possible local confluence of~$\RD\MM$ in Subsection~\ref{SS:LocConf}. This leads us to introduce in Subsection~\ref{SS:3Ore} what we call the $3$-Ore condition.

%%%%
\subsection{Convergence, confluence, and local confluence}\label{SS:ConfConf}

A rewrite system, here on~$\FR\MM$, is called \emph{confluent} when
\begin{equation}\label{E:Conf}
\parbox{113mm}{If we have $\aav \rds \bbv$ and $\aav \rds \ccv$, there exists~$\ddv$ satisfying $\bbv \rds \ddv$ and $\ccv \rds \ddv$}
\end{equation}
(``diamond property''), and it is called \emph{locally confluent} when
\begin{equation}\label{E:LocConf}
\parbox{113mm}{If we have $\aav \rd \bbv$ and $\aav \rd \ccv$, there exists~$\ddv$ satisfying $\bbv \rds \ddv$ and $\ccv \rds \ddv$.}
\end{equation}
By Newman's classical Diamond Lemma~\cite{New, DeJ}, a terminating rewrite system is convergent if and only if it is confluent, if and only if it is locally confluent. In the current case, we saw in Prop.\,\ref{P:Termin} and Lemma~\ref{L:Cut} that, under the mild assumption that the ground monoid~$\MM$ is noetherian, the systems~$\RD\MM$ and~$\RDh\MM$ are terminating. So the point is to investigate the possible local confluence of these systems. Once again, $\RD\MM$ and~$\RDh\MM$ behave similarly.

\begin{lemm}\label{L:RedRedh}
If $\MM$ is a gcd-monoid, then $\RDh\MM$ is locally confluent if and only if $\RD\MM$ is.
\end{lemm}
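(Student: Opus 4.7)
The proof splits into two implications, both pivoting on Lemma~\ref{L:RedCut} and on the observation that, aside from a trivial divisor case, a $\Cut$-step commutes with any nontrivial $\rd$-step.

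For the implication ``$\RD\MM$ locally confluent $\Rightarrow$ $\RDh\MM$ locally confluent'', I would examine a peak $\bbv \antirdh \aav \rdh \ccv$ case by case. If both steps are $\rd$-steps, local confluence of $\RD\MM$ closes the peak. If both are $\Cut$-steps, then $\bbv = \ccv$ and there is nothing to do. The remaining case is $\bbv = \aav \act \Red{\ii, \xx}$ with $\xx \ne 1$ and $\ccv = \aav \act \Cut$, so the last entry of~$\aav$, at position $\nn := \dh\aav$, equals~$1$. The key observation is that $\ii + 1 < \nn$ must hold: since $\ii < \nn$ (Lemma~\ref{L:RedDef}\ITEM1), the only alternative is $\ii + 1 = \nn$, and then the reduction rule would demand that $\xx$ left- or right-divide $\aa_{\ii+1} = 1$, forcing $\xx$ invertible and hence $\xx = 1$, a contradiction. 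Thus $\Red{\ii, \xx}$ only affects entries of index at most $\ii + 1 < \nn$, so $\bb_\nn = 1$ and a direct comparison yields $\bbv \act \Cut = \ccv \act \Red{\ii, \xx}$, providing a common $\rdh$-reduct in a single step on each side.

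For the converse implication ``$\RDh\MM$ locally confluent $\Rightarrow$ $\RD\MM$ locally confluent'', suppose $\bbv \antird \aav \rd \ccv$. Since every $\rd$-step is also an $\rdh$-step, local confluence of~$\RDh\MM$ furnishes a multifraction~$\ddv'$ with $\bbv \rdhs \ddv'$ and $\ccv \rdhs \ddv'$. Lemma~\ref{L:RedCut} then gives $\pp_1, \pp_2 \ge 0$ with $\bbv \rds \ddv' \opp \One{\pp_1}$ and $\ccv \rds \ddv' \opp \One{\pp_2}$. Since both $\rd$ and $\rds$ preserve depth, the two multifractions $\ddv' \opp \One{\pp_1}$ and $\ddv' \opp \One{\pp_2}$ share the common depth $\dh\aav$. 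A routine inspection of the definition of~$\opp$ shows that, given a fixed total depth, the padding $\ddv' \opp \One\pp$ is uniquely determined as a multifraction (the only ambiguity being $\ddv' \opp \One0 = \ddv' \opp \One1$ when $\dh{\ddv'}$ is odd, which yields the same object), so the two reducts coincide and provide the desired common $\rds$-reduct~$\ddv$.

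The only substantive step is the commutation argument in the first direction, where the hypothesis $\xx \ne 1$ together with Lemma~\ref{L:RedDef}\ITEM1 rules out interaction between the $\Cut$ and the $\Red{\ii,\xx}$; the parity check in the converse is a formal matter once Lemma~\ref{L:RedCut} is applied.
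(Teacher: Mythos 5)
Your proof is correct and follows essentially the same route as the paper's: the mixed case $\Red{\ii,\xx}$ vs.\ $\Cut$ is handled by the same observation that $\xx\ne 1$ forces $\aa_{\ii+1}\ne 1$, hence $\ii+1<\nn$ and the two steps commute, and the converse uses Lemma~\ref{L:RedCut} together with depth preservation of $\rds$ exactly as in the paper. The extra remark about the ambiguity $\ddv'\opp\One0=\ddv'\opp\One1$ for odd depth is a harmless refinement of the paper's ``$\pp=\qq$''.
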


\begin{proof}
Assume that $\RD\MM$ is locally confluent. To establish that $\RDh\MM$ is locally confluent, it suffices to consider the mixed case $\bbv = \aav \act \Red{\ii, \xx}$, $\ccv = \aav \act \Cut$. So assume that $\aav \act \Cut$ and~$\aav \act \Red{\ii, \xx}$ are defined, with $\xx \not= 1$. Let~$\nn = \dh\aav$. The assumption that $\aav \act \Cut$ is defined implies $\aa_\nn = 1$, whereas the assumption that $\aav \act \Red{\ii, \xx}$ is defined with $\xx \not= 1$ implies that $\xx$ divides~$\aa_{\ii + 1}$ (on the relevant side), whence $\aa_{\ii + 1} \not= 1$. So the only possibility is $\ii + 1 < \nn$. Then we immediately check that $\aav \act \Cut\Red{\ii, \xx}$ and $\aav \act \Red{\ii, \xx}\Cut$ are defined, and that they are equal, \ie, \eqref{E:LocConf} is satisfied for $\ddv = \aav \act \Cut\Red{\ii, \xx}$.

Conversely, assume that $\RDh\MM$ is locally confluent, and we have $\aav \rd \bbv$ and $\aav \rd \ccv$. By assumption, we have $\bbv \rdhs \ddv$ and $\ccv \rdhs \ddv$ for some~$\ddv$. By Lemma~\ref{L:RedCut}, we deduce the existence of~$\pp, \qq$ satisfying $\bbv \rds \ddv \opp \One\pp$ and $\ccv \rds \ddv \opp \One\qq$ and, as $\rds$ preserves depth, we must have $\pp = \qq$, and $\ddv \opp \One\pp$ provides the expected common $\RRR$-reduct.
\end{proof}

Thus we can summarize the situation in

\begin{prop}\label{P:ConvConf}
If $\MM$ is a noetherian gcd-monoid, the following are equivalent:

\ITEM1 The rewrite system~$\RDh\MM$ is convergent;

\ITEM2 The rewrite system~$\RD\MM$ is convergent;

\ITEM3 The rewrite system~$\RD\MM$ is locally confluent, \ie, it satisfies~\eqref{E:LocConf}.
\end{prop}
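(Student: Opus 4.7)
The plan is to chain together the auxiliary results already established in the excerpt and invoke Newman's Diamond Lemma as the bridge between termination plus local confluence on one side and full convergence on the other.

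Under the standing hypothesis that $\MM$ is a noetherian gcd-monoid, Proposition~\ref{P:Termin} gives that $\RD\MM$ is terminating, and Lemma~\ref{L:Cut}\ITEM2 extends termination to $\RDh\MM$. Newman's Lemma then asserts that, for any terminating rewrite system, convergence is equivalent to local confluence. Applied to $\RD\MM$, this yields the equivalence \ITEM2 $\Leftrightarrow$ \ITEM3 directly.

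For the remaining equivalence \ITEM1 $\Leftrightarrow$ \ITEM3, I would apply Newman's Lemma once more, this time to $\RDh\MM$: as it is terminating, it is convergent if and only if it is locally confluent. Lemma~\ref{L:RedRedh} then establishes precisely that local confluence of $\RDh\MM$ is equivalent to local confluence of $\RD\MM$, closing the circle.

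Since all the substantive technical work---termination of both rewrite systems and the transfer of local confluence between $\RD\MM$ and $\RDh\MM$---has been done in the preceding lemmas, the argument here is an essentially routine assembly, and I do not anticipate any real obstacle; the actual difficulty of the chapter lives downstream, in the analysis of the critical pairs for \ITEM3 and in identifying the $3$-Ore hypothesis as the condition that secures local confluence.
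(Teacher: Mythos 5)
Your assembly is exactly the paper's: termination of $\RD\MM$ from Prop.~\ref{P:Termin}, termination of $\RDh\MM$ from Lemma~\ref{L:Cut}\ITEM2, Newman's Diamond Lemma to reduce convergence to local confluence for each system, and Lemma~\ref{L:RedRedh} to transfer local confluence between the two. The argument is correct and matches the intended proof.
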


%%%%
\subsection{Local confluence of~$\RD\MM$}\label{SS:LocConf}

In order to study the possible local confluence of~$\RD\MM$, we shall assume that a multifraction~$\aav$ is eligible for two rules~$\Red{\ii, \xx}$ and~$\Red{\jj, \yy}$, and try to find a common reduct for $\aav \act \Red{\ii, \xx}$ and~$\aav \act \Red{\jj, \yy}$. The situation primarily depends on the distance between~$\ii$ and~$\jj$. We begin with the case of remote reductions ($\vert\ii - \jj\vert \ge 2$).

\begin{lemm}\label{L:Distant}
Assume that both $\aav \act \Red{\ii, \xx}$ and $\aav \act \Red{\jj, \yy}$ are defined and $\vert\ii - \jj\vert \ge 2$ holds. Then $\aav \act \Red{\ii, \xx}\Red{\jj, \yy}$ and $\aav \act \Red{\jj, \yy}\Red{\ii, \xx}$ are defined and equal.
\end{lemm}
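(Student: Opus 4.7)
The plan is to distinguish two cases according to the distance $\vert\ii - \jj\vert$; without loss of generality, assume $\jj \ge \ii + 2$. In either case, the argument reduces to observing that the rewriting windows of the two rules are (almost) disjoint and that the preconditions of each rule involve only entries inside its own window.

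Consider first the \emph{remote} case $\jj \ge \ii + 3$. The entries possibly modified by $\Red{\ii, \xx}$ lie in $\{\ii - 1, \ii, \ii + 1\}$ (or $\{1, 2\}$ if $\ii = 1$), while those modified by $\Red{\jj, \yy}$ lie in $\{\jj - 1, \jj, \jj + 1\}$; the two sets are disjoint. The defining conditions of each rule in Def.~\ref{D:Red} (existence of a common multiple with $\aa_\ii$, divisibility of~$\aa_{\ii+1}$, and symmetrically for $\jj$) involve only the entries of its own window, hence survive the other reduction. Both compositions then agree entry by entry by a direct inspection, with the \emph{same} witnesses $\xx'$ and $\yy'$ in each order.

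The interesting case is $\jj = \ii + 2$, where the two windows overlap in the single entry of index $\ii + 1$. Since $\ii$ and $\jj$ have the same parity, both rules are of the same left/right flavour; I will treat the case in which both are negative in $\aav$, the positive case being obtained by left/right symmetry. Let $\xx'$ and $\yy'$ be the witnesses determined by $\aa_\ii \xx' = \xx \lcm \aa_\ii$ and $\aa_{\ii+2} \yy' = \yy \lcm \aa_{\ii+2}$. The rule $\Red{\ii, \xx}$ replaces~$\aa_{\ii+1}$ by the unique $\bb_{\ii+1}$ with $\xx \bb_{\ii+1} = \aa_{\ii+1}$, while $\Red{\ii+2, \yy}$ right-multiplies $\aa_{\ii+1}$ by $\yy'$. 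Since these two modifications act on opposite sides of $\aa_{\ii+1}$, associativity gives $\bb_{\ii+1} \yy'$ as the $(\ii+1)$st entry of both $\aav \act \Red{\ii, \xx}\Red{\jj, \yy}$ and $\aav \act \Red{\jj, \yy}\Red{\ii, \xx}$. The entry $\aa_\ii$, involved in the preconditions of $\Red{\ii, \xx}$, is untouched by $\Red{\jj, \yy}$, and similarly $\aa_\jj, \aa_{\jj+1}$ are untouched by $\Red{\ii, \xx}$; hence both rules remain applicable in either order, with the same $\xx', \yy'$. The neighbouring entries $\aa_{\ii-1}$ and $\aa_{\ii+3}$ are then modified identically by the two compositions.

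The only obstacle is bookkeeping: Def.~\ref{D:Red} splits into three forms (negative~$\ii$, positive $\ii \ge 3$, positive $\ii = 1$), so a handful of subcases must be written out, notably $\ii = 1,\ \jj = 3$, where no $(\ii - 1)$st entry exists to absorb the remainder $\xx'$. In that subcase the very same associativity observation on entry~$2$ gives the equality, the difference being merely the absence of $\xx'$. No hypothesis on $\MM$ beyond being a gcd-monoid is used.
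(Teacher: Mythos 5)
Your proof is correct and follows essentially the same route as the paper's: disjoint rewriting windows for $\vert\ii-\jj\vert\ge 3$, and for $\vert\ii-\jj\vert=2$ the observation that the two rules act on opposite sides of the single shared entry, with the same witnesses $\xx',\yy'$ since the entries governing the lcms are untouched. The only step worth making explicit is that the divisibility precondition of one rule survives the other reduction because the shared entry is only multiplied on the far side (e.g.\ $\xx\dive\aa_{\ii+1}$ implies $\xx\dive\aa_{\ii+1}\yy'$) — this is precisely the one verification the paper spells out.
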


\begin{proof}
The result is straightforward for $\vert\ii-\jj\vert \ge 3$ since, in this case, the three indices $\ii - 1, \ii, \ii + 1$ involved in the definition of~$\Red{\ii, \xx}$ are disjoint from the three indices $\jj - 1, \jj, \jj + 1$ involved in that of~$\Red{\jj, \yy}$ and, therefore, the two actions commute.

The case $\vert\ii-\jj\vert = 2$ is not really more difficult. Assume for instance $\ii$ negative in~$\aav$ and $\ii = \jj + 2$ (see Fig.\,\ref{F:LocConfDisj}). Put $\bbv := \aav \act \Red{\ii, \xx}$ and $\ccv := \aav \act \Red{\jj, \yy}$. By definition of~$\Red{\ii, \xx}$, we have $\bb_{\jj + 1} \multe \aa_{\jj + 1}$ and $\bb_\jj = \aa_\jj$, so $\yy \dive \aa_{\jj + 1}$ implies $\yy \dive \bb_{\jj + 1}$, and the assumption that $\aav \act \Red{\jj, \yy}$ is defined implies that $\bbv \act \Red{\jj, \yy}$ is defined. Similarly, we have $\cc_{\ii+1} = \aa_{\ii+1}$ and $\cc_\ii = \aa_\ii$, so the assumption that $\aav \act \Red{\ii, \xx}$ is defined implies that $\ccv \act \Red{\ii, \xx}$ is defined too. Then we have $\bbv \act \Red{\jj, \yy} = \ccv \act \Red{\ii, \xx} = \ddv$, with
\begin{multline*}
\dd_{\ii - 3} = \aa_{\ii - 3}\yy', \ \yy\dd_{\ii - 2} = \aa_{\ii - 2} \yy' = \yy \lcm \aa_{\ii - 2}, \ \yy\dd_{\ii - 1} = \aa_{\ii - 1}\xx', \ \xx \dd_\ii = \aa_\ii \xx' = \xx \lcm \aa_\ii, \ \xx \dd_{\ii+1} = \aa_{\ii+1}. 
\end{multline*}
The argument for $\ii$ positive in~$\aav$ is symmetric.
\end{proof}

\begin{figure}[htb]
\begin{picture}(75,16)(0,-2)
\psset{nodesep=0.7mm}
\put(-7,6){$...$}
\psline[style=back,linecolor=color2]{c-c}(0,6)(15,0)(60,0)(75,6)
\psline[style=back,linecolor=color1]{c-c}(0,6)(15,12)(60,12)(75,6)
\pcline{->}(0,6)(15,12)\taput{$\aa_{\jj - 1}$}
\pcline{->}(0,6)(15,0)\tbput{$\dd_{\jj - 1}$}
\pcline{<-}(15,12)(30,12)\taput{$\aa_\jj$}
\pcline{<-}(15,0)(30,0)\tbput{$\dd_\jj$}
\pcline{->}(30,12)(45,12)\taput{$\aa_{\ii - 1}$}
\pcline{->}(30,0)(45,0)\tbput{$\dd_{\ii - 1}$}
\pcline{<-}(45,12)(60,12)\taput{$\aa_\ii$}
\pcline{<-}(45,0)(60,0)\tbput{$\dd_\ii$}
\pcline{->}(60,12)(75,6)\taput{$\aa_{\ii + 1}$}
\pcline{->}(60,0)(75,6)\tbput{$\dd_{\ii + 1}$}
\pcline{->}(30,12)(30,0)\trput{$\yy$}
\pcline{->}(15,12)(15,0)\tlput{$\yy'$}
\pcline{->}(60,12)(60,0)\trput{$\xx$}
\pcline{->}(45,12)(45,0)\tlput{$\xx'$}
\psarc[style=thin](15,0){3}{0}{90}
\psarc[style=thin](45,0){3}{0}{90}
\put(79,6){$...$}
\end{picture}
\caption{\small Local confluence of reduction for $\jj = \ii - 2$ (case $\ii$ negative in~$\aav$): starting with the grey path, if we can both push~$\xx$ through~$\aa_\ii$ at level~$\ii$ and~$\yy$ through~$\aa_{\ii - 2}$ at level~$\ii-2$, we can start with either and subsequently converge to the colored path.}
\label{F:LocConfDisj}
\end{figure}

We turn to $\vert\ii - \jj\vert = 1$. This case is more complicated, but confluence can always be realized. 

\begin{lemm}\label{L:LocConfSucc}
Assume that both $\aav \act \Red{\ii, \xx}$ and $\aav \act \Red{\jj, \yy}$ are defined and $\ii = \jj + 1$ holds. Then there exist~$\vv$ and~$\ww$ such that $\aav \act \Red{\ii, \xx}\Red{\ii-1,\vv}$ and $\aav \act \Red{\ii-1,\yy} \Red{\ii, \xx} \Red{\ii-2, \ww}$ are defined and equal.
\end{lemm}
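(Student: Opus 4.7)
I would handle the case when $\ii$ is negative in $\aav$ and $\ii \ge 3$ (the case $\ii$ positive in $\aav$ is symmetric, exchanging left and right; the boundary case $\ii = 2$ requires only minor adjustment, since then the last reduction $\Red{\ii-2,\ww}$ becomes vacuous and the factor $\ww$ produced below turns out to be trivial). Set $\bbv := \aav \act \Red{\ii, \xx}$ and $\ccv := \aav \act \Red{\ii-1, \yy}$. Unpacking Def.~\ref{D:Red} gives
\[
\xx \bb_\ii = \aa_\ii \xx' = \xx \lcm \aa_\ii, \quad \bb_{\ii-1} = \aa_{\ii-1}\xx', \quad \xx \bb_{\ii+1} = \aa_{\ii+1},
\]
\[
\aa_\ii = \cc_\ii \yy, \quad \cc_{\ii-1}\yy = \yy'\aa_{\ii-1} = \yy \lcmt \aa_{\ii-1}, \quad \cc_{\ii-2} = \yy'\aa_{\ii-2},
\]
which amount to two different decompositions of $\aa_\ii$ that must be reconciled.

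The first key step is to apply $\Red{\ii, \xx}$ to $\ccv$. Since $\cc_\ii \dive \aa_\ii$ and $\xx \lcm \aa_\ii$ exists, Lemma~\ref{L:IterLcm} guarantees that $\xx \lcm \cc_\ii$ exists; write $\xx \lcm \cc_\ii = \xx \dd_\ii = \cc_\ii \xx''$. Then $\ccv \act \Red{\ii, \xx}$ is defined and its entries at levels $\ii{-}1, \ii, \ii{+}1$ are $\cc_{\ii-1}\xx''$, $\dd_\ii$, $\bb_{\ii+1}$. Because $\xx \lcm \cc_\ii \dive \xx \lcm \aa_\ii = \xx\bb_\ii$, left-cancellation of $\xx$ produces $\zz$ with $\dd_\ii \zz = \bb_\ii$; left-cancelling $\cc_\ii$ in $\cc_\ii \xx'' \zz = \xx \dd_\ii \zz = \xx \bb_\ii = \aa_\ii \xx' = \cc_\ii \yy \xx'$ then yields the key identity
\[
\xx'' \zz = \yy \xx'.
\]

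Next I would set $\vv := \zz$ and show $\bbv \act \Red{\ii-1, \zz}$ is defined. Indeed $\zz$ right-divides $\bb_\ii = \dd_\ii\zz$, while the computation
\[
\cc_{\ii-1}\xx''\zz = \cc_{\ii-1}\yy\xx' = \yy'\aa_{\ii-1}\xx' = \yy'\bb_{\ii-1}
\]
exhibits a common left multiple of $\zz$ and $\bb_{\ii-1}$. Writing the left lcm as $\zz \lcmt \bb_{\ii-1} = \bb^*_{\ii-1} \zz = \vv^* \bb_{\ii-1}$, the common multiple above factors through it, producing a unique $\ww$ with $\cc_{\ii-1}\xx'' = \ww\,\bb^*_{\ii-1}$ and $\yy' = \ww\,\vv^*$.

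Finally I would take this $\ww$ and verify that $\Red{\ii-2, \ww}$ is defined on $\ccv \act \Red{\ii, \xx}$ and that the two target multifractions coincide. Eligibility is clear: $\ww$ left-divides $\cc_{\ii-1}\xx''$, which is the $(\ii{-}1)$st entry of $\ccv \act \Red{\ii, \xx}$, and $\ww$ left-divides $\yy' = \ww\vv^*$, hence left-divides $\yy'\aa_{\ii-2} = \cc_{\ii-2}$, so the right lcm $\ww \lcm \cc_{\ii-2}$ equals $\cc_{\ii-2}$ itself and the residue pushed to level $\ii{-}3$ is $1$. A direct unpacking then shows that the entries at levels $\ii{-}2, \ii{-}1, \ii$ of both $\ccv \act \Red{\ii, \xx}\Red{\ii-2, \ww}$ and $\bbv \act \Red{\ii-1, \zz}$ equal $\vv^* \aa_{\ii-2}$, $\bb^*_{\ii-1}$, $\dd_\ii$ respectively, while the level $\ii{+}1$ entry is $\bb_{\ii+1}$ on both sides and all other levels are untouched. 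The main obstacle is keeping the bookkeeping straight across three successive reductions; the essential conceptual point is that the entire picture fits inside a single iterated right-lcm of $\xx$, $\cc_\ii$, $\yy$ supplied by Lemma~\ref{L:IterLcm}, from which $\zz$ and $\ww$ are merely the two cofactors read off the corresponding commutative diagram.
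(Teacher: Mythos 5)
Your proof is correct and follows essentially the same route as the paper's: the same unpacking of the two reductions, the same appeal to Lemma~\ref{L:IterLcm} to split $\xx\lcm\aa_\ii$ through $\aa_\ii=\cc_\ii\yy$ (your $\dd_\ii,\xx'',\zz$ are the paper's $\uu,\xx'',\vv$), the same identification of $\yy'\bb_{\ii-1}$ as a common left multiple of $\zz$ and $\bb_{\ii-1}$ whose factorization through the left lcm produces $\ww$, and the same treatment of the degenerate case $\ii=2$. The only difference is cosmetic bookkeeping (you extract $\ww$ before computing the final multifraction rather than after), so there is nothing further to add.
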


\begin{proof}
(See Fig.\,\ref{F:LocConfSucc}.) Assume $\ii$ negative in~$\aav$. Put $\bbv := \aav \act \Red{\ii, \xx}$ and $\ccv := \aav \act \Red{\ii-1,\yy}$. By definition, there exist~$\xx', \yy'$ satisfying 
\begin{gather*}
\bb_{\ii-2} = \aa_{\ii-2}, \quad \bb_{\ii-1} = \aa_{\ii - 1} \xx', \quad \xx\bb_\ii = \aa_\ii\xx' = \xx \lcm \aa_\ii, \quad \xx \bb_{\ii+1} = \aa_{\ii+1},\\
\cc_{\ii-2} = \yy' \aa_{\ii-2}, \quad \cc_{\ii-1}\yy = \yy' \aa_{\ii - 1} = \yy \lcmt \aa_{\ii - 1}, \quad \cc_\ii \yy = \aa_\ii, \quad \cc_{\ii+1} = \aa_{\ii+1}.
\end{gather*}
As $\aa_\ii$ is~$\cc_\ii \yy$ and $\aa_\ii \xx' = \xx \bb_\ii$ is the right lcm of~$\xx$ and~$\aa_\ii$, Lemma~\ref{L:IterLcm} implies the existence of~$\xx''$, $\uu$, and $\vv$ satisfying
\begin{equation}\label{E:AdjCase1}
\bb_\ii = \uu \vv \quad \text{with} \quad \cc_\ii \xx'' = \xx \uu = \xx \lcm \cc_\ii \quand \yy \xx' = \xx'' \vv = \yy \lcm \xx''.
\end{equation}

Let us first consider~$\ccv$. By construction, $\xx$ left divides~$\cc_{\ii + 1}$, which is~$\aa_{\ii + 1}$, and $\xx \lcm \cc_\ii$ exists. Hence $\ccv \act \Red{\ii, \xx}$ is defined. Call it~$\ddv$. The equalities of~\eqref{E:AdjCase1} imply
\begin{equation*}
\dd_{\ii-2} = \cc_{\ii-2} = \yy' \aa_{\ii - 2}, \quad \dd_{\ii-1} = \cc_{\ii - 1} \xx'', \quad \dd_\ii = \uu, \quad \dd_{\ii+1} = \bb_{\ii+1}.
\end{equation*}

Next, by~\eqref{E:AdjCase1} again, $\vv$ right divides~$\bb_\ii$, and we have
\begin{equation}\label{E:AdjCase2}
\yy' \bb_{\ii - 1} = \yy' \aa_{\ii - 1} \xx' = \cc_{\ii - 1} \yy \xx' = \cc_{\ii - 1} \xx'' \vv,
\end{equation}
which shows that $\vv$ and~$\bb_{\ii - 1}$ admit a common left multiple, hence a left lcm. It follows that $\bbv \act \Red{\ii - 1, \vv}$ is defined. Call it~$\eev$. By definition, we have
\begin{equation*}
\ee_{\ii-2} = \vv' \bb_{\ii - 2} = \vv' \aa_{\ii-2}, \quad \ee_{\ii-1} \vv = \vv' \bb_{\ii - 1} = \vv \lcmt \bb_{\ii - 1}, \quad \ee_\ii \vv = \bb_\ii, \quad \ee_{\ii+1} = \bb_{\ii+1}
\end{equation*}
(the colored path in Fig.\,\ref{F:LocConfSucc}). 

Now, merging $ \ee_\ii \vv = \bb_\ii$ with $\bb_\ii = \uu \vv$ in~\eqref{E:AdjCase1}, we deduce $\ee_\ii = \uu = \dd_\ii$. On the other hand, we saw in~\eqref{E:AdjCase2} that $\yy' \bb_{\ii - 1}$, which is also $\dd_{\ii - 1} \vv$, is a common left multiple of~$\vv$ and~$\bb_{\ii - 1}$, whereas, by definition, $\vv' \bb_{\ii - 1}$, which is also $\ee_{\ii - 1} \vv$, is the left lcm of~$\vv$ and~$\bb_{\ii - 1}$. By the definition of a left lcm, there must exist~$\ww$ satisfying
\begin{equation}\label{E:AdjCase3}
\yy' = \ww \vv' \quand \dd_{\ii - 1} = \ww \ee_{\ii - 1}.
\end{equation}
From the left equality in~\eqref{E:AdjCase3}, we deduce $\dd_{\ii - 2} = \yy' \aa_{\ii - 2} = \ww \vv' \aa_{\ii - 2} = \ww \ee_{\ii - 2}$. Hence $\eev$ is obtained from~$\ddv$ by left dividing the $(\ii - 2)$nd and $(\ii - 1)$st entries by~$\ww$. This means that $\eev = \ddv \act \Red{\ii - 2, \ww}$ holds, completing the argument in the general case.

In the particular case $\ii = 2$, $\jj = 1$, the assumption that $\Red{1, \yy}$ is defined implies $\yy \divt \aa_{\ii - 1}$, which, with the above notation, implies $\yy'= 1$. In this case, we necessarily have $\ww = \vv' = 1$, so that $\Red{1, \vv}$ is well defined, and the result remains valid at the expense of forgetting about the term~$\Red{\ii-2, \ww}$, which is then trivial. 

 Finally, the case $\ii$ positive in~$\aav$ is addressed similarly.
\end{proof}

\begin{figure}[htb]
\begin{picture}(98,42)(0,1)
\psset{nodesep=0.7mm}
\psset{yunit=0.83mm}
\psline[style=thin](42,48)(40,48)(40,14)(42,14)\put(36.5,27){$\aa_\ii$}
\psline[style=back,linecolor=color2](0,-0.5)(15.5,-0.5)(15.5,11.5)
\psline[style=back,linecolor=color2](75,47.5)(98,47.5)
\psbezier[style=back,linecolor=color2]{c-c}(25,27.5)(25,15)(17,13.5)(15.5,13)
\psline[style=back,linecolor=color2]{c-c}(75,36)(75,48)
\psline[style=back,linecolor=color1](0,0)(15,0)(15,12)(45,12)(45,48)(75,48)(98,48)
\psline[style=thin](15,40)(15,41)(74,41)(74,40)\put(50,35.5){$\dd_{\ii - 1}$}
\psline[style=back,linecolor=color2]{c-c}(25,28)(50,28)
\psbezier[style=back,linecolor=color2]{c-c}(50,28)(70,28)(73.5,35)(74.5,35.5)
\pcline[style=etc]{->}(0,0)(15,0)
\pcline[style=etc]{->}(98,48)(90,48)
\pcline{->}(45,48)(75,48)\taput{$\xx$}
\pcline{->}(75,48)(90,48)\taput{$\bb_{\ii+1}$}
\psline[style=thin](45,51)(45,53)(89,53)(89,51)
\pcline[linestyle=none](45,53)(89,53)\taput{$\aa_{\ii+1}$}
\pcline{->}(45,48)(45,36)\put(41,36){$\cc_\ii$}
\pcline{->}(75,48)(75,36)\trput{$\uu$}
\pcline[border=3pt]{->}(16,36)(44,36)\taput{$\cc_{\ii - 1}$}
\psarc[style=thin](75,36){3.5}{90}{180}
\pcline{->}(15,36)(15,12)\tlput{$\yy'$}
\psarc[style=thin](15,36){3.5}{270}{360}
\pcline(45,36)(45,29)\pcline{->}(45,27)(45,12)\put(41,20){$\yy$}
\pcline{->}(75,36)(75,12)\trput{$\vv$}
\pcline{->}(15,12)(45,12)\tbput{$\aa_{\ii - 1}$}
\pcline{->}(45,12)(75,12)\put(60,11){$\xx'$}
\psarc[style=thin](75,12){3.5}{90}{180}
\psline[style=thin](16,10)(16,8)(75,8)(75,10)
\pcline[linestyle=none](16,8)(75,8)\tbput{$\bb_{\ii - 1}$}
\pcline{->}(15,12)(15,0)\tlput{$\aa_{\ii-2}$}
\pcline{->}(15,36)(25,28)\put(21,27){$\ww$}
\psbezier{->}(25,27.5)(25,15)(17,13.5)(15.5,13)\put(25,15){$\vv'$}
\psline(25.5,28)(50,28)\psbezier{->}(50,28)(70,28)(73.5,35)(74.5,35.5)\put(56,21){$\ee_{\ii - 1}$}
\psarc[style=thin](25,28){3.5}{270}{360}
\pcline{->}(45,36)(75,36)\taput{$\xx''$}
\psline[style=thin](8,1)(6.8,1)(6.8,36)(8,36)\put(0,17){$\cc_{\ii - 2}$}
\psline[style=thin](77,12)(79,12)(79,47)(77,47)\put(80,29){$\bb_\ii$}
\end{picture}
\caption{\small Local confluence for $\jj = \ii - 1$ (case $\ii$ negative in~$\aav$): starting with the grey path, if we can both push~$\xx$ through~$\aa_\ii$ at level~$\ii$ and $\yy$ through~$\aa_{\ii - 1}$ at level~$\ii-1$, then we can start with either and, by subsequent reductions, converge to the colored path. (A zigzag representation of multifractions is preferable here).}
\label{F:LocConfSucc}
\end{figure}

\begin{rema}
Note that, in Lemma~\ref{L:LocConfSucc}, the parameters~$\vv$ and~$\ww$ occurring in the confluence solutions depend not only on the initial parameters~$\xx, \yy$, but also on the specific multifraction~$\aav$. 
\end{rema}

The last case is $\ii = \jj$, \ie, two reductions at the same level. Here an extra condition appears.

\begin{lemm}\label{L:LocConfLcm}
Assume that $\aav \act \Red{\ii, \xx}$, and $\aav \act \Red{\ii, \yy}$ are defined, with $\ii$ negative $($\resp positive$)$ in~$\aav$, and $\aa_\ii, \xx$, and $\yy$ admit a common right $($\resp left$)$ multiple. Then $\aav \act \Red{\ii, \xx}\Red{\ii, \vv}$ and $\aav \act \Red{\ii, \yy} \Red{\ii, \ww}$ are defined and equal, where $\vv$ and~$\ww$ are defined by $\xx \lcm \yy = \xx\vv = \yy\ww$ $($\resp $\xx \lcmt \yy = \vv\xx = \ww\yy$$)$.
\end{lemm}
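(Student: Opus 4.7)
The plan is to reduce the assertion to the composition formula of Lemma~\ref{L:IterRed}, which states that, for $\ii$ negative in~$\aav$, two successive reductions at level~$\ii$ by $\xx$ and then by $\vv$ coincide with the single reduction $\Red{\ii,\xx\vv}$ (and symmetrically $\Red{\ii,\vv\xx}$ for $\ii$ positive). Given this, both compositions in the statement collapse to a common single reduction: by definition of~$\vv$ and~$\ww$, we have $\xx\vv = \xx\lcm\yy = \yy\ww$ (\resp $\vv\xx = \xx\lcmt\yy = \ww\yy$), so $\Red{\ii,\xx}\Red{\ii,\vv}$ and $\Red{\ii,\yy}\Red{\ii,\ww}$ ought to equal $\Red{\ii,\xx\lcm\yy}$ (\resp $\Red{\ii,\xx\lcmt\yy}$) applied to~$\aav$.

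The only real content is then to verify that this single reduction is legitimate, namely that $\aav \act \Red{\ii,\xx\lcm\yy}$ is defined in the negative case (the positive case being symmetric). I would check the criterion of Lemma~\ref{L:RedDef}\ITEM1. First, $\xx\lcm\yy$ exists by Lemma~\ref{L:CondLcm}, since the hypothesis that $\aa_\ii,\xx,\yy$ admit a common right multiple in particular yields one for $\xx$ and~$\yy$. Next, $\xx$ and $\yy$ each left divide~$\aa_{\ii+1}$ because $\aav \act \Red{\ii,\xx}$ and $\aav \act \Red{\ii,\yy}$ are defined, so their right lcm $\xx\lcm\yy$ also left divides~$\aa_{\ii+1}$. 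Finally, any common right multiple of the three elements $\aa_\ii,\xx,\yy$ is automatically a common right multiple of~$\aa_\ii$ and~$\xx\lcm\yy$, so the required common right multiple with~$\aa_\ii$ exists.

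With these three verifications in hand, Lemma~\ref{L:IterRed} applied in both directions gives that $\aav\act\Red{\ii,\xx}\Red{\ii,\vv}$ and $\aav\act\Red{\ii,\yy}\Red{\ii,\ww}$ are each defined and each equal to $\aav\act\Red{\ii,\xx\lcm\yy}$, settling the negative case. The positive case is handled identically, using the positive formula in Lemma~\ref{L:IterRed} and replacing $\lcm$ by $\lcmt$. I do not anticipate any genuine obstacle: the proof is essentially bookkeeping, and the crucial input — that a triple common multiple of $\aa_\ii,\xx,\yy$ forces $\xx\lcm\yy$ and $\aa_\ii$ to share a common right multiple — is exactly what the hypothesis supplies. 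This observation also foreshadows why the $3$-Ore condition on triples of pairwise commuting multiples will be precisely what is needed to guarantee local confluence at the same-level case.
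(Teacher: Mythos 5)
Your proposal is correct and follows essentially the same route as the paper: establish that $\zz = \xx\lcm\yy$ left divides~$\aa_{\ii+1}$ and admits a common right multiple with~$\aa_\ii$ (the latter because any common right multiple of the triple is a right multiple of~$\zz$), so that $\aav\act\Red{\ii,\zz}$ is defined, and then invoke Lemma~\ref{L:IterRed} in both directions. The only cosmetic difference is that you derive the existence of $\xx\lcm\yy$ from the triple-multiple hypothesis while the paper reads it off from $\aa_{\ii+1}$; both are valid.
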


\begin{proof}
(See Fig.\,\ref{F:LocConfLcm}.) Assume $\ii$ negative in~$\aa_\ii$. Put $\bbv:= \aav \act \Red{\ii, \xx}$ and $\ccv := \aav \act \Red{\ii, \yy}$. By assumption, $\aa_{\ii + 1}$ is a right multiple of~$\xx$ and of~$\yy$, hence $\xx \lcm \yy$ exists, and $\aa_{\ii + 1}$ is a right multiple of the latter. Write $\zz = \xx \lcm \yy = \xx \vv = \yy \ww$. 

The assumption that $\aav \act \Red{\ii, \xx}$ and $\aav \act \Red{\ii, \yy}$ are defined implies that both $\xx$ and $\yy$ left divide~$\aa_{\ii+1}$, hence so does their right lcm~$\zz$. On the other hand, by associativity of the lcm, the assumption that $\xx$, $\yy$, and $\aa_\ii$ admit a common right multiple implies that $\zz$ and $\aa_\ii$ admit a right lcm. Hence $\aav \act \Red{\ii, \zz}$ is defined. By Lemma~\ref{L:IterRed}, we deduce $\aav \act \Red{\ii, \zz} = \aav \act \Red{\ii, \xx} \Red{\ii, \vv} = \aav \act \Red{\ii, \yy} \Red{\ii, \ww}$.

The case of $\ii$ positive in~$\aav$ is symmetric. The particular case $\ii = 1$ results in no problem, since, if $\xx$ and $\yy$ right divide~$\aa_1$, then so does their left lcm~$\zz$.
\end{proof}

\begin{figure}[htb]
\begin{picture}(65,23)(0,-1)
\psset{nodesep=0.7mm}
\put(-7,10){$...$}
\psline[style=back,linecolor=color2]{c-c}(0,10)(20,0)(45,0)(65,10)
\psline[style=back,linecolor=color1]{c-c}(0,10)(20,20)(45,20)(65,10)
\pcline{->}(0,10)(20,20)\taput{$\aa_{\ii - 1}$}
\pcline{<-}(20,20)(45,20)\taput{$\aa_\ii$}
\pcline{->}(45,20)(65,10)\taput{$\aa_{\ii + 1}$}
\pcline{<-}(20,0)(45,0)\tbput{$\dd_\ii$}
\pcline{->}(45,0)(65,10)\tbput{$\dd_{\ii + 1}$}
\pcline{->}(20,20)(15,12)\put(18.1,15){$\yy'$}
\pcline{->}(0,10)(15,12)
\pcline{<-}(15,12)(40,12)
\pcline{->}(40,12)(65,10)
\pcline{->}(50,8)(65,10)
\pcline{->}(15,12)(20,0)
\pcline{->}(25,8)(20,0)
\pcline{->}(45,20)(40,12)\tlput{$\yy$}
\pcline{->}(40,12)(45,0)
\pcline{->}(50,8)(45,0)
\pcline[border=2pt]{->}(45,20)(50,8)\trput{$\xx$}
\pcline[border=2pt]{<-}(25,8)(50,8)
\pcline[border=2pt]{->}(0,10)(25,8)
\pcline[border=2pt]{->}(20,20)(25,8)\trput{$\xx'$}
\pcline{->}(0,10)(20,20)\taput{$\aa_{\ii - 1}$}
\pcline{->}(0,10)(20,0)\tbput{$\dd_{\ii - 1}$}
\psarc[style=thin](25,8){3}{0}{110}
\psarc[style=thin](15,12){4}{0}{50}
\psarc[style=thin](45,0){4}{60}{110}
\psarc[style=thin](20,0){4.5}{60}{110}
\psarc[style=thin](20,0){4}{0}{60}
\put(67,10){$...$}
\end{picture}
\caption{\small Local confluence for $\ii = \jj$ (case $\ii$ negative in~$\aav$): if a global common multiple of~$\xx$, $\yy$, and~$\aa_\ii$ exists and $\xx$ and~$\yy$ cross~$\aa_\ii$, then so does their right lcm.}
\label{F:LocConfLcm}
\end{figure}

\begin{rema}
Inspecting the above proofs shows that allowing arbitrary common multiples rather than requiring lcms in the definition of reduction would not really change the situation, but just force to possibly add extra division steps, making some arguments more tedious. In any case, irreducible multifractions are the same, since a multifraction may be irreducible only if adjacent entries admit no common divisor (on the relevant side), 
\end{rema}

%%%%
\subsection{The $3$-Ore condition}\label{SS:3Ore}

The results of Subsection~\ref{SS:LocConf} show that reduction of multifractions is close to be locally confluent, \ie, to satisfy the implication~\eqref{E:LocConf}: the only possible failure arises in the case when $\aav \act \Red{\ii, \xx}$ and $\aav \act \Red{\ii, \yy}$ are defined but the elements~$\xx$, $\yy$, and~$\aa_\ii$ admit no common multiple. Here we consider those monoids for which such a situation is excluded.

\begin{defi}
We say that a monoid~$\MM$ satisfies the \emph{right} (\resp \emph{left}) \emph{$3$-Ore condition} if
\begin{equation}\label{E:3Ore}
\parbox{128mm}{if three elements of~$\MM$ pairwise admit a common right $($\resp left$)$ multiple, \\\null\hfill then they admit a common right $($\resp left$)$ multiple.}
\end{equation}
 Say that $\MM$ satisfies the \emph{$3$-Ore condition} if it satisfies the right and the left $3$-Ore conditions. 
\end{defi}

The terminology refers to Ore's Theorem: a (cancellative) monoid is usually said to satisfy the Ore condition if any two elements admit a common multiple, and this could also be called the \emph{$2$-Ore condition}, as it involves pairs of elements. Condition~\eqref{E:3Ore} is similar, but involving triples. Diagrammatically, the $3$-Ore condition asserts that every tentative lcm cube whose first three faces exist can be completed. The $2$-Ore condition implies the $3$-Ore condition (a common multiple for~$\aa$ and a common multiple of~$\bb$ and~$\cc$ is a common multiple for~$\aa, \bb, \cc$), but the latter is weaker.

\begin{exam}
A free monoid satisfies the $3$-Ore condition. Indeed, two elements~$\aa, \bb$ admit a common right multiple only if one is a prefix of the other, so, if $\aa, \bb, \cc$ pairwise admit common right multiples, they are prefixes of one of them, and therefore admit a global right multiple.
\end{exam}

Merging Lemmas~\ref{L:Distant}, \ref{L:LocConfSucc}, and~\ref{L:LocConfLcm} with Prop.\,\ref{P:ConvConf}, we obtain 

\begin{prop}\label{P:3OreConv}
If $\MM$ is a noetherian gcd-monoid satisfying the $3$-Ore condition, then the systems~$\RD\MM$ and $\RDh\MM$ are convergent.
\end{prop}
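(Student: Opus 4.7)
The plan is to reduce the proposition to local confluence of $\RD\MM$ and then dispatch to the three case-analysis lemmas of Subsection~\ref{SS:LocConf}, with the $3$-Ore condition doing its work in exactly one place.

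By Proposition~\ref{P:ConvConf}, since $\MM$ is noetherian, convergence of $\RD\MM$ and $\RDh\MM$ are both equivalent to local confluence of $\RD\MM$. Thus it suffices to verify~\eqref{E:LocConf}: given $\aav \rd \bbv$ and $\aav \rd \ccv$, I produce a common $\RRR$-reduct. Writing $\bbv = \aav \act \Red{\ii, \xx}$ and $\ccv = \aav \act \Red{\jj, \yy}$, I split on the value of $\vert\ii-\jj\vert$.

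In the case $\vert\ii - \jj\vert \ge 2$, Lemma~\ref{L:Distant} directly provides $\bbv \rd \ddv$ and $\ccv \rd \ddv$ with $\ddv = \aav \act \Red{\ii, \xx}\Red{\jj, \yy} = \aav \act \Red{\jj, \yy}\Red{\ii, \xx}$; no hypothesis beyond ``gcd-monoid'' is needed here. In the case $\vert\ii - \jj\vert = 1$, Lemma~\ref{L:LocConfSucc} supplies parameters~$\vv, \ww$ with $\bbv \rd \ddv$ and $\ccv \rds \ddv$ through a length-two sequence of reductions; again the gcd-monoid structure alone suffices.

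The delicate case is $\ii = \jj$, which is where the $3$-Ore condition is used. Assume $\ii$ is negative in $\aav$ (the other parity being symmetric). The assumption that both $\aav \act \Red{\ii, \xx}$ and $\aav \act \Red{\ii, \yy}$ are defined implies, by Lemma~\ref{L:RedDef}\ITEM1, that $\xx$ and $\aa_\ii$ admit a common right multiple, $\yy$ and $\aa_\ii$ admit a common right multiple, and both $\xx$ and $\yy$ left-divide $\aa_{\ii+1}$, so in particular $\xx$ and $\yy$ admit a common right multiple (namely $\aa_{\ii+1}$). Thus the three elements $\aa_\ii, \xx, \yy$ pairwise admit common right multiples, and the right $3$-Ore condition yields a global common right multiple. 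This is precisely the hypothesis of Lemma~\ref{L:LocConfLcm}, which then furnishes $\vv, \ww$ with $\xx \lcm \yy = \xx\vv = \yy\ww$ and $\aav \act \Red{\ii, \xx}\Red{\ii, \vv} = \aav \act \Red{\ii, \yy}\Red{\ii, \ww}$, giving the common reduct. The symmetric case ($\ii$ positive) uses the left $3$-Ore condition analogously.

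The only conceptual step, and the place where the hypothesis of the proposition is actually consumed, is verifying the pairwise-to-global jump in the $\ii = \jj$ case; everything else is bookkeeping on the three preceding lemmas and invoking Proposition~\ref{P:ConvConf} to transfer the conclusion from $\RD\MM$ to~$\RDh\MM$.
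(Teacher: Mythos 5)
Your proof is correct and follows exactly the route the paper takes: the paper's own proof is the one-line "Merging Lemmas~\ref{L:Distant}, \ref{L:LocConfSucc}, and~\ref{L:LocConfLcm} with Prop.\,\ref{P:ConvConf}, we obtain", and you have simply written out that merge, correctly identifying that the $3$-Ore condition is consumed only in the $\ii=\jj$ case to upgrade the pairwise common multiples of $\aa_\ii,\xx,\yy$ to a global one so that Lemma~\ref{L:LocConfLcm} applies.
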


It turns out that the implication of Prop.\,\ref{P:3OreConv} is almost an equivalence: whenever (a condition slightly stronger than) the convergence of~$\RD\MM$ holds, then $\MM$ must satisfy the $3$-Ore condition: we refer to~\cite{Diu} for the proof (which requires the extended framework of signed multifractions developed there).

Before looking at the applications of the convergence of~$\RD\MM$, we conclude this section with a criterion for the $3$-Ore condition. It involves the basic elements of Def.\,\ref{D:Primitive}.

\begin{prop}\label{P:3OreInd}
A noetherian gcd-monoid~$\MM$ satisfies the $3$-Ore condition if and only if it satisfies the following two conditions: 
\begin{gather}
\label{E:3OreInd1}
\parbox{128mm}{if three right basic elements of~$\MM$ pairwise admit a common right multiple, \\\null\hfill then they admit a common right multiple.}\\
\label{E:3OreInd2}
\parbox{128mm}{if three left basic elements of~$\MM$ pairwise admit a common left multiple, \\\null\hfill then they admit a common left multiple.}
\end{gather}
\end{prop}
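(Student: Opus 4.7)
The forward direction is immediate: right basic (resp.\ left basic) elements are in particular elements of $\MM$, so the $3$-Ore conditions specialize to give \eqref{E:3OreInd1} (resp.\ \eqref{E:3OreInd2}).

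For the converse, the left-right symmetry of the setup reduces the problem to showing that \eqref{E:3OreInd1} implies the right $3$-Ore condition; the left case is identical using \eqref{E:3OreInd2}. So fix $\aa, \bb, \cc \in \MM$ that pairwise admit common right multiples, whence by Lemma~\ref{L:CondLcm} the three pairwise right lcms $\aa \lcm \bb$, $\aa \lcm \cc$, $\bb \lcm \cc$ exist. The plan is to construct a common right multiple by an induction on the atom lengths (finite by noetherianity and Lemma~\ref{L:Atoms}), reducing to the case where all three entries are atoms, which are in particular right basic, and where \eqref{E:3OreInd1} applies directly.

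The core reduction step is as follows. Pick an atom~$\ss$ left-dividing~$\aa$ and write $\aa = \ss\aa_1$. Applying Lemma~\ref{L:IterLcm} to $\bb \lcm \aa = \bb \lcm \ss\aa_1$ and to $\cc \lcm \aa = \cc \lcm \ss\aa_1$, both $\ss \lcm \bb$ and $\ss \lcm \cc$ exist and, writing $\ss \lcm \bb = \ss\bb^*$ and $\ss \lcm \cc = \ss\cc^*$, the lcms $\aa_1 \lcm \bb^*$ and $\aa_1 \lcm \cc^*$ also exist. Suppose inductively that a common right multiple $\mm_0 = \ss\mm_0'$ of the triple $(\ss, \bb, \cc)$ has been obtained. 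Using left-cancellativity and the identity $\ss\bb^* \lcm \ss\cc^* = \ss(\bb^* \lcm \cc^*)$, one sees that $\bb^* \lcm \cc^*$ exists and divides~$\mm_0'$. Hence $(\aa_1, \bb^*, \cc^*)$ pairwise admits right multiples, and if the inductive hypothesis applied to this triple yields a common right multiple~$\nn$, then $\ss\nn$ is a common right multiple of $\aa$, $\bb$, $\cc$ (since $\bb \dive \ss\bb^* \dive \ss\nn$ and similarly for $\cc$).

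The main obstacle is the choice of well-founded measure: the residues $\bb^*, \cc^*$ may well have larger atom length than $\bb, \cc$, so a naive sum-of-lengths measure fails to decrease across the reduction. The remedy is a nested induction on a designated first component. The outer induction is on the atom length $\ell(\aa)$ with inductive statement universally quantified over $\bb, \cc$: when $\ell(\aa) \ge 2$, both auxiliary triples $(\ss, \bb, \cc)$ and $(\aa_1, \bb^*, \cc^*)$ have first component of length strictly less than $\ell(\aa)$, so the outer hypothesis applies in both places. The outer base case $\ell(\aa) = 1$ (\ie, $\aa$ an atom) is handled by a symmetric inner induction on $\ell(\bb)$, using the symmetry of the $3$-Ore condition under permutation of its three arguments; and when $\bb$ too is an atom, a further inner induction on $\ell(\cc)$ reduces to the case of three atoms, which is settled by \eqref{E:3OreInd1}.
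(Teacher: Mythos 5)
Your reduction step itself is sound and is essentially the one the paper uses (decompose one entry as $\ss\aa_1$, first treat the triple with $\ss$ in place of $\aa$, then the triple of complements), but the induction scheme does not close, and the gap sits exactly where the distinction between atoms and right basic elements matters. The outer induction on $\ell(\aa)$ correctly reduces everything to the base case $\ell(\aa)=1$. The ``symmetric inner induction on $\ell(\bb)$'' then fails at its first step: decomposing $\bb=\tt\bb_1$ replaces the triple $(\aa,\bb,\cc)$ by $(\tt,\aa,\cc)$ and $(\bb_1,\aa^*,\cc^*)$, where $\aa^*$ is the right complement of $\tt$ in $\tt\lcm\aa$. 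This $\aa^*$ is in general not an atom (already in the $3$-strand braid monoid the complement of one atom in the lcm of two atoms has length two), so the new triple $(\bb_1,\aa^*,\cc^*)$ need not contain any atom at all: the inner inductive hypothesis (first component an atom, second component of smaller length) does not apply to it, and the outer hypothesis is not available either, since within the proof of the base case it only covers first components of length $<1$. Consequently the argument bottoms out at the case ``one atom and two arbitrary elements'', not at ``three atoms'', and the claimed reduction to \eqref{E:3OreInd1} restricted to atoms is not achieved.

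This is precisely why the proposition is stated for right \emph{basic} elements rather than atoms. The paper's proof measures each entry by its length as a product of elements of the RC-closed family $\XX$ of right basic elements and inducts on the single quantity $\pp+\qq+\rr$: because $\XX$ is closed under right complement, Lemma~\ref{L:IterLcm} gives $\aa\in\XX^\pp\Rightarrow\aa'\in\XX^\pp$, so the complements do not grow in $\XX$-length, the total strictly decreases at each step, and the base case is a triple of elements of~$\XX$ --- which is where the full strength of \eqref{E:3OreInd1}, for basic elements and not merely atoms, is actually needed. If you want to keep your lexicographic bookkeeping you would at minimum have to replace ``atom length'' by ``$\XX$-length'' throughout; but once that substitution is made the plain sum already works and the nesting becomes unnecessary.
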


\begin{proof}
The condition is necessary, since \eqref{E:3OreInd1} and \eqref{E:3OreInd2} are instances of~\eqref{E:3Ore}. 

Conversely, assume that $\MM$ is a noetherian gcd-monoid~$\MM$ satisfying~\eqref{E:3OreInd1}. Let $\XX$ be the set of right basic elements in~$\MM$. We shall prove that $\MM$ satisfies the right $3$-Ore condition. Let us say that $\OOO(\aa, \bb, \cc)$ holds if either $\aa, \bb, \cc$ have a common right multiple, or at least two of them have no common right multiple. Then \eqref{E:3OreInd1} says that $\OOO(\aa, \bb, \cc)$ is true for all $\aa, \bb, \cc$ in~$\XX$, and our aim is to prove that $\OOO(\aa, \bb, \cc)$ is true for all $\aa, \bb, \cc$ in~$\MM$. We shall prove using induction on~$\mm$ the property
\begin{equation}\label{E:Pm}
\tag{$\PPP_\mm$}
\parbox{128mm}{$\OOO(\aa, \bb, \cc)$ holds for all $\aa, \bb, \cc$ with $\aa \in \XX^\pp$, $\bb \in \XX^\qq$, $\cc \in \XX^\rr$ and $\pp + \qq + \rr \le \mm$.}
\end{equation}
As $\OOO(\aa, \bb, \cc)$ is trivial if one of~$\aa, \bb, \cc$ is~$1$, \ie, when one of $\pp, \qq, \rr$ is zero, the first nontrivial case is $\mm = 3$ with $\pp = \qq = \rr = 1$, and then \eqref{E:3OreInd1} gives the result. So $(\PPP_3)$ is true.

Assume now $\mm \ge 4$, and let $\aa, \bb, \cc$ satisfy $\aa \in \XX^\pp$, $\bb \in \XX^\qq$, $\cc \in \XX^\rr$ with $\pp + \qq + \rr \le \mm$ and pairwise admit common right multiples. Then at least one of~$\pp, \qq, \rr$ is~$\ge 2$, say $\rr \ge 2$. Write $\cc = \zz \cc'$ with $\zz \in \XX$ and $\cc' \in \XX^{\rr-1}$. By assumption, $\aa \lcm \cc$ is defined, so, by Lemma~\ref{L:IterLcm}, $\aa \lcm \zz$ exists and so is $\aa' \lcm \cc'$, where $\aa'$ is defined by $\aa \lcm \zz = \zz \aa'$. Similarly, $\bb \lcm \zz$ and $\bb' \lcm \cc'$ exist, where $\bb'$ is defined by $\bb \lcm \zz = \zz \bb'$ (see Fig.\,\ref{F:Pm}). Then $\aa, \bb$, and~$\zz$ pairwise admit common right multiples and one has $\pp + \qq + 1 < \mm$ so, by the induction hypothesis, they admit a global common right multiple and, therefore, $\aa' \lcm \bb'$ is defined. 

On the other hand, as $\XX$ is RC-closed, $\aa \in \XX^\pp$ implies $\aa' \in \XX^\pp$: indeed, assuming $\aa = \xx_1 \pdots \xx_\pp$ with $\xx_1 \wdots \xx_\pp \in \XX$, Lemma~\ref{L:IterLcm} implies $\aa' = \xx'_1 \pdots \xx'_\pp$ with $\xx'_\ii$ and $\zz_\ii$ inductively defined by $\zz_0 = \zz$ and $\xx_\ii \lcm \zz_{\ii - 1} = \zz_{\ii - 1} \xx'_\ii = \xx_\ii \zz_\ii$ for $1 \le \ii \le \pp$. As $\XX$ is RC-closed, $\xx_\ii \in \XX$ implies $\xx'_\ii \in \XX$. Similarly, $\bb \in \XX^\qq$ implies $\bb' \in \XX^\qq$. So $\aa', \bb'$, and~$\cc'$, belong to~$\XX^\pp, \XX^\qq$, and~$\XX^{\rr-1}$. We saw that $\aa' \lcm \bb'$ exists. On the other hand, the assumption that $\aa \lcm \cc$ and $\bb \lcm \cc$ exist implies that $\aa' \lcm \cc'$ and $\bb' \lcm \cc'$ do. As we have $\pp + \qq + (\rr - 1) < \mm$, the induction hypothesis implies that $\aa', \bb', \cc'$ admit a common right multiple~$\dd'$, and then $\zz \dd'$ is a common right multiple for~$\aa, \bb, \cc$. Hence $(\PPP_\mm)$ is true. And, as $\XX$ generates~$\MM$, every element of~$\MM$ lies in~$\XX^\pp$ for some~$\pp$. Hence the validity of $(\PPP_\mm)$ for every~$\mm$ implies that $\MM$ satisfies the right $3$-Ore condition. 

A symmetric argument using left basic elements gives the left $3$-Ore condition, whence, finally, the full $3$-Ore condition. 
\end{proof}

\begin{figure}[htb]
\begin{picture}(60,23)(0,0)
\psset{nodesep=0.8mm}
\psset{yunit=0.55mm}
\psline[style=thin](20,40)(20,42)(60,42)(60,40)
\pcline[linestyle=none](20,42)(60,42)\taput{$\cc$}
\pcline{->}(20,36)(0,24)\nbput{$\aa$}
\pcline{->}(20,12)(0,0)
\pcline{->}(35,36)(35,12)\naput[npos=0.7]{$\bb'$}
\pcline{->}(0,24)(15,24)%\tbput{$\aa{\cp}\cc_1$}
\pcline{->}(20,12)(35,12)%\taput{$\bb \cp \cc_1$}
\pcline{->}(35,12)(15,0)
\pcline{->}(0,24)(0,0)
\pcline{->}(20,36)(20,12)\naput[npos=0.7]{$\bb$}
\pcline[border=2pt]{->}(15,24)(15,0)
\pcline{->}(0,0)(15,0)
\pcline{->}(60,36)(60,12)
\pcline{->}(35,12)(60,12)
\pcline{->}(60,12)(40,0)
\pcline{->}(15,0)(40,0)
\pcline[style=exist,border=2pt]{->}(15,24)(40,24)
\pcline[style=exist,border=2pt]{->}(60,36)(40,24)
\pcline[style=exist,border=2pt]{->}(40,24)(40,0)
\pcline[border=2pt]{->}(35,36)(15,24)\put(29,15.5){$\aa'$}
\pcline{->}(20,36)(35,36)\taput{$\zz$}
\pcline{->}(35,36)(60,36)\taput{$\cc'$}
\psarc[style=thin](35,12){3}{90}{180}
\psarc[style=thin](60,12){3}{90}{180}
\psarc[style=thin](1,1){3}{20}{110}
\psarc[style=thin](16,21){4}{50}{150}
\end{picture}
\caption{\small Induction for Prop.\,\ref{P:3OreInd}: Property~$(\PPP_{\pp + \qq + 1})$ ensures the existence of the left cube, Property~$(\PPP_{\pp + \qq + \rr - 1})$ that of the right cube.}
\label{F:Pm}
\end{figure}

It follows that, under mild assumptions (see Subsection~\ref{SS:WordPb} below), the $3$-Ore condition is a decidable property of a presented noetherian gcd-monoid.

\begin{rema}
A simpler version of the above argument works for the $2$-Ore condition: any two elements in a noetherian gcd-monoid admit a common right multiple (\resp left multiple) if and only if any two right basic (\resp left basic) elements admit one. 
\end{rema}

%%%%
\section{Applications of convergence}\label{S:AppliConv}

We now show that, as can be expected, multifraction reduction provides a full control of the enveloping group~$\EG\MM$ when the rewrite system~$\RD\MM$ is convergent. We shall successively address the representation of the elements of~$\EG\MM$ by irreducible multifractions (Subsection~\ref{SS:Repr}), the decidability of the word problem for~$\EG\MM$ (Subsection~\ref{SS:WordPb}), and what we call Property~$\PropH$ (Subsection~\ref{SS:PropH}).

%%%%
\subsection{Representation of the elements of~$\EG\MM$}\label{SS:Repr}

The definition of convergence and the connection between the congruence~$\simeq$ defining the enveloping group and $\RRR$-reduction easily imply:

\begin{prop}\label{P:AppliConv}
If $\MM$ is a noetherian gcd-monoid and $\RD\MM$ is convergent, then every element of~$\EG\MM$ is represented by a unique $\RRRh$-irreducible multifraction; for all~$\aav, \bbv$ in~$\FR\MM$, we have
\begin{equation}
\label{E:AppliConv1}
\aav \simeq \bbv \quad \Longleftrightarrow \quad \redh(\aav) = \redh(\bbv),
\end{equation}
where $\redh(\aa)$ is the unique $\RDh\MM$-irreducible reduct of~$\aav$, and, in particular,
\begin{equation}
\label{E:AppliConv2}
\text{$\aav$ represents~$1$ in~$\EG\MM$} \quad \Longleftrightarrow \quad \aav \rdhs \ef.
\end{equation}
The monoid~$\MM$ embeds in~$\EG\MM$, and the product of~$\EG\MM$ is determined by
\begin{equation}\label{E:AppliConv3}
\can(\aav) \opp \can(\bbv) = \can(\redh(\aav \opp \bbv)).
\end{equation} 
\end{prop}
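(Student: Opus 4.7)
The plan is to combine the convergence of $\RDh\MM$ (which holds by Prop~\ref{P:ConvConf} under the hypothesis that $\RD\MM$ is convergent) with the congruence characterization of Prop~\ref{P:Zigzag} in order to extract every claim from one central fact: the unique $\RRRh$-irreducible reduct $\redh(\aav)$ depends only on the $\simeq$-class of~$\aav$.

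The central step is \eqref{E:AppliConv1}. The implication from right to left is immediate: $\redh(\aav) = \redh(\bbv)$ gives $\aav \rdhs \redh(\aav) = \redh(\bbv) \antirdhs \bbv$, and Lemma~\ref{L:Cut}\ITEM1 yields $\rdhs \subseteq \simeq$. For the converse, I would invoke Prop~\ref{P:Zigzag} to obtain a zigzag $\aav = \ccv^0 \rdhs \ccv^1 \antirdhs \ccv^2 \rdhs \pdots \antirdhs \ccv^{2\rr} = \bbv$. For each adjacent peak $\ccv^{2\ii-1} \antirdhs \ccv^{2\ii} \rdhs \ccv^{2\ii+1}$, convergence of~$\RDh\MM$ provides a common $\RDh$-reduct; since $\RDh\MM$ is terminating and confluent, $\redh$ is well defined on it and yields $\redh(\ccv^{2\ii-1}) = \redh(\ccv^{2\ii+1})$. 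Chaining these equalities, all the $\redh(\ccv^\ii)$ coincide, whence $\redh(\aav) = \redh(\bbv)$.

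Once \eqref{E:AppliConv1} is established, the uniqueness of irreducible representatives follows at once: every $\simeq$-class contains at least one $\RRRh$-irreducible element by termination (Lemma~\ref{L:Cut}\ITEM2), and \eqref{E:AppliConv1} shows it contains at most one. Since $\EG\MM = \FR\MM/{\simeq}$ by Prop~\ref{P:EnvGroup}, this gives the representation of the elements of~$\EG\MM$. Formula~\eqref{E:AppliConv2} then follows because $\ef$ is $\RRRh$-irreducible, so $\redh(\ef) = \ef$, and $\aav$ represents~$1$ in~$\EG\MM$ amounts to $\aav \simeq \ef$ (using $1 \simeq \ef$), itself equivalent by \eqref{E:AppliConv1} to $\redh(\aav) = \ef$, \ie, to $\aav \rdhs \ef$. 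For the embedding of~$\MM$, note that any depth-one multifraction~$\aa$ with $\aa \not= 1$ is $\RRRh$-irreducible: no rule~$\Red{\ii,\xx}$ applies, since these require depth~$> \ii \ge 1$, and $\Cut$ requires a trivial final entry; whereas $1 \rdh \ef$. So $\redh(\aa) = \aa$ for $\aa \not= 1$ and $\redh(1) = \ef$, and $\can(\aa) = \can(\bb)$ with $\aa, \bb \in \MM$ forces $\redh(\aa) = \redh(\bb)$, hence $\aa = \bb$. Finally, \eqref{E:AppliConv3} follows since $\can$ is a monoid morphism from~$\FR\MM$ to~$\EG\MM$ (Prop~\ref{P:EnvGroup}), so $\can(\aav) \opp \can(\bbv) = \can(\aav \opp \bbv)$, and $\aav \opp \bbv \simeq \redh(\aav \opp \bbv)$ gives the right-hand side.

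The main obstacle is the confluence-to-congruence step in~\eqref{E:AppliConv1}; once the $\simeq$-congruence is identified via Prop~\ref{P:Zigzag} with the symmetric-transitive closure of~$\rdhs$, the argument reduces to standard Newman-style reasoning on terminating, locally confluent rewrite systems, and all remaining items are routine corollaries.
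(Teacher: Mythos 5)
Your proposal is correct and follows essentially the same route as the paper: convergence of $\RDh\MM$ via Prop.~\ref{P:ConvConf}, the zigzag characterization of~$\simeq$ from Prop.~\ref{P:Zigzag} combined with uniqueness of irreducible reducts to get~\eqref{E:AppliConv1}, and then the remaining claims as routine consequences (the paper phrases the zigzag step as an induction on the index~$\kk$ rather than chaining equalities over peaks, but this is the same argument). No gaps.
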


\begin{proof}
By Prop.\,\ref{P:ConvConf}, the assumption that $\RD\MM$ is convergent implies that $\RDh\MM$ is convergent as well, so $\redh$ is well defined on~$\FR\MM$ and, by Cor.\,\ref{C:Irred}, every $\simeq$-class contains at least one $\RDh\MM$-irreducible multifraction. Hence, the unique representation result will follows from~\eqref{E:AppliConv1}.

Assume $\aav \simeq \bbv$. By Prop.\,\ref{P:Zigzag}, there exists a zigzag $\ccv^0 \wdots \ccv^{2\rr}$ connecting~$\aav$ to~$\bbv$ as in~\eqref{E:Zigzag}. Using induction on~$\kk \ge 0$, we prove $\ccv^\kk \rdhs \redh(\aav)$ for every~Ê$\kk$ . For $\kk = 0$, the result is true by definition. For $\kk$ even, the result for~$\kk$ follows from the result for~$\kk - 1$ and the transitivity of~$\rdhs$. Finally, assume $\kk$ odd. We have $\ccv^{\kk - 1} \rdhs \ccv^\kk$ by~\eqref{E:Zigzag} and $\ccv^{\kk - 1} \rdhs \redh(\aav)$ by induction hypothesis. By definition of~$\redh$, this implies $\redh(\ccv^{\kk - 1}) = \redh(\ccv^\kk)$ and $\redh(\ccv^{\kk - 1}) = \redh(\aav)$, whence $\redh(\ccv^\kk) = \redh(\aav)$. For $\kk = 2\rr$, we find $\redh(\bbv) = \redh(\aav)$. Hence $\aav \simeq \bbv$ implies $\redh(\aav) = \redh(\bbv)$. The converse implication follows from Lemma~\ref{L:Cut}. So \eqref{E:AppliConv1} is established, and \eqref{E:AppliConv2} follows, since, because $\ef$ is $\RRRh$-irreducible, the latter is a particular instance of~\eqref{E:AppliConv1}.

Next, no reduction of~$\RD\MM$ applies to a multifraction of depth one, \ie, to an element of~$\MM$, hence we have $\redh(\xx) = \xx$ for $\xx \not= 1$ and $\redh(1) = \ef$. Hence $\xx \not= \yy$ implies $\redh(\xx) \not= \redh(\yy)$, whence $\can(\xx) \not= \can(\yy)$. So the restriction of~$\can$ to~$\MM$ is injective, \ie, $\MM$ embeds in~$\EG\MM$.

Finally, \eqref{E:AppliConv1} implies $\can(\aav) = \can(\redh(\aav))$, and \eqref{E:AppliConv3} directly follows from~$\can$ being a morphism.
\end{proof}

Merging with~Prop.\,\ref{P:3OreConv}, we obtain the following result, which includes Item~\ITEM1 in Theorem~A in the introduction:

\begin{thrm}\label{T:MainA1}
If $\MM$ is a noetherian gcd-monoid satisfying the $3$-Ore condition, then every element of~$\EG\MM$ is represented by a unique $\RDh\MM$-irreducible multifraction. The monoid~$\MM$ embeds in the group~$\EG\MM$, and \eqref{E:AppliConv1}, \eqref{E:AppliConv2}, and \eqref{E:AppliConv3} are valid in~$\MM$.
\end{thrm}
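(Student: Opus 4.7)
The plan is essentially to combine the two main results that have just been established: Proposition~\ref{P:3OreConv}, which guarantees convergence of $\RD\MM$ and $\RDh\MM$ from the hypotheses, and Proposition~\ref{P:AppliConv}, which extracts all the desired consequences from convergence of $\RD\MM$. So the theorem is a direct corollary and there is almost no new work to do; the proof is a two-line deduction.

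First, I would verify that the hypotheses of Proposition~\ref{P:3OreConv} are met: $\MM$ is assumed to be a noetherian gcd-monoid satisfying the $3$-Ore condition, which is exactly what that proposition requires. Applying it yields that the rewrite system~$\RD\MM$ (and its extension~$\RDh\MM$) is convergent.

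Second, I would feed this convergence into Proposition~\ref{P:AppliConv}, whose hypotheses are precisely ``$\MM$ is a noetherian gcd-monoid and $\RD\MM$ is convergent''. That proposition delivers in one package all four conclusions we want: the unique representation of every element of~$\EG\MM$ by an $\RDh\MM$-irreducible multifraction, the embedding of~$\MM$ in~$\EG\MM$, and the three formulas \eqref{E:AppliConv1}, \eqref{E:AppliConv2}, \eqref{E:AppliConv3}. Since the statements to be proved are verbatim copies of the conclusions of Proposition~\ref{P:AppliConv}, nothing further needs to be checked.

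There is no real obstacle here; all the technical difficulty has already been absorbed into the earlier propositions, namely the local confluence analysis of Subsection~\ref{SS:LocConf} (which isolates the $3$-Ore condition as the missing ingredient for confluence at the same level) and the transfer from confluence to unique representatives via the zigzag characterization of~$\simeq$ given in Proposition~\ref{P:Zigzag}. Accordingly, my proof will simply read: by Proposition~\ref{P:3OreConv}, $\RD\MM$ is convergent; by Proposition~\ref{P:AppliConv}, all stated conclusions follow.
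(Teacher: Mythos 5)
Your proposal is correct and is exactly the paper's argument: the theorem is stated immediately after Proposition~\ref{P:AppliConv} with the phrase ``Merging with Prop.~\ref{P:3OreConv}, we obtain...'', i.e.\ the $3$-Ore hypothesis gives convergence of~$\RD\MM$ via Proposition~\ref{P:3OreConv}, and Proposition~\ref{P:AppliConv} then yields all four conclusions verbatim. Nothing further is needed.
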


We now quickly mention a few further consequences of the unique representation result. First, there exists a new, well defined integer parameter for the elements of~$\EG\MM$:

\begin{defi}\label{D:Depth}
If $\MM$ is a gcd-monoid and $\RD\MM$ is convergent, then, for~$\gg$ in~$\EG\MM$, the \emph{depth}~$\dh\gg$ of~$\gg$ is the depth of the (unique) $\RDh\MM$-irreducible multifraction that represents~$\gg$. 
\end{defi}

\begin{exam}
The only element of depth~$0$ is~$1$, whereas the elements of depth~$1$ are the nontrivial elements of~$\MM$. The elements of depth~$2$ are the ones that can be expressed as a (right) fraction~$\aa / \bb$ with $\aa, \bb$ in~$\MM$, etc. If $\MM$ is a Garside monoid, and, more generally, if $\EG\MM$ is a group of right fractions for~$\MM$, every element of~$\EG\MM$ has depth at most~$2$. When $\EG\MM$ is a group of left fractions for~$\MM$, then every element of~$\EG\MM$ has depth at most~$3$, possibly a sharp bound: for instance, in the Baumslag--Solitar monoid $\MON{\tta, \ttb}{\tta^2\ttb = \ttb\tta}$, the element $ \ttb\inv\tta\inv\ttb$ is represented by the irreducible multifraction~$ 1/\tta\ttb/\ttb$ and it has depth~$3$. On the other hand, a non-cyclic free group contains elements of arbitrary large depth.
\end{exam}

\begin{ques}
Does there exist a gcd-monoid~$\MM$ such that $\RD\MM$ is convergent and the least upper bound of the depth on~$\EG\MM$ is finite $\ge 4$?
\end{ques}

The following inequalities show that, when it exists, the depth behaves like a sort of gradation on the group~$\EG\MM$. They easily follow from the definition of the product on~$\FR\MM$ and can be seen to be optimal:
\begin{gather}
\label{E:DepthInv}
\dh{\gg\inv} = 
\begin{cases}
\dh\gg \text{\ or\ } \dh\gg + 1
&\text{for $\dh\gg$ odd,}\\
\dh\gg \text{\ or\ } \dh\gg - 1
&\text{for $\dh\gg$ even,}
\end{cases}\\
\label{E:DepthProd}
\max(\dh\gg - \dh\hh\sh, \dh\hh - \dh\gg\sh) \le \dh{\gg\hh} \le 
\begin{cases}
\dh\gg + \dh\hh - 1
&\text{for $\dh\gg$ odd,}\\
\dh\gg + \dh\hh
&\text{for $\dh\gg$ even,}
\end{cases}
\end{gather}
with $\nn\sh$ standing for $\nn$ if $\nn$ is even and for $\nn + 1$ if $\nn$ is odd. By the way, changing the definition so as to ensure $\dh{\gg\inv} = \dh\gg$ seems difficult: thinking of the signed multifractions of~\cite{Diu}, we could wish to forget about the first entry when it is trivial, but this is useless: for instance, in the right-angled Artin-Tits monoid $\MON{\fr{a, b, c}}{\fr{ab=ba, bc=cb}}$, if $\fr{a/bc/a}$ represents $\gg$, then $\gg\inv$ is represented by~$\fr{b/a/c/a}$, leading in any case to $\dh{\gg\inv} = \dh\gg + 1$.

In the same vein, we can associate with every nontrivial element of~$\EG\MM$ an element of~$\MM$:

\begin{defi}
If $\MM$ is a gcd-monoid and $\RD\MM$ is convergent, then, for~$\gg$ in~$\EG\MM \setminus \{1\}$, the \emph{denominator}~$\Den\gg$ of~$\gg$ is the last entry of the $\RRRh$-irreducible multifraction representing~$\gg$.
\end{defi}

Then the usual characterization of the denominator of a fraction extends into:

\begin{prop}
If $\MM$ is a gcd-monoid and $\RD\MM$ is convergent, then, for every~$\gg$ in~$\EG\MM$ with $\dh\gg$ even $($\resp odd$)$, $\Den\gg$ is the $\dive$-smallest $($\resp $\divet$-smallest$)$ element~$\aa$ of~$\MM$ satisfying $\dh{\gg\aa} < \dh\gg$ $($\resp $\dh{\gg\aa\inv} < \dh\gg$$)$.
\end{prop}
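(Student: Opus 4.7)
The argument treats the case $\dh\gg = \nn$ even; the odd case is symmetric after exchanging left and right throughout, using $\gg\aa\inv$ in place of $\gg\aa$ and the multifraction $\aav \opp \aa_\nn$ processed through $\Red{\nn, \aa_\nn}$ applied in its positive version. Write the $\RDh$-normal form of $\gg$ as $\aav = \aa_1 / \cdots / \aa_\nn$, so $\Den\gg = \aa_\nn$.

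\emph{Existence (the set is nonempty with witness $\aa_\nn$).} The multifraction $\aav \opp \aa_\nn = \aa_1 / \cdots / \aa_{\nn-1} / \aa_\nn / \aa_\nn$ of depth $\nn{+}1$ represents $\gg \aa_\nn$. Since $\nn$ is negative in this multifraction and $\aa_\nn$ left-divides itself, the right lcm $\aa_\nn \lcm \aa_\nn = \aa_\nn$ has trivial complement $\xx' = 1$, so $\Red{\nn, \aa_\nn}$ produces $\aa_1 / \cdots / \aa_{\nn-1} / 1 / 1$, and two applications of $\Cut$ yield a reduct of depth $\nn - 1$. By Prop.~\ref{P:AppliConv}, $\dh{\gg \aa_\nn} \le \nn - 1 < \nn$.

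\emph{Minimality ($\aa_\nn \dive \aa$ for every $\aa$ in the set).} Suppose $\aa \in \MM$ satisfies $\dh{\gg\aa} < \nn$, and let $\bbv$ be the normal form of $\gg\aa$, with $\mm := \dh\bbv < \nn$. The identity $\gg = (\gg\aa)\aa\inv$ gives, via Prop.~\ref{P:AppliConv}, the equality $\aav = \redh(\bbv \opp (1/\aa))$. By the definition of~$\opp$, the last entry of $\bbv \opp (1/\aa)$ is~$\aa$, and its depth is $\mm + 1$ for $\mm$ odd, $\mm + 2$ for $\mm$ even, in both cases at most~$\nn$. Since reductions in $\RDh\MM$ never increase depth, this depth must equal~$\nn$; otherwise the reduct could not be the depth-$\nn$ multifraction~$\aav$. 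Consequently, no $\Cut$ is applied during $\bbv \opp (1/\aa) \rdhs \aav$ (the depth stays at $\nn$ throughout), and the only reductions $\Red{\ii, \xx}$ that can modify the $\nn$th entry have $\ii = \nn - 1$. By Def.~\ref{D:Red}, each such $\Red{\nn-1, \xx}$ (positive index) replaces the $\nn$th entry $\cc$ by the unique $\cc'$ satisfying $\cc' \xx = \cc$, that is, right-divides the last entry by~$\xx$. Composing these right-divisions starting from~$\aa$ and ending at $\aa_\nn$, we obtain an element $\zz \in \MM$ with $\aa_\nn \zz = \aa$, i.e., $\aa_\nn \dive \aa$.

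Since $\MM$ has no nontrivial invertible elements, $\dive$ is a partial order, so the two properties just established identify $\aa_\nn = \Den\gg$ as \emph{the} $\dive$-smallest element of the set $\{\aa \in \MM \mid \dh{\gg\aa} < \dh\gg\}$. The main obstacle is the depth-parity analysis in the minimality step: showing that $\bbv \opp (1/\aa)$ has depth exactly~$\nn$ requires combining the depth-non-increasing character of reduction with convergence of $\RDh\MM$; once this is in place, the inspection of which rules can modify the last entry is routine from Def.~\ref{D:Red}.
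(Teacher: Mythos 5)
The paper offers no proof here (it says ``We skip the (easy) verification''), so there is nothing to compare against; your argument is correct and is the natural way to supply the missing verification. Both halves are sound: the witness computation $\aav \opp \aa_\nn \act \Red{\nn, \aa_\nn}\Cut\Cut$ shows $\dh{\gg\aa_\nn} \le \nn-1$, and since $\rdhs$ never increases depth while the target $\aav$ has depth~$\nn$, no $\Cut$ can occur in $\bbv \opp (1/\aa) \rdhs \aav$ and only rules $\Red{\nn-1,\xx}$ can touch the last entry, each dividing it by~$\xx$ on the appropriate side, whence $\aa_\nn \dive \aa$. One small slip in your sketch of the odd case: the multifraction representing $\gg\aa_\nn\inv$ is $\aav \opp (1/\aa_\nn)$, not $\aav \opp \aa_\nn$ (the latter would multiply into the last entry and represent $\gg\aa_\nn$); with that correction the symmetric argument goes through, the only extra bookkeeping being that $\mm = \dh{\gg\aa\inv}$ is then forced to be even and equal to $\nn-1$.
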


We skip the (easy) verification.

%%%%
\subsection{The word problem for~$\EG\MM$}\label{SS:WordPb}

In view of~\eqref{E:AppliConv2} and Lemma~\ref{L:WP1}, one might think that reduction directly solves the word problem for the group~$\EG\MM$ when $\RD\MM$ is convergent. This is essentially true, but some care and some additional assumptions are needed.

The problem is the decidability of the relation~$\rds$, \ie, the question of whether, starting with a (convenient) presentation of a gcd-monoid, one can effectively decide whether, say, the multifraction represented by a word reduces to~$\one$. The question is \emph{not} trivial, because the existence of common multiples need not be decidable in general. However, we shall see that mild finiteness assumptions are sufficient. 

If $\SS$ is a generating subfamily of a monoid~$\MM$, then, for~$\aa$ in~$\MM$, we denote by~$\LG\SS\aa$ the minimal length of a word in~$\SS$ representing~$\aa$.

\begin{lemm}\label{L:BoundCM}
If $\MM$ is a noetherian gcd-monoid, $\SS$ is the atom set of~$\MM$, and $\LG\SS\xx \le C$ holds for every right basic element~$\xx$ of~$\MM$, then, for all~$\aa, \bb$ in~$\MM$ such that $\aa \lcm \bb$ exists, we have 
\begin{equation}
\LG\SS{\aa \lcm \bb} \le C(\LG\SS\aa + \LG\SS\bb).
\end{equation}
\end{lemm}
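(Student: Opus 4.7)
The plan is to decompose~$\aa$ and~$\bb$ into atoms and compute $\aa \lcm \bb$ via a reversing grid in which every edge label turns out to be right basic.

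First I would write $\aa = \ss_1 \pdots \ss_\pp$ and $\bb = \tt_1 \pdots \tt_\qq$ as minimal-length words over~$\SS$, so that $\pp = \LG\SS\aa$ and $\qq = \LG\SS\bb$, and set up a $\pp \times \qq$ complement grid. The top edge is labeled left-to-right by $\ss_1 \wdots \ss_\pp$ and the left edge top-to-bottom by $\tt_1 \wdots \tt_\qq$; at each interior cell~$(\ii,\jj)$ I would compute the right lcm of its top label~$\ss^{(\jj-1)}_\ii$ with its left label~$\tt^{(\ii-1)}_\jj$, defining the bottom label~$\ss^{(\jj)}_\ii$ and right label~$\tt^{(\ii)}_\jj$ of the cell by
\[ \ss^{(\jj-1)}_\ii \, \tt^{(\ii)}_\jj \,=\, \tt^{(\ii-1)}_\jj \, \ss^{(\jj)}_\ii \,=\, \ss^{(\jj-1)}_\ii \lcm \tt^{(\ii-1)}_\jj. \]

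The first substantive step is to verify that every cell lcm in the grid actually exists. This is done by repeated application of Lemma~\ref{L:IterLcm} to the expression $\aa \lcm \tt_1 \pdots \tt_\qq$: at each step that lemma reduces the existence of a lcm with a product to the existence of two simpler lcms, which are precisely the cell lcms in a given row of the grid; and iterating the construction formula of the same lemma gives the closed form
\[ \aa \lcm \bb \,=\, \aa \cdot \tt^{(\pp)}_1 \, \tt^{(\pp)}_2 \pdots \tt^{(\pp)}_\qq. \]

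The main content is then the claim that every label~$\ss^{(\jj)}_\ii$ and~$\tt^{(\ii)}_\jj$ is right basic in~$\MM$. I would prove this by induction on $\ii + \jj$: atoms are right basic by Definition~\ref{D:Primitive}, which handles the boundary; and the inductive step uses that the family of right basic elements is RC-closed (again Definition~\ref{D:Primitive}), so the right complements of two right basic elements admitting a right lcm stay right basic. In particular each of the $\qq$ labels $\tt^{(\pp)}_1 \wdots \tt^{(\pp)}_\qq$ is right basic and therefore of $\SS$-length at most~$C$ by hypothesis. Combining the pieces,
\[ \LG\SS{\aa \lcm \bb} \,\le\, \LG\SS\aa + \sum_{\jj=1}^{\qq} \LG\SS{\tt^{(\pp)}_\jj} \,\le\, \LG\SS\aa + \qq C \,\le\, C\bigl(\LG\SS\aa + \LG\SS\bb\bigr), \]
where the final inequality uses $C \ge 1$, which holds since atoms are themselves right basic and have $\SS$-length~$1$. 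The only really delicate point I anticipate is the bookkeeping at the first step: tracing through Lemma~\ref{L:IterLcm} carefully enough to see that the assumed existence of the single lcm $\aa \lcm \bb$ propagates to guarantee the existence of all $\pp\qq$ intermediate cell lcms.
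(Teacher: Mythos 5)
Your proof is correct and is essentially the paper's argument made explicit: the paper writes $\aa, \bb \in \XX^\pp, \XX^\qq$ for $\XX$ the (RC-closed) set of right basic elements and invokes "a straightforward induction using Lemma~\ref{L:IterLcm}" to get $\aa \lcm \bb = \aa\bb'$ with $\bb' \in \XX^\qq$, which is exactly your complement grid with every label right basic. Your bookkeeping even yields the marginally sharper bound $\LG\SS\aa + C\,\LG\SS\bb$ before relaxing to $C(\LG\SS\aa + \LG\SS\bb)$.
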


\begin{proof}
Let $\XX$ be the set of right basic elements in~$\MM$. Assume that $\aa, \bb$ are elements of~$\MM$ such that $\aa \lcm \bb$ exists. Let $\pp:= \LG\SS\aa$, $\qq := \LG\SS\bb$. Then $\aa$ lies in~$\SS^\pp$ (\ie, it can be expressed as the product of $\pp$ elements of~$\XX$), hence a fortiori in~$\XX^\pp$, since $\SS$ is included in~$\XX$. Similarly, $\bb$ lies in~$\XX^\qq$. Now, as already mentioned in the proof of Prop.\,\ref{P:3OreInd}, a straightforward induction using Lemma~\ref{L:IterLcm} shows that, if $\aa$ and~$\bb$ lie in~$\XX^\pp$ and~$\XX^\qq$ and $\aa \lcm \bb$ exists, then one has $\aa \lcm \bb = \aa\bb' = \bb\aa'$ with $\aa' \in \XX^\pp$ and $\bb'$ in~$\XX^\qq$. We conclude that $\aa \lcm \bb$ lies in~$\XX^{\pp + \qq}$, and, therefore, we have $\LG\SS{\aa\lcm\bb} \le C(\pp + \qq)$.
\end{proof}

\begin{lemm}\label{L:Decid1}
Assume that $\MM$ is a strongly noetherian gcd-monoid with finite\-ly many basic elements. Let $\SS$ be the atom set of~$\MM$. Then, for all~$\ii$ and~$\uu$ in~$\SS^*$, the relation ``\,$\clp\ww \act \Red{\ii, \clp\uu}$ is defined'' is decidable and the map ``\,$\ww \mapsto \clp\ww \act \Red{\ii, \clp\uu}$'' is computable.
\end{lemm}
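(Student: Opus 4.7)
The strategy is to reduce both claims to the decidability of three natural operations on~$\MM$: the word problem, left/right divisibility, and existence/computation of right and left lcms. Since atoms are basic, the hypothesis of finitely many basic elements makes the atom set~$\SS$ finite; assuming a recursive presentation~$(\SS, \RR)$ of~$\MM$, Prop~\ref{P:WordPbMon} applies, so the word problem for~$\MM$ is decidable and, given any $\SS$-word, its finite $\equivp$-class can be explicitly enumerated.

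First I would deduce decidability of left-divisibility: given $\SS$-words $\vv, \uu$ representing $\aa, \bb \in \MM$, compute the $\equivp$-class of~$\uu$ and test whether any of its elements admits a prefix equivalent to~$\vv$. Right-divisibility is handled symmetrically using suffixes. Next, I would settle existence and computation of right lcms: let~$C$ be the maximum of $\LG\SS\xx$ over the finitely many right basic elements~$\xx$, a finite and effectively computable constant. By Lemma~\ref{L:BoundCM}, if $\aa \lcm \bb$ exists then $\LG\SS{\aa \lcm \bb} \le C(|\vv| + |\uu|)$, so one enumerates all $\SS$-words up to this bound, keeps those representing a common right multiple of~$\aa$ and~$\bb$ (using decidability of left-divisibility), and returns a $\dive$-minimal one as the right lcm. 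Left lcms are handled the same way.

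Finally, given~$\ww$, $\ii$, and~$\uu$, one extracts the canonical decomposition~$(\ww_1 \wdots \ww_\nn)$ of~$\ww$ from~\eqref{E:CanDec}, reads off $\clp{\ww_{\ii-1}}$, $\clp{\ww_\ii}$, $\clp{\ww_{\ii+1}}$ as $\SS$-words, and checks the conditions of Lemma~\ref{L:RedDef}: $\ii < \nn$ is immediate from the decomposition, divisibility of $\clp{\ww_{\ii+1}}$ by $\clp\uu$ on the appropriate side is decidable by the first step, and existence of a common multiple of $\clp\uu$ and $\clp{\ww_\ii}$ on the relevant side is decidable by the second step. When all conditions hold, computing the corresponding lcm yields $\SS$-word representatives of the new entries of the reduced multifraction, giving a concrete description of $\clp\ww \act \Red{\ii, \clp\uu}$.

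The main obstacle is the lcm step: deciding existence of a common multiple in a gcd-monoid is in general not decidable, and it is precisely the finiteness of the basic-element set that rescues the algorithm through the computable length bound of Lemma~\ref{L:BoundCM}. The rest of the argument is essentially bookkeeping on top of the decidable word problem.
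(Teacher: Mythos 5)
Your proposal is correct and follows essentially the same route as the paper: reduce everything to decidable divisibility tests via exhaustive enumeration of finite $\equivp$-classes, and use the length bound of Lemma~\ref{L:BoundCM} (coming from the finiteness of the basic-element set) to make the existence of lcms decidable by bounded search. The only cosmetic difference is that you \emph{assume} a recursive presentation, whereas the paper derives a finite (lcm) presentation from the hypotheses by citing~\cite[Thrm.~4.1]{Dfx}; this does not affect the substance of the argument.
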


\begin{proof}
By~\cite[Thrm.~4.1]{Dfx}, $\MM$ admits a finite presentation~$(\SS, \RR)$: it suffices, for all~$\ss, \tt$ such that $\ss$ and~$\tt$ admit a common right multiple in~$\MM$, to choose two words~$\uu, \vv$ such that both $\ss\uu$ and~$\tt\vv$ represent~$\ss \lcm \tt$ in~$\MM$ and to put in~$\RR$ the relation~$\ss\uu = \tt\vv$.

By definition, $\clp\ww \act \Red{\ii, \clp\uu}$ is defined if and only if calling $(\ww_1 \wdots \ww_\nn)$ the decomposition~\eqref{E:CanDec} of~$\ww$, the elements~$\clp\uu$ and~$\clp{\ww_\ii}$ admit a common multiple, and $\clp\uu$ divides~$\clp{\ww_{\ii+1}}$ (both on the relevant side). As seen in the proof of Prop.\,\ref{P:WordPbMon}, the set of all words in~$\SS$ that represent~$\clp{\ww_{\ii + 1}}$ is finite, hence we can decide $\clp\uu \dive \clp{\ww_{\ii + 1}}$ (\resp $\clp\uu \divet \clp{\ww_{\ii + 1}}$) by exhaustively enumerating the class of~$\ww_{\ii + 1}$ and check whether some word in this class begins (\resp finishes) with~$\uu$. Deciding the existence of $\clp\uu \lcm \clp{\ww_\ii}$ is a priori more difficult, because we do not remain inside some fixed class. However, by Lemma~\ref{L:BoundCM}, $\clp\uu \lcm \clp{\ww_\ii}$ exists if and only if, calling~$C$ the sup of the lengths of the (finitely many) words that represent a basic element of~$\MM$, there exist two equivalent words of length at most $C (\LG{}\uu + \LG{}{\ww_\ii})$ that respectively begin with~$\uu$ and with~$\ww_\ii$. This can be tested in finite time by an exhaustive search.

Finally, when $\clp\ww \act \Red{\ii, \clp\uu}$ is defined, computing its value is easy, since it amounts to performing multiplications and divisions in~$\MM$, and the word problem for~$\MM$ is decidable.
\end{proof}

\begin{prop}\label{P:Decid}
If $\MM$ is a strongly noetherian gcd-monoid with finite\-ly many basic elements and atom set~$\SS$, then the relations $\clp\ww \rds \one$ and $\clp\ww \rdhs \ef$ on~$(\SS \cup \INV\SS)^*$ are decidable.
\end{prop}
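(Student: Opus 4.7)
The plan is to reduce both decision problems to a single finite search in the rewrite system $\RD\MM$, restricted to atomic parameters to keep the search tree finitely branching.

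First, I would restrict attention to atomic reductions $\Red{\ii, \ss}$ with $\ss \in \SS$. By Lemma~\ref{L:Atoms}, every non-invertible element of $\MM$ factors as a product of atoms; combined with Lemma~\ref{L:IterRed}, this shows that every rule $\Red{\ii, \xx}$ with $\xx \ne 1$ decomposes as a finite composition of atomic rules. Consequently the reflexive--transitive closure $\rds$ of reduction coincides with that of the atomic subsystem, and the same holds for $\rdhs$.

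Next, I would establish that the tree of atomic reduction sequences starting from $\clp\ww$ is finite. Since $\rds$ preserves depth, every atomic $\rds$-reduct of $\clp\ww$ has depth $\nn := \dh{\clp\ww}$, so at each node at most $\nn \cdot \card\SS$ candidate atomic rules need to be examined. By Lemma~\ref{L:Decid1} each such candidate is effectively testable and, if applicable, its result effectively computable. By Prop.~\ref{P:Termin} every atomic reduction sequence terminates in finitely many steps, and a finitely branching tree with no infinite branch is finite (König's lemma), so a systematic breadth-first exploration produces in finite time the complete finite set $E$ of atomic $\rds$-reducts of $\clp\ww$.

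Finally, I would read off the two relations from $E$. The relation $\clp\ww \rds \One\nn$ holds if and only if $\One\nn$ belongs to $E$, and this is a finite test since equality in $\FR\MM$ reduces to the word problem for $\MM$, itself decidable by Prop.~\ref{P:WordPbMon}. For the second relation, Lemma~\ref{L:RedCut} gives $\clp\ww \rdhs \ef$ if and only if $\clp\ww \rds \One\pp$ for some $\pp \ge 0$; since $\rds$ preserves depth this forces $\pp = \nn$, so the two problems coincide. The main obstacle is that $\RD\MM$ is not locally confluent in general (Example~\ref{X:ExRed}), so no greedy strategy suffices and the entire branching tree must be explored; Lemma~\ref{L:IterRed} is precisely what keeps that tree finitely branching by allowing the restriction to atomic parameters, and effectivity at each node rests on Lemma~\ref{L:BoundCM} together with the finiteness hypothesis on basic elements.
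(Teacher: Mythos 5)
Your proof is correct and follows essentially the same route as the paper: restrict to atomic parameters via Lemma~\ref{L:IterRed}, build the finitely branching tree of reducts whose branches terminate by Prop.~\ref{P:Termin}, invoke K\"onig's lemma, and test membership using Lemma~\ref{L:Decid1} and the decidable word problem of~$\MM$. The only (minor, and arguably cleaner) difference is that you dispose of $\clp\ww \rdhs \ef$ by reducing it outright to $\clp\ww \rds \one$ via Lemma~\ref{L:RedCut} and depth preservation, where the paper just says the argument is similar.
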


\begin{proof}
For $\aav$ in~$\FR\MM$, consider the tree~$T_{\aav}$, whose nodes are pairs~$(\bbv, \ss)$ with $\bbv$ a reduct of~$\aav$ and $\ss$ is a finite sequence in~$\NNNN \times \SS$: the root of~$T_{\aav}$ is~$(\aav, \ew)$, and the sons of~$(\bbv, \ss)$ are all pairs $(\bbv \act \Red{\ii, \xx}, \ss {}^\frown (\ii, \xx))$ such that $\bbv \act \Red{\ii, \xx}$ is defined. As $\SS$ is finite, the number of pairs~$(\ii, \xx)$ with $\xx$ in~$\SS$ and $\bbv \act \Red{\ii, \xx}$ defined is finite, so each node in~$T_{\aav}$ has finitely many immediate successors. On the other hand, as $\MM$ is noetherian, $\RD\MM$ is terminating and, therefore, $T_{\aav}$ has no infinite branch. Hence, by K\" onig's lemma, $T_{\aav}$ is finite. Therefore, starting from a word~$\ww$ in~$\SS \cup \SSb$ and applying Lemma~\ref{L:Decid1}, we can exhaustively construct~$T_{\clp\ww}$. Once this is done, deciding $\clp\ww \rds \one$ (or, more generally, $\clp\ww \rds \clp{\ww'}$ for any~$\ww'$) is straightforward: it suffices to check whether $\one$ (or $\clp{\ww'}$) occur in~$T_{\clp\ww}$, which amounts to checking finitely many $\equivp$-equivalences of words.

The argument for~$\RDh\SS$ is similar, mutatis mutandis.
\end{proof}

Then we can solve the word problem for~$\EG\MM$:

\begin{prop}\label{P:WordPb}
If $\MM$ is a strongly noetherian gcd-monoid with finitely many basic elements and $\RD\MM$ is convergent, the word problems for~$\EG\MM$ is decidable.
\end{prop}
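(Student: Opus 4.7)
The plan is to obtain the result as an essentially immediate concatenation of three results already in place: Lemma~\ref{L:WP1}, which translates the word problem for~$\EG\MM$ to a statement about multifractions inside~$\FR\MM$; equation~\eqref{E:AppliConv2} of Proposition~\ref{P:AppliConv}, which under convergence of~$\RD\MM$ rephrases $\simeq$-triviality as reducibility to~$\ef$; and Proposition~\ref{P:Decid}, which asserts that the relation $\clp\ww \rdhs \ef$ is algorithmically decidable precisely under the hypotheses we have. So no genuinely new content is needed.

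First I would fix the generating set. Since every atom of~$\MM$ is a basic element (it lies trivially in the closure of the atom set under right complement), the hypothesis that $\MM$ contains finitely many basic elements forces the atom set~$\SS$ to be finite; in particular $\MM$, and hence $\EG\MM$, is finitely generated, and to solve the word problem it suffices to give an algorithm with respect to~$\SS \cup \SSb$. Next, given a word $\ww$ in~$\SS \cup \SSb$, I compute its canonical decomposition $(\ww_1 \wdots \ww_\nn)$ of~\eqref{E:CanDec} by a straightforward left-to-right scan grouping maximal runs of letters of the same sign, thereby producing the multifraction $\clp\ww = \clp{\ww_1} \sdots \clp{\ww_\nn}$. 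Then I chain the equivalences: $\ww$ represents~$1$ in~$\EG\MM$ iff $\clp\ww \simeq 1$ in~$\FR\MM$ (Lemma~\ref{L:WP1}), iff $\clp\ww \rdhs \ef$ (Proposition~\ref{P:AppliConv}\eqref{E:AppliConv2}, applicable because $\RD\MM$ is assumed convergent), iff the decision procedure of Proposition~\ref{P:Decid} accepts~$\ww$.

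The only place where one needs to pause is to verify that the hypotheses of Proposition~\ref{P:Decid} are indeed met: it demands a strongly noetherian gcd-monoid with finitely many basic elements, which is exactly what we assumed. Implicit in that proposition is the decidability of the word problem of the monoid~$\MM$ itself, which is guaranteed by Proposition~\ref{P:WordPbMon} together with the effective finite presentation constructed in the proof of Lemma~\ref{L:Decid1}; these ensure that every test performed while exploring the reduction tree~$T_{\clp\ww}$ is effective. Thus the main ``obstacle'' is purely bookkeeping — checking that the hypotheses of~\ref{P:Decid} transfer verbatim — and there is no hidden difficulty beyond confirming that convergence of~$\RD\MM$ is exactly the ingredient needed to bridge Lemma~\ref{L:WP1} and Proposition~\ref{P:Decid}.
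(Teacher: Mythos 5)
Your proof is correct and follows essentially the same route as the paper: the paper's own argument is precisely the chain Lemma~\ref{L:WP1} $\leftrightarrow$ \eqref{E:AppliConv2} $\leftrightarrow$ Proposition~\ref{P:Decid}, with the hypotheses transferring verbatim. The extra bookkeeping you supply (finiteness of the atom set from finiteness of the basic elements, effectiveness of computing the decomposition~\eqref{E:CanDec}) is left implicit in the paper but is accurate.
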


\begin{proof}
Let $\SS$ be the atom set of~$\MM$. Then Prop.\,\ref{P:Decid} states the decidability of the relation $\clp\ww \rdhs \ef$ for words in~$\SS \cup \SSb$. By~\eqref{E:AppliConv2}, this relation is equivalent to $\clp\ww \simeq 1$, hence, by Lemma~\ref{L:WP1}, to $\ww$ representing~$1$ in~$\EG\MM$. 
\end{proof}

Merging with Prop\,\ref{P:3OreConv}, we deduce the second part of Theorem~A in the introduction:

\begin{thrm}\label{T:MainA2}
If $\MM$ is a strongly noetherian gcd-monoid satisfying the $3$-Ore condition and containing finitely many basic elements, the word problem for~$\EG\MM$ is decidable.
\end{thrm}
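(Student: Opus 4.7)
The proof is a direct merger of two results already established in the paper, so the plan is essentially assembly rather than new work. First I would invoke Proposition~\ref{P:3OreConv}: since strong noetherianity implies noetherianity (as noted after Definition~\ref{D:StrNoeth}, because $\NNNN$ embeds in the ordinals and its addition is commutative, so~\eqref{E:StrWit} entails both~\eqref{E:Wit} and~\eqref{E:Witt}), the hypothesis that $\MM$ is a strongly noetherian gcd-monoid satisfying the $3$-Ore condition gives that the rewrite system~$\RD\MM$ is convergent.

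Second, with convergence of~$\RD\MM$ in hand and the remaining hypotheses (strong noetherianity, finitely many basic elements) carried over unchanged, I would apply Proposition~\ref{P:WordPb} to conclude that the word problem for~$\EG\MM$ is decidable with respect to the atom set~$\SS$. Since any two finite generating sets of a finitely generated group yield equivalent word problems, this suffices.

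There is no genuine obstacle: the real content lies in the two feeder propositions. Proposition~\ref{P:3OreConv} itself depends on the local confluence analysis of Subsection~\ref{SS:LocConf} (Lemmas~\ref{L:Distant}, \ref{L:LocConfSucc}, \ref{L:LocConfLcm}) together with Newman's lemma via Proposition~\ref{P:ConvConf}, where the $3$-Ore condition is precisely what closes the residual confluence gap at equal-level reductions. Proposition~\ref{P:WordPb} in turn packages the algorithmic content: strong noetherianity bounds the size of $\equivp$-classes, finiteness of the basic-element set together with Lemma~\ref{L:BoundCM} bounds the length of lcms and thereby makes the existence of common multiples decidable (Lemma~\ref{L:Decid1}), and K\"onig's lemma applied to the finitely-branching, well-founded reduction tree~$T_{\clp\ww}$ yields an effective enumeration of all reducts, against which one tests for~$\one$ via~\eqref{E:AppliConv2}. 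Once these are cited, the present theorem follows immediately with no additional argument.
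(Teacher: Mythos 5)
Your proposal is correct and coincides with the paper's own proof, which obtains Theorem~\ref{T:MainA2} precisely by merging Proposition~\ref{P:WordPb} with Proposition~\ref{P:3OreConv} (strong noetherianity giving the noetherianity needed for the latter). Nothing further is required.
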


%A few words about the algorithmic complexity of the solution so obtained will be added after the more precise results of Section~\ref{S:Univ}.

%%%%
\subsection{Property~$\PropH$}\label{SS:PropH}

We conclude with a third application of semi-convergence involving the property introduced in~\cite{Dia} and called Property~$\PropH$ in~\cite{Dib} and~\cite{GoR}. We say that a presentation~$(\SS, \RR)$ of a monoid~$\MM$ satisfies \emph{Property~$\PropH$} if a word~$\ww$ in~$\SS \cup \SSb$ represents~$1$ in~$\EG\MM$ if and only if one can go from~$\ww$ to the empty word only using \emph{special} transformations of the following four types (we recall that $\equivp$ is the congruence on~$\SS^*$ generated by~$\RR$):

- replacing a positive factor~$\uu$ of~$\ww$ (no letter~$\INV\ss$) by~$\uu'$ with $\uu' \equivp \uu$, 

- replacing a negative factor~$\INV\uu$ of~$\ww$ (no letter~$\ss$) by~$\INV{\uu'}$, with $\uu' \equivp \uu$,

- deleting some length two factor $\INV\ss \ss$ or replacing some length two factor~$\INV\ss \tt$ with~$\vv \INV\uu$ such that $\ss\vv = \tt\uu$ is a relation of~$\RR$ (``right reversing'' relation~$\rev$ of~\cite{Dia}), 

- deleting some length two factor $\ss\INV\ss$ or replacing some length two factor~$\ss\INV\tt$ with~$\INV\uu\vv$ such that $\vv\ss = \uu\tt$ is a relation of~$\RR$ (``left reversing'' relation~$\revt$ of~\cite{Dia}). 

\noindent All special transformations replace a word with another word that represents the same element in~$\EG\MM$, and the point is that new trivial factors~$\ss\INV\ss$ or~$\INV\ss\ss$ are never added: words may grow longer (if some relation of~$\RR$ involves a word of length~$\ge 3$), but, in some sense, they must become simpler, a situation directly reminiscent of Dehn's algorithm for hyperbolic groups, see~\cite[Sec.\,1.2]{Dib} for precise results in this direction.

Let us say that a presentation~$(\SS, \RR)$ of a gcd-monoid~$\MM$ is a \emph{right lcm presentation} if $\RR$ contains one relation for each pair~$(\ss, \tt)$ in~$\SS \times \SS$ such that $\ss$ and~$\tt$ admit a common right multiple and this relation has the form $\ss\vv = \tt\uu$ where both $\ss\vv$ and $\tt\uu$ represent the right lcm $\ss \lcm \tt$. A left lcm presentation is defined symmetrically. By~\cite[Thrm.~4.1]{Dfx}, every noetherian gcd-monoid admits (left and right) lcm presentations. For instance, the standard presentation of an Artin-Tits monoid is an lcm presentation on both sides.

\begin{prop}\label{P:PropH}
If $\MM$ is a gcd-monoid and $\RD\MM$ is convergent, then Property~$\PropH$ is true for every presentation of~$\MM$ that is an lcm-presentation on both sides.
\end{prop}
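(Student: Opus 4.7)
The plan is to prove both directions of Property~$\PropH$. The easy direction is that every special transformation replaces the current word by another representing the same element of~$\EG\MM$: the positive/negative substitutions use relations of~$\RR$ (so are valid in $\MM$, hence in $\EG\MM$), and the right/left reversing replacements and cancellations are elementary group moves. Hence if $\ww$ can be driven to $\ew$ by special transformations, it represents~$1$. For the nontrivial direction, I would invoke the convergence of~$\RD\MM$ and~\eqref{E:AppliConv2}: $\ww$ represents~$1$ iff $\clp\ww \rdhs \ef$, so it suffices to prove that every single step $\aav \rdh \bbv$ can be \emph{simulated} by special transformations, meaning that any word $\ww$ with $\clp\ww = \aav$ can be transformed into some $\ww'$ with $\clp{\ww'} = \bbv$. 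Chaining these simulations along the finite reduction, and finally using positive/negative equivalences to collapse the last word representing~$1$ into~$\ew$, then concludes.

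The core of the proof is the simulation of $\bbv = \aav \act \Red{\ii, \xx}$, which I describe for the principal case $\ii \ge 2$ negative in $\aav$. Here $\xx\bb_\ii = \aa_\ii\xx' = \xx\lcm\aa_\ii$, $\bb_{\ii-1} = \aa_{\ii-1}\xx'$, and $\aa_{\ii+1} = \xx\bb_{\ii+1}$. Choose words $\uu, \vv$ over~$\SS$ representing $\xx$ and $\bb_{\ii+1}$; since $\uu\vv \equivp \ww_{\ii+1}$, a single positive-factor transformation replaces $\ww_{\ii+1}$ by $\uu\vv$ in~$\ww$. The word now contains the factor $\INV{\ww_\ii}\,\uu$, and because $(\SS,\RR)$ is a right lcm-presentation and $\aa_\ii \lcm \xx$ exists, the standard word-reversing process rewrites $\INV{\ww_\ii}\,\uu$, each step being precisely a right-reversing special transformation, into a word $\uu'\INV{\bar\ww_\ii}$ with $\clp{\ww_\ii}\clp{\uu'} = \clp\uu\clp{\bar\ww_\ii} = \xx\lcm\aa_\ii$, forcing $\clp{\uu'} = \xx'$ and $\clp{\bar\ww_\ii} = \bb_\ii$. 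The positive concatenation $\ww_{\ii-1}\uu'$ then automatically represents $\aa_{\ii-1}\xx' = \bb_{\ii-1}$, so the resulting word decomposes as a word $\ww'$ with $\clp{\ww'} = \bbv$.

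The other cases are dealt with symmetrically. For $\ii \ge 3$ positive in $\aav$, one mirrors the argument, exposing $\xx$ at the right end of $\ww_{\ii+1}$ via negative-factor equivalence and applying left-reversing replacements to the factor $\ww_\ii\INV\uu$; correctness here uses that $(\SS,\RR)$ is also a left lcm-presentation and that $\xx \lcmt \aa_\ii$ exists. For $\ii = 1$, the element $\xx$ is a common right divisor of $\aa_1$ and $\aa_2$; positive equivalences expose it at the end of both $\ww_1$ and $\ww_2$, yielding an intermediate factor $\uu\INV\uu$ which evaporates letter by letter using the cancellation rule $\tt\INV\tt \to \ew$. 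The rule $\Cut$, which drops a trailing trivial entry $\aa_\nn = 1$, is simulated by a single positive or negative transformation replacing $\ww_\nn$ (a word representing~$1$) by $\ew$, after which the decomposition of the new word has one fewer entry, matching~$\bbv$.

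The main obstacle, and the point where both hypotheses enter, is the correctness of the reversing simulation: one must know that the right-reversing (respectively left-reversing) procedure applied to $\INV{\ww_\ii}\,\uu$ terminates and outputs the true lcm rather than failing midway because some required relation is missing from $\RR$. Termination follows by the standard $\gcd$-monoid argument built on Lemma~\ref{L:IterLcm}: the existence of the global lcm $\aa_\ii \lcm \xx$ forces the existence of every intermediate letter-against-letter lcm encountered, so every needed relation is in~$\RR$. Correctness of the output is exactly what the lcm-presentation assumption on \emph{both} sides guarantees, since each relation used by the reversing faithfully encodes the lcm of the two generators concerned. Thus convergence of~$\RD\MM$ supplies the global strategy (a finite reduction $\clp\ww \rdhs \ef$), and the two-sided lcm-presentation hypothesis supplies the atomic moves needed to implement each step.
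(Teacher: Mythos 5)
Your proposal is correct and follows essentially the same route as the paper's (sketched) proof: reduce the statement via~\eqref{E:AppliConv2} to simulating each reduction step $\Red{\ii,\xx}$ by special transformations, with the right (\resp left) lcm computed by word reversing, whose availability and correctness are exactly what the two-sided lcm-presentation hypothesis together with Lemma~\ref{L:IterLcm} guarantee. The only cosmetic difference is that the paper works with $\xx$ a single generator and the relation $\clp\ww \rds \one$, whereas you allow general $\xx$ and treat $\Cut$ and the case $\ii=1$ explicitly; these variants are interchangeable.
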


\begin{proof}[Proof (sketch)]
Assume that $(\SS, \RR)$ is the involved presentation. Let $\ww$ be a word in~$\SS \cup \SSb$. Then $\ww$ represents~$1$ in~$\EG\MM$ if and only if $\clp\ww \rds \one$ holds, where we recall $\clp\ww$ is the multifraction $\clp{\ww_1} / \clp{\ww_2} / \clp{\ww_3} / \pdots$ assuming that the parsing of~$\ww$ is $\ww_1 \INV{\ww_2} \ww_3 \pdots$. So the point is to check that, starting from a sequence of positive words $(\uu_1 \wdots \uu_\nn)$ and~$\ss$ in~$\SS$, we can construct a sequence $(\vv_1 \wdots \vv_\nn)$ satisfying 
$$\clp{\vv_1} \sdots \clp{\vv_\nn} = \clp{\uu_1} \sdots \clp{\uu_\nn} \act \Red{\ii, \ss}$$
(assuming that the latter is defined) by only using special transformations. This is indeed the case, as we can take (for $\ii$ negative in~$\aav$) $\vv_\kk = \uu_\kk$ for $\kk \not= \ii - 1, \ii, \ii + 1$, and
$$\vv_{\ii-1} = \uu_{\ii - 1} \ss', \quad \INV\ss \uu_\ii \rev \vv_\ii \INV{\ss'}, \quad \uu_{\ii + 1} \equiv^+ \ss \vv_{\ii +1},$$
where $\rev$ is the above alluded right reversing relation that determines a right lcm~\cite{Dia, Dir}.
\end{proof}

\begin{coro}\label{C:PropH}
If $\MM$ is a noetherian gcd-monoid satisfying the $3$-Ore condition, then Property~$\PropH$ is true for every presentation of~$\MM$ that is an lcm-presentation on both sides.
\end{coro}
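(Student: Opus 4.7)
The plan is essentially to chain together two results already established in the paper. First, I would invoke Proposition~\ref{P:3OreConv}: since $\MM$ is assumed to be a noetherian gcd-monoid satisfying the $3$-Ore condition, both $\RD\MM$ and $\RDh\MM$ are convergent. This discharges the hypothesis needed for the next step.

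Second, I would apply Proposition~\ref{P:PropH} directly: from the convergence of $\RD\MM$ and the assumption that $(\SS,\RR)$ is an lcm-presentation on both sides, Property~$\PropH$ follows for $(\SS,\RR)$. So the proof of the corollary is a one-line composition of Proposition~\ref{P:3OreConv} and Proposition~\ref{P:PropH}, and there is no genuine obstacle beyond observing that the hypotheses match.

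If one wanted to make the argument slightly more self-contained, I would note that the convergence of $\RD\MM$ is exactly the ingredient used inside the proof sketch of Proposition~\ref{P:PropH}: it guarantees that a word~$\ww$ in $\SS \cup \SSb$ represents~$1$ in~$\EG\MM$ if and only if $\clp\ww \rds \one$, by~\eqref{E:AppliConv2} and Lemma~\ref{L:WP1}, and the two-sided lcm-presentation hypothesis is what allows each elementary step of the reduction $\Red{\ii,\ss}$ (with $\ss$ an atom) to be realized by the special Dehn-like transformations $\equivp$, $\rev$, $\revt$ listed before Proposition~\ref{P:PropH}. Since by Lemma~\ref{L:IterRed} it suffices to track reductions whose parameter is an atom, and atoms are generators of~$\SS$, every reduction step corresponds to a composition of special transformations, and the corollary is immediate.
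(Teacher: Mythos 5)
Your proof is correct and is exactly the intended argument: the paper states the corollary as an immediate consequence of Proposition~\ref{P:3OreConv} (which gives convergence of~$\RD\MM$ from the noetherian and $3$-Ore hypotheses) combined with Proposition~\ref{P:PropH}. The additional remarks you give simply unpack the proof sketch of Proposition~\ref{P:PropH} and add nothing that needs checking beyond matching hypotheses.
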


By Prop.\,\ref{P:AT3Ore} (below), every Artin-Tits monoid of type~FC satisfies the $3$-Ore condition, hence is eligible for Cor.\,\ref{C:PropH}: this provides a new, alternative proof of the main result in~\cite{Dib}.

\begin{rema}
If $\MM$ is a noetherian gcd-monoid and $\SS$ is the atom family of~$\MM$, then $\MM$ admits a right lcm presentation~$(\SS, \RR_\rr)$ and a left lcm presentation~$(\SS, \RR_\ell)$ but, in general, $\RR_\rr$ and~$\RR_\ell$ need not coincide. Adapting the definition of Property~$\PropH$ to use~$\RR_\rr$ for~$\rev$ and $\RR_\ell$ for~$\revt$ makes every noetherian gcd-monoid~$\MM$ such that $\RD\MM$ is convergent eligible for Prop.\,\ref{P:PropH}.
\end{rema}

%%%%
\section{The universal reduction strategy}\label{S:Univ}

When the rewrite system~$\RD\MM$ is convergent, every sequence of reductions from a multifraction~$\aav$ leads in finitely many steps to~$\red(\aav)$. We shall see now that, when the $3$-Ore condition is satisfied, there exists a canonical sequence of reductions leading from~$\aav$ to~$\red(\aav)$, the remarkable point being that the recipe so obtained only depends on~$\dh\aav$.

In Subsection~\ref{SS:LocIrred}, we establish technical preparatory results about how local irreducibility is preserved when reductions are applied. The universal recipe is established in Subsection~\ref{SS:Univ}, with a geometric interpretation in terms of van Kampen diagrams in Subsection~\ref{SS:Diagram}. Finally, we conclude in Subsection~\ref{SS:Torsion} with a few (weak) results about torsion.

%%%%
\subsection{Local irreducibility}\label{SS:LocIrred}

\begin{defi}
If $\MM$ is a gcd-monoid, a multifraction~$\aav$ on~$\MM$ is called \emph{$\ii$-irreducible} if $\aav \act \Red{\ii, \xx}$ is defined for no~$\xx \not= 1$. 
\end{defi}

An $\nn$-multifraction is $\RRR$-irreducible if and only if it is $\ii$-irreducible for every~$\ii$ in~$\{1 \wdots \nn-1\}$. In general, $\ii$-irreducibility is not preserved under reduction. However we shall see now that partial preservation results are valid, specially in the $3$-Ore case.

\begin{lemm}\label{L:Irred1}
Assume that $\MM$ is a gcd-monoid and $\bbv = \aav \act \Red{\ii, \xx}$ holds.

\ITEM1 If $\aav$ is $\jj$-irreducible for some $\jj \not= \ii - 2, \ii, \ii + 1$, then so is~$\bbv$.

\ITEM2 If $\aav$ is $(\ii - 2)$-irreducible and $\bbv \act \Red{\ii-2, \zz}$ is defined, then, for~$\ii$ negative (\resp positive) in~$\aav$, we must have $\bb_{\ii - 1} = \xx' \lcmt \uu$ (\resp $\xx' \lcm \uu$), where $\xx'$ and~$\uu$ are defined by $\aa_\ii \xx' = \xx \bb_\ii$ (\resp $\xx' \aa_\ii = \bb_\ii \xx'$) and $\zz \uu = \bb_{\ii - 1}$ (\resp $\uu \zz = \bb_{\ii - 1}$).
\end{lemm}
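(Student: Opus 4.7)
The plan is to split part (1) by the distance between $\jj$ and $\ii$. When $\jj \le \ii-3$ or $\jj \ge \ii+2$, the three entries $\bb_{\ii-1}, \bb_\ii, \bb_{\ii+1}$ modified by $\Red{\ii,\xx}$ are disjoint from the two entries $\bb_\jj, \bb_{\jj+1}$ that, by Lemma~\ref{L:RedDef}, determine the eligibility of $\Red{\jj,\cdot}$, so $\jj$-irreducibility transfers immediately from $\aav$ to $\bbv$. The delicate case is $\jj = \ii-1$, where the entry $\bb_\ii$ has been modified while $\bb_{\ii-1}$ has also changed; the ``harmless'' looking excluded indices $\ii - 2, \ii, \ii + 1$ are precisely the ones where adjacent entries $(\bb_\jj, \bb_{\jj+1})$ contain a modified right half or a modified pair.

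For $\jj = \ii - 1$, I treat $\ii$ negative in~$\aav$, the positive case being symmetric. The argument is by contraposition: assume $\bbv \act \Red{\ii-1, \yy}$ is defined with $\yy \ne 1$, so $\yy$ right-divides $\bb_\ii$ (write $\bb_\ii = \bb'_\ii \yy$) and $\yy$ admits a common left multiple with $\bb_{\ii-1} = \aa_{\ii-1}\xx'$; the goal is to construct $\uu \ne 1$ making $\aav \act \Red{\ii-1, \uu}$ defined, contradicting the hypothesis. Applying Lemma~\ref{L:Lcm} to $\xx \bb_\ii = \aa_\ii \xx' = \xx \lcm \aa_\ii$ yields $\xx' \gcdt \bb_\ii = 1$, hence $\xx' \gcdt \yy = 1$. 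The element $\aa_\ii \xx' = \xx \bb'_\ii \yy$ is then a common left multiple of $\xx'$ and $\yy$, so by Lemma~\ref{L:CondLcm} the left lcm $\xx' \lcmt \yy = \uu \xx' = \vv \yy$ exists; moreover $\uu \ne 1$, for otherwise $\yy$ would right-divide $\xx'$, forcing $\yy = 1$ by right-coprimality. Since $\xx' \lcmt \yy$ right-divides the common left multiple $\aa_\ii \xx'$, right-cancelling $\xx'$ gives that $\uu$ right-divides $\aa_\ii$. For the common-left-multiple condition, any common left multiple $\mm \yy = \nn \bb_{\ii-1} = \nn \aa_{\ii-1} \xx'$ of $\yy$ and $\bb_{\ii-1}$ is also a common left multiple of $\yy$ and $\xx'$, hence equals $\ss \uu \xx'$ for some $\ss$; right-cancelling $\xx'$ gives $\nn \aa_{\ii-1} = \ss \uu$, which is a common left multiple of $\uu$ and $\aa_{\ii-1}$. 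The boundary case $\ii = 2$ (so $\ii-1 = 1$) is similar: $\uu$ right-divides $\aa_1$ directly because $\aa_1 \xx' = \bb_1$ is itself a common left multiple of $\xx'$ and $\yy$, and this is the only condition needed at level~$1$.

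For part~(2), again with $\ii$ negative, both $\xx'$ and $\uu$ right-divide $\bb_{\ii-1}$ via $\bb_{\ii-1} = \aa_{\ii-1}\xx' = \zz\uu$, so by Lemma~\ref{L:CondLcm} the left lcm $\xx' \lcmt \uu = \uu' \xx' = \xx'' \uu$ exists and right-divides $\bb_{\ii-1}$. Writing $\bb_{\ii-1} = \tt \cdot (\xx' \lcmt \uu) = \tt \uu' \xx' = \tt \xx'' \uu$ and right-cancelling $\xx'$ and $\uu$ respectively yields $\aa_{\ii-1} = \tt \uu'$ and $\zz = \tt \xx''$. Thus $\tt$ left-divides $\aa_{\ii-1}$; moreover any common right multiple of $\zz$ and $\aa_{\ii-2} = \bb_{\ii-2}$ (which exists by the hypothesis that $\bbv \act \Red{\ii-2, \zz}$ is defined) is automatically a common right multiple of $\tt$ and $\aa_{\ii-2}$, since $\tt$ left-divides $\zz$. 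The $(\ii-2)$-irreducibility of $\aav$ then forces $\tt = 1$, giving $\bb_{\ii-1} = \xx' \lcmt \uu$ as claimed. The positive case is obtained by exchanging left and right throughout. The main obstacle is the $\jj = \ii - 1$ case of part~(1), where all three modified entries come into play simultaneously and one must leverage the right-coprimality extracted from Lemma~\ref{L:Lcm} together with careful bookkeeping of common multiples across adjacent entries; by contrast, the calculation in part~(2) is essentially a single application of the universal property of the left lcm, combined with $(\ii-2)$-irreducibility.
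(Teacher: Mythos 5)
Your proof is correct and follows essentially the same route as the paper's: for $\jj = \ii-1$ you extract the element $\uu$ with $\uu\xx' = \xx'\lcmt\yy$, show it is eligible for a reduction at level~$\ii-1$ on~$\aav$, and close with the coprimality $\xx'\gcdt\bb_\ii = 1$ supplied by Lemma~\ref{L:Lcm}, while your part~(2) is the paper's argument with your $\tt$ playing the role of its~$\vv$. The only differences are cosmetic: a contrapositive phrasing, and a direct appeal to the universal property of the left lcm where the paper instead cites Lemma~\ref{L:IterLcm}.
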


\begin{proof}
\ITEM1 The result is trivial for $\jj \le \ii - 3$ and $\jj \ge \ii + 2$, as $\bb_\jj = \aa_\jj$ and $\bb_{\jj + 1} = \aa_{\jj + 1}$ then hold.

We now consider the case $\jj = \ii - 1$, with $\ii \ge 4$ negative in~$\aav$. Assume that $\aav$ is $(\ii - 1)$-irreducible and $\bbv \act \Red{\ii-1, \yy}$ is defined (Fig.\,\ref{F:Irred}). Our aim is to show $\yy = 1$. By construction, we have $\bb_{\ii - 1} = \aa_{\ii-1} \xx'$ with $\aa_\ii \xx':= \xx \lcm \aa_\ii$. By Lemma~\ref{L:IterLcm}, the assumption that $\yy \lcmt \bb_{\ii-1}$ exists implies that $\yy \lcmt \xx'$ and $\yy' \lcmt \aa_{\ii-1}$ both exist where $\yy'$ is determind by $\yy' \xx' = \xx' \lcmt \yy$. On the other hand, the equality $\aa_\ii \xx' = \xx \bb_\ii$ shows that $\aa_\ii \xx'$ is a common right multiple of~$\xx'$ and~$\bb_\ii$, hence a fortiori of~$\xx'$ and~$\yy$. By definition of~$\yy'$, this implies $\yy' \divet \aa_\ii$. Hence $\aav \act \Red{\ii-1, \yy'}$ is defined. As $\aav$ is $(\ii-1)$-irreducible, this implies $\yy' = 1$, which implies $\yy \divet \xx'$. Thus $\yy$ is a common right divisor of~$\xx'$ and~$\bb_\ii$. By definition, $\aa_\ii \xx'$ is the right lcm of~$\xx$ and~$\aa_\ii$, hence, by Lemma~\ref{L:Lcm}, $\xx' \gcdt \bb_\ii = 1$ holds. Therefore, the only possibility is $\yy = 1$, and $\bbv$ is $(\ii-1)$-irreducible.

For $\ii = 2$, the argument is similar: the assumption that $\yy$ right divide~$\bb_1$ implies that $\yy'$ right divides~$\aa_1$, as well as~$\aa_2$, and the assumption that $\aav$ is $1$-irreducible implies $\yy' = 1$, whence $\yy = 1$ as above. Finally, for $\ii$ positive in~$\aav$, the argument is symmetric, mutatis mutandis.

\ITEM2 Assume that $\aav$ is $(\ii - 2)$-irreducible and $\bbv \act \Red{\ii - 2, \zz}$ is defined. We first assume $\ii \ge 4$ negative in~$\aav$. By definition, $\zz$ is a left divisor of~$\bb_{\ii - 1}$, say $\bb_{\ii - 1} = \zz \uu$. By construction, $\bb_{\ii - 1}$, which is $\aa_{\ii - 1} \xx'$, is a right multiple of~$\xx'$ and~$\uu$, hence $\xx' \lcmt \uu$ exists and it right divides~$\bb_{\ii - 1}$, say $\bb_{\ii - 1} = \vv (\xx' \lcmt \uu)$. Then $\vv$ is a left divisor of~$\aa_{\ii - 1}$. By construction, $\vv$ left divides~$\zz$, hence the assumption that $\bbv \act \Red{\ii - 2, \zz}$ is defined, which implies that $\zz$ and~$\aa_{\ii - 2}$ admit a common right multiple, a fortiori implies that $\vv$ and $\aa_{\ii - 2}$ admit a common right multiple. It follows that $\aav \act \Red{\ii - 2, \vv}$ is defined. As $\aav$ is assumed to be $(\ii - 2)$-irreducible, this implies $\vv = 1$, hence $\bb_{\ii - 1} = \uu \lcmt \xx'$.

For $\ii \ge 5$ positive in~$\aav$, the argument is symmetric. Finally, for $\ii = 3$, $\vv$ is a common right divisor of~$\aa_1$ and~$\aa_2$, and the $1$-irreducibility of $\aav$ implies $\vv = 1$, whence $\bb_2 = \uu \lcm \xx'$ again.
\end{proof}

\begin{figure}[htb]
\begin{picture}(80,42)(0,2)
\psset{nodesep=0.7mm}
\psline[style=thin](38,40)(36,40)(36,24.5)(38,24.5)\put(32.5,34){$\aa_\ii$}
\pcline{->}(0,4)(15,4)\taput{$\aa_{\ii-3}$}
\pcline{->}(15,18)(15,4)\tlput{$\aa_{\ii - 2}$}
\pcline{->}(15,18)(40,12)\tbput{$\zz$}
\pcline{->}(40,12)(65,18)\put(50,12){$\uu\ \color{color3}(= \cc_{\ii - 1})$}
\pcline{->}(15,18)(40,24)\taput{$\aa_{\ii-1}$}
\pcline{->}(40,24)(65,18)\taput{$\xx'$}
\pcline[style=exist]{->}(15,18)(31,18)\put(27,18.5){$\vv$}
\pcline[style=exist]{->}(30,18)(40,12)
\pcline[style=exist]{->}(30,18)(40,24)
\psarc[style=thin](30,18){3.5}{330}{30}
\pcline[style=exist]{->}(15,26)(15,18)
\pcline[style=exist,border=2pt]{->}(15,26)(40,32)
\pcline[style=exist]{->}(40,32)(65,26)
\pcline{->}(40,32)(40,24)\trput{$\yy'$}
\pcline{->}(65,26)(65,18)\trput{$\yy$}
\psarc[style=thin](40,32){3}{270}{345}
\psarc[style=thin](15,26){3}{270}{15}
\pcline{->}(40,40)(40,32)
\pcline{->}(65,40)(65,26)
\pcline{->}(40,40)(65,40)\taput{$\xx$}
\pcline{->}(65,40)(80,40)\put(69,41){$\bb_{\ii + 1}$}
\psline[style=thin](40,42)(40,44)(80,44)(80,42)\put(58,45.5){$\aa_{\ii + 1}$}
\psline[style=thin](67,39)(69,39)(69,18)(67,18)\put(70,30){$\bb_\ii\ \color{color3}(= \cc_\ii)$}
\psarc[style=thin](65,18){3}{90}{160}
\pcline[linecolor=color3]{->}(15,4)(40,4)\taput{\color{color3}$\zz'$}
\pcline[linecolor=color3]{->}(40,12)(40,4)\trput{\color{color3}$\cc_{\ii - 2}$}
\psarc[style=thin,linecolor=color3](40,4){3}{90}{180}
\psline[style=thin,linecolor=color3](0,3)(0,2)(40,2)(40,3)
\put(16,0){\color{color3}$\cc_{\ii - 3}$}
\end{picture}
\caption[]{\sf Preservation of irreducibility, proofs of Lemmas~\ref{L:Irred1} and~\ref{L:Irred2}, here for $\ii$ negative in~$\aav$; the colored part is for Lemma~\ref{L:Irred2}.}
\label{F:Irred}
\end{figure}

\begin{lemm}\label{L:Irred2}
Assume that $\MM$ is a gcd-monoid satisfying the $3$-Ore condition and $\ccv = \aav \act \Red{\ii, \xx}\Red{\ii - 2, \zz}$ holds. If $\aav$ is $(\ii - 1)$- and $(\ii - 2)$-irreducible, then $\ccv$ is $(\ii - 1)$-irreducible.
\end{lemm}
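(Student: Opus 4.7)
My plan is to argue by contradiction: suppose some $\yy \neq 1$ makes $\ccv \act \Red{\ii-1,\yy}$ defined, and produce from this a nontrivial reduction of $\aav$ at level~$\ii-1$, contradicting the hypothesis that $\aav$ is $(\ii-1)$-irreducible. I will treat the case $\ii$ negative in~$\aav$ (so $\ii$ is even, at least~$4$, and $\ii-1$ is positive); the positive case is symmetric, obtained by swapping left and right throughout.

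Setup: write $\bbv := \aav \act \Red{\ii,\xx}$, and let $\xx'$ be the element supplied by Def.~\ref{D:Red}, so $\bb_{\ii-1} = \aa_{\ii-1}\xx'$ and $\xx\bb_\ii = \aa_\ii\xx' = \xx \lcm \aa_\ii$. By Lemma~\ref{L:Lcm} this last right-lcm equality forces the coprimeness $\bb_\ii \gcdt \xx' = 1$, which will be used at the very end. Set $\uu := \cc_{\ii-1}$, so that $\zz\uu = \bb_{\ii-1}$ by the definition of $\Red{\ii-2,\zz}$, and note that $\cc_\ii = \bb_\ii$. The $(\ii-2)$-irreducibility of $\aav$ now permits invoking Lemma~\ref{L:Irred1}\ITEM2, yielding the crucial identity $\bb_{\ii-1} = \xx' \lcmt \uu$ (a left lcm).

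Main step: assume $\ccv \act \Red{\ii-1,\yy}$ is defined with $\yy \neq 1$. Since $\ii-1$ is positive in $\ccv$, Lemma~\ref{L:RedDef}\ITEM1 gives $\yy \divet \bb_\ii$ and the existence of $\yy \lcmt \uu$. Writing $\bb_\ii = \yy_1\yy$ and using $\xx\bb_\ii = \aa_\ii\xx'$, I see that both $\yy$ and $\xx'$ right-divide $\aa_\ii\xx'$, so Lemma~\ref{L:CondLcm} produces $\yy \lcmt \xx'$. Thus $\yy$, $\uu$, $\xx'$ pairwise admit common left multiples, and the left $3$-Ore hypothesis yields a common left multiple of all three; combined with Lemma~\ref{L:CondLcm}, this gives the joint left lcm, which by associativity equals $\yy \lcmt \bb_{\ii-1} = \yy \lcmt (\aa_{\ii-1}\xx')$.

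Closing the loop: the left-right dual of Lemma~\ref{L:IterLcm} applied to $\yy \lcmt (\aa_{\ii-1}\xx')$ factors the existence of this lcm through that of $\yy' \lcmt \aa_{\ii-1}$, where $\yy'$ is defined by $\yy \lcmt \xx' = \yy' \xx'$. Moreover, $\yy'\xx' = \yy \lcmt \xx' \divet \aa_\ii\xx'$ gives, by right-cancellation, $\yy' \divet \aa_\ii$. Hence $\Red{\ii-1,\yy'}$ is defined on $\aav$, and the $(\ii-1)$-irreducibility of $\aav$ forces $\yy' = 1$. Then $\yy \lcmt \xx' = \xx'$, i.e., $\yy \divet \xx'$; combined with $\yy \divet \bb_\ii$ and the coprimeness $\bb_\ii \gcdt \xx' = 1$, this forces $\yy = 1$, the contradiction.

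The main delicate point will be identifying the correct witness at level~$\ii-1$ of $\aav$: it is not $\yy$ itself but the ``remainder'' $\yy'$ obtained by pushing $\yy$ across $\xx'$, and checking that $\yy'$ satisfies the two conditions ($\yy' \divet \aa_\ii$ and a common left multiple with $\aa_{\ii-1}$) requires one careful application each of the left $3$-Ore condition, Lemma~\ref{L:IterLcm}, and right-cancellation; the coprimeness ingredients supplied by Lemma~\ref{L:Lcm} and Lemma~\ref{L:Irred1}\ITEM2 then finish the argument.
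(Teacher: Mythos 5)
Your proof is correct and follows essentially the same route as the paper's: both hinge on applying the left $3$-Ore condition to the triple $\yy, \xx', \uu$ together with the identity $\bb_{\ii-1} = \xx' \lcmt \uu$ supplied by Lemma~\ref{L:Irred1}\ITEM2, so as to conclude that $\yy \lcmt \bb_{\ii-1}$ exists and hence that $\bbv \act \Red{\ii-1,\yy}$ is defined. The only difference is cosmetic: at that point the paper simply cites Lemma~\ref{L:Irred1}\ITEM1 to get the $(\ii-1)$-irreducibility of~$\bbv$ and conclude $\yy = 1$, whereas you re-derive that step by hand (pushing $\yy$ through~$\xx'$ via the dual of Lemma~\ref{L:IterLcm} and invoking the coprimeness of~$\bb_\ii$ and~$\xx'$), which amounts to inlining the proof of that case of Lemma~\ref{L:Irred1}\ITEM1.
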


\begin{proof}
(See Fig.\,\ref{F:Irred} again.) Put $\bbv:= \aav \act \Red{\ii, \xx}$. By Lemma~\ref{L:Irred1}\ITEM1, the $(\ii - 1)$-irreducibility of~$\aav$ implies that of~$\bbv$. However, as $\ii - 1 = (\ii - 2) + 1$, Lemma~\ref{L:Irred1}\ITEM1 is useless to deduce that $\bbv \act \Red{\ii - 2, \zz}$ is $(\ii - 1)$-irreducible. Now, assume that $\ccv \act \Red{\ii - 1, \yy}$ is defined with, say, $\ii$ negative in~$\aav$. Define $\xx'$ and~$\uu$ by $\aa_\ii \xx' = \xx \bb_\ii$ and $\zz \uu = \bb_{\ii - 1}$. By construction, we have $\cc_{\ii - 1} = \uu$. The existence of $\ccv \act \Red{\ii - 1, \yy}$ implies $\yy \divet \cc_\ii$ and the existence of~Ê$\yy \lcmt \uu$. Next, $\xx'$ and~$\bb_\ii$ admit a common left multiple, namely~$\xx \bb_\ii$, hence a fortiori so do $\xx'$ and~$\yy$. Finally, $\uu$ and~$\xx'$ admit a common left multiple, namely~$\bb_{\ii - 1}$. As $\MM$ satisfies the $3$-Ore condition, $\yy$, $\xx'$, and~$\uu$ admit a common left multiple, hence $\yy \lcmt \xx' \lcmt \uu$ exists. As $\aav$ is also $(\ii - 2)$-irreducible, Lemma~\ref{L:Irred1}\ITEM2 implies $\bb_{\ii - 1} = \xx' \lcmt \uu$. Thus $\yy \lcmt \bb_{\ii - 1}$ exists, hence $\bbv \act \Red{\ii - 1, \yy}$ is defined. As $\bbv$ is $(\ii - 1)$-irreducible, we deduce $\yy = 1$. Hence $\ccv$ is $(\ii - 1)$-irreducible.

As usual, the argument for $\ii$ positive in~$\aav$ is symmetric (and $\ii = 3$ is not special here).
\end{proof}

%%%%
\subsection{The recipe}\label{SS:Univ}

Our universal recipe relies on the existence, for each level, of a unique, well defined maximal reduction applying to a given multifraction.

\begin{lemm}\label{L:RedMax}
If $\MM$ is a noetherian gcd-monoid satisfying the $3$-Ore condition, for every~$\aav$ in~$\FR\MM$ and every $\ii < \dh\aav$ negative $($\resp positive$)$ in~$\aav$, there exists~$\xx_\smax$ such that $\aav \act \Red{\ii, \xx}$ is defined if and only if $\xx \dive \xx_\smax$ $($\resp $\xx \divet \xx_\smax$$)$ holds.
\end{lemm}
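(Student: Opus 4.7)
The plan is to characterize, for each fixed~$\aav$ and~$\ii$, the set $\XX := \{\xx \in \MM : \aav \act \Red{\ii, \xx} \text{ is defined}\}$ as a sublattice of $(\MM, \dive)$ (or $(\MM, \divet)$) with a bounded ``upper part'', and then invoke noetherianity to extract a maximum element. Concretely, I will treat the case where $\ii \ge 3$ is negative in~$\aav$; the case $\ii \ge 3$ positive in~$\aav$ is symmetric with left and right exchanged, and the case $\ii = 1$ is easy since $\XX$ then consists of the common right divisors of $\aa_1$ and $\aa_2$, whose maximum is $\aa_1 \gcdt \aa_2$.

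By Lemma~\ref{L:RedDef}\ITEM1, $\xx$ belongs to~$\XX$ if and only if $\xx \dive \aa_{\ii + 1}$ and $\xx, \aa_\ii$ admit a common right multiple. I first check that $\XX$ is downward closed under~$\dive$: both conditions defining~$\XX$ are preserved under replacing~$\xx$ by a left divisor (the first by transitivity of~$\dive$, the second because any common right multiple of~$\xx$ and~$\aa_\ii$ is again a common right multiple of any~$\yy \dive \xx$ and~$\aa_\ii$). I then show that $\XX$ is closed under binary right lcms: if $\xx, \yy$ both lie in~$\XX$, then $\xx$ and~$\yy$ admit the common right multiple~$\aa_{\ii + 1}$ so, by Lemma~\ref{L:CondLcm}, $\xx \lcm \yy$ exists and divides~$\aa_{\ii + 1}$; moreover, $\xx, \yy, \aa_\ii$ pairwise admit common right multiples (the first pair by what we just said, the other two pairs by $\xx, \yy \in \XX$), hence, by the right $3$-Ore condition, they admit a global common right multiple, which in turn is a common right multiple of $\xx \lcm \yy$ and~$\aa_\ii$.

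To conclude, I will extract a $\dive$-maximal element of~$\XX$ using noetherianity. For $\xx \in \XX$, let $\xx^*$ denote the unique element with $\xx \xx^* = \aa_{\ii + 1}$ (well-defined by left cancellativity), and set $\XX^* := \{\xx^* : \xx \in \XX\}$. As $\MM$ is noetherian, $\divt$ is well-founded, so $\XX^*$ has a $\divt$-minimal element~$\xx_\smin^*$; let $\xx_\smax$ be the corresponding element of~$\XX$. For any $\xx \in \XX$, the lcm $\xx \lcm \xx_\smax$ lies in~$\XX$ by the previous paragraph, so, writing $\xx \lcm \xx_\smax = \xx_\smax \uu$, we get $\aa_{\ii + 1} = \xx_\smax \uu (\xx \lcm \xx_\smax)^*$, whence $\xx_\smin^* = \uu (\xx \lcm \xx_\smax)^*$. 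Minimality of~$\xx_\smin^*$ and right cancellativity force $\uu = 1$, that is, $\xx \dive \xx_\smax$. Conversely, every $\dive$-divisor of~$\xx_\smax$ lies in~$\XX$ by downward closure, which gives the required equivalence.

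The main obstacle is the closure of~$\XX$ under right lcm, which is exactly where the $3$-Ore condition enters in a non-trivial way; the rest is a standard extraction of a maximum in a noetherian divisibility order.
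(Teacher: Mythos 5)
Your proposal is correct and follows essentially the same route as the paper: the same set $\XX$, the same use of the $3$-Ore condition to show $\XX$ is closed under right lcm, and the same extraction of $\xx_\smax$ by taking a $\divt$-minimal complement of an element of~$\XX$ in~$\aa_{\ii+1}$ (your $\XX^*$ is the paper's~$\YY$). The only differences are cosmetic: you make the downward closure of~$\XX$ explicit where the paper leaves it implicit, and the cancellation you invoke at the end is left (not right) cancellation, a harmless slip.
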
 

\begin{proof}
Assume $\ii$ negative in~$\aav$ and let $\XX:= \{\xx \in \MM \mid \aav \act \Red{\ii, \xx} \text{ is defined}\}$. Let $\xx, \yy \in \XX$. By definition, $\xx$ and $\yy$ both left divide~$\aa_{\ii+1}$, so $\xx \lcm \yy$ exists, and it left divides~$\aa_{\ii+1}$. On the other hand, by assumption, $\xx \lcm \aa_\ii$ and~$\yy \lcm \aa_\ii$ exist. By the $3$-Ore condition, $(\xx \lcm \yy) \lcm \aa_\ii$ exists, whence $\xx \lcm \yy \in \XX$.

Put $\YY := \{ \yy \mid \exists\xx{\in}\XX\, (\xx \yy = \aa_{\ii + 1})\}$. As $\MM$ is noetherian, there exists a $\divt$-minimal element in~$\YY$, say~$\yy_\smin$. Define~$\xx_\smax$ by $\xx_\smax \yy_\smin = \aa_{\ii + 1}$. By construction, $\xx_\smax$ lies in~$\XX$, so $\xx \dive \xx_\smax$ implies $\xx \in \XX$. Conversely, assume $\xx \in \XX$. We saw above that $\xx \lcm \xx_\smax$ exists and belongs to~$\XX$. Now, by the choice of~$\xx_\smax$, we must have $\xx \lcm \xx_\smax = \xx_\smax$, \ie, $\xx \dive \xx_\smax$. 

The argument for $\ii \ge 3$ positive in~$\aav$ is similar. For $\ii = 1$, the result reduces to the existence of a right gcd.
\end{proof}

\begin{nota}
In the context of Lemma~\ref{L:RedMax}, we write $\aav \act \Redmax\ii$ for $\aav \act \Red{\ii, \xx_\smax}$ for $\ii < \dh\aav$, extended with $\aav \act \Redmax\ii := \aav$ for $\ii \ge \dh\aav$. Next, if $\U\ii$ is a sequence of integers, say $\U\ii = (\ii_1 \wdots \ii_\ell)$, we write $\aav \act \Redmax{\U\ii}$ for $\aav \act \Redmax{\ii_1} \pdots \Redmax{\ii_\ell}$.
\end{nota}

In this way, $\aav \act \Redmax\ii$ is defined for every positive integer~$\ii$. One should not forget that, in this expression, $\Redmax\ii$ depends on~$\aav$ and does not correspond to a fixed~$\Red{\ii, \xx}$. Note that, for every~$\aav$ with $\dh\aav \ge 2$, we have $\aav \act \Redmax1 = \aav \act \Rdiv{1, \aa_1 \gcdt \aa_2}$. By Lemma~\ref{L:RedMax}, a multifraction $\aav \act \Redmax\ii$ is always $\ii$-irreducible: $\aav$ is $\ii$-irreducible if and only if $\aav = \aav \act \Redmax\ii$ holds. 

The next result shows that conveniently reducing a multifraction that is $\ii$-irreducible for $\ii < \mm$ leads to a multifraction that is $\ii$-irreducible for $\ii \le \mm$, paving the way for an induction.

\begin{lemm}\label{L:Irred3}
Assume that $\MM$ is a gcd-monoid satisfying the $3$-Ore condition and $\aav$ is an $\nn$-multifraction that is $\ii$-irreducible for every~$\ii < \mm$. Put $\Sigma(\mm) = (\mm, \mm - 2, \mm - 4, ..., 2)$ $($\resp $(\mm, \mm - 2, ..., 1)$$)$ for $\mm$ even $($\resp odd$)$. Then $\aav \act \Redmax{\Sigma(\mm)}$ is $\ii$-irreducible for every~$\ii \le \mm$.
\end{lemm}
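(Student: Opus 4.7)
The plan is to run an induction on $k$ along the sequence $\Sigma(\mm)$. I would set $\aav_0 := \aav$ and $\aav_{k+1} := \aav_k \act \Redmax{\mm - 2k}$, so that $\aav \act \Redmax{\Sigma(\mm)} = \aav_{\lceil \mm/2\rceil}$. The invariant I would maintain, by induction on $k \ge 0$, is
\[ (\PPP_k): \ \aav_k \text{ is $\ii$-irreducible for every } \ii \in \{1 \wdots \mm - 2k - 1\} \cup \{\mm - 2k + 1 \wdots \mm\}, \]
in other words, the only index in $\{1 \wdots \mm\}$ where irreducibility can possibly fail at stage~$k$ is the single ``gap'' $\mm - 2k$, which is precisely the level attacked by the next reduction. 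As soon as $\mm - 2k \le 0$ the gap leaves the target range, and $(\PPP_k)$ becomes the sought conclusion. The base case $(\PPP_0)$ is exactly the hypothesis of the lemma.

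For the inductive step, Lemma~\ref{L:Irred1}\ITEM1 (applied with $\ii_0 = \mm - 2k$) transports $\jj$-irreducibility from $\aav_k$ to $\aav_{k+1}$ for every $\jj \notin \{\mm - 2k - 2,\ \mm - 2k,\ \mm - 2k + 1\}$, and the maximality of $\Redmax{\mm - 2k}$ gives $(\mm - 2k)$-irreducibility of $\aav_{k+1}$ for free. The index $\mm - 2k - 2$ is precisely the new gap permitted by $(\PPP_{k+1})$, so the only property genuinely missing is $(\mm - 2k + 1)$-irreducibility of~$\aav_{k+1}$.

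Recovering this ``off-by-one'' index is the main obstacle, and the place where the $3$-Ore assumption enters through Lemma~\ref{L:Irred2}. The trick I would use is to look two steps back: for $k \ge 1$ one has $\aav_{k+1} = \aav_{k-1} \act \Redmax{\mm - 2(k-1)}\Redmax{\mm - 2k}$, which fits the pattern $\bbv \act \Red{\ii, \xx}\Red{\ii - 2, \zz}$ with $\ii = \mm - 2(k-1)$. The invariant $(\PPP_{k-1})$ tells me that $\aav_{k-1}$ is $\jj$-irreducible for every $\jj \le \mm - 2(k-1) - 1 = \mm - 2k + 1$; in particular it is both $(\ii - 1)$- and $(\ii - 2)$-irreducible, which are exactly the hypotheses of Lemma~\ref{L:Irred2}, yielding the desired $(\ii - 1) = (\mm - 2k + 1)$-irreducibility of $\aav_{k+1}$. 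The sole remaining case, $k = 0$, needs no such argument because the would-be missing index $\mm + 1$ falls outside $\{1 \wdots \mm\}$, and Lemma~\ref{L:Irred1}\ITEM1 alone already delivers $(\PPP_1)$. This closes the induction and gives the lemma.
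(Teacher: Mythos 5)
Your proposal is correct and follows essentially the same route as the paper's own proof: the same stepwise decomposition along $\Sigma(\mm)$, the same invariant (irreducibility at every level of $\{1 \wdots \mm\}$ except the single gap $\mm - 2k$), Lemma~\ref{L:Irred1}\ITEM1 plus maximality of $\Redmax{\mm-2k}$ for all but one index, and Lemma~\ref{L:Irred2} applied to the two-step composition from $\aav_{k-1}$ to recover the off-by-one index $\mm - 2k + 1$. The bookkeeping of indices and the treatment of the initial step $k=0$ both match the paper.
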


\begin{proof}
Put $\aav^0 := \aav$ and $\aav^\kk:= \aav^{\kk - 1} \act \Redmax{\mm - 2 \kk + 2}$ for $\kk > 1$. We prove using induction on~$\kk \ge 0$
\begin{equation}\label{E:k2}\tag{$\HHH_\kk$}
\text{$\aav^\kk$ is $\ii$-irreducible for $\ii = 1 \wdots \mm$ with $\ii \not= \mm - 2\kk$.}
\end{equation}
By assumption, $\aav^0$, \ie, $\aav$, is $\ii$-irreducible for every $\ii < \mm$. Hence $(\HHH_0)$ is true.

Next, we have $\aav^1 = \aav \act \Redmax\mm$. By Lemma~\ref{L:Irred1}\ITEM1, the $\ii$-irreducibility of~$\aav$ for $\ii < \mm$ implies that $\aav^1$ is $\ii$-irreducible for $\ii = 1 \wdots \mm - 1$ with $\ii \not= \mm - 2$. On the other hand, the definition of~$\Redmax\mm$ implies that $\aav^1$ is $\mm$-irreducible. Hence $(\HHH_1)$ is true.

Assume now $\kk \ge 2$. By~$(\HHH_{\kk - 1})$, $\aav^{\kk - 1}$ is $\ii$-irreducible for $\ii = 1 \wdots \mm$ with $\ii \not= \mm - 2\kk +2$. Then, as above, Lemma~\ref{L:Irred1}\ITEM1 and the definition of~$\Redmax{\mm - 2 \kk + 2}$ imply that $\aav^\kk$ is $\ii$-irreducible for \break $\ii = 1 \wdots \mm$ with $\ii \not=\mm - 2\kk, \mm - 2\kk + 3$. Now, $\aav^\kk = \aav^{\kk - 2} \act \Redmax{\mm - 2\kk + 4} \Redmax{\mm - 2 \kk + 2}$ also holds and, by~$(\HHH_{\kk - 2})$, $\aav^{\kk - 2}$ is both $(\mm - 2\kk + 2)$- and $(\mm - 2\kk + 3)$-irreducible. Then Lemma~\ref{L:Irred2} implies that $\aav^\kk$ is $(\mm - 2\kk + 3)$-irreducible. Thus, $\aav^\kk$ is $\ii$-irreducible for $\ii = 1 \wdots \mm$ with $\ii \not= \mm - 2\kk$. Hence $(\HHH_\kk)$ is true.

Applying~$(\HHH_\kk)$ for $\kk = \lfloor (\mm + 1) / 2 \rfloor$, which gives $\mm - 2\kk < 1$, we obtain that $\aav^\kk$, which is $\aav \act \Redmax{\Sigma(\mm)}$, is $\ii$-irreducible for every~$\ii \le \mm$.
\end{proof}

Building on Lemma~\ref{L:Irred3}, we now easily obtain the expected universal recipe.

\begin{prop}\label{P:Strat}
If $\MM$ is a noetherian gcd-monoid satisfying the $3$-Ore condition, then, for every $\nn$-multifraction~$\aav$ on~$\MM$, we have
\begin{equation}\label{E:Strat}
\red(\aav) = \aav \act \Redmax{\univ\nn},
\end{equation}
 where $\univ\nn$ is empty for $\nn \le 1$, and is $(1, 2 \wdots \nn - 1)$ followed by~$\univ{\nn - 2}$ for~$\nn \ge\nobreak 2$. 
\end{prop}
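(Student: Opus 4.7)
My plan is to proceed by induction on~$\nn$, using the recursive decomposition $\univ\nn = (1, 2 \wdots \nn-1) \cdot \univ{\nn-2}$ together with the convergence of~$\RD\MM$ established in Proposition~\ref{P:3OreConv}. The base case $\nn \le 1$ is immediate, since $\univ\nn$ is empty and no reduction applies to a multifraction of depth at most one. For the inductive step with $\nn \ge 2$, I would set $\bbv := \aav \act \Redmax{1, 2 \wdots \nn-1}$, so that $\aav \act \Redmax{\univ\nn} = \bbv \act \Redmax{\univ{\nn-2}}$. Lemma~\ref{L:Irred1}\ITEM1 then shows that $\bbv$ is both $(\nn-2)$- and $(\nn-1)$-irreducible: $\Redmax{\nn-1}$ provides $(\nn-1)$-irreducibility at the final step, and $(\nn-2)$-irreducibility is provided by the penultimate $\Redmax{\nn-2}$ and preserved by $\Redmax{\nn-1}$ since $\nn-2 \notin \{\nn-3, \nn-1, \nn\}$.

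The key observation allowing the induction hypothesis to be applied is that every $\Redmax\ii$ with $\ii \le \nn-3$ depends on and modifies only entries of index at most~$\nn-2$: the maximal parameter supplied by Lemma~\ref{L:RedMax} at level~$\ii$ depends only on entries~$\ii$ and~$\ii+1$, and the modification touches only entries~$\ii-1, \ii, \ii+1$. Consequently, letting $\bbv' := (\bb_1 \wdots \bb_{\nn-2})$ viewed as an $(\nn-2)$-multifraction, applying $\Redmax{\univ{\nn-2}}$ to $\bbv$ leaves $\bb_{\nn-1}$ and $\bb_\nn$ untouched and acts on $(\bb_1 \wdots \bb_{\nn-2})$ identically to its action on $\bbv'$. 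By the induction hypothesis, $\bbv' \act \Redmax{\univ{\nn-2}} = \red(\bbv')$ is $\RRR$-irreducible, and hence $\ccv := \aav \act \Redmax{\univ\nn}$ equals $(\red(\bbv'), \bb_{\nn-1}, \bb_\nn)$. Since $\aav \rds \ccv$, by convergence it suffices to verify that $\ccv$ is itself $\RRR$-irreducible.

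The verification splits into three parts. Irreducibility at every level $\ii \le \nn-3$ follows from that of $\red(\bbv')$, since the condition concerns only entries $\cc_\ii, \cc_{\ii+1}$ lying inside $\bbv'$. Irreducibility at level $\nn-1$ is inherited from $\bbv$, because $\cc_{\nn-1} = \bb_{\nn-1}$ and $\cc_\nn = \bb_\nn$ are unchanged. The delicate step, which I expect to be the main obstacle, is irreducibility at level~$\nn-2$, where $\cc_{\nn-2} = \red(\bbv')_{\nn-2}$ may differ from $\bb_{\nn-2}$ through the single application of $\Redmax{\nn-3}$ occurring in the first batch of $\univ{\nn-2}$. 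My approach is to use Lemma~\ref{L:Distant} to commute $\Redmax{\nn-1}$ from the initial batch past the prefix operations $\Redmax1 \wdots \Redmax{\nn-4}$ of $\univ{\nn-2}$, all at distance at least~$3$ from~$\nn-1$, so that modified and read entries are disjoint and the maximal parameters of Lemma~\ref{L:RedMax} are unaffected by the swap; this reordering places $\Redmax{\nn-1}$ immediately before the occurrence of $\Redmax{\nn-3}$ inside $\univ{\nn-2}$, forming a pattern to which Lemma~\ref{L:Irred2} applies once one establishes $(\nn-2)$- and $(\nn-3)$-irreducibility of the intermediate state by a tracking argument in the spirit of Lemma~\ref{L:Irred3}. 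Since the tail $\univ{\nn-4}$ uses only indices at most~$\nn-5$ and therefore preserves $(\nn-2)$-irreducibility, convergence of $\RD\MM$ then forces $\ccv = \red(\aav)$, completing the induction.
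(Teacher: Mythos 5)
Your reduction of the problem is correct as far as it goes: the truncation argument for levels $\ii \le \nn-3$, the preservation of $(\nn-1)$-irreducibility, and the identification of level $\nn-2$ as the only remaining issue are all fine, as is the observation that $\Redmax{\nn-1}$ commutes with $\Redmax\jj$ for $\jj \le \nn-4$. But the step you defer to ``a tracking argument in the spirit of Lemma~\ref{L:Irred3}'' is precisely where the entire difficulty of the proposition lives, and as sketched it does not go through. To apply Lemma~\ref{L:Irred2} to the pattern $\Redmax{\nn-1}\Redmax{\nn-3}$ you need the intermediate multifraction $\ddv$ obtained after $\Redmax1 \pdots \Redmax{\nn-2}$ followed by $\Redmax1 \pdots \Redmax{\nn-4}$ to be both $(\nn-2)$- and $(\nn-3)$-irreducible. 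The $(\nn-2)$-part is easy, but the $(\nn-3)$-part fails under the obvious attack: Lemma~\ref{L:Irred1}\ITEM1 does not guarantee preservation of $\jj$-irreducibility under $\Red{\ii,\xx}$ when $\jj = \ii+1$, and the last operation $\Redmax{\nn-4}$ of your prefix is exactly of this excluded type relative to level $\nn-3$. Trying to repair this sends you into a regress: $(\nn-3)$-irreducibility of $\ddv$ requires another instance of Lemma~\ref{L:Irred2} one level down, whose hypothesis requires $(\nn-4)$-irreducibility of a still earlier state, and so on. Carrying irreducibility at two consecutive sliding levels through this cascade is exactly the content of the double induction $(\HHH_\kk)$ in the proof of Lemma~\ref{L:Irred3}, and that lemma cannot be cited as a black box here because your intermediate states (for instance $\bbv$, which is only $(\nn-2)$- and $(\nn-1)$-irreducible) do not satisfy its hypothesis of $\ii$-irreducibility for \emph{all} $\ii$ below the working level.

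For comparison, the paper runs the argument in the opposite order: it first iterates Lemma~\ref{L:Irred3} to show that the sequence $\Redmax{\Sigma(1)}\Redmax{\Sigma(2)}\pdots\Redmax{\Sigma(\nn-1)}$, with $\Sigma(\mm) = (\mm, \mm-2, \ldots)$, yields a multifraction that is $\ii$-irreducible for every $\ii < \nn$, hence equals $\red(\aav)$, and only then uses the distance-$\ge 3$ commutations to convert that sequence into $\univ\nn$. Your commutations are the same ones run backwards. The cleanest fix is to reorganize so that the irreducibility bookkeeping is done on the $\Sigma$-ordered sequence --- that is, to actually prove and invoke Lemma~\ref{L:Irred3} --- rather than attempting to track irreducibility along $\univ\nn$ directly.
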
 

\begin{proof}
An induction from Lemma~\ref{L:Irred3} shows that, for $\dh\aav = \nn$ and~$\Sigma$ as in Lemma~\ref{L:Irred3}, 
\begin{equation}\label{E:Strat1}
\aav \act \Redmax{\Sigma(1)} \Redmax{\Sigma(2)} \pdots \Redmax{\Sigma(\nn - 1)}
\end{equation}
is $\ii$-irreducible for every $\ii < \nn$, hence it must be~$\red(\aav)$. Then we observe that the terms in~\eqref{E:Strat1} can be rearranged. Indeed, the proof of Lemma~\ref{L:Distant} shows that, as partial mappings on~$\FR\MM$, the transformations $\Red{\ii, \xx}$ and $\Red{\jj, \yy}$ commute for $\vert\ii - \jj\vert \ge 3$ (we claim nothing for $\vert \ii - \jj \vert = 2$). Applying this in~\eqref{E:Strat1} to push the high level reductions to the left gives~\eqref{E:Strat}.
\end{proof}

 Thus, when Prop.\,\ref{P:Strat} is eligible, reducing a multifraction~$\aav$ amounts to performing the following quadratic sequence of algorithmic steps:

\texttt{for $\pp:= 1$ to $\lfloor \dh\aav /2 \rfloor$ do}

\HS{4}\texttt{for $\ii:= 1$ to $\dh\aav + 1 - 2\pp$ do}

\HS{8}\texttt{$\aav:= \aav \act \Redmax\ii$}.

In particular, $\aav$ represents~$1$ in~$\EG\MM$ if and only if the process ends with a trivial multifraction (all entries equal to~$1$).

By the way, the proof of Prop.\,\ref{P:Strat} shows that, for every $\nn'$-multifraction~$\aav$ with $\nn' \ge \nn$, the multifraction $\aav \act \Redmax{\univ\nn}$ is $\ii$-irreducible for every $\ii < \nn$.

%%%%
\subsection{Universal van Kampen diagrams}\label{SS:Diagram}

Applying the rule of~\eqref{E:Strat} amounts to filling a diagram that only depends on the depth of the considered multifraction. For instance, we have for every $6$-multifraction~$\aav$ the universal recipe
\begin{equation*}
\red(\aav) = \aav \act \Redmax1 \Redmax2 \Redmax3 \Redmax4 \Redmax5 \Redmax1 \Redmax2 \Redmax3 \Redmax1,
\end{equation*}
and reducing~$\aav$ corresponds to filling the universal diagram of Fig.\,\ref{F:Recipe}.

\begin{figure}[htb]
\begin{picture}(108,44)(0,0)
\psset{nodesep=0.7mm}
\pspolygon[linearc=3,linewidth=4mm,linecolor=white,fillstyle=solid,fillcolor=color1](18,28)(18,42)(36,42)
\pspolygon[linearc=3,linewidth=4mm,linecolor=white,fillstyle=solid,fillcolor=color1](18,28)(36,42)(54,42) (36,28)
\pspolygon[linearc=3,linewidth=4mm,linecolor=white,fillstyle=solid,fillcolor=color1](36,28)(54,42) (72,42) (54,28)
\pspolygon[linearc=3,linewidth=4mm,linecolor=white,fillstyle=solid,fillcolor=color1](54,28)(72,42) (90,42) (72,28)
\pspolygon[linearc=3,linewidth=4mm,linecolor=white,fillstyle=solid,fillcolor=color1](72,28)(90,42) (108,42) (90,28)
\pspolygon[linearc=3,linewidth=4mm,linecolor=white,fillstyle=solid,fillcolor=color1](36,14)(36,28)(54,28)
\pspolygon[linearc=3,linewidth=4mm,linecolor=white,fillstyle=solid,fillcolor=color1](36,14)(54,28) (72,28) (54,14)
\pspolygon[linearc=3,linewidth=4mm,linecolor=white,fillstyle=solid,fillcolor=color1](54,14)(72,28) (90,28) (72,14)
\pspolygon[linearc=3,linewidth=4mm,linecolor=white,fillstyle=solid,fillcolor=color1](54,0)(54,14)(72,14)
\pcline[style=double](0,42)(0,34) \psarc[style=double](7,35){7}{180}{270} \pcline[style=double](6,28)(18,28)
\pcline[style=double](18,28)(18,20) \psarc[style=double](25,21){7}{180}{270} \pcline[style=double](24,14)(36,14)
\pcline[style=double](36,14)(36,6) \psarc[style=double](43,7){7}{180}{270} \pcline[style=double](42,0)(54,0)
\pcline{->}(0,42)(18,42)\taput{$\aa_1$}
\pcline{<-}(18,42)(36,42)\taput{$\aa_2$}
\pcline{->}(36,42)(54,42)\taput{$\aa_3$}
\pcline{<-}(54,42)(72,42)\taput{$\aa_4$}
\pcline{->}(72,42)(90,42)\taput{$\aa_5$}
\pcline{<-}(90,42)(108,42)\taput{$\aa_6$}
\pcline{->}(18,28)(18,42)
\pcline[linecolor=color3]{<-}(18,42)(27,35)
\pcline{<-}(27,35)(36,42)
\pcline[linecolor=color3]{->}(36,42)(45,35)
\pcline{->}(45,35)(54,42)
\pcline[linecolor=color3]{<-}(54,42)(63,35)
\pcline{<-}(63,35)(72,42)
\pcline[linecolor=color3]{->}(72,42)(81,35)
\pcline{->}(81,35)(90,42)
\pcline[linecolor=color3]{<-}(90,42)(99,35)
\pcline{<-}(99,35)(108,42)\nbput[npos=0.6]{$\aa'_6$}
\pcline{->}(18,28)(27,35)
\pcline{->}(27,35)(36,28)
\pcline{<-}(36,28)(45,35)
\pcline{<-}(45,35)(54,28)
\pcline{->}(54,28)(63,35)
\pcline{->}(63,35)(72,28)
\pcline{<-}(72,28)(81,35)
\pcline{<-}(81,35)(90,28)
\pcline{->}(90,28)(99,35)\nbput[npos=0.6]{$\aa'_5$}
\pcline{->}(18,28)(36,28)
\pcline{<-}(36,28)(54,28)
\pcline{->}(54,28)(72,28)
\pcline{<-}(72,28)(90,28)
\pcline[linecolor=color3]{<-}(36,28)(45,21)
\pcline{<-}(45,21)(54,28)
\pcline[linecolor=color3]{->}(54,28)(63,21)
\pcline{->}(63,21)(72,28)
\pcline[linecolor=color3]{<-}(72,28)(81,21)
\pcline{<-}(81,21)(90,28)\nbput[npos=0.6]{$\aa'_4$}
\pcline{->}(36,14)(45,21)
\pcline{->}(45,21)(54,14)
\pcline{<-}(54,14)(63,21)
\pcline{<-}(63,21)(72,14)
\pcline{->}(72,14)(81,21)\nbput[npos=0.6]{$\aa'_3$}
\pcline{->}(36,14)(54,14)
\pcline{<-}(54,14)(72,14)
\pcline[linecolor=color3]{<-}(54,14)(63,7)
\pcline{<-}(63,7)(72,14)\nbput[npos=0.6]{$\aa'_2$}
\pcline{->}(54,0)(63,7)\nbput[npos=0.6]{$\aa'_1$}
\pcline[style=exist]{->}(36,42)(36,28)
\pcline[style=exist]{<-}(54,42)(54,28)
\pcline[style=exist]{->}(72,42)(72,28)
\pcline[style=exist]{<-}(90,42)(90,28)
\pcline{->}(36,14)(36,28)
\pcline[style=exist]{<-}(54,14)(54,28)
\pcline[style=exist]{->}(72,14)(72,28)
\pcline{->}(54,0)(54,14)
\psarc[style=thin](27,35){2.5}{40}{220}
\psarc[style=thin](36,28){3.5}{40}{140}
\psarc[style=thin](54,28){3.5}{40}{140}
\psarc[style=thin](72,28){3.5}{40}{140}
\psarc[style=thin](90,28){3.5}{40}{140}
\psarc[style=thin](45,21){2.5}{40}{220}
\psarc[style=thin](54,14){3.5}{40}{140}
\psarc[style=thin](72,14){3.5}{40}{140}
\psarc[style=thin](63,7){2.5}{40}{220}
\put(18.5,35){$\Red1$}
\put(36.5,35){$\Red2$}
\put(54.5,35){$\Red3$}
\put(72.5,35){$\Red4$}
\put(90.5,35){$\Red5$}
\put(36.5,21){$\Red1$}
\put(54.5,21){$\Red2$}
\put(72.5,21){$\Red3$}
\put(54.5,7){$\Red1$}
\end{picture}
\caption[]{\sf The universal diagram for the reduction of a $6$-multifraction: choosing for each colored arrow the maximal divisor of the arrow above that admits an lcm with the arrow on the left and filling the diagram leads to~$\aav' = \red(\aav)$. The nine reductions correspond to the grey tiles. The diagram shows that one can go from~$\aav$ to~$\aav'$ by iteratively dividing and multiplying adjacent factors by a common element.}
\label{F:Recipe}
\end{figure}

Things take an interesting form when we consider a unital multifraction~$\aav$, \ie, $\aav$ represents~$1$ in~$\EG\MM$. By~\eqref{E:AppliConv2}, we must finish with a trivial multifraction, \ie, all arrows~$\aa'_\ii$ in Fig.\,\ref{F:Recipe} are equalities. 

If \VR(3, 0) $(\Gamma, *)$ is a finite, simply connected pointed graph, let us say that a multifraction~$\aav$ on a monoid~$\MM$ admits a \emph{van Kampen diagram of shape~$\Gamma$} if there is an $\MM$-labeling of~$\Gamma$ such that the outer labels from~$*$ are~$\aa_1 \wdots \aa_\nn$ and the labels in each triangle induce equalities in~$\MM$. This notion is a mild extension of the usual one: if $\SS$ is any generating set for~$\MM$, then replacing the elements of~$\MM$ with words in~$\SS$ and equalities with word equivalence provides a van Kampen diagram in the usual sense for the word in~$\SS \cup \SSb$ then associated with~$\aav$. 

It is standard that, if $\aav$ is unital and $\MM$ embeds in~$\EG\MM$, then there exists a van Kampen diagram for~$\aav$, in the sense above. However, in general, there is no uniform constraint on the underlying graph of a van Kampen diagram, typically no bound on the number of cells or of spring and well vertices. What is remarkable is that Prop.\,\ref{E:Strat} provides one unique common shape that works for \emph{every} multifraction of depth~$\nn$.

\begin{defi}
\rightskip32mm We define $(\UG4, *)$ to be the pointed graph on the right, and, for $\nn \ge 6$ even, we inductively define~$(\UG\nn, *)$ to be the graph obtained by appending $\nn - 2$ adjacent copies of~$\UG4$ around~$(\UG{\nn - 2}, *)$ starting from~$*$, with alternating orientations, and connecting the last copy of~$(\UG4, *)$ with the first one, see Fig.\,\ref{F:UnivGr}.\hfill\begin{picture}(0,0)(-8,-2)
\psset{nodesep=0.7mm}
\put(-0.7, -0.7){$*$}
\pcline{->}(1,0)(20,0)
\pcline{->}(0,1)(0,16)
\pcline{<-}(0,16)(20,16)
\pcline{<-}(20,1)(20,16)
\pcline{->}(0.7, 0.7)(10,8)
\pcline{->}(20,16)(10,8)
\pcline{<-}(0,16)(10,8)
\pcline{->}(10,8)(20,0)
\end{picture}
\end{defi}

\begin{figure}[htb]
\psset{xunit=0.7mm, yunit=0.7mm}
\setlength{\unitlength}{0.7mm}
\def\NPoint(#1,#2,#3){\cnode[style=thin,fillcolor=white,linecolor=white](#1,#2){0}{#3}}
\def\BPoint(#1,#2,#3){\cnode[fillstyle=solid,fillcolor=black,linecolor=black](#1,#2){0.7}{#3}}
\def\SPoint(#1,#2,#3){\cnode[fillstyle=solid,fillcolor=black,linecolor=black](#1,#2){0.7}{#3}\put(#1, #2){$*$}}
\def\AArrow(#1,#2){\ncline[linewidth=0.8pt]{->}{#1}{#2}}
\def\BArrow(#1,#2){\ncline[linecolor=color3, linewidth=1.5pt]{->}{#1}{#2}}
\def\CArrow(#1,#2){\ncline[linecolor=color1, linewidth=3mm]{c-c}{#1}{#2}}
\def\DArrow(#1,#2){\ncline[doubleline=true, linewidth=0.4pt]{#1}{#2}}
\def\PArrow(#1,#2){\ncline[linestyle=dashed, linewidth=0.8pt]{->}{#1}{#2}}

% Gamma4
\begin{picture}(15,60)(25,-24)
\psset{nodesep=0.9mm}
\psset{xunit=1mm, yunit=1.1mm}
\NPoint(10, 0, w0) \NPoint(0, 10, w1) \NPoint(10, 20, w2) \NPoint(20, 10, w3) 
\NPoint(10, 10, x1)
%\CArrow(w0, w1) \CArrow(w1, w2) \CArrow(w2, w3) \CArrow(w3, w0)
\pspolygon[linearc=2.5,linewidth=3mm,linecolor=white,fillstyle=solid,fillcolor=color20](10,0)(0,10)(10,20)(20,10)
\AArrow(w0,w1)\naput{$\aa_1$}
\AArrow(w2,w1)\nbput{$\aa_2$}
\AArrow(w2,w3)\naput{$\aa_3$}
\AArrow(w0,w3)\nbput{$\aa_4$}
\AArrow(w0,x1) \AArrow(w2,x1) \AArrow(x1,w1) \AArrow(x1,w3) 
\put(13,-2){$*$}
\put(13,35){$\UG4$}
\end{picture}
% Gamma6
\begin{picture}(36,55)(2,-8)
\psset{nodesep=0.9mm}
\psset{xunit=0.75mm, yunit=0.8mm}
\NPoint(24, 0, v0) \NPoint(0, 15, v1) \NPoint(0, 40, v2) \NPoint(24, 55, v3) \NPoint(48, 40, v4) \NPoint(48, 15, v5) 
\NPoint(24, 0, w0) \WPoint(14, 22, w1) \BPoint(24, 40, w2) \WPoint(34, 22, w3) 
\NPoint(7, 19, x1) \NPoint(16.5, 40, x2) \NPoint(31.5, 40, x3) \NPoint(41,19, x4) 
\NPoint(24, 23, y1) 
\pspolygon[linearc=2.5,linewidth=3mm,linecolor=white,fillstyle=solid,fillcolor=color20](24,0)(0,15)(0,40)(14,22)
\pspolygon[linearc=2.5,linewidth=3mm,linecolor=white,fillstyle=solid,fillcolor=color22](24,0)(14,22)(24,40)(34,22)
\pspolygon[linearc=2.5,linewidth=3mm,linecolor=white,fillstyle=solid,fillcolor=color20](24,0)(34,22)(48,40)(48,15)
\pspolygon[linearc=2.5,linewidth=3mm,linecolor=white,fillstyle=solid,fillcolor=color20](14,22)(0,40)(24,55)(24,40)
\pspolygon[linearc=2.5,linewidth=3mm,linecolor=white,fillstyle=solid,fillcolor=color20](34,22)(24,40)(24,55)(48,40)
%\CArrow(v0,v1)\CArrow(v1,v2)\CArrow(v2,w1)\CArrow(w1,v0)
%\CArrow(w3,v0) \CArrow(v5,v0)
%\CArrow(v2,v3)\CArrow(v3,w2)\CArrow(w2,w1)
%\CArrow(v3,v4)\CArrow(v4,w3)\CArrow(w3,w2)
%\CArrow(v5,v4)
\WPoint(14, 22, w1) \BPoint(24, 40, w2) \WPoint(34, 22, w3) 
\AArrow(v0,v1)\tbput{$\aa_1$}
\AArrow(v2,v1)\tlput{$\aa_2$}
\AArrow(v2,v3)\taput{$\aa_3$}
\AArrow(v4,v3)\taput{$\aa_4$}
\AArrow(v4,v5)\trput{$\aa_5$}
\AArrow(v0,v5)\tbput{$\aa_6$}
\AArrow(w0,w1)
\AArrow(w2,w1)
\AArrow(w2,w3)
\AArrow(w0,w3)
\AArrow(v2,w1)
\AArrow(x6,v7) 
\AArrow(x6,w5) 
\AArrow(v6,w5)
\AArrow(w2,v3)
\AArrow(w0,v5)
\AArrow(v4,w3)
\AArrow(v0,x1) \AArrow(v2,x1) \AArrow(x1,v1) \AArrow(x1,w1) 
\AArrow(v2,x2) \AArrow(w2,x2) \AArrow(x2,v3) \AArrow(x2,w1) 
\AArrow(x3,v3) \AArrow(x3,w3) \AArrow(v4,x3) \AArrow(w2,x3)
\AArrow(w0,x4) \AArrow(v4,x4) \AArrow(x4,v5)\AArrow(x4,w3)
\AArrow(w0,y1) \AArrow(w2,y1) \AArrow(y1, w1) \AArrow(y1, w3) 
\put(24.2,-2){$*$}
\put(24,66){$\UG6$}
\end{picture}
\hspace{15mm}
%
% Gamma8
\begin{picture}(75,84)(0,0)
\psset{nodesep=0.9mm}
\psset{xunit=1.15mm, yunit=0.9mm}
\NPoint(30, 0, u0) \NPoint(10, 10, u1) \NPoint(0, 30, u2) \NPoint(10, 50, u3) \NPoint(30, 60, u4) \NPoint(50, 50, u5) \NPoint(60, 30, u6) \NPoint(50, 10, u7)
\NPoint(30, 0, v0) \NPoint(17, 14, v1) \WPoint(15, 34, v2) \BPoint(30, 50, v3) 
\WPoint(45, 34, v4) \BPoint(43, 14, v5) 
\NPoint(30, 0, w0) \WPoint(25, 17, w1) \NPoint(30, 35, w2) \WPoint(35, 17, w3) 
\NPoint(13, 12, x1) \NPoint(8, 32.5, x2) \NPoint(20, 50, x3) \NPoint(40 , 50, x4) 
\NPoint(52, 32.5, x5) \NPoint(47, 12, x6) 
\NPoint(21, 15.5, y1) \NPoint(22, 35, y2) \NPoint(38, 35, y3) \NPoint(39, 15.5, y4) 
\NPoint(30, 17, z1) 

\pspolygon[linearc=2.5,linewidth=3mm,linecolor=white,fillstyle=solid,fillcolor=color20](30,0)(10,10)(0,30)(17,14)
\pspolygon[linearc=2.5,linewidth=3mm,linecolor=white,fillstyle=solid,fillcolor=color22](30, 0)(25,17)(15, 34) (17, 14)
\pspolygon[linearc=2.5,linewidth=3mm,linecolor=white,fillstyle=solid,fillcolor=color22](30, 0)(25,17)(30, 35) (35, 17)
\pspolygon[linearc=2.5,linewidth=3mm,linecolor=white,fillstyle=solid,fillcolor=color22](30, 0)(35,17)(45,34) (43,14)
\pspolygon[linearc=2.5,linewidth=3mm,linecolor=white,fillstyle=solid,fillcolor=color20](30, 0)(43,14)(60,30)(50,10)
\pspolygon[linearc=2.5,linewidth=3mm,linecolor=white,fillstyle=solid,fillcolor=color20](17,14)(0,30)(10,50)(15,34)
\pspolygon[linearc=2.5,linewidth=3mm,linecolor=white,fillstyle=solid,fillcolor=color22](25,17)(15,34)(30,50)(30,35)
\pspolygon[linearc=2.5,linewidth=3mm,linecolor=white,fillstyle=solid,fillcolor=color22](35,17)(30,35)(30,50)(45,34)
\pspolygon[linearc=2.5,linewidth=3mm,linecolor=white,fillstyle=solid,fillcolor=color20](43,14)(45,34)(50,50)(60,30)
\pspolygon[linearc=2.5,linewidth=3mm,linecolor=white,fillstyle=solid,fillcolor=color20](15,34)(10,50)(30,60)(30,50)
\pspolygon[linearc=2.5,linewidth=3mm,linecolor=white,fillstyle=solid,fillcolor=color20](45,34)(30,50)(30,60)(50,50)
%\CArrow(u0,u1)\CArrow(u1,u2)\CArrow(u2,v1)\CArrow(v1,u0)
%\CArrow(u0,u7)\CArrow(u7,u6)\CArrow(u6,v5)\CArrow(v5,u0)
%\CArrow(u2,u3)\CArrow(u3,v2)\CArrow(v2,v1)
%\CArrow(u6,u5)\CArrow(u5,v4)\CArrow(v4,v5)
%\CArrow(u3,u4)\CArrow(u4,v3)\CArrow(v3,v2)
%\CArrow(u5,u4)\CArrow(v3,v4)
%\CArrow(w0,w1) \CArrow(w2,w1) \CArrow(w2,w3) \CArrow(w0,w3)
%\CArrow(v3,w2) \CArrow(v2,w1) \CArrow(v4,w3)

\WPoint(17, 14, v1) \BPoint(15, 34, v2) \WPoint(30, 50, v3) 
\BPoint(45, 34, v4) \WPoint(43, 14, v5) 
\WPoint(25, 17, w1) \BPoint(30, 35, w2) \WPoint(35, 17, w3) 

\AArrow(u0,u1)\tbput{$\aa_1$}
\AArrow(u2,u1)\tlput{$\aa_2$}
\AArrow(u2,u3)\tlput{$\aa_3$}
\AArrow(u4,u3)\taput{$\aa_4$}
\AArrow(u4,u5)\taput{$\aa_5$}
\AArrow(u6,u5)\trput{$\aa_6$}
\AArrow(u6,u7)\trput{$\aa_7$}
\AArrow(u0,u7)\tbput{$\aa_8$} 
\AArrow(v0,v1) \AArrow(v2,v1) \AArrow(v2,v3) \AArrow(v4,v3) \AArrow(v4,v5) \AArrow(v0,v5)
\AArrow(w0,w1) \AArrow(w2,w1) \AArrow(w2,w3) \AArrow(w0,w3)
\AArrow(v2,w1) \AArrow(w2,v3)\AArrow(v4,w3) 
\AArrow(v2,u3)
\AArrow(u2,v1)
\AArrow(u6,v5)
\AArrow(v4,u5)
\AArrow(v4,u5)
\AArrow(u4,v3)

\AArrow(u0,x1) \AArrow(u2,x1) \AArrow(x1,u1) \AArrow(x1,v1) 
\AArrow(u2,x2) \AArrow(v2,x2) \AArrow(x2,u3) \AArrow(x2,v1)
\AArrow(u4,x3) \AArrow(v2,x3) \AArrow(x3,u3) \AArrow(x3,v3)
\AArrow(u4,x4) \AArrow(v4,x4)\AArrow(x4,u5)\AArrow(x4,v3)
\AArrow(u6,x5) \AArrow(v4,x5) \AArrow(x5,u5) \AArrow(x5,v5)
\AArrow(u0,x6) \AArrow(u6,x6) \AArrow(x6,u7) \AArrow(x6,v5) 
\AArrow(v0,y1) \AArrow(v2,y1) \AArrow(y1,v1) \AArrow(y1,w1) 
\AArrow(v2,y2) \AArrow(w2,y2) \AArrow(y2,v3) \AArrow(y2,w1) 
\AArrow(v4,y3) \AArrow(w2,y3) \AArrow(y3,v3) \AArrow(y3,w3) 
\AArrow(v0,y4) \AArrow(v4,y4) \AArrow(y4,v5) \AArrow(y4,w3) 
\AArrow(w0,z1) \AArrow(w2,z1) \AArrow(z1,w1) \AArrow(z1,w3) 
\put(48,-2){$*$}
\put(47,80){$\UG8$}
\end{picture}
\caption{\small The universal graph~$\UG\nn$, here for $\nn = 4, 6, 8$: for \emph{every} unital $\nn$-multifraction~$\aav$, there is an $\MM$-labeling of~$\UG\nn$ with $\aav$ on the boundary; $\UG\nn$ is obtained by attaching $\nn-2$ copies of~$\UG4$ around~$\UG{\nn - 2}$, so five copies of~$\UG4$ appear in~$\UG6$, eleven appear in~$\UG8$, etc. (the $\UG4$-tiling does not correspond to the reduction tiles of Fig.\,\ref{F:Recipe}).}
\label{F:UnivGr}
\end{figure}

One easily checks that~$\UG\nn$ contains ${\frac14}\nn(\nn - 2) - 1$ copies of~$\UG4$, and ${\frac12}\nn(\nn - 3) - 1$ interior nodes, namely ${\frac18}\nn(\nn - 2) - 1$ wells, ${\frac18}(\nn - 2)(\nn - 4)$ springs, and ${\frac14}\nn(\nn - 2)$ four-prongs.

\begin{prop}\label{P:Tiling}
If $\MM$ is a noetherian gcd-monoid that satisfies the $3$-Ore condition, then every unital $\nn$-multifraction~$\aav$ on~$\MM$ admits a van Kampen diagram of shape~$\UG\nn$. 
\end{prop}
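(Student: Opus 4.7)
The plan is to construct the van Kampen diagram as the geometric trace of the universal reduction recipe, proceeding by induction on the (even) depth~$\nn$. Since $\MM$ satisfies the $3$-Ore condition, Proposition~\ref{P:3OreConv} ensures that $\RDh\MM$ is convergent, so $\aav \simeq 1$ combined with~\eqref{E:AppliConv2} gives $\aav \rdhs \ef$; Lemma~\ref{L:RedCut} (using that $\rds$ preserves depth) upgrades this to $\aav \rds \One\nn$, and Proposition~\ref{P:Strat} makes the reduction explicit as $\aav \act \Redmax{\univ\nn} = \One\nn$, where $\univ\nn = (1, 2, \ldots, \nn-1) \cdot \univ{\nn-2}$.

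Each step $\Red{\ii, \xx_\smax}$ with $\ii \ge 2$ is read off from Figure~\ref{F:Red} as a labelled lcm cell: the elements $\aa_\ii$, $\xx$, $\bb_\ii$, $\xx'$ form a commutative square with interior node realizing $\xx \lcm \aa_\ii$, which is precisely the shape of~$\UG4$; the boundary equalities $\bb_{\ii-1} = \aa_{\ii-1}\xx'$ and $\xx \bb_{\ii+1} = \aa_{\ii+1}$ describe how this tile glues onto the cumulative diagram produced by the previous steps. The step $\Red{1, \xx_\smax}$ produces a degenerate triangular cell whose output edges, in the unital setting, are destined to become trivial and collapse at the basepoint~$*$.

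For the inductive step, the first block $\Redmax{1}, \Redmax{2}, \ldots, \Redmax{\nn-1}$ assembles the outer ring of~$\UG\nn$: the $\nn - 2$ reductions at levels $\ii \ge 2$ contribute $\nn - 2$ copies of~$\UG4$ with alternating orientations (according to the parity of~$\ii$), while $\Redmax{1}$ provides the triangle that closes the ring at~$*$. Once the trivial level-$1$ output edges are collapsed, the inner boundary of this ring carries a unital $(\nn - 2)$-multifraction; applying the induction hypothesis to it yields a diagram of shape~$\UG{\nn - 2}$ that glues onto the ring to give a diagram of shape~$\UG\nn$. The base case $\nn = 4$ is verified by direct inspection of the four-reduction recipe $\univ{4} = (1, 2, 3, 1)$.

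The main obstacle will be the combinatorial bookkeeping: verifying that the level-$1$ triangles close the outer ring at~$*$ correctly, that the $\nn - 2$ outer~$\UG4$ tiles are oriented as in Figure~\ref{F:UnivGr}, and that the output of the first block truly represents a unital $(\nn - 2)$-multifraction fit for the induction. Lemma~\ref{L:Distant} permits free reordering of remote reductions, and the uniqueness of~$\xx_\smax$ in Lemma~\ref{L:RedMax} guarantees that the labelled diagram produced is canonical, so the verification reduces to a careful matching between the recursive structures of~$\univ\nn$ and~$\UG\nn$.
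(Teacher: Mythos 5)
Your overall route is the paper's: run the universal recipe of Prop.~\ref{P:Strat} on the unital multifraction, read the resulting labelled picture as a van Kampen diagram, and exploit the parallel recursive structures of~$\univ\nn$ and~$\UG\nn$. The convergence part of your argument ($\aav \rds \One\nn$ and its explicit realization by $\Redmax{\univ\nn}$) is correct, and packaging the rest as an induction on~$\nn$ is a reasonable reorganization of the paper's ``bend the diagram of Fig.\,\ref{F:Recipe} and close it'' sketch. The gap lies in the geometric identification, which is exactly where the content of the proof sits.

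A reduction cell at level $\ii \ge 2$ is \emph{not} a copy of~$\UG4$. The cell recording $\Red{\ii, \xx}$ is a plain commutative square $\xx\bb_\ii = \aa_\ii\xx' = \xx \lcm \aa_\ii$: four vertices (a source, a sink, two intermediate ones), four edges, one face, and \emph{no interior node} --- the lcm is realized at the sink, a boundary vertex. By contrast, $\UG4$ is a square with an interior vertex joined to all four corners (four triangular faces), i.e.\ the diagram of a unital $4$-multifraction split through a common middle vertex; the caption of Fig.\,\ref{F:UnivGr} warns explicitly that the $\UG4$-tiling of~$\UG\nn$ does not correspond to the reduction tiles of Fig.\,\ref{F:Recipe}, and in the final diagram each copy of~$\UG4$ straddles two adjacent reduction cells. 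The global counts confirm that no cell-by-cell dictionary can close up: the recipe produces $\nn(\nn-2)/4$ lcm squares, while $\UG\nn$ contains $\nn(\nn-2)/4 - 1$ copies of~$\UG4$. Relatedly, the entries trivialized by the first block $\Redmax1 \pdots \Redmax{\nn-1}$ are $\bb_{\nn-1} = \bb_\nn = 1$ (the tail $\univ{\nn-2}$ never touches entries $\nn-1$ and~$\nn$, and the end result is $\One\nn$), so the annulus between the outer $\nn$-gon and the inner $(\nn-2)$-gon is pinched at the end \emph{away} from~$*$, not by ``level-$1$ output edges collapsing at the basepoint''. Handling that pinching, and merging two copies of~$\UG4$ sharing their fourth vertex into a single one --- which the paper imports from \cite[Lemma~6.14]{Diu}, and which in your induction is precisely what the ``direct inspection'' of the base case must supply, since $\univ4 = (1,2,3,1)$ yields two lcm squares whereas $\UG4$ is one copy --- are the two substantive steps of the proof; your plan files both under routine bookkeeping.
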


\begin{proof}[Proof (sketch)]
We simply bend the diagram of Fig.\,\ref{F:Recipe} so as to close it and obtain a diagram, which we can view as included in the Cayley graph of~$\MM$, whose outer boundary is labeled with the entries of~$\aav$. There are two nontrivial points. 

First, we must take into account the fact that the $\nn/2$ right triangles (which are half-copies of~$\UG4$) are trivial. The point is that, if unital multifractions $\aav, \bbv$ satisfy $\aav \act \Red{1, \xx_1} \pdots \Red{\nn - 1, \xx_{\nn - 1}} = \bbv$ with $\bb_{\nn - 1} = \bb_\nn = 1$, then the number of copies of~$\UG4$ can be diminished from~$\nn - 1$ to~$\nn - 2$ in the first row (and the rest accordingly). Indeed, we easily check that $\bb_\nn = 1$ is equivalent to~$\aa_\nn = \xx_{\nn - 1}$, whereas $\bb_{\nn - 1} = 1$ is equivalent to~$\xx_{\nn - 2}\inv\aa_{\nn - 1} \divet \aa_\nn$, enabling one to contract
$$\begin{picture}(110,17)(0,0)
\psset{nodesep=0.7mm}\pspolygon[linearc=3,linewidth=4mm,linecolor=white,fillstyle=solid,fillcolor=color1](0,0)(18,14)(36,14) (18,0)
\pspolygon[linearc=3,linewidth=4mm,linecolor=white,fillstyle=solid,fillcolor=color1](18,0)(36,14) (54,14) (36,0)
\pcline{->}(0,0)(18,0)
\pcline{<-}(18,0)(36,0)
\pcline{->}(0,0)(9,7)
\pcline{->}(9,7)(18,0)
\pcline{<-}(18,0)(27,7)
\pcline{<-}(27,7)(36,0)
\pcline[style=double](36,0)(45,7)
\pcline{<-}(9,7)(18,14)
\pcline[linecolor=color3]{->}(18,14)(27,7)
\pcline{->}(27,7)(36,14)
\pcline[linecolor=color3]{<-}(36,14)(45,7)
\pcline[style=double](45,7)(54,14)
\pcline{->}(18,14)(36,14)\taput{$\aa_{\nn - 1}$}
\pcline{<-}(36,14)(54,14)\taput{$\aa_\nn$}
\pcline[style=exist]{->}(18,14)(18,0)
\pcline[style=exist]{<-}(36,14)(36,0)
\psarc[style=thin](18,0){3.5}{40}{140}
\psarc[style=thin](36,0){3.5}{40}{140}
\put(18,6.7){$\Red{\nn\HS{-0.1}{-}\HS{-0.3}1}$}
\put(36.5,6.7){$\Red{\nn}$}
\put(23,11){\color{color3}$\xx_{\nn{-}1}$}
\put(41,11){\color{color3}$\xx_{\nn}$}
\put(60,7){into}
\pspolygon[linearc=3,linewidth=4mm,linecolor=white,fillstyle=solid,fillcolor=color1](70,0)(88,14)(106,14) (88,0)
\pspolygon[linearc=3,linewidth=4mm,linecolor=white,fillstyle=solid,fillcolor=color1](88,0)(106,14)(106,0)
\pcline{->}(70,0)(88,0)
\pcline{->}(88,14)(106,14)\taput{$\aa_{\nn - 1}$}
\pcline[linecolor=color3]{->}(88,14)(97,7)
\pcline[linecolor=color3]{->}(106,0)(106,14)\trput{$\aa_\nn \color{color3} = \xx_{\nn}$}
\pcline[style=exist]{->}(88,14)(88,0)
\put(88,6.7){$\Red{\nn\HS{-0.1}{-}\HS{-0.3}1}$}
\put(100,6.7){$\Red{\nn}$}
\pcline{->}(70,0)(79,7)
\pcline{<-}(79,7)(88,14)
\pcline{->}(79,7)(88,0)
\pcline{<-}(88,0)(97,7)
\pcline{->}(97,7)(106,14)
\pcline{<-}(88,0)(106,0)
\pcline{<-}(97,7)(106,0)
\put(93,11){\color{color3}$\xx_{\nn{-}1}$}
\psarc[style=thin](88,0){3.5}{40}{140}
\end{picture}$$

Second, we must explain how the two graphs~$\UG4$ of the penultimate row in Fig.\,\ref{F:Recipe} can be contracted into one. This follows from the fact, established in~\cite[Lemma\,6.14]{Diu}, that, if $\cc_1 \sdots \cc_4$ and~$\dd_1 \sdots \dd_4$ label two copies of~$\UG4$ with a common fourth vextex (from~$*$) and we have $\cc_2 = \dd_3$ and $\cc_4 = \dd_1$, then $\cc_1 / \cc_2 / \dd_3 / \dd_4$ label one copy of~$\UG4$. Thus we can contract
$$\begin{picture}(88,20)(0,-3)
\psset{nodesep=0.7mm}\pspolygon[linearc=3,linewidth=4mm,linecolor=white,fillstyle=solid,fillcolor=color1](0,0)(0,14)(18,14)
\pspolygon[linearc=3,linewidth=4mm,linecolor=white,fillstyle=solid,fillcolor=color1](0,0)(18,14)(36,14) (18,0)
\pspolygon[linearc=3,linewidth=4mm,linecolor=white,fillstyle=solid,fillcolor=color1](18,0)(36,14) (36,0)
\pcline{->}(0,0)(18,0)\tbput{$\cc_4$}
\pcline{<-}(18,0)(36,0)\put(25.5,-2.7){$\dd_1$}
\pcline{->}(0,0)(9,7)
\pcline{->}(9,7)(18,0)
\pcline{<-}(18,0)(27,7)
\pcline{<-}(27,7)(36,0)
\pcline{<-}(9,7)(18,14)
\pcline{<-}(0,14)(9,7)
\pcline{->}(18,14)(27,7)
\pcline{->}(27,7)(36,14)
\pcline{<-}(18,14)(36,14)\taput{$\dd_3$}
\pcline{<-}(0,14)(0,0)\tlput{$\cc_1$}
\pcline{->}(0,14)(18,14)\taput{$\cc_2$}
\pcline{->}(18,14)(18,0)\tlput{$\cc_3$}\trput{$\dd_2$}
\pcline{<-}(36,14)(36,0)\trput{$\dd_4$}
\psarc[style=double](4,0){4}{180}{270} \pcline[style=double](3,-4)(33,-4) \psarc[style=double](32,0){4}{270}{0}
\put(51,6){into}
\pspolygon[linearc=3,linewidth=4mm,linecolor=white,fillstyle=solid,fillcolor=color1](70,0)(70,14)(88,14)
\pspolygon[linearc=3,linewidth=4mm,linecolor=white,fillstyle=solid,fillcolor=color1](70,0)(88,14)(88,0)
\pcline{->}(70,0)(70,14)\tlput{$\cc_1$}
\pcline{<-}(70,14)(79,7)
\pcline{->}(70,0)(88,0)\tbput{$\dd_1$}
\pcline{<-}(70,14)(88,14)\taput{$\cc_2$}
\pcline{->}(88,14)(88,0)\trput{$\dd_3$}
\pcline{->}(70,0)(79,7)
\pcline{<-}(79,7)(88,14)
\pcline{->}(79,7)(88,0)
\pcline{->}(79,7)(88,14)
\end{picture}$$
One easily checks that what remains is a copy of~$\UG\nn$.
\end{proof}

%%%%
\subsection{Application to the study of torsion}\label{SS:Torsion}

The existence of the universal reduction strategy provides a powerful tool for establishing further properties, typically for extending to the $3$-Ore case some of the results previously established in the $2$-Ore case. Here we mention a few (very) partial results involving torsion. It is known~\cite{Dfz, Dha} that, if $\MM$ is a gcd-monoid satisfying the $2$-Ore condition, then the group~$\EG\MM$ is torsion free. 

\begin{conj}
If $\MM$ is a noetherian gcd-monoid satisfying the $3$-Ore condition, then the group~$\EG\MM$ is torsion free.
\end{conj}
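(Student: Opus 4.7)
The plan is to argue by contradiction, leveraging the unique irreducible representation of Theorem~\ref{T:MainA1}, the universal reduction strategy of Proposition~\ref{P:Strat}, and the universal van Kampen diagrams of Proposition~\ref{P:Tiling}. Suppose some $\gg \in \EG\MM$ satisfies $\gg \ne 1$ and $\gg^\kk = 1$ for some $\kk \ge 2$. Replacing $\gg$ by $\gg^{\kk/\pp}$ for a prime $\pp\mid\kk$, we may assume $\kk$ is prime. Among all torsion elements of prime order, choose $\gg$ with minimal depth $\nn := \dh\gg$, and let $\aav$ be its unique $\RDh\MM$-irreducible representative.

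\textbf{Low depth.} If $\nn = 0$ then $\gg = 1$, contradicting the choice of $\gg$. If $\nn = 1$, then $\gg = \can(\aa_1)$ with $\aa_1 \ne 1$ in $\MM$; by the embedding $\MM \hookrightarrow \EG\MM$ from Theorem~\ref{T:MainA1}, $\gg^\kk = 1$ forces $\aa_1^\kk = 1$ in $\MM$, so $\aa_1$ is invertible, hence $\aa_1 = 1$ since a gcd-monoid has trivial group of units by Def.~\ref{D:GcdMon}, a contradiction. The case $\nn = 2$ yields an irreducible representative $\aa_1/\aa_2$ with $\aa_1 \gcdt \aa_2 = 1$; when $\aa_1 \lcm \aa_2$ exists the classical $2$-Ore torsion-free argument of~\cite{Dfz, Dha} applies, and when it does not, a direct depth count on the reduction of $(\aa_1/\aa_2)^{\opp\kk}$ should still yield a contradiction, since no reduction at level $2$ can absorb entries cyclically without producing a common right multiple of $\aa_1$ and $\aa_2$.

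\textbf{Generic case.} For $\nn \ge 3$, let $\bbv := \aav \opp \aav \opp \pdots \opp \aav$ ($\kk$ factors), a unital multifraction of depth $\kk\nn$. By Proposition~\ref{P:Tiling}, $\bbv$ admits a van Kampen diagram $D$ of shape $\UG{\kk\nn}$, labeled in $\MM$. Two complementary approaches seem natural. The first exploits the cyclic $\ZZZZ/\kk$-symmetry of the boundary labels of $D$ and the rotational symmetry of $\UG{\kk\nn}$ about its base vertex: one tries to produce a $\ZZZZ/\kk$-equivariant relabeling (for instance by applying the universal recipe of Proposition~\ref{P:Strat} in a symmetric order and invoking uniqueness via convergence), so that the $\ZZZZ/\kk$-invariant interior data encode a torsion element of strictly smaller depth, violating minimality of $\nn$. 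The second propagates the local irreducibility constraints on $\aav$ (Lemma~\ref{L:RedDef}\ITEM2: adjacent entries $\aa_\ii, \aa_{\ii+1}$ have no gcd and no common multiple on the relevant side) inward through the $\UG4$-tiling of $\UG{\kk\nn}$, using the $3$-Ore completion of lcm cubes at each tile to force every interior label to be trivial and thus $\aav$ itself to collapse.

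\textbf{Main obstacle.} The hard part is the generic case. In the first approach, there is no symmetric version of $\RD\MM$ that manifestly commutes with the cyclic action, so any ``averaging'' will require a substitute for Newman's lemma in the equivariant setting. In the second approach, the outward-to-inward induction needs extremely tight control: at each step one must rule out the appearance of nontrivial common multiples among the freshly created labels, yet the $3$-Ore condition (as opposed to the $2$-Ore condition) permits exactly the kind of ``partial'' common multiples that could spoil irreducibility mid-diagram. Standard Garside shortcuts based on head-and-tail normal forms are not available here, because $\EG\MM$ need not be a group of fractions of $\MM$ once depth exceeds~$2$. A viable alternative route would be to construct a bi-invariant order on $\EG\MM$ out of the depth filtration together with the universal diagram's radial coordinate, since bi-orderable groups are automatically torsion-free; but constructing even a left-invariant total order that is compatible with multiplication under reduction seems to require a genuinely new invariant of irreducible multifractions, which is consistent with the statement being presented as a conjecture rather than a theorem.
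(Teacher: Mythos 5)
This statement is a \emph{conjecture} in the paper: the author does not prove it, and explicitly remarks that ``it is doubtful that a general argument can be reached in this way.'' What the paper does establish are a few special cases ($\gg^2 = 1$ for $\dh\gg \le 5$, and $\gg^3 = 1$ or $\gg^4 = 1$ for $\dh\gg \le 3$), obtained by writing out the universal recipe of Prop.~\ref{P:Strat} explicitly for the relevant low depths, extracting from $\red(\gg^{-(\kk-1)}) = \gg$ a cyclic system of equations $\aa\xx_1 = \bb\xx_2, \dots, \aa\xx_\pp = \bb\xx_1$, and invoking a dedicated lemma (Lemma~\ref{L:Cycle}) showing such a system forces $\aa = \bb$. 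Your proposal should therefore not be measured against a proof in the paper, but it must still be judged as a proof attempt, and as such it has a genuine gap: the ``generic case'' $\nn \ge 3$ is not an argument but a pair of speculative strategies, each of which you yourself identify as blocked (no equivariant analogue of Newman's lemma for the symmetrization approach; no control over mid-diagram common multiples for the inward-propagation approach). Nothing in the proposal closes either obstacle, so the conjecture remains exactly as open after your text as before it.

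Two further points deserve attention even in the low-depth cases you treat. First, for $\nn = 2$ your appeal to the ``classical $2$-Ore torsion-free argument'' is misplaced: the results of~\cite{Dfz, Dha} require the $2$-Ore condition to hold globally (so that $\EG\MM$ is a group of fractions), and the existence of a common multiple for the single pair $\aa_1, \aa_2$ does not put you in that setting; the phrase ``a direct depth count \dots should still yield a contradiction'' is not a proof, and indeed the paper's own treatment of $\gg^2 = 1$ proceeds by the explicit expansion-plus-Lemma~\ref{L:Cycle} route rather than by any depth count. Second, the reduction to prime order and minimal depth is sound but buys nothing, since minimality of $\nn$ is never actually exploited in a way that produces a smaller-depth torsion element. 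If you want to make partial progress along the paper's lines, the productive direction is to prove and use a statement like Lemma~\ref{L:Cycle} and push the explicit expansion of $\aav \act \Redmax{\univ{\kk\nn}}$ to the next few values of $(\kk, \nn)$; a uniform argument appears to require new ideas.
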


We establish below a few simple instances of this conjecture, using the following observation:

\begin{lemm}\label{L:Cycle}
If $\MM$ is a noetherian gcd-monoid and $\aa, \bb, \xx_1 \wdots \xx_\pp$ satisfy 
\begin{equation}\label{E:Cycle}
\aa\xx_1 = \bb \xx_2, \ \aa\xx_2 = \bb\xx_3, ..., \ \aa\xx_{\pp - 1} = \bb\xx_\pp, \ \aa\xx_\pp = \bb \xx_1,
\end{equation}
then $\aa = \bb$ holds.
\end{lemm}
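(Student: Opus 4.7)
The plan is to argue by contradiction: assuming $\aa\ne\bb$, I will build an infinite sequence strictly descending for the proper right-divisibility relation~$\divt$, contradicting the noetherianity of~$\MM$. First I would exploit Lemma~\ref{L:CondLcm} to obtain $\aa\lcm\bb$: the equation $\aa\xx_1=\bb\xx_2$ already exhibits a common right multiple of~$\aa$ and~$\bb$. Writing $\aa\lcm\bb=\aa\bb'=\bb\aa'$, Lemma~\ref{L:Lcm} gives $\aa'\gcdt\bb'=1$. Since each equation $\aa\xx_i=\bb\xx_{i+1}$ (indices taken mod~$\pp$) is likewise a common right multiple, there exists $\yy_i\in\MM$ with $\aa\xx_i=\aa\bb'\yy_i=\bb\aa'\yy_i$; cancellation yields $\xx_i=\bb'\yy_i$ and $\xx_{i+1}=\aa'\yy_i$. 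Combining indices gives $\aa'\yy_i=\bb'\yy_{i+1}$ for every~$i$, so $(\aa',\bb',\yy_1,\ldots,\yy_\pp)$ satisfies the same cyclic system as $(\aa,\bb,\xx_1,\ldots,\xx_\pp)$, ready for the next iteration.

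Next I would prove the two nontriviality statements that keep the iteration alive: if $\aa\ne\bb$, then $\bb'\ne1$ and $\aa'\ne\bb'$. Indeed, if $\bb'=1$, then $\aa\lcm\bb=\aa$, so $\bb$ left-divides~$\aa$, say $\aa=\bb\cc$; left-cancelling~$\bb$ in each equation of~\eqref{E:Cycle} gives $\xx_{i+1}=\cc\xx_i$, and one round around the cycle yields $\xx_1=\cc^\pp\xx_1$, hence $\cc^\pp=1$ by right-cancellation, hence $\cc=1$ because $\MM$ has no nontrivial invertible elements, contradicting $\aa\ne\bb$. If instead $\aa'=\bb'$, then $\aa'\gcdt\bb'=1$ forces $\aa'=\bb'=1$, whence $\aa\lcm\bb=\aa=\bb$, another contradiction. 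These two claims together show that the construction applies again to $(\aa',\bb',\yy_i)$ without stalling.

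Iterating produces sequences $(\aa_k)_{k\ge0}$, $(\bb_k)_{k\ge0}$, and $(\xx_i^{(k)})_{k\ge0}$ with $\aa_0=\aa$, $\bb_0=\bb$, $\xx_i^{(0)}=\xx_i$, each pair $\aa_k\ne\bb_k$, and $\xx_i^{(k)}=\bb_{k+1}\xx_i^{(k+1)}$ where $\bb_{k+1}\ne1$ for every~$k$. Hence $\xx_1^{(0)}\multt\xx_1^{(1)}\multt\xx_1^{(2)}\multt\pdots$ is an infinite strictly $\divt$-descending sequence in~$\MM$, contradicting the noetherianity of~$\MM$. The main difficulty is the bookkeeping ensuring that $\aa_k\ne\bb_k$ is preserved at every step; this is exactly what the two auxiliary claims in the previous paragraph handle, and once they are in place the rest of the argument is a mechanical iteration of the first step together with an invocation of the well-foundedness of~$\divt$.
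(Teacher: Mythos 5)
Your proof is correct, and its algebraic core is identical to the paper's: form the right lcm $\aa\lcm\bb=\aa\bb'=\bb\aa'$ (which exists by Lemma~\ref{L:CondLcm}, since \eqref{E:Cycle} provides a common right multiple of~$\aa$ and~$\bb$), factor $\xx_\ii=\bb'\yy_\ii$ and $\xx_{\ii+1}=\aa'\yy_\ii$, and observe that $(\aa',\bb',\yy_1\wdots\yy_\pp)$ satisfies the same cyclic system. Where you genuinely diverge is in how noetherianity is used to terminate. The paper takes the ordinal-valued map~$\wit$ of Lemma~\ref{L:Wit} and runs a transfinite induction on $\min_\ii\wit(\aa\xx_\ii)$, which strictly decreases as soon as $\aa\not=1$ or $\bb\not=1$; the degenerate situations are then absorbed by the base case $\aa=\bb=1$ and by right-cancelling $\bb'$ once $\aa'=\bb'$ is known. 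You instead argue by infinite descent directly on the well-founded relation~$\divt$, through $\xx_1^{(\kk)}=\bb_{\kk+1}\xx_1^{(\kk+1)}$, which obliges you to verify that $\aa\not=\bb$ forces $\bb'\not=1$ and $\aa'\not=\bb'$. Both checks are sound: the first because $\bb'=1$ yields $\xx_1=\cc^\pp\xx_1$, hence $\cc^\pp=1$ by right cancellation and $\cc=1$ since $\MM$ has no nontrivial invertible elements; the second because $\aa'\gcdt\bb'=1$ holds by Lemma~\ref{L:Lcm}. These two auxiliary facts are the real novelty of your variant: they buy a self-contained descent that avoids the ordinal machinery altogether, at the price of some extra bookkeeping that the paper's rank function handles invisibly.
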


\begin{proof}
Let $\wit$ be a map from~$\MM$ to the ordinals satisfying~\eqref{E:Wit}. By induction on~$\alpha$, we prove
\begin{equation*}\label{E:Cycle1}
\parbox{128mm}{$\PPP(\alpha)$: \quad If $\aa, \bb, \xx_1 \wdots \xx_\pp$ satisfy \eqref{E:Cycle} with $\min_\ii(\wit(\aa\xx_\ii)) \le \alpha$, then we have $\aa = \bb$.}
\end{equation*}

Assume first $\alpha = 0$. Let $\aa, \bb, \xx_1 \wdots \xx_\pp$ satisfy \eqref{E:Cycle} and $\min_\ii(\wit(\aa\xx_\ii)) \le \alpha$. We have $\wit(\aa\xx_\ii) = 0$ for some~$\ii$, whence $\aa\xx_\ii = 1$, whence $\aa = 1$. By~\eqref{E:Cycle}, we have $\aa\xx_\ii = \bb\xx_{\ii + 1}$ (with $\xx_{\pp + 1}$ meaning~$\xx_1$), whence $\bb\xx_{\ii + 1} = 1$ and, therefore, $\aa = \bb = 1$. So $\PPP(0)$ is true.

Assume now $\alpha > 0$. Let $\aa, \bb, \xx_1 \wdots \xx_\pp$ satisfy \eqref{E:Cycle} and $\min_\ii(\wit(\aa\xx_\ii)) \le \alpha$. By~\eqref{E:Cycle}, $\aa$ and $\bb$ admit a common right multiple, hence a right lcm, say $\aa \bb' = \bb \aa'$. Then, for~$\ii \le \pp$, the equality $\aa\xx_\ii = \bb\xx_{\ii + 1}$ (with $\xx_{\pp + 1}$ meaning~$\xx_1$) implies the existence of~$\xx'_\ii$ satisfying $\xx_\ii = \bb' \xx'_\ii$ and $\xx_{\ii + 1} = \aa' \xx'_\ii$. It follows that $\aa', \bb', \xx'_1 \wdots \xx'_\pp$ satisfy (the counterpart of)~\eqref{E:Cycle}. Assume first $\aa \not= 1$. By assumption, we have $\wit(\aa\xx_\ii) \le \alpha$ for some~$\ii$ and, therefore, $\wit(\xx_\ii) = \wit(\aa'\xx'_{\ii - 1}) < \alpha$. Applying the induction hypothesis to $\aa', \bb', \xx'_1 \wdots \xx'_\pp$, we deduce $\aa' = \bb'$, whence $\aa = \bb$ by right cancelling~$\aa'$ in~$\aa\bb' = \bb\aa'$. Assume now $\bb \not= 1$. By assumption, we have $\wit(\bb\xx_\ii) \le \alpha$ for some~$\ii$ and, again, $\wit(\xx_\ii) = \wit(\aa'\xx'_{\ii - 1}) < \alpha$. Applying the induction hypothesis, we deduce as above $\aa' = \bb'$ and $\aa = \bb$. Finally, if both $\aa \not= 1$ and $\bb \not= 1$ fail, the only possibility is $\aa = \bb = 1$. In every case, $\aa = \bb$ holds, and $\PPP(\alpha)$ is satisfied.
\end{proof}

\begin{prop}
If $\MM$ is a noetherian gcd-monoid satisfying the $3$-Ore condition, then no element~$\gg \not= 1$ of~$\EG\MM$ may satisfy $\gg^2 = 1$ with $\dh\gg \le 5$, or $\gg^3 = 1$ or $\gg^4 = 1$ with $\dh\gg \le 3$. \end{prop}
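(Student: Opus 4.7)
Represent the element $\gg$ by its unique $\RRRh$-irreducible multifraction $\aa_1\sdots\aa_\nn$ provided by Theorem~\ref{T:MainA1}. The case $\nn=0$ gives $\gg=1$ and is excluded. For $\nn=1$, $\gg$ lies in the image of $\MM$ in $\EG\MM$, which is injective; since $\MM$ is cancellative with only trivial invertibles, $\gg^\pp=1$ forces $\gg=1$. All remaining cases will be reduced to Lemma~\ref{L:Cycle}, which turns a cycle of common right multiples into an equality $\aa=\bb$.

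For $\nn=2$, write $\gg=\aa/\bb$ with $\aa\gcdt\bb=1$. The relation $\gg^\pp=1$ means that the depth-$2\pp$ multifraction $\aa/\bb/\aa/\bb/\pdots/\aa/\bb$ reduces to~$\ef$. Starting from the $\Red{1,\cdot}$ step, irreducibility forces no right-gcd cancellation, so all reductions propagate through the lcm diagrams as in Figure~\ref{F:Recipe}. Tracking the first reduction $\Red{2,\aa}$ gives $\aa\lcm\bb=\aa\bb'=\bb\aa'$; iterating the universal strategy $\Redmax{\univ{2\pp}}$ produces, at each boundary between consecutive copies of $\aa/\bb$, an element $\xx_\ii\in\MM$ such that $\aa\xx_\ii=\bb\xx_{\ii+1}$ (indices taken cyclically modulo~$\pp$). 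Lemma~\ref{L:Cycle} then yields $\aa=\bb$, and $\aa\gcdt\bb=1$ forces $\aa=\bb=1$, contradicting $\dh\gg=2$. The same scheme applies for $\pp=2,3,4$; the length of the cycle to be exhibited equals~$\pp$.

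For $\nn=3$ the strategy is similar but uses the inverse side as well. For $\pp=2$ we exploit $\gg=\gg^{-1}$: the natural representative of $\gg^{-1}$ is the depth-$4$ multifraction $1/\aa_3/\aa_2/\aa_1$, which must $\rdhs$-reduce to the depth-$3$ irreducible $\aa_1/\aa_2/\aa_3$. The depth-drop forces at least one application of $\Cut$, hence a step $\Red{3,\aa_1}$ requiring $\aa_1\lcmt\aa_2$ to exist; by uniqueness (Theorem~\ref{T:MainA1}\ITEM1) the resulting depth-$3$ multifraction must be $\aa_1/\aa_2/\aa_3$ itself, yielding equalities $\aa_1=1$, $\aa_2=\aa_3$ that contradict irreducibility unless $\gg=1$. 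For $\pp=3,4$, apply the same machinery to the depth-$(2\pp-1)$ concatenation $(\aa_1/\aa_2/\aa_3)^{\opp\pp}$ and extract, at each boundary, elements cycling through $\aa_2$ and $\aa_3$ to which Lemma~\ref{L:Cycle} applies; irreducibility of the original word closes each case.

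For $\nn=4,5$ with $\pp=2$, the tactic is again $\gg=\gg^{-1}$ with the natural reverse-order representative of $\gg^{-1}$ (depth $\nn$ or $\nn+1$ according to parity). Reducing this representative to the irreducible $\aa_1\sdots\aa_\nn$ via the universal recipe $\Redmax{\univ{\nn}}$ (or $\Redmax{\univ{\nn+1}}$) and equating the outcome with $\aa_1\sdots\aa_\nn$ by uniqueness produces, at each level, an lcm identity that feeds a cyclic system of length~$2$. Lemma~\ref{L:Cycle} then forces the successive pairs $(\aa_\ii,\aa_{\nn+1-\ii})$ to be equal, and irreducibility of $\aa_1\sdots\aa_\nn$ rules out any such equality unless all entries are trivial, i.e.\ $\gg=1$. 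The main obstacle is the $\nn=5$ subcase: the depth-$6$ representation of $\gg^{-1}$ admits several legitimate first reductions, and one must show that the $3$-Ore condition (used via Lemma~\ref{L:Irred2}, which is precisely what makes the universal strategy of Prop.~\ref{P:Strat} work) guarantees convergence to the common cyclic equations needed; the bookkeeping of the intermediate lcms at levels $3$ and $4$ is the only delicate point.
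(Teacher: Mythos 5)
Your overall strategy --- represent $\gg$ by its unique $\RRRh$-irreducible multifraction, exploit $\gg=\gg^{-(\pp-1)}$ together with the universal recipe of Prop.~\ref{P:Strat}, and feed the resulting lcm identities into Lemma~\ref{L:Cycle} --- is the paper's. But two of your steps do not go through as written. First, you never make the reduction from odd to even depth: for $\nn$ odd the last entry $\aa_\nn$ occurs positively, so the conjugate $\aa_\nn\gg\aa_\nn\inv$ is represented by $\aa_\nn\aa_1/\aa_2\sdots\aa_{\nn-1}$, has depth at most $\nn-1$, and has the same order as~$\gg$. This one-line observation is what disposes of the depth-$3$ and depth-$5$ cases; without it you are pushed into the direct depth-$5$ analysis that you yourself flag as unresolved, and your depth-$3$ argument asserts equalities ($\aa_1=1$, $\aa_2=\aa_3$) with no derivation. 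After the conjugation step the only cases left are $\pp=2$ with $\dh\gg\le4$ and $\pp=3,4$ with $\dh\gg\le2$, and the $\pp=4$ case then follows at once from $(\gg^2)^2=1$ with $\dh{\gg^2}\le4$, rather than from a length-$4$ cycle.

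Second, the cyclic system you hand to Lemma~\ref{L:Cycle} is not the one that actually arises. Expanding $\red(\dd/\cc/\bb/\aa)=\aa/\bb/\cc/\dd$ (for $\pp=2$) or $\red(\bb/\aa/\bb/\aa)=\aa/\bb/1/1$ (for $\pp=3$) along $\Redmax1\Redmax2\Redmax3\Redmax1$ produces relations among \emph{auxiliary} elements $\xx,\yy',\zz,\aa',\bb',\cc'$ coming from the intermediate lcms; after eliminating the original entries one obtains a cycle of the form $\xx\,\xx_\ii=\yy'\xx_{\ii+1}$, whose conclusion is $\xx=\yy'$ --- not a cycle $\aa\xx_\ii=\bb\xx_{\ii+1}$ yielding $\aa=\bb$ directly, and not equalities between the pairs $(\aa_\ii,\aa_{\nn+1-\ii})$. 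Getting from $\xx=\yy'$ back to the entries of~$\gg$ is where the real work lies: in the involution case one deduces that $\bb$ and $\cc$ admit a common right multiple, which contradicts irreducibility at level~$2$ unless $\cc=1$, whence $\dd=1$, and only a second application of the universal recipe then gives $\aa=\bb$ and finally $\aa=\bb=1$. These elimination and irreducibility arguments are the substance of the proof and are absent from your plan.
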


\begin{proof}
First, for~$\nn$ odd, $(\aa_1 \aa_2\inv \aa_3 \pdots \aa_\nn)^\pp = 1$ implies $(\aa_\nn\aa_1 \aa_2\inv \pdots \aa_{\nn-1}\inv)^\pp = 1$, so the existence of~$\gg$ satisfying $\gg^\pp = 1$ with $\dh\gg$ odd implies the existence of~$\gg'$ with $\dh{\gg'} = \dh\gg - 1$ satisfying $\gg'{}^\pp = 1$. Hence, the cases to consider are $\gg^2 = 1$ with $\dh\gg \le 4$, $\gg^3 = 1$ and $\gg^4 = 1$ with $\dh\gg \le 2$.

Assume $\gg^2 = 1$ with $\dh\gg \le 4$. Let $\aa/\bb/\cc/\dd$ be the unique $\RRR$-irred\-ucible $4$-multifraction representing~$\gg$. Then $\dd/\cc/\bb/\aa$ represents~$\gg\inv$, and $\gg\inv = \gg$ together Prop.\,\ref{P:Strat} imply
\begin{equation}\label{E:Torsion1}
\red(\dd/\cc/\bb/\aa) = \dd/\cc/\bb/\aa \act \Redmax1 \Redmax2 \Redmax3 \Redmax1 = \aa/\bb/\cc/\dd.
\end{equation}
Because $\aa/\bb/\cc/\dd$ is $\RRR$-irreducible, we have $\cc \gcdt \dd = 1$, so applying~$\Redmax1$ to~$\dd/\cc/\bb/\aa$ leaves the latter unchanged. Hence there exist $\xx, \yy, \zz$ in~$\MM$ satisfying $\dd/\cc/\bb/\aa \act \Red{2, \xx}\Red{3, \yy} \Red{1, \zz} = \aa/\bb/\cc/\dd$. Expanding this equality provides $\xx', \yy'$ and $\bb', \cc'$ satisfying
$$\cc\xx' = \xx\cc' \ (= \xx \lcm \cc), \ \xx\bb' = \bb, \ \yy'\bb' = \cc\yy \ (=\bb' \lcmt \yy), \ \aa = \dd\yy, \ \aa\zz = \dd\xx', \ \bb\zz = \yy'\cc'.$$
Eliminating~$\aa$, we obtain $\dd\yy\zz = \dd\xx'$, whence $\xx' = \yy\zz$ and, eliminating~$\bb$ and~$\xx'$, we remain with $\yy' \opp \bb'\zz = \xx \opp \cc' \ (\ = \cc\yy\zz)$ and $\xx\opp \bb'\zz = \yy' \opp \cc'$. Applying Lemma~\ref{L:Cycle} with $\pp = 2$ and $\xx_1 = \bb'\zz$, $\xx_2 = \cc'$, we deduce $\yy' = \xx$, whence $\cc' = \bb'\zz$, and, from there, $\bb\zz = \xx\bb'\zz = \xx\cc' = \cc\xx'$, which shows that $\bb$ and~$\cc$ admit a common right multiple. As, by assumption, $\aa/\bb/\cc/\dd$ is $\RRR$-irreducible, the only possibility is $\cc = 1$ and, from there, $\dd = 1$. Applying Prop.\,\ref{P:Strat}, we find $\red(1/1/\bb/\aa) = 1/1/\bb/\aa \act \Redmax2\Redmax3\Redmax1 = \bb/\aa/1/1$ and, merging with~\eqref{E:Torsion1}, we deduce $\bb/\aa/1/1 = \aa/\bb/1/1$, whence $\aa = \bb$. As, by assumption, $\aa$ and~$\bb$ admit no nontrivial common right divisor, we deduce $\aa = \bb = 1$ and, finally, $\gg = 1$.

Assume now $\gg^3 = 1$ with $\dh\gg \le 2$. Let $\aa/\bb$ be the unique $\RRR$-irreducible $2$-multifraction representing~$\gg$. Then $\bb/\aa/\bb/\aa$ represents~$\gg^{-2}$ and the assumption $\gg^{-2} = \gg$ plus Prop.\,\ref{P:Strat} imply
\begin{equation}\label{E:Torsion2}
\red(\bb/\aa/\bb/\aa) = \bb/\aa/\bb/\aa \act \Redmax1 \Redmax2 \Redmax3 \Redmax1 = \aa/\bb/1/1.
\end{equation}
Arguing as above, we deduce the existence of $\xx', \yy'$ and $\aa', \bb'$ satisfying $\aa\xx' = \xx\aa' \ (= \xx \lcm \aa)$, $\bb = \xx\bb'$, $\aa = \yy'\bb'$, $\aa\zz = \bb\xx'$, and $\bb\zz = \yy'\aa'$. Eliminating~$\aa$ and~$\bb$, we find $\xx \opp \bb'\xx' = \yy' \opp \bb'\zz$, $\xx \opp \bb'\zz = \yy' \opp \aa'$, $\xx \opp \aa' = \yy' \opp \bb'\xx'$. Applying Lemma~\ref{L:Cycle} with $\pp = 3$ and $\xx_1 = \bb'\zz$, $\xx_2 = \bb'\zz'$, $\xx_3 = \aa'$, we deduce $\yy' = \xx$, whence $\bb' \xx' = \bb' \zz = \aa'$ and, from there, $\xx' = \zz$. Merging with $\aa\zz = \bb\xx'$ and right cancelling~$\xx'$, we deduce $\aa = \bb$, whence $\aa = \bb = 1$ since $\aa \gcdt \bb = 1$ holds, and, finally, $\gg = 1$. (An alternative argument can be obtained by expanding $\red(\aa/\bb/\aa) = \red(\bb/\aa/\bb)$.)

For $\gg^4 = 1$ with $\dh\gg \le 2$, we have $(\gg^2)^2 = 1$, whence $\gg^2 = 1$ by applying the result above to~$\gg^2$, which is legal by $\dh{\gg^2} \le 4$. We then deduce $\gg = 1$ by applying the first result to~$\gg$. 
\end{proof}

A few more particular cases could be addressed similarly, but the complexity grows fast and it is doubtful that a general argument can be reached in this way.

%%%%
\section{The case of Artin-Tits monoids}\label{S:AT}

Every Artin-Tits monoid is a noetherian gcd-monoid, hence it is eligible for the current approach. In this short final section, we address the question of recognizing which Artin-Tits monoids satisfy the $3$-Ore condition and are therefore eligible for Theorem~A. The answer is the following simple criterion, stated as Theorem~B in the introduction:

\begin{prop}\label{P:AT3Ore}
An Artin-Tits monoid satisfies the $3$-Ore condition if and only if it is of type~FC.
\end{prop}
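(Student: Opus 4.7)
The strategy is to apply Prop.~\ref{P:3OreInd}, which reduces the $3$-Ore condition to a statement about basic elements. For Artin-Tits monoids, two ingredients are crucial: first, the basic elements of $M_S$ are the simple elements of spherical parabolic submonoids, i.e.~the left (\resp right) divisors of the parabolic Garside element~$\Delta_T$ for a spherical subset $T \subseteq S$; second, the key lemma (due to Deligne/Paris, and a consequence of the theory of standard parabolic embeddings) that for $T_1, T_2 \subseteq S$ spherical, the right lcm $\Delta_{T_1} \lcm \Delta_{T_2}$ exists in $M_S$ if and only if $T_1 \cup T_2$ is spherical, and in that case equals $\Delta_{T_1 \cup T_2}$. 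Once these ingredients are in place, both directions are nearly formal. Note that Artin-Tits monoids are isomorphic to their opposites via word reversal, so the right and left $3$-Ore conditions are equivalent, and we may treat only the right version.

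For the ``if'' direction, assume $M_S$ is of type~FC. By Prop.~\ref{P:3OreInd}, it suffices to check that three basic elements $a, b, c$ that pairwise admit a common right multiple admit a global common right multiple. Let $T_a, T_b, T_c$ be the spherical supports, so $a \dive \Delta_{T_a}$, etc. The existence of $a \lcm b$ forces (by applying the key lemma with the parabolic Garside elements that bound $a, b$ from above) the union $T_a \cup T_b$ to be spherical; similarly for the other two pairs. Hence every pair of atoms of $T_a \cup T_b \cup T_c$ lies in one of the three pairwise-spherical sets and therefore satisfies $m_{s,t} < \infty$. The FC (flag complex) condition then yields that $T_a \cup T_b \cup T_c$ itself is spherical, so $\Delta_{T_a \cup T_b \cup T_c}$ is a common right multiple of $a, b, c$.

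For the ``only if'' direction, I argue by contraposition. Suppose $M_S$ is not of type~FC, and pick $T \subseteq S$ of minimal cardinality~$n$ such that every pair in $T$ is spherical while $T$ is not. If $n = 3$, the three atoms of $T$ pairwise admit common right multiples (the parabolic Garside elements of their pairs) but no global common right multiple, since $T$ itself is non-spherical; this immediately violates the right $3$-Ore condition. If $n \ge 4$, choose three $(n{-}2)$-subsets $T_1, T_2, T_3 \subsetneq T$ whose pairwise unions are proper in~$T$ but whose total union is~$T$ (e.g.~$T_1 = T \setminus \{s_1, s_2\}$, $T_2 = T \setminus \{s_1, s_3\}$, $T_3 = T \setminus \{s_2, s_3\}$). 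By minimality of~$T$, each $T_i$ and each $T_i \cup T_j$ is spherical. The basic elements $\Delta_{T_1}, \Delta_{T_2}, \Delta_{T_3}$ then pairwise admit common right multiples $\Delta_{T_i \cup T_j}$, but the key lemma applied iteratively shows that a common right multiple of all three would require $T = T_1 \cup T_2 \cup T_3$ to be spherical, a contradiction.

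The main obstacle is not the combinatorial skeleton, which is straightforward, but establishing or cleanly invoking the key lemma on right lcms of parabolic Garside elements (and the dual statement for left lcms), together with the identification of the basic elements of an Artin-Tits monoid with the simples of spherical parabolic submonoids. Both facts are known in the Artin-Tits literature (the latter underlying the proof of the cited finiteness result for basic elements), but care is needed to state them in the precise form required to match Def.~\ref{D:Primitive} and to conclude via Prop.~\ref{P:3OreInd}.
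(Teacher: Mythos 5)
Your skeleton matches the paper's: both directions ultimately rest on controlling common multiples of divisors of parabolic Garside elements (the paper feeds the RC-closed generating set $\XX = \{\xx \mid \xx \dive \Delta_\II \text{ for some spherical } \II\}$ into Prop.~\ref{P:3OreInd}), and your contrapositive argument for the ``only if'' direction is a sound variant of the paper's direct induction on $\card\II$. There is, however, a genuine gap in the ``if'' direction, at the step where you claim that the existence of $\aa \lcm \bb$ for two basic elements ``forces the union $T_a \cup T_b$ to be spherical'' by ``applying the key lemma with the parabolic Garside elements that bound $a$, $b$ from above''. Your key lemma is an equivalence between the existence of $\Delta_{T_1} \lcm \Delta_{T_2}$ and the sphericity of $T_1 \cup T_2$; but you only know that the \emph{divisors} $\aa \dive \Delta_{T_1}$ and $\bb \dive \Delta_{T_2}$ admit a common right multiple, and that does not yield a common right multiple of $\Delta_{T_1}$ and $\Delta_{T_2}$. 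What is actually needed is the statement that, whenever $\aa$ and $\bb$ admit a common right multiple, every $\ss \in \Supp(\aa) \setminus \Supp(\bb)$ and $\tt \in \Supp(\bb) \setminus \Supp(\aa)$ admit one as well, so that (under FC) $\Supp(\aa) \cup \Supp(\bb)$ is spherical. This is the paper's Lemma~\ref{L:Supp}, and its proof is the real technical content of the direction: one first shows by induction on the length of $\aa$ that an atom $\ss \notin \Supp(\aa)$ admitting a right lcm with $\aa$ satisfies $\ss \dive \uu$ where $\ss \lcm \aa = \aa\uu$ (the atom ``survives'' crossing $\aa$), and then propagates this through a $3\times3$ grid of iterated lcms. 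Your proposal neither states nor proves this, and the lemma you flag as the main obstacle is not the one that closes this gap.

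A secondary caution: the identification of the right basic elements of an FC-type monoid with divisors of spherical parabolic $\Delta_T$'s is itself something you must establish (it follows from showing that the set of such divisors is RC-closed and contains the atoms, which again uses the support lemma via \eqref{E:LcmExists}); it is cleaner to verify the hypotheses of Prop.~\ref{P:3OreInd} directly on that larger RC-closed set, as the paper does, than to invoke a structure theorem for basic elements of general Artin-Tits monoids, which is not available outside the FC case.
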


We recall that an Artin-Tits monoid $\MM = \MON\SS\RR$ is \emph{of spherical type} if the Coxeter group~$\WW$ obtained by adding to~$\RR$ the relation~$\ss^2 = 1$ for every~$\ss$ in~$\SS$ is finite. In this case, the canonical lifting~$\Delta$ to~$\MM$ of the longest element~$\ww_0$ of~$\WW$ is a Garside element in~$\MM$, and $(\MM, \Delta)$ is a Garside monoid~\cite{Dfx}. Then $\MM $ satisfies the $2$-Ore condition: any two elements of~$\MM$ admit a common right multiple, and a common left multiple.

If $\MM = \MON\SS\RR$ is an Artin-Tits monoid, then, for~$\II \subseteq \SS$, the standard parabolic submonoid~$\MM_\II$ generated by~$\II$ is the Artin-Tits monoid $\MON\II{\RR_\II}$, where $\RR_\II$ consists of those relations of~$\RR$ that only involve generators from~$\II$. Then $\MM$ is of \emph{type~FC} (flag complex)~\cite{Alt, Chc, Jug} if every submonoid~$\MM_\II$ such that any two elements of~$\II$ admit a common multiple is spherical. The global lcm of~$\II$ is then denoted by~$\Delta_\II$. 

The specific form of the defining relations implies that, for every element~$\aa$ of an Artin-Tits monoid, the generators of~$\SS$ occurring in a decomposition of~$\aa$ do not depend on the decomposition. Call it the \emph{support}~$\Supp(\aa)$ of~$\aa$. An easy induction from Lemma~\ref{L:IterLcm} implies
\begin{equation}\label{E:Supp}
\Supp(\aa') \subseteq \Supp(\aa) \cup \Supp(\bb) \quad \text{for} \quad \aa \lcm \bb = \bb\aa' .
\end{equation}
We begin with two easy observations that are valid for all Artin-Tits monoids:

\begin{lemm}\label{L:Supp}
Assume that $\MM$ is an Artin-Tits monoid with atom family~$\SS$. 

\ITEM1 Assume $\aa \in \MM$, $\ss \in \SS \setminus \Supp(\aa)$, and $\ss \lcm \aa$ exists. Write $\ss \lcm \aa = \aa\uu$. Then $\ss \dive \uu$ holds.

\ITEM2 Assume $\aa, \bb \in \MM$, and $\ss \in \Supp(\aa) \setminus \Supp(\bb)$ and $\tt \in \Supp(\bb) \setminus \Supp(\aa)$. If $\aa$ and $\bb$ admit a common right multiple, then so do $\ss$ and~$\tt$.
\end{lemm}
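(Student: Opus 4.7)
The plan is the following.

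For~\ITEM1, I induct on the length~$\ell(\aa)$, well-defined since Artin-Tits relations are length-preserving. The base $\aa = 1$ is trivial, as $\ss\lcm 1 = \ss = 1\cdot\ss$ gives $\uu = \ss$. For the inductive step, write $\aa = \tt\aa_1$ with $\tt$ an atom left-dividing~$\aa$; then $\tt\in\Supp(\aa)$ forces $\tt\ne\ss$, and $\ss\notin\Supp(\aa_1)$. Applying Lemma~\ref{L:IterLcm} to $\ss\lcm\aa = \ss\lcm(\tt\aa_1)$ shows that $\ss\lcm\tt$ and $\ss'\lcm\aa_1$ both exist, where $\tt\ss' = \ss\lcm\tt$, and that $\ss\lcm\aa = \tt\opp(\ss'\lcm\aa_1)$. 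The key atomic fact is that for distinct atoms $\ss\ne\tt$ whose lcm exists, the Artin-Tits relation $\underbrace{\ss\tt\ss\cdots}_{m} = \underbrace{\tt\ss\tt\cdots}_{m}$ (with $m = m_{\ss,\tt}\ge 2$) provides both expressions of $\ss\lcm\tt$, and factoring $\tt$ off on the left yields $\ss' = \underbrace{\ss\tt\ss\cdots}_{m-1}$, which begins with~$\ss$, so $\ss\dive\ss'$. Since $\ss\dive\ss'$ and $\ss'\lcm\aa_1$ exists, Lemma~\ref{L:CondLcm} gives that $\ss\lcm\aa_1$ exists, and the induction hypothesis yields $\ss\lcm\aa_1 = \aa_1\vv$ with $\ss\dive\vv$. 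Writing $\ss'\lcm\aa_1 = \aa_1\uu'$, we get $\ss\lcm\aa = \tt\aa_1\uu' = \aa\uu'$, hence $\uu = \uu'$; and the divisibility $\aa_1\vv\dive\aa_1\uu'$ (from $\ss\lcm\aa_1 \dive \ss'\lcm\aa_1$) forces $\vv\dive\uu$, so $\ss\dive\uu$.

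For~\ITEM2, I plan to apply \ITEM1 twice, with Lemma~\ref{L:IterLcm} as the bridge. Set $\cc = \aa\lcm\bb$, which exists by Lemma~\ref{L:CondLcm}. Take the prefix of any atom decomposition of~$\aa$ up to the first occurrence of~$\ss$, giving $\aa = \aa_1\ss\aa_2$ with $\ss\notin\Supp(\aa_1)$. Since $\aa_1,\bb\dive\cc$, $\aa_1\lcm\bb$ exists; write $\aa_1\lcm\bb = \aa_1\bb^{*} = \bb\aa_1^{*}$. Two successive applications of Lemma~\ref{L:IterLcm} to $\aa = \aa_1\cdot\ss\cdot\aa_2$ then show that $\bb^{*}\lcm\ss$ exists. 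A support computation via~\eqref{E:Supp} gives $\Supp(\bb^{*})\subseteq\Supp(\aa_1)\cup\Supp(\bb)$, so $\ss\notin\Supp(\bb^{*})$; while $\tt\in\Supp(\aa_1\lcm\bb) = \Supp(\aa_1)\cup\Supp(\bb)$ together with $\tt\notin\Supp(\aa_1)$ forces $\tt\in\Supp(\bb^{*})$. Repeat the extraction on~$\bb^{*}$: pick $\bb^{*}_1\tt\dive\bb^{*}$ with $\tt\notin\Supp(\bb^{*}_1)$. Since $\bb^{*}_1\tt\dive\bb^{*}$ and $\ss\lcm\bb^{*}$ exists, $\ss\lcm(\bb^{*}_1\tt)$ exists, so by Lemma~\ref{L:IterLcm} $\ss\lcm\bb^{*}_1$ exists. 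Applying \ITEM1 to $\bb^{*}_1$ and $\ss$ (using $\ss\notin\Supp(\bb^{*}_1)\subseteq\Supp(\bb^{*})$) yields $\ss\lcm\bb^{*}_1 = \bb^{*}_1\ff$ with $\ss\dive\ff$, say $\ff = \ss\ff''$. Lemma~\ref{L:IterLcm} applied to $\ss\lcm(\bb^{*}_1\tt)$ now produces that $\ff\lcm\tt = (\ss\ff'')\lcm\tt$ exists, and a final application of Lemma~\ref{L:IterLcm} to $(\ss\ff'')\lcm\tt$ extracts the existence of $\ss\lcm\tt$.

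The main obstacle is in~\ITEM2: the naive attempt to apply \ITEM1 directly to $\aa$ and~$\tt$ fails because $\tt\in\Supp(\bb)$ does not imply $\tt\dive\cc$, so $\tt\lcm\aa$ is not a priori known to exist. The remedy is to pass through the intermediate element $\bb^{*} = \aa_1\backslash(\aa_1\lcm\bb)$, whose support and lcm-with-$\ss$ properties follow from iterating Lemma~\ref{L:IterLcm} along the factorization $\aa = \aa_1\ss\aa_2$; then a symmetric extraction on~$\bb^{*}$ reduces the problem to the atom-level existence provided by \ITEM1 combined with one more use of Lemma~\ref{L:IterLcm}.
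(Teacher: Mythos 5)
Your proof is correct and follows essentially the same route as the paper's: for \ITEM1, induction on length using Lemma~\ref{L:IterLcm} and the fact that $\ss \lcm \tt$ begins with $\tt\ss$, and for \ITEM2, extraction of the prefix preceding the first occurrence of the relevant atom combined with~\eqref{E:Supp}, Lemma~\ref{L:IterLcm}, and part~\ITEM1. The only (harmless) difference is that in \ITEM2 you organize the lcm grid asymmetrically — reducing to $\bb^*$, applying \ITEM1 once to $\ss$ against $\bb^*_1$, and peeling $\ss$ off the front of $\ff$ — whereas the paper treats $\ss$ and $\tt$ symmetrically in a $3\times3$ grid and applies \ITEM1 twice.
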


\begin{proof}
\ITEM1 We use induction on the length~$\wit(\aa)$ of~$\aa$. For $\wit(\aa) = 0$, \ie, $\aa = 1$, we have $\uu = \ss$, and the result is true. Otherwise, write $\aa = \tt \aa'$ with $\tt \in \SS$. By assumption, we have $\tt \not= \ss$ and $\ss \lcm \tt$ exists since $\ss \lcm \aa$ does. By definition of an Artin-Tits relation, we have $\ss \lcm \tt = \tt \ss \vv$ for some~$\vv$. Applying Lemma~\ref{L:IterLcm} to~$\ss$, $\tt$, and~$\aa'$ gives $\ss \vv \lcm \aa' = \aa' \uu$, and then applying it to~$\aa'$, $\ss$, and $\vv$ gives $\uu = \uu' \vv'$ with $\uu', \vv'$ (and $\ww$) determined by $\ss \lcm \aa' = \ss\ww = \aa' \uu'$ and $\vv \lcm \ww = \ww\vv'$:
$$\begin{picture}(38,17)
\psset{nodesep=0.5mm}
\psset{yunit=0.9mm}
\pcline{->}(0,0)(15,0)
\pcline{->}(15,0)(35,0)
\pcline{->}(0,16)(0,0)\tlput{$\ss$}
\pcline{->}(15,16)(15,8)\trput{$\ss$}
\pcline{->}(15,8)(15,0)\trput{$\vv$}
\pcline{->}(35,16)(35,8)\trput{$\uu'$}
\pcline{->}(35,8)(35,0)\trput{$\vv'$}
\pcline{->}(15,8)(35,8)\taput{$\ww$}
\pcline{->}(0,16)(15,16)\taput{$\tt$}
\pcline{->}(15,16)(35,16)\taput{$\aa'$}
\psarc[style=thin](15,0){3}{90}{180}
\psarc[style=thin](35,0){3}{90}{180}
\psarc[style=thin](35,8){3}{90}{180}
\psline[style=thin](38,0)(40,0)(40,16)(38,16)\put(41,8){$\uu$}
\end{picture}$$
Then the assumption $\ss \notin \Supp(\aa)$ implies $\ss \notin \Supp(\aa')$ and, by definition, we have $\wit(\aa') < \wit(\aa)$. Then the induction hypothesis implies $\ss \dive \uu'$, whence $\ss \dive \uu' \vv' = \uu$. 

\ITEM2 Assume that $\aa \lcm \bb$ exists. Starting with arbitrary expressions of~$\aa$ and~$\bb$, write $\aa = \aa_1 \ss \aa_2$ and $\bb = \bb_1 \tt \bb_2$ with neither $\ss$ nor~$\tt$ in $\Supp(\aa_1) \cup \Supp(\bb_2)$. Applying Lemma~\ref{L:IterLcm} repeatedly, we decompose the computation of~$\aa \lcm \bb$ into $3 \times 3$ steps:
$$\begin{picture}(45,31)(0,0)
\psset{nodesep=0.5mm}
\psset{yunit=0.75mm}
\pcline[style=exist]{->}(0,0)(15,0)
\pcline[style=exist]{->}(15,0)(30,0)
\pcline[style=exist]{->}(30,0)(45,0)
\pcline{->}(0,12)(15,12)
\pcline{->}(15,12)(30,12)
\pcline[style=exist]{->}(30,12)(45,12)
\pcline{->}(0,24)(15,24)\taput{$\bb'$}
\pcline{->}(15,24)(30,24)\taput{$\vv$}
\pcline[style=exist]{->}(30,24)(45,24)
\pcline{->}(0,36)(15,36)\taput{$\bb_1$}
\pcline{->}(15,36)(30,36)\taput{$\tt$}
\pcline{->}(30,36)(45,36)\taput{$\bb_2$}
\pcline{->}(0,36)(0,24)\tlput{$\aa_1$}
\pcline{->}(0,24)(0,12)\tlput{$\ss$}
\pcline{->}(0,12)(0,0)\tlput{$\aa_2$}
\pcline{->}(15,36)(15,24)\trput{$\aa'$}
\pcline{->}(15,24)(15,12)\trput{$\uu$}
\pcline[style=exist]{->}(15,12)(15,0)
\pcline{->}(30,36)(30,24)
\pcline{->}(30,24)(30,12)
\pcline[style=exist]{->}(30,12)(30,0)
\pcline[style=exist]{->}(45,36)(45,24)
\pcline[style=exist]{->}(45,24)(45,12)
\pcline[style=exist]{->}(45,12)(45,0)
\psarc[style=thin](15,0){3}{90}{180}
\psarc[style=thin](30,0){3}{90}{180}
\psarc[style=thin](45,0){3}{90}{180}
\psarc[style=thin](15,12){3}{90}{180}
\psarc[style=thin](30,12){3}{90}{180}
\psarc[style=thin](45,12){3}{90}{180}
\psarc[style=thin](15,24){3}{90}{180}
\psarc[style=thin](30,24){3}{90}{180}
\psarc[style=thin](45,24){3}{90}{180}
\end{picture}$$
By assumption, neither~$\ss$ nor~$\tt$ belongs to $\Supp(\aa_1) \cup \Supp(\bb_1)$, hence neither belongs to $\Supp(\aa') \cup \Supp(\bb')$. Then \ITEM1 implies $\ss \dive \uu$ and $\tt \dive \vv$. By assumption, $\uu \lcm \vv$ exists, hence (by Lemma~\ref{L:IterLcm} once more) so does~$\ss \lcm \tt$. 
\end{proof}

Putting things together, we can complete the argument.

\begin{proof}[Proof of Prop.\,\ref{P:AT3Ore}]
Assume that $\MM$ is an Artin-Tits monoid with atom set~$\SS$ and $\MM$ satisfies the $3$-Ore condition. We prove using induction on~$\card\II$ that, whenever $\II$ is a subset of~$\SS$ whose elements pairwise admit common multiples, then $\Delta_\II$ exists. For $\card\II \le 2$, the result is trivial. Assume $\card\II \ge 3$. Let $\ss \not= \tt$ belong to~$\II$, and put $\JJ := \II \setminus \{\ss, \tt\}$. Each of $\card(\JJ \cup \{\ss\})$, $\card(\JJ \cup \{\tt\})$, and $\card\{\ss, \tt\}$ is smaller than $\card\II$ hence, by induction hypothesis, $\Delta_{\JJ \cup \{\ss\}}$, which is $\Delta_\JJ \lcm \ss$, $\Delta_{\JJ \cup \{\tt\}}$, which is $\Delta_\JJ \lcm \tt$, and $\Delta_{\{\ss, \tt \}}$, which is $\ss \lcm \tt$, exist. The assumption that $\MM$ satisfies the $3$-Ore condition implies that $\Delta_\JJ \lcm \ss \lcm \tt$, which is~$\Delta_\II$, also exists. Hence $\MM$ is of type~FC.

Conversely, assume that $\MM$ is of type~FC. Put
$$\XX:= \{\xx \in \MM \mid \exists\II \subseteq \SS\, (\Delta_\II \text{ exists and } \xx \dive \Delta_\II)\}.$$
For each~$\ss$ in~$\SS$, we have $\Delta_{\{\ss\}} = \ss$, hence $\ss \in \XX$: thus $\XX$ contains all atoms, and therefore $\XX$ generates~$\MM$. Next, we observe that, for all~$\xx, \yy$ in~$\XX$, 
\begin{equation}\label{E:LcmExists}
\xx \lcm \yy \text{ exists} \quad \Leftrightarrow\quad \forall \ss, \tt \in \Supp(\xx) \cup \Supp(\yy)\ (\ss \lcm \tt \text{ exists}).
\end{equation}
Indeed, put $\II:= \Supp(\xx) \cup \Supp(\yy)$, and assume first that $\xx \lcm \yy$ exists. Let $\ss, \tt$ belong to~$\II$. If $\ss$ and~$\tt$ both belong to~$\Supp(\xx)$, or both belong to~$\Supp(\yy)$, then $\ss \lcm \tt$ exists by definition of~$\XX$. Otherwise, we may assume $\ss \in \Supp(\xx) \setminus \Supp(\yy)$ and $\tt \in \Supp(\yy) \setminus \Supp(\xx)$, and Lemma~\ref{L:Supp}\ITEM2 implies that $\ss \lcm \tt$ exists as well. Conversely, assume that $\ss \lcm \tt$ exists for all~$\ss, \tt$ in~$\II$. Then the assumption that $\MM$ is of type~FC implies that $\Delta_\II$ exists. Then we have $\xx \dive \Delta_\II$ and $\yy \dive \Delta_\II$, hence $\xx \lcm \yy$ exists (and it divides~$\Delta_\II$).

We deduce that the family~$\XX$ is RC-closed. Indeed, assume that $\xx, \yy$ lie in~$\XX$ and $\xx \lcm \yy$ exists. Write $\xx \lcm \yy = \xx \yy'$. By~\eqref{E:LcmExists}, $\ss \lcm \tt$ exists for all~$\ss, \tt$ in~$\Supp(\xx) \cup \Supp(\yy)$. As $\MM$ is of type~FC, this implies the existence of~$\Delta_\II$, where $\II$ is again $\Supp(\xx) \cup \Supp(\yy)$, and we deduce that both $\xx$ and~$\yy$ divide~$\Delta_\II$. As $\Delta_\II$ is a Garside element in~$\MM_\II$, this implies that $\yy'$ also left divides~$\Delta_\II$, hence it belongs to~$\XX$. 

Next, assume that $\xx, \yy, \zz$ lie in~$\XX$ and pairwise admit right lcms. By~\eqref{E:LcmExists}, we deduce that $\ss \lcm \tt$ exists for all~$\ss, \tt$ in~$\Supp(\xx) \cup \Supp(\yy)$, in~$\Supp(\yy) \cup \Supp(\zz)$, and in~$\Supp(\xx) \cup \Supp(\zz)$, whence for all~$\ss, \tt$ in $\JJ:= \Supp(\xx) \cup \Supp(\yy) \cup \Supp(\zz)$. As $\MM$ is of type~FC, this implies that $\Delta_\JJ$ exists. Then $\xx, \yy, \zz$ all divide~$\Delta_\JJ$, hence they admit a common multiple. Hence, $\XX$ satisfies~\eqref{E:3OreInd1}. A symmetric argument shows that $\XX$ satisfies~\eqref{E:3OreInd2}. By Proposition~\ref{P:3OreInd}, we deduce that $\MM$ satisfies the right $3$-Ore condition. \end{proof}

It follows from Theorem~A that, if $\MM$ is an Artin-Tits of type~FC, every element of~$\EG\MM$ is represented by a unique $\RRRh$-irreducible multifraction. From there, choosing any normal form on the monoid~$\MM$ (typically, the normal form associated with the smallest Garside family~\cite{Din, DyH}), this decomposition provides a unique normal form for the elements of the group~$\EG\MM$. 

\begin{ques}
\ITEM1 Is there a connection between the above normal form(s) and the one of~\cite{Alt}?

\ITEM2 (L.\,Paris) 
Is there a connection between the above normal form(s) and the one associated with the action on the Niblo--Reeves CAT(0) complex \cite{NiR, GoP1}?
\end{ques}

A positive answer to the second question seems likely. If so, the current construction would provide a simple, purely algebraic construction of a geodesic normal form. Preliminary observations in this direction were proposed by B.\,Wiest~\cite{WieCAT}.

%%%%%

\newcommand{\noopsort}[1]{}

\end{document}